\newtheoremstyle{citing}
  {3pt}
  {3pt}
  {\itshape}
  {}
  {\bfseries}
  {.}
  {.5em}
  {\thmnote{#3}}
\theoremstyle{citing}
\newtheorem*{citing}{}
\theoremstyle{definition}
\theoremstyle{remark}
\theoremstyle{plain}
\newtheorem{theorem}{Theorem}[section]
\newtheorem{lemma}[theorem]{Lemma}
\newtheorem{corollary}[theorem]{Corollary}
\theoremstyle{remark}
\newtheorem{remark}[theorem]{Remark}
\newtheorem*{remark*}{Remark}
\newtheorem{example}[theorem]{Example}
\theoremstyle{definition}
\newtheorem{definition}[theorem]{Definition}
\newtheorem{miniremark}[theorem]{}
\newcounter{counter1}
\newcounter{counter2}
\newcommand{\Var}{\mathbf{V}}     
\newcommand{\RVar}{\mathbf{RV}}   
\newcommand{\IVar}{\mathbf{IV}}   
\newcommand{\Lp}[1]{\mathbf{L}_{#1}}
\newcommand{\Lploc}[1]{\mathbf{L}_{#1}^{\mathrm{loc}}}
\newcommand{\Sob}[3]{\mathbf{W}_{#1}^{{#2},{#3}}}
\newcommand{\trunc}{\mathbf{T}}
\newcommand{\qspace}{\mathbf{Q}}
\newcommand{\nat}{\mathscr{P}}
\newcommand{\LM}{\mathscr{L}}
\newcommand{\HM}{\mathscr{H}}
\newcommand{\integers}{\mathbf{Z}}
\newcommand{\rel}{\mathbf{R}}
\newcommand{\grass}[2]{\mathbf{G}(#1,#2)}
\newcommand{\pp}{\mathbf{p}}
\newcommand{\qq}{\mathbf{q}}
\newcommand{\ocube}[2]{O(#1,#2)}
\newcommand{\oball}[2]{\mathbf{U}(#1,#2)}
\newcommand{\cball}[2]{\mathbf{B}(#1,#2)}
\newcommand{\density}{\boldsymbol{\Theta}}
\newcommand{\unitmeasure}[1]{\boldsymbol{\alpha}(#1)}
\newcommand{\besicovitch}[1]{\boldsymbol{\beta}(#1)}
\newcommand{\isoperimetric}[1]{\boldsymbol{\gamma}(#1)}
\newcommand{\cylinder}[4]{\mathbf{C} ( #1, #2, #3, #4 )}
\newcommand{\id}[1]{\mathbf{1}_{#1}}
\newcommand{\weakD}{\operatorname{\mathbf{D}}}
\newcommand{\derivative}[2]{{#1}\,\weakD{#2}}
\newcommand{\ud}{\ensuremath{\,\mathrm{d}}}
\DeclareMathOperator{\with}{:}
\newcommand{\classification}[3]{{#1} \cap \{ {#2} \with {#3} \}}
\newcommand{\eqclassification}[3]{{(#1)} \cap \{ {#2} \with {#3} \}}
\newcommand{\project}[1]{#1_\natural}
\newcommand{\eqproject}[1]{(#1)_\natural}
\newcommand{\perpproject}[1]{#1_\natural^\perp}
\newcommand{\lIm}{[}
\newcommand{\rIm}{]}
\newcommand{\Lbrack}{\boldsymbol{[}}
\newcommand{\Rbrack}{\boldsymbol{]}}
\newcommand{\vdim}{{m}}
\newcommand{\codim}{{n-m}}
\newcommand{\adim}{{n}}
\DeclareMathOperator{\without}{\sim}
\newcommand{\restrict}{\mathop{\llcorner}}
\newcommand{\class}[1]{#1}
\newcommand{\tint}[2]{{\textstyle\int_{#1}^{#2}}}
\newcommand{\tfint}[2]{{\textstyle\fint_{#1}^{#2}}}
\newcommand{\tsum}[2]{{\textstyle\sum_{#1}^{#2}}}
\DeclareMathOperator{\card}{card}
\newcommand{\Clos}[1]{\mathop{\mathrm{Clos}}#1}
\newcommand{\measureball}[2]{{#1}\,{#2}}
\DeclareMathOperator{\Nor}{Nor}     
\DeclareMathOperator{\Tan}{Tan}     
\DeclareMathOperator{\spt}{spt}     
\DeclareMathOperator{\im}{im}       
\DeclareMathOperator{\Int}{Int}     
\DeclareMathOperator{\Lip}{Lip}     
\DeclareMathOperator{\grad}{grad}   
\DeclareMathOperator{\trace}{trace} 
\DeclareMathOperator{\dmn}{dmn}     
\DeclareMathOperator{\dist}{dist}   
\DeclareMathOperator{\Hom}{Hom}     
\DeclareMathOperator{\Aff}{A}       
\DeclareMathOperator{\Der}{D}       
\DeclareMathOperator{\ap}{ap}       
\DeclareMathOperator*{\aplim}{\mathrm{ap}\, \lim}   
\DeclareMathOperator*{\aplimsup}{\mathrm{ap}\, \limsup}   
\DeclareMathOperator{\ach}{arcosh}  
\newcommand{\Lpnorm}[3]{{#1}_{({#2})}({#3})}
\newcommand{\eqLpnorm}[3]{{(#1)}_{({#2})}({#3})}
\newcommand{\orlicz}[3]{\tint{}{}{#1} \circ |{#2}| \ud {#3}}
\DeclareMathOperator{\Lap}{Lap}
\newcommand{\optional}[1]{}      
\theoremstyle{definition}
\begin{document}


\title{Decay rates for the quadratic and super-quadratic tilt-excess of
integral varifolds}
\author{S{\l}awomir Kolasi{\'n}ski \and Ulrich Menne}
\maketitle
\begin{abstract}
    This paper concerns integral varifolds of arbitrary dimension in an open
    subset of Euclidean space satisfying integrability conditions on their
    first variation.  Firstly, the study of pointwise power decay rates almost
    everywhere of the quadratic tilt-excess is completed by establishing the
    precise decay rate for two-dimensional integral varifolds of locally
    bounded first variation.  In order to obtain the exact decay rate, a
    coercive estimate involving a height-excess quantity measured in Orlicz
    spaces is established.  Moreover, counter-examples to pointwise power
    decay rates almost everywhere of the super-quadratic tilt-excess are
    obtained.  These examples are optimal in terms of the dimension of the
    varifold and the exponent of the integrability condition in most cases,
    for example if the varifold is not two-dimensional.  These examples also
    demonstrate that within the scale of Lebesgue spaces no local higher
    integrability of the second fundamental form, of an at least
    two-dimensional curvature varifold, may be deduced from boundedness of its
    generalised mean curvature vector.

    Amongst the tools are Cartesian products of curvature varifolds.
\end{abstract}
\paragraph{Keywords.}
Integral varifold, first variation, generalised mean curvature vector,
quadratric tilt-excess, super-quadratic tilt-excess, Orlicz space height-excess;
curvature varifold, second fundamental form; Cartesian product of varifolds.
    

\addcontentsline{toc}{section}{\numberline{}Introduction}
\section*{Introduction}

\subsection*{Overview}

Integral varifolds constitute an analytically tractable model for singular
geometric objects which admit appropriate notions of tangent plane and mean
curvature vector, see Almgren \cite{Almgren:Vari}, Allard \cite{MR0307015},
and Simon \cite{MR756417}. Due to good compactness properties they also arise
naturally as weak limits of smooth submanifolds of some ambient space and may
be used to represent solutions to geometrical variational problems.  Our
principal objective is the study of regularity properties entailed by
integrability conditions on the first variation of such varifolds by means of
decay rates of tilt-excess.  For this purpose the classical quadratic
tilt-excess and the super-quadratic tilt-excess which arises in the study of
area minimising integral currents are employed.  The three main results of
this study may be informally described as follows.

Firstly, for two-dimensional integral varifolds of locally bounded first
variation (i.e. $\vdim=2$ and $p=1$ in the general hypotheses below), the
optimal decay rate almost everywhere of the quadratic tilt-excess is
established, see Theorems~A and~B for the decay rate and its sharpness
respectively. For all other values of $(\vdim,p)$ the best decay rate amongst
\emph{powers} was determined in \cite{snulmenn.c2}.  The present result not only
fills this gap but in fact sharply exhibits the best decay rate amongst
\emph{all} rates, not just powers.  To obtain this precision, a coercive
estimate for the quadratic tilt-excess is derived which involves a height-excess
quantity measured in an Orlicz space occurring naturally in sharp embeddings of
Sobolev spaces. This seems to be the first time that a regularity estimate for
varifolds relies on Orlicz spaces; in fact, the only previous usage of Orlicz
spaces in the context of varifolds appears to be the Poincar{\'e} type embedding
results of Hutchinson, see \cite[Theorems 2 and 4]{MR1066398}.

Secondly, for at least two-dimensional integral varifolds with locally bounded
mean curvature and ``no boundary'' ($p = \infty$), negative results concerning
decay rates almost everywhere of super-quadratic tilt-excess are shown in
Theorem~C.  The importance of these examples stems from the fact that they imply
that -- even just almost everywhere and in co-dimension one -- there is no
analogue in the present situation of the ``main analytic estimate'' in Almgren's
proof of interior almost everywhere regularity of area minimising integral
currents in arbitrary codimension, see \cite{MR574247} and
\cite[\S3]{MR1777737}. (Of course, Almgren in fact proved a stronger Hausdorff
dimension estimate on the interior singular set.)  This provides a serious
obstacle to an, otherwise canonical, approach to the question of possible almost
everywhere regularity of stationary integral varifolds which is a key open
problem in varifold theory.

Thirdly, for one-dimensional integral varifolds of locally bounded first
variation ($p=1$), almost everywhere differentiability of the tangent plane
map (restricted to the set of points of approximate continuity) is proven, see
Theorem~D.  This implies in particular that such varifolds near almost every
point are representable as graphs of a finite number of Lipschitzian functions
with small constant.  These results as well as the estimates involved in
deriving them should prove useful in the study which parts of the structural
description of one-dimensional stationary varifolds with a uniform lower bound
on their density, see Allard and Almgren in \cite{MR0425741}, generalise to
locally bounded first variation.

In combination with previous results the preceding three main results
in particular yield a nearly complete picture concerning power decay rates
almost everywhere of quadratic and super-quadratic tilt-excess.


\subsection*{Known results}

In order to more formally describe the results of the present paper in the
context of known results, consider the following set of hypotheses; the notation
is explained in Section \ref{sec:notation}. Additionally, the terms ``[twice]
weakly differentiable'' are employed with their usual meaning, see
e.g.~\cite[p.~9--10]{snulmenn.decay}.
\begin{citing} [General hypotheses]
    Suppose $\vdim$ and $\adim$ are positive integers, $\vdim < \adim$, $1
    \leq p \leq \infty$, $U$ is an open subset of $\rel^\adim$, $V$ is an
    $\vdim$ dimensional integral varifold in $U$ whose first variation $\delta
    V$ is representable by integration\footnote{That is, in the terminology of
    Simon \cite[39.2]{MR756417}, $V$ is of locally bounded first variation.},
    the generalised mean curvature vector $\mathbf{h} (V,\cdot)$ of $V$
    belongs to $\Lploc{p} (\| V \|, \rel^\adim )$, and if $p > 1$ then
    \begin{gather*}
	( \delta V ) ( \theta ) = - \tint{}{} \mathbf{h} (V,z) \bullet \theta
	(z) \ud \| V \| z \quad \text{for $\theta \in \mathscr{D} ( U,
	\rel^\adim )$}.
    \end{gather*}
    Let $Z = U \cap \{ z \with \Tan^\vdim ( \| V \|, z ) \in \grass \adim
    \vdim \}$ and define $\tau : Z \to \Hom ( \rel^\adim, \rel^\adim )$ by
    $\tau (z) = \project{\Tan^\vdim ( \| V \|, z)}$ for $z \in Z$.
\end{citing}
The set $Z$ consists of all points such that the closed cone of $(\| V \|,
\vdim )$ approximate tangent vectors forms an $\vdim$ dimensional plane; for
these points $z$, $\tau (z)$ denotes the orthogonal projection of $\rel^\adim$
onto this plane.

The study of regularity properties is usually preceded by a more basic study
of the density ratio by means of the monotonicity identity, see Allard
\cite[\S 5]{MR0307015}, Simon \cite[\S 17]{MR756417} and \cite[\S
4]{snulmenn:tv.v2}, and its consequence, the isoperimetric inequality, see
Allard \cite[\S 7]{MR0307015} and Michael and Simon \cite[\S 2]{MR0344978}. In
particular, if $p > \vdim$ then $\density^\vdim ( \| V \|, \cdot )$ is an
upper semicontinuous real valued function whose domain is $U$, see Simon
\cite[17.8]{MR756417}. To which extent these properties persist if $p = \vdim$
is unclear; the cases $\vdim = 1$ and $\vdim = 2$ are treated in
\cite[4.8]{snulmenn:tv.v2} and Kuwert and Sch{\"a}tzle
\cite[Appendix]{MR2119722} respectively. From Allard \cite[3.5\,(1),
8.3]{MR0307015} one is at least assured that $\mathscr{H}^\vdim \restrict \spt
\| V \| \leq \| V \|$ if $p = \vdim$. If $p < \vdim$, one easily constructs
examples with $\spt \| V \| = U$, see \cite[14.1]{snulmenn:tv.v2}. Yet, there
are precise local estimates available on the size of the set of points where
the density ratio is small, see \cite[\S 2]{snulmenn.isoperimetric}.

In order to put the study of regularity properties into perspective, it is
instructive to consider as well the behaviour of the Laplace operator.
\begin{citing} [Model case]
    Suppose $\vdim$ and $\adim$ are positive integers, $\vdim < \adim$, $1
    \leq p \leq \infty$, $u \in \Lploc 1 ( \mathscr{L}^\vdim, \rel^\codim )$,
    the distributional Laplacian $T \in \mathscr{D}' ( \rel^\vdim, \rel^\codim
    )$ of $u$, defined by $T ( \phi ) = \tint{}{} u \bullet \Lap \phi \ud
    \mathscr{L}^\vdim$ for $\phi \in \mathscr{D} ( \rel^\vdim, \rel^\codim
    )$, is representable by integration, and if $p > 1$ then there exists $f
    \in \Lploc{p} ( \| V \|, \rel^\codim )$ satisfying
    \begin{gather*}
    	T ( \phi ) = \tint{}{} f(x) \bullet \phi(x) \ud \mathscr{L}^\vdim
	x \quad \text{for $\phi \in \mathscr{D} ( U, \rel^\codim )$}.
    \end{gather*}
\end{citing}
If $1 < p < \infty$ then $u$ is twice weakly differentiable and the
distributional partial derivatives up to second order of $u$ correspond to
functions in $\Lploc p ( \mathscr{L}^\vdim, \rel^\codim )$; this is
a~consequence of the usual a priori estimate based on the Calder{\'o}n Zygmund
inequality, see e.g.~\cite[3.5]{snulmenn.c2}, and convolution.

This implies differentiability results in Lebesgue spaces for the weak
derivative, see for instance Calder{\'o}n and Zygmund \cite[Theorem~12,
p.~204]{MR0136849}.

For an integral varifold $V$, the existence of a notion of first order
derivative, i.e.,~that $\tau$ is defined $\| V \|$ almost everywhere,  is a
simple consequence of its rectifiability, see Allard
\cite[3.5\,(1)]{MR0307015}.  However, this derivative behaves rather like an
approximate derivative than a weak derivative as is exemplified by the fact
that a Poincar{\'e} inequality only holds under additional hypotheses on the
first variation, see \cite[p.~372]{snulmenn.poincare}. More information on the
validity of Sobolev Poincar{\'e} type inequalities may be retrieved from
Hutchinson \cite{MR1066398}, \cite[\S4]{snulmenn.poincare} and
\cite[\S10]{snulmenn:tv.v2}.

As the Grassmann manifold is compact, $\tau$ belongs to $\Lploc q ( \| V \|,
\Hom ( \rel^\adim, \rel^\adim ) )$ for $1 \leq q \leq \infty$. Yet, it is
important to understand for which $q$ effective coercive estimates are
available. Classically, this is the case for $q = 2$, see Allard
\cite[8.13]{MR0307015} and its refinements Brakke \cite[5.5]{MR485012} and
\cite[4.10, 4.14]{snulmenn.decay}.
\begin{citing} [Question 1]
    Suppose $2 < q < \infty$ and $1 < p < \infty$.

    Do the general hypotheses imply that for $\|V\|$ almost all $c$ there
    exists $1 \leq \gamma < \infty$ such that there holds
    \begin{multline*}
	\big ( r^{-\vdim} \tint{\cball c{r/2}}{} | \tau(z)-\tau(c) |^q \ud \|
	V \| z \big )^{1/q} \\
	\leq \gamma \Big ( \big ( r^{-\vdim} \tint{\cball cr}{} |\tau
	(z)-\tau(c)|^2 \ud \| V \| z \big )^{1/2} + \big ( r^{p-\vdim}
	\tint{\cball cr}{} | \mathbf{h} (V,z)|^p \ud \| V \| z \big )^{1/p}
	\Big )
    \end{multline*}
    whenever $0 < r \leq \gamma^{-1}$? A similar question may be phrased for
    $q = \infty$ or $p \in \{ 1, \infty \}$.
\end{citing}
In case $\vdim \geq 2$ and $q = \infty$ such estimates are known to fail by
Brakke \cite[6.1]{MR485012}. In view of \ref{remark:holes-tilt-large} the
Question 1 is related to the possible size of the exceptional sets occurring
in approximations by $\qspace_Q ( \rel^\codim )$ valued functions. It is also
related to the second ``main analytic estimate'' -- so termed in Almgren's
announcement \cite[p.~6]{MR574247} -- of Almgren's regularity proof for area
minimising currents of arbitrary codimension, see Almgren \cite[3.29,
3.30]{MR1777737} and De~Lellis and Spadaro \cite[Theorem~7.1]{MR3283929}.

Passing from first order to second order quantities, the analogous property to
weak differentiability of the weak derivative of $u$ would be generalised $V$
weak differentiability of~$\tau$, or equivalently, $V$ being a curvature
varifold in the sense of Hutchinson, see \ref{def:curvature_varifold} and
\ref{remark:curvature_varifold}. Considering three half lines emanating from
the origin in $\rel^2$ at equal angles shows that even a stationary integral
varifold need not to be a curvature varifold, see Mantegazza \cite[3.4,
3.11]{MR1412686}. In view of \cite[4.8]{snulmenn.c2}, one may however still
define a notion of approximate second fundamental form, $\ap \mathbf{b}
(V,\cdot)$, of a varifold satisfying the general hypotheses such that
\begin{gather*}
    \ap \mathbf{b} (V,z) = \mathbf{b} (M,z) \quad \text{for $\| V \|$ almost
    all $z \in U \cap M$}
\end{gather*}
whenever $M$ is an $\vdim$ dimensional submanifold of $\rel^\adim$ of class
$2$. Since the corresponding approximate mean curvature vector is $\|V \|$
almost equal to $\mathbf{h} (V,\cdot)$ by \cite[4.8]{snulmenn.c2}, we assume
$\vdim \geq 2$ in the following question.
\begin{citing} [Question~2]
    Suppose $\vdim \geq 2$, $p = \infty$, and $0 < q < \infty$.

    Do the general hypotheses imply that for $\| V \|$ almost all $c$
    \begin{gather*}
        \tint{\cball cr}{} \| \ap \mathbf{b} (V,z) \|^q \ud \| V \| z < \infty
        \quad \text{for some $r > 0$}?
    \end{gather*}
\end{citing}
The existence proof in \cite[4.8]{snulmenn.c2} does not provide any integral
estimates of the resulting quantity and, in fact, no positive results are
known. Considering the scaling behaviour of the above integral, the example in
Brakke~\cite[6.1]{MR485012} shows that the answer is in the negative whenever $q
\geq 2$.

In case $V$ happens to be a curvature varifold, one may deduce
differentiability results for $\tau$ in Lebesgue spaces from general facts
about generalised $V$ weakly differentiable functions, see \cite[11.4,
15.9--15.12]{snulmenn:tv.v2}. The next question concerns to which extent these
properties persist for non-curvature varifolds.
\begin{citing} [Question 3]
    Suppose $0 < \alpha \leq 1$ and $2 \leq q < \infty$.

    Do the general hypotheses imply that
    \begin{gather*}
        \limsup_{r \to 0+} r^{-\alpha} \big ( r^{-\vdim} \tint{\cball cr}{} |
        \tau (z)- \tau (c) |^q \ud \| V \| z \big )^{1/q} < \infty
    \end{gather*}
    for $\| V \|$ almost all $c$? A similar question may be phrased for $q =
    \infty$.
\end{citing}
This may be seen as a pointwise H{\"o}lder condition with exponent $\alpha$ on
$\tau$ at~$c$ measured in a Lebesgue space with exponent $q$. If $p > \vdim$
and $p \geq 2$, proving uniform estimates for $$r^{-\alpha} \big ( r^{-\vdim}
\tint{\cball cr}{} | \tau (z) - \tau (c) |^2 \ud \| V \| z \big )^{1/2}$$ for
all $c$ in some relatively open subset of $\spt \| V \|$ and all $0 <r \leq
\varepsilon$ for some $\varepsilon > 0$ is -- via H{\"o}lder continuity of
$\tau$ -- the key to Allard's regularity theorem, see \cite[\S 8]{MR0307015}.
If $\vdim \geq 2$ and $p = \infty$, then $\tau$ may be discontinuous at points
in a set of positive $\| V \|$ measure and $\spt \| V \|$ may fail to be
associated to an $\rel^\codim$ valued or even $\qspace_Q ( \rel^\codim )$
valued function near those points by Brakke \cite[6.1]{MR485012}. Therefore,
the condition in Question~3 also acts as replacement for more classical
notions of regularity which are known to possibly fail on a set of positive
$\| V \|$ measure.

Evidently, if $\vdim \geq 2$ and $q = \infty$ the answer is in the negative
whenever $1 \leq p \leq \infty$ by Brakke \cite[6.1]{MR485012}. If $p < \vdim$,
the answer is in the negative whenever $\alpha q > \vdim p/(\vdim-p)$,
see~\cite[1.2\,(iv)]{snulmenn.isoperimetric}.

Turning to positive results, only the case $q=2$ has been systematically
studied. The sharpest known results were obtained in \cite{snulmenn.c2} building
partly on \cite{snulmenn.decay} and extending methods and results of Brakke
\cite[5.5, 5.7]{MR485012} and Sch\"atzle \cite[5.4]{MR1906780},
\cite[Theorem~5.1]{MR2064971}, and \cite[Theorem~3.1]{MR2472179}. Namely, if
$\vdim = 1$ or $\vdim = 2$ and $p > 1$ or $\vdim > 2$ and $p \geq 2
\vdim/(\vdim+2)$, then the answer is in the affirmative for $\alpha = 1$ and $q
= 2$, see \cite[5.2\,(2)]{snulmenn.c2}, and if $\vdim =2$ and $\alpha < 1$ or
$\sup \{ 2,p \} < \vdim$ and $\alpha = \vdim p/(2(\vdim-p)) < 1$ then the answer
is in the affirmative for $q = 2$, see \cite[5.2\,(1)]{snulmenn.c2}.

Therefore, the previously known results for Question 3 may be summarised as
follows. If $q = 2$, only the case $(\vdim,p,\alpha) = (2,1,1)$ was left open.
If $2 < q < \infty$, the gap between positive results and known counterexamples
was quite large. And if $q = \infty$, only the case $\vdim = 1$ remained
open. The initial motivation for the present work was to solve some of these
open cases of Question 3.

Finally, notice that Question~3 could be phrased for $1 \leq q < 2$ as well,
see \cite[p.~248, Problem (ii)]{snulmenn.isoperimetric}, and Question~2 could
include the case $p < \infty$. However, no effort has been  made to resolve
these additional cases in the present study.


\subsection*{Results of the present article} 
The results may be summarised as follows. All cases of Question~1 are answered;
in the negative if $\vdim \geq 2$ and in the affirmative if $\vdim = 1$, see
\ref{remark:no-gehring-improvement} and \ref{remark:sup-L1-est}. Question~2 is
answered in the negative if $q > 1$, see
\ref{remark:example-super-quadratic-tilt}. All cases of Question~3 except the
case $(\vdim,p,\alpha) = (2,1,2/q)$ for $2 < q < \infty$ are treated, see
\ref{thm:positive_result},
\ref{remark:sharpness_superquadratric}--\ref{remark:m=2,p=1}, and
\ref{thm:decay-1d}. Finally, for the case $(\vdim,p,q) = (2,1,2)$ of Question~3,
the precise decay rate is determined, see \ref{example:quadratic_tilt_excess}
and \ref{theorem:quadratic-tilt-decay}.

Beginning with the last item, the following theorem is established.
\begin{citing} [Theorem~A, see \ref{theorem:quadratic-tilt-decay}]
    Suppose $\vdim = 2$ and $p = 1$.  

    If $\adim$, $U$, $V$, and $\tau$ satisfy the conditions of the general
    hypotheses, then
    \begin{gather*}
	\lim_{r \to 0+} r^{-4} ( \log (1/r) )^{-1} \tint{ \cball cr}{} | \tau
	(z) - \tau (c) |^2 \ud \| V \| z = 0
    \end{gather*}
    for $\| V \|$ almost all $c$.
\end{citing}
This result is sharply complemented by the following negative result.
\begin{citing} [Theorem~B, see \ref{example:quadratic_tilt_excess}]
    Suppose $\vdim = 2$, $\adim = 3$, $p = 1$, $U = \rel^3$, and $\omega$ is a
    modulus of continuity.

    Then there exist $V$ and $\tau$ satisfying the conditions of the general
    hypotheses and $C$ with $\| V \| (C) > 0$ satisfying
    \begin{gather*}
	\limsup_{r \to 0+} \omega (r)^{-1} r^{-4} ( \log (1/r) )^{-1} \tint{
	\cball cr}{} | \tau (z) - \tau (c) |^2 \ud \| V \| z > 0
    \end{gather*}
    whenever $c \in C$.
\end{citing}
In fact, the varifold constructed in Theorem~B may be chosen to be associated to
the graph of a Lipschitzian function with small Lipschitz constant. Theorem~B in
particular answers the case $(\vdim,p,\alpha) = (2,1,1)$ of the part $q = 2$
of Question 3 in the negative.

\begin{citing} [Theorem~C, see \ref{example:quantitative_brakke_again} and
\ref{remark:second_fundamental_form_large}]
    Suppose $\vdim \geq 2$, $\adim = \vdim+1$, $p = \infty$, $U =
    \rel^\adim$, and $\omega$ is a modulus of continuity.
    
    Then there exist a curvature varifold $V \in \IVar_\vdim ( \rel^\adim )$ and
    $\tau$ satisfying the conditions of the general hypotheses, an $\vdim$
    dimensional submanifold $M$ of $\rel^\adim$ of class $\infty$ which is
    relatively open in $\spt \| V \|$, $\varepsilon > 0$, $B \subset \{ r \with
    r > 0 \}$ with $\inf B = 0$, and $C$ with $\| V \| (C)>0$ such that the
    following properties hold whenever $c \in C$.
    \begin{enumerate}
	\item \label{item:brakke:tilt} If $r \in B$, then $$\| V \| ( \cball
        cr \cap \{ z \with \| \tau(z)-\tau(c)\| \geq 1/3 \} ) \geq \omega (r)
        r^{\vdim+2} ( \log (1/r) )^{-2}.$$
	\item \label{item:brakke:holes} If $r \in B$ and $T = \im \tau (c)$,
        then $$\mathscr{H}^\vdim ( H(T,c,r) ) \geq 
        \omega(r) r^{\vdim+2} ( \log (1/r))^{-2},$$ where $H (T,c,r) = T \cap
        \cball{\project{T}(c)}{r} \without \project{T} \lIm \cylinder Tcrr
        \cap \spt \| V \| \rIm$.
    \item \label{item:brakke:2nd_fundamental_form} If $r > 0$ and $1 < q < \infty$,
        then
        $$\tint{M \cap \cball cr}{} | \mathbf{b} ( M,z) |^q \ud
        \mathscr{H}^\vdim z = \infty.$$
	\item \label{item:brakke:density} If $0 < r \leq \varepsilon$, then
        $$\| V \| ( \cball cr \cap \{ z \with \density^\vdim ( \| V \| , z )
        \leq \density^\vdim ( \| V \|, c) - 1 \} ) \geq \omega (r) r^\vdim.$$
    \end{enumerate}
\end{citing}
If $\omega$ satisfies the Dini condition and ``$\omega(r)$'' is replaced by
``$\omega(r)^2$'' in parts \eqref{item:brakke:tilt} and
\eqref{item:brakke:holes}, then one may take $B = \{ r \with 0 < r \leq
\varepsilon \}$, see \ref{example:quantitative_brakke} and
\ref{remark:example-super-quadratic-tilt}.

Part \eqref{item:brakke:tilt} answers Question~3 in the negative whenever $q >
2$ and $\alpha > 2/q$. Notice that the $\| V \|$ measure of sets of the form
considered in part \eqref{item:brakke:tilt} occurs as upper bound on the size of
the exceptional sets of the usual approximation by $\qspace_Q ( \rel^\codim)$
valued functions, \ref{remark:holes-tilt-large}. In part
\eqref{item:brakke:holes} we provide a lower bound on the size of ``holes'' of
the varifold. As these regions will always be part of one of the exceptional
sets of any approximation by $\qspace_Q (\rel^\codim)$ valued functions, this
yields a corresponding lower bound on the size of this set. For any $\delta >
0$, this lower bound also rules out an upper bound on the size of the
exceptional sets by a suitable multiple of
\begin{gather*}
	E^{1+\delta}, \quad \text{where $E = r^{-\vdim} \tint{\cball cr}{} |
	\tau (z)-\tau(c) |^2 \ud \| V \| z$}.
\end{gather*}
Such upper bound would be the natural analogue for varifolds satisfying the
general hypotheses with $\vdim \geq 2$ and $p =\infty$ of the aforementioned
second ``main analytic estimate'' for area minimising currents obtained by
Almgren in \cite[3.29\,(8)]{MR1777737}. Of course, our example does not
(obviously) preclude a possible bound involving $E ( \log (1/E))^{-2}$, for $E
> 0$, in place of $E^{1+\delta}$, or the original bound $E^{1+\delta}$ under
the additional hypothesis of stationarity.

Evidently, part \eqref{item:brakke:2nd_fundamental_form} provides a negative
answer to Question~2 whenever $q > 1$; in fact, even if the integral in
question is restricted to the ``regular part'' of~$V$. Part
\eqref{item:brakke:density} contains the observation that the regions of
significantly smaller density around a given point may be as large as the
approximate continuity of the density permits, see
\ref{remark:density-ap-lsc}.

If $\vdim \geq 2$, part \eqref{item:brakke:tilt} of Theorem~C leaves little
room for positive answers to Question~3 for $q > 2$ except of those which
follow from the boundedness of $\tau$ and the known positive results for $q
=2$, see \ref{thm:positive_result} and
\ref{remark:sharpness_superquadratric}--\ref{remark:m=2,p=1}. In fact, only
the case $(\vdim,p,\alpha) = (2,1,2/q)$ for $2 < q < \infty$ remains open.
This case is related to the question of availability of estimates of $\tau$ in
certain Lorentz spaces, see \ref{remark:m=2,p=1}.

The combination of positive and negative answers to Question~3 obtained so far
in particular implies that the answer to Question~1 is in the negative whenever
$\vdim \geq 2$, see~\ref{remark:no-gehring-improvement}.

Turning to the case $\vdim = 1$, a positive answer to Question~1 follows from
\ref{remark:sup-L1-est}.
\begin{citing} [Theorem~D, see \ref{thm:decay-1d} and
\ref{corollary:1-dim-Q-graph}]
    Suppose $\vdim = 1$ and $p =1$.

    If $\adim$, $U$, $V$, $Z$, and $\tau$ satisfy the general conditions, then
    there exists a subset $A$ of $Z$ with $\| V || ( A \without Z ) = 0$ such
    that the following two statements hold for $\| V \|$ almost all $z \in A$.
    \begin{enumerate}
	\item \label{item:1d:diff} The map $\tau | A$ is differentiable at~$z$
	relative to $A$ and
        \begin{gather*}
            \Der  ( \tau | A ) (z) = ( \| V \|, 1 ) \ap \Der  \tau (z).
        \end{gather*}
    \item \label{item:1d:representation} If $\varepsilon > 0$ then there exist a
        positive integer $Q$, $0 <r< \infty$, and $f_i : T \cap \cball{\project{T}
          (z)}{r} \to T^\perp \cap \cball{ \perpproject{T} (z)}{r}$ with $\Lip f_i
        \leq \varepsilon$ for $i = 1, \ldots, Q$ and
    	\begin{gather*}
	    \density^1 ( \| V\|, \zeta ) = \card \big \{ i \with f_i (
	    \project{T} ( \zeta ) ) = \perpproject{T} (\zeta) \big \}
        \end{gather*}
        for $\mathscr{H}^1$ almost all $\zeta \in \cylinder Tzrr$.
    \end{enumerate}
\end{citing}
Part \eqref{item:1d:diff} includes a positive answer to Question~3 for $\vdim
= 1$, $\alpha = 1$, and $q = \infty$. Part \eqref{item:1d:representation}
expresses the varifold as finite sum of graphs of Lipschitzian functions.


\subsection*{Outline of the proofs}

\subsubsection*{Theorem~A}
In order to explain the proof of Theorem~A, it is instructive to recall the
strategy of proof of similar results in \cite[5.2]{snulmenn.c2}. The proof rests
on an idea of Sch{\"a}tzle underlying \cite[Theorem 3.1]{MR2064971}. He realised
that in the presence of a coercive estimate, see Brakke \cite[5.5]{MR485012},
second order rectifiability of a varifold satisfying the general hypotheses with
$p = 2$ implies second order behaviour of the quadratic tilt-excess and the
quadratic height-excess. Having second order rectifiability at one's disposal
from \cite[4.8]{snulmenn.c2}, this procedure has been employed in
\cite[5.2\,(2)]{snulmenn.c2} with a refined coercive estimate.  Specifically,
Brakke's estimate \cite[5.5]{MR485012} was improved by introducing height-excess
quantities measured in $\Lp q ( \| V \| )$ with $q = \frac{2 \vdim}{\vdim-2}$ if
$\vdim > 2$, see \cite[4.10, 4.11]{snulmenn.decay}.  Employing an approximation
by $\qspace_Q ( \rel^\codim)$ valued functions and interpolation in \cite[5.7,
6.4]{snulmenn.decay} yielded~a coercive estimate involving approximate
height-excess quantities, that is height-excess that excludes an arbitrary set
of small $\| V \|$ measure, see \cite[9.5]{snulmenn.decay}. Finally, in all
cases in which this method implied a positive answer to Question~3 for $q=2$ and
$\alpha < 1$, the differentiation theory from \cite[3.7]{snulmenn.isoperimetric}
was employed in conjunction again with the second order rectifiability to
establish that the limit in question is actually equal zero $\| V \|$ almost
everywhere.

The results obtained by the above method in case $\vdim =2$ were not sharp as
the coercive estimate \cite[4.10]{snulmenn.decay} did only use Lebesgue spaces
so that subsequently the non-sharp embedding of weakly differentiable functions
with square-integrable weak derivative on $\oball ar$, for $a \in \rel^2$ and $0
< r < \infty$, into $\Lp q ( \mathscr{L}^2 \restrict \oball ar)$ for $q <
\infty$ needed to be employed. To be able to employ the sharp embedding into
Orlicz spaces, see \cite[8.27, 8.28]{MR2424078}, we therefore modify the above
procedure. In~particular, we obtain a coercive estimate involving the
appropriate Orlicz space in \ref{lemma:coercive-estimate} which takes the role
of \cite[4.10]{snulmenn.decay}. Moreover, to be able to proceed after obtaining
a weaker form of Theorem~A which results from replacing ``$\lim$'' and ``$=0$''
by ``$\limsup$'' and ``$<\infty$'' in its statement, the differentiation result
\cite[3.7]{snulmenn.isoperimetric} is adapted in \ref{thm:O_o} to include rates
which are not powers. It should also be noted that as the current proof does not
aim at decay \emph{estimates}, i.e.,~estimates for positive radii, such as
\cite[10.2]{snulmenn.decay} but only at decay \emph{rates}, i.e.,~the behaviour
as the radius approaches zero, we are able to encapsulate the usage of the
approximation by $\qspace_Q ( \rel^\codim )$ valued functions in the
construction of a real valued auxiliary function in
\ref{lemma:mini-lip-approx}. In comparison to \cite[9.5]{snulmenn.decay} where
the coercive estimate involving the approximate height-excess quantities was
derived as corollary to the more elaborate estimates in
\cite[9.4\,(9)]{snulmenn.decay} aiming at the proof of
\cite[10.2]{snulmenn.decay}, this greatly simplifies our derivation of the
corresponding estimates in \ref{theorem:quadratic-tilt-decay}.

\subsubsection*{Theorems B and C}
The qualitative construction principle for both theorems is that of Brakke
\cite[6.1]{MR485012}. The basis for obtaining quantitative information is
provided by the following variant of classical propositions, see
\ref{example:yet_another_cantor_set}.
\emph{If $\vdim$ is a positive integer, $0 < \lambda < 1$, and $\omega$ is a
  modulus of continuity, then there exist a countable disjointed family $G$ of
  cubes contained in the unit cube $C$ of $\rel^\vdim$, $B \subset \rel \cap \{
  r \with r > 0 \}$ with $\inf B = 0$, and $\varepsilon > 0$ such that
  $\mathscr{L}^\vdim$ almost all $a \in C \without \bigcup G$ have the following
  property: If $0 < r \leq \varepsilon$, then there exists a subfamily $H$ of
  $G$ with $\bigcup H \subset \cball ar$,
  \begin{gather*}
      \mathscr{L}^\vdim ( \textstyle{\bigcup H} ) \geq \omega (r) r^\vdim
      \quad \text{and} \quad
      \text{if $r \in B$ then $\card H = 1$.}
  \end{gather*}} Moreover, $B$ may be required to equal $\{ r \with 0 < r \leq
\varepsilon \}$ if and only if $\omega$ satisfies the Dini condition, see
\ref{example:cantor_set} and \ref{remark:topsoe}.  Denoting by $F$ the
collection of balls with centres and ``radii'' equal to those of the cubes in
$G$ and fixing an isometric injection of $\rel^\vdim$ into $\rel^{\vdim+1}$, we
associate to each member of $F$ an $\vdim$~dimensional submanifold
of~$\rel^{\vdim+1}$ of class~$1$ involving a~piece of a~catenoid, see
Figures~\ref{F:catenoid} and~\ref{F:bent-catenoid} on page~\pageref{F:catenoid}
and~\pageref{F:bent-catenoid}. The varifolds whose existence is asserted in
Theorems~B and~C then consist of the submanifolds corresponding to the members
of $F$ together with the image of $\rel^\vdim \without \bigcup F$ under the
injection [taken with multiplicity two in case of Theorem~C].

\subsubsection*{Theorem D} The key to the proof of Theorem~D is an a priori
estimate for ``weak subsolutions to Poisson's equation for the
Laplace-Beltrami operator on $V$'', see \ref{lemma:sup-L1-est} and
\ref{remark:laplace-beltrami}. This estimate is adapted from Allard and
Almgren, see~\cite[5\,(6)]{MR0425741}, and implies in particular a positive
answer to Question~1 if $\vdim = 1$ and $q = \infty$, see
\ref{remark:sup-L1-est}. Since a positive answer to Question~3 for $\vdim = 1$
and $q = 2$ is already known from \cite[5.2\,(2)]{snulmenn.c2}, part
\eqref{item:1d:diff} is now a consequence of suitable differentiation result,
see \ref{thm:differentiation}, which in turn is based on
\cite[3.1]{snulmenn.isoperimetric}. Part \eqref{item:1d:representation} then
follows by a suitable approximation by $\qspace_Q ( \rel^{\adim-1} )$ valued
functions, see \cite[3.15]{snulmenn.poincare}, and a selection theorem for
such function, see Almgren \cite[1.10]{MR1777737}.

\subsubsection*{Curvature varifolds} Our treatment of curvature varifolds
makes use of generalised weakly differentiable functions introduced in
\cite[8.3]{snulmenn:tv.v2}, in particular their differentiability properties,
see \cite[11.2, 11.4]{snulmenn:tv.v2}. This approach rests on the
characterisation of curvature varifolds in terms of such functions,
see~\cite[15.6]{snulmenn:tv.v2}. Otherwise, only some more elementary facts
are cited from~\cite{snulmenn:tv.v2}.

\subsection*{An open problem}
The cases of Question~3 remaining open are related to the following question
concerning the decay behaviour of tilt-excess quantities measures in the Lorentz
space with exponent $(2,\infty)$, see~\ref{remark:m=2,p=1}.
\begin{citing} [Question~4]
    Suppose $\vdim = 2$ and $p = 1$.

    Do the general hypotheses imply that
    \begin{gather*}
        \limsup_{r \to 0+} r^{-1} E(c,r) < \infty
        \quad
        \text{for $\| V \|$ almost all $c$},
    \end{gather*}
    where $E(c,r) = \sup \{ t r^{-\vdim/2} \| V \| ( \cball cr \cap \{ z \with |
    \tau (z)-\tau(c) | > t \} )^{1/2} \with 0 < t < \infty \}$?
\end{citing}
If the answer should turn out to be in the affirmative, it would imply Theorem~A
as a corollary as well as a positive answer to Question~3 in the remaining cases
$\alpha = 2/q$ for $2 < q < \infty$ both by interpolation based on the
boundedness of $\tau$. For the model case of the Laplace operator the procedure
used to prove Theorem~A could be adapted since coercive estimates in Lorentz
spaces are available for the Laplace operator; such estimates are reviewed for
instance in Dolzmann and M{\"u}ller~\cite[\S 3]{MR1354111}.

\subsection*{Organisation of the paper} 
In Section \ref{sec:notation} the notation is introduced. Sections
\ref{sec:lebesgue}--\ref{sec:differentiation} are of preparatory nature
supplying the density properties of the Lebesgue measure, properties of
Cartesian products of varifolds, and some differentiation theory for functions
on varifolds respectively. Sections
\ref{sec:qte-ex}--\ref{sec:quadratic_tilt_decay} treat the quadratic tilt-excess
whereas Sections \ref{sec:super_quadratic_tilt}--\ref{sec:sqte-decay} cover the
super-quadratic case. This includes, in Section \ref{sec:super_quadratic_tilt},
the example concerning the integrability of the second fundamental form of
curvature varifolds. Finally, one dimensional varifolds are considered in
Section \ref{sec:one_dimensional_decay}.

\subsection*{Acknowledgement}
The first author was partially supported by the Foundation for Polish Science
and partially by NCN Grant no.  2013/10/M/ST1/00416 while he was on leave from
Institute of Mathematics of the University of Warsaw.
 
It is a~pleasure to the authors to express their gratitude to Prof.~Guy David
for his valuable input to this paper and to thank Prof.~David Preiss for
informing them about the studies of Mejlbro and Tops{\o}e in \cite{MR0583557}
and Tops{\o}e in \cite{MR577961}.

\emph{AEI publication number:} AEI-2015-007


\section{Notation} \label{sec:notation}

The notation generally follows Federer \cite{MR41:1976} and Allard
\cite{MR0307015} with some modifications and additions described in \cite[\S
1]{snulmenn:tv.v2}. Here, concerning the paragraph ``Definitions in the text'' of
\cite[\S 1]{snulmenn:tv.v2}, only the notions of \emph{generalised weakly
differentiable function}, \emph{generalised weak derivative}, $\derivative
Vf$, and the associated space $\trunc (V,Y)$, see \cite[8.3]{snulmenn:tv.v2},
will be employed.

\paragraph{Less common symbols and terminology.} A family is disjointed if and
only if two distinct members have empty intersection, see
\cite[2.1.6]{MR41:1976}. Whenever $f$ is a function it is a subset of the
Cartesian product of its domain, $\dmn f$, and its image, $\im f$, see
\cite[p.~669]{MR41:1976}. The Lebesgue measure of the unit ball in
$\rel^\vdim$ will be denoted by $\unitmeasure \vdim$, see
\cite[2.7.16]{MR41:1976}. Whenever $\mu$ measures $X$ and $Y$ is a Banach
space, $\mathbf{A} ( \mu, Y)$ denotes the vectorspace of those $Y$ valued
$\mu$ measurable functions $f$ such that there exists a separable subspace
$Z$ of $Y$ satisfying $\mu ( f^{-1} \lIm Y \without Z \rIm ) = 0$, see
\cite[2.3.8]{MR41:1976}. In this case,
\begin{gather*}
    \Lpnorm{\mu}{p}{f} = \big ( \tint{}{} |f|^p \ud \mu \big )^{1/p} \quad
    \text{whenever $1 \leq p < \infty$}, \\
    \Lpnorm{\mu}{\infty}{f} = \inf \{ t \with \mu ( \{ x \with |f(x)| > t
    \} ) = 0 \}
\end{gather*}
whenever $f \in \mathbf{A} ( \mu, Y )$, see \cite[2.4.12]{MR41:1976}. Notice
also that no usage of equivalence classes of functions is made.

\paragraph{Modifications.} Extending Allard \cite[2.5\,(1)]{MR0307015},
whenever $M$ is a submanifold of $\rel^\adim$ of class~$2$ and $a \in M$ the
\emph{second fundamental form of $M$ at $a$} is the unique symmetric bilinear
mapping
\begin{gather*}
	\mathbf{b}(M,a) : \Tan (M,a) \times \Tan (M,a) \to \Nor (M,a)
\end{gather*}
such that whenever $g : M \to \rel^\adim$ is of class $1$ and $g(z) \in \Nor
(M,z)$ for $z \in M$,
\begin{gather*}
    u \bullet \langle v, \Der  g (a) \rangle = - \mathbf{b} (M,a) ( u,v )
    \bullet g (a) \quad \text{for $u,v \in \Tan (M,a)$}.
\end{gather*}

\paragraph{Additional notation.} By a~\emph{modulus of continuity} we mean
a~function $\omega : \{ t \with 0 \leq t \leq 1 \} \to \{ t \with 0 \leq t \leq
1 \}$ such that
\begin{gather*}
    \lim_{t \to 0+} \omega (t) = 0, \qquad \text{$\omega (t) = 0$ if and only
      if $t=0$ whenever $0 \leq t \leq 1$},
    \\
    \text{$\omega(s) \leq \omega(t)$ whenever $0 \leq s \leq t \leq 1$}.
\end{gather*}
Adapting Morrey \cite[p.~54]{MR0202511}, such $\omega$ is said to satisfy the
\emph{Dini condition} if and only if $\tint{0}{1} \omega(t)t^{-1} \ud \LM^1 t <
\infty$.  Following \cite[p.~8]{snulmenn.decay}, whenever $\adim,\vdim \in
\nat$, $\vdim < \adim$, $T \in \grass{\adim}{\vdim}$, $a \in \rel^\adim$, $0 < r
< \infty$, and $0 < h < \infty$ we define the \emph{closed cuboid} by
\begin{gather*}
    \cylinder Tarh = \rel^\adim \cap \big \{ z \with |\project T (z-a)| \le r
    \text{ and } |\perpproject T (z-a)| \le h \big \}  .
\end{gather*}

\paragraph{Definitions in the text.} The notions of \emph{curvature varifold}
and its \emph{second fundamental form} $\mathbf{b}(V,z)$, are explained in
\ref{def:curvature_varifold}. The Orlicz space seminorm $\mu_{(\Phi)}$ is
defined in \ref{def:orlicz}. The space $\qspace_Q ( Y )$ metrised by
$\mathscr{G}$ occurs in \ref{miniremark:almgren1}, the notion of \emph{affine}
$\qspace_Q ( Y )$ valued function and the corresponding seminorm $\| f \|$ are
explained in \ref{miniremark:almgren2}. Finally, the notions of \emph{affinely
approximable} and \emph{approximately affinely approximable} for $\qspace_Q (
Y)$ valued functions and the corresponding symbols $\Aff f$ and $\ap \Aff f$
are defined in \ref{miniremark:almgren2}.


\section{Density properties of Lebesgue measure} \label{sec:lebesgue} The
purpose of this section is to provide two examples of subsets of positive
Lebesgue measure whose complement is as large as possible near each point of
the set, see \ref{example:cantor_set} and
\ref{example:yet_another_cantor_set}. In this respect the size of the
complement is measured either by the Lebesgue measure or by the behaviour of
the distance function on the complement (equivalently by the size of cubes
contained in the complement). In the light of known positive results, the
examples obtained are sharp, see \ref{remark:topsoe} and
\ref{remark:vitali_lebesgue}. Using these examples, certain varifolds will be
constructed in Sections~\ref{sec:qte-ex} and~\ref{sec:super_quadratic_tilt} to
demonstrate that the tilt-excess decays proven in
Sections~\ref{sec:quadratic_tilt_decay} and~\ref{sec:sqte-decay} are sharp in
many cases.

Both sets are constructed by removing a suitable disjointed subcollection of
the family of all dyadic subcubes from the unit cube.

\begin{miniremark}
    \label{miniremark:family_of_cubes}
    Here, we collect some useful terminology for later reference.

    Suppose $\vdim$ is a positive integer. Define \emph{open cubes} by
    \begin{gather*}
        C(a,r) = \classification{\rel^\vdim}{x}{ \text{$a_j < x_j <
        a_j+r$ for $j=1, \ldots, \vdim$} }
    \end{gather*}
    whenever $a \in \rel^\vdim$ and $0 < r < \infty$. Note that $a \notin
    C(a,r)$. We shall work with the following definition of dyadic cubes.
    Whenever $i$ is a~nonnegative integer define $W(i)$ to consist of all open
    cubes
    \begin{gather*}
        C ( 2^{-i} a, 2^{-i} ) \subset C(0,1)
    \end{gather*}
    corresponding to $a \in \integers^\vdim$ with $0 \leq a_j \leq 2^i-1$
    for $j = 1, \ldots, \vdim$. Let
    \begin{gather*}
        Z = {\textstyle\bigcup} \{ W(i) \with i = 0, 1, 2, \ldots \},
        \quad
        N = {\textstyle \bigcup_{i=1}^\infty \big ( ( \Clos C(0,1) ) \without \bigcup W(i) \big )}.
    \end{gather*}
    Note that $W(i)$ is disjointed and $W(i) \cap W(j) = \varnothing$ if
    $i \neq j$ and $\LM^\vdim (N)=0$. Observe
    \begin{gather*}
        \text{either $Q \subset R$ or $R \subset Q$} \quad
        \text{whenever $Q,R \in Z$ and $Q \cap R \neq \varnothing$}.
    \end{gather*}
\end{miniremark}
\begin{example}
    \label{example:cantor_set}
    Suppose $\vdim$, $W$, $Z$, and $N$ are as in
    \ref{miniremark:family_of_cubes}, $\omega$ is a modulus of
    continuity satisfying the Dini condition, and $0 \leq \lambda < 1$.

    Then there exist $0 < \varepsilon \leq 1$ and a disjointed subfamily
    $G$ of $Z$ such that
    \begin{gather*}
        A = {\textstyle C(0,1) \without ( N \cup \bigcup G)}
    \end{gather*}
    satisfies the following two conditions:
    \begin{enumerate}
        \item \label{item:cantor_set:measure} $\LM^\vdim (A)
        \geq \lambda$.
        \item \label{item:cantor_set:holes} If $a \in A$ and $0 < r
        \leq \varepsilon$, then there exists $Q \in G$ with
        \begin{gather*}
             Q \subset \oball{a}{r} \quad \text{and} \quad
             \LM^\vdim (Q) \geq \omega (r) r^\vdim.
        \end{gather*}
    \end{enumerate}
\end{example}
\begin{proof} [Construction]
    Choose $0 < s \leq 2^{-1} \vdim^{-1/2}$ such that $\omega ( 2
    \vdim^{1/2} s) \leq 2^{-2\vdim} \vdim^{-\vdim/2}$, define $\phi :
    \classification{\rel}{t}{ 0 \leq t \leq 1 } \to \rel$ by
    \begin{gather*}
        \phi ( t ) = 2^{2\vdim} \vdim^{\vdim/2} \omega ( 2 \vdim^{1/2}
        t ) \quad \text{for $0 \leq t \leq s$}, \qquad \phi ( t ) = 1
        \quad \text{for $s < t \leq 1$},
    \end{gather*}
    and note that $\phi$ is a modulus of continuity satisfying the Dini
    condition. Choose a positive integer $k$ such that
    \begin{gather*}
        2^{-k} \leq s, \quad ( \log 2 )^{-1} \tint{0}{2^{1-k}} \phi
        (t) t^{-1} \ud \LM^1 t \leq 1-\lambda.
    \end{gather*}

    Define a sequence $\beta_i$ of integers by the requirement
    \begin{gather*}
        2^{\vdim (i-\beta_i+1)} > \phi ( 2^{-i} ) \geq 2^{\vdim (
        i-\beta_i) }.
    \end{gather*}
    For $k \leq i \in \integers$ observe that $\beta_{i+1} \geq \beta_i \geq
    i$ and inductively define families $F_i$ to consist of the open cubes
    $C(b,2^{-\beta_i})$, see \ref{miniremark:family_of_cubes}, corresponding
    to all $C(b,2^{-i}) \in W(i)$ satisfying the following condition:
    \begin{gather*}
    	\text{If $j$ is an integer, $k \leq j \leq i-1$, and $R \in F_j$ then
	$C(b,2^{-\beta_i}) \cap R = \varnothing$}.
    \end{gather*}
    Let $\varepsilon = \vdim^{1/2} 2^{1-k}$ and note that $\varepsilon
    \leq 1$. Define
    \begin{gather*}
        G = {\textstyle \bigcup} \{ F_i \with k \leq i \in \integers \},
        \quad
        A = {\textstyle C(0,1) \without ( N \cup \bigcup G)}.
    \end{gather*}

    Notice that $F_i$ is disjointed for $k \leq i \in \integers$ and $G$ is a
    disjointed subfamily of $Z$. Estimating
    \begin{gather*}
        \tsum{i=k}{\infty} 2^{\vdim (i-\beta_i)} \leq
        \tsum{i=k}{\infty} \phi ( 2^{-i} ) \leq \tsum{i=k}{\infty}
        (\log 2)^{-1} \tint{2^{-i}}{2^{1-i}} \phi (t) t^{-1} \ud
        \LM^1 t \leq 1-\lambda,
    \end{gather*}
    and $\card F_i \leq \card W(i) = 2^{i\vdim}$ whenever $k \leq i \in
    \integers$, one obtains
    \begin{gather*}
        \LM^\vdim ( {\textstyle\bigcup G} ) \leq
        \tsum{i=k}{\infty} 2^{\vdim(i-\beta_i)} \leq 1-\lambda, \quad
        \LM^\vdim ( A ) \geq \lambda.
    \end{gather*}

    Suppose $a \in A$ and $0 < r \leq \varepsilon$.

    There exist $i$ and $b$ such that
    \begin{gather*}
        k \leq i \in \integers, \quad 2^{-i} \leq \vdim^{-1/2} r \leq
        2^{1-i}, \quad {\textstyle a \in S = C (b,2^{-i}) \in W(i)}.
    \end{gather*}
    The proof will be concluded by showing: \emph{There exists $Q = C
    (c,t) \in G$ having the property
    \begin{gather*}
        t \geq 2^{-\beta_i} \quad \text{and} \quad Q \cap S \neq
        \varnothing
    \end{gather*}
    and this property implies
    \begin{gather*}
        Q \subset \oball{a}{r}, \quad \LM^\vdim (Q) \geq
        \omega (r) r^\vdim.
    \end{gather*}}
    Concerning the existence of $Q$, if $C(b,2^{-\beta_i}) \cap R \neq
    \varnothing$ for some integer $j$ with $k \leq j \leq i-1$ and some $R \in
    F_j$, then one may take $Q = R$, and otherwise one may take $Q =
    C(b,2^{-\beta_i}) \in F_i$. Concerning the implication of the property,
    estimate
    \begin{gather*}
        2^{-i} \leq s, \quad \phi ( 2^{-i} ) \geq 2^{2\vdim}
        \vdim^{\vdim/2} \omega (r), \quad t^\vdim \geq 2^{-\vdim
        \beta_i} > \phi (2^{-i}) 2^{\vdim(-i-1)} \geq \omega (r)
        r^{\vdim}
    \end{gather*}
    and note that $Q \subset S$, since $a \in S \without Q$ and $S,Q \in
    Z$, hence $Q \subset \oball{a}{r}$.
\end{proof}
\begin{remark} \label{remark:topsoe}
    Whenever $\omega$ violates the Dini condition and $\lambda > 0$ there do
    not exist $\varepsilon$, $G$ and $A$ as in \ref{example:cantor_set} as may
    be verified by applying Tops{\o}e \cite[Theorem 3]{MR577961} with $\| x
    \|$, $N$, $\mathscr{B}$, $\psi (r)$, and $\delta_0$ replaced by $\sup \{
    |x_i| \with i = 1, \ldots, \vdim \}$, $\vdim$, $G$, $\omega (r)^{1/\vdim}
    r/2$, and $\varepsilon$.
\end{remark}
\begin{remark} \label{remark:mejlbro_topsoe}
    The basic construction principle is that of Mejlbro and Tops{\o}e
    \cite[Theorem 2]{MR0583557} who established the sharpness of the Dini
    condition in a similar context.
\end{remark}
\begin{example}
    \label{example:yet_another_cantor_set}
    Suppose $\vdim$, $W$, $Z$, and $N$ are as in
    \ref{miniremark:family_of_cubes}, $\omega$ is a modulus of continuity, and
    $0 \leq \lambda < 1$.

    Then there exist $0 < \varepsilon \leq 1$, $B \subset \rel \cap \{ r
    \with r > 0 \}$ with $\inf B = 0$, and a disjointed subfamily $G$ of
    $Z$ such that
    \begin{gather*}
        {\textstyle A = C(0,1) \without \left (
        N \cup \bigcup G \right )} \quad \text{with} \quad
        \LM^\vdim (A) \geq \lambda
    \end{gather*}
    satisfying the following condition: If $a \in A$ and $0 < r \leq
    \varepsilon$, then there exists a subset $H$ of $G \cap \{ Q \with Q
    \subset \oball{a}{r} \}$ such that
    \begin{gather*}
        \LM^\vdim ( {\textstyle \bigcup H} ) \geq \omega (r) r^\vdim
        \quad \text{and} \quad
        \text{$\card H = 1$ if $r \in B$}.
    \end{gather*}
\end{example}
\begin{proof} [Construction]
    Choose $s$ with
    \begin{gather*}
        0 < s \leq 2^{-1} \vdim^{-1/2},
        \quad
        \omega ( 2 \vdim^{1/2} s) \leq 2^{-2\vdim} \vdim^{-\vdim/2},
    \end{gather*}
    define $\phi : \rel \cap \{ t
    \with 0 \leq t \leq 1 \} \to \rel$ by
    \begin{gather*}
        \phi (t) = 2^{2\vdim} \vdim^{\vdim/2} \omega ( 2 \vdim^{1/2} t)
        \quad
        \text{for $0 \leq t \leq s$},
        \qquad
        \phi (t) = 1
        \quad
        \text{for $s < t \leq 1$},
    \end{gather*}
    and note that $\phi$ is a modulus of continuity. Inductively choose
    sequences $\alpha_i$ and $\beta_i$ of nonnegative integers subject to
    the conditions
    \begin{gather*}
        \phi ( 2^{-\alpha_i} ) \leq ( 1- \lambda) 2^{-i},
        \qquad
        \alpha_{i+1} > \beta_i \geq \alpha_i,
        \\
        2^{\vdim ( \alpha_{i+1} - \beta_{i+1} + 1)} 
        > \phi (2^{-\alpha_i} )
        \geq 2^{\vdim ( \alpha_{i+1} - \beta_{i+1} )}
    \end{gather*}
    whenever $i$ is a positive integer and, in case $i > 1$, define families
    $F_i$ to consist of the open cubes $C(b,2^{-\beta_i})$, see
    \ref{miniremark:family_of_cubes}, corresponding to all $C
    (b,2^{-\alpha_i}) \in W ( \alpha_i )$ satisfying the following condition:
    \begin{gather*}
	\text{If $j$ is an integer, $1 < j \leq i-1$, and $R \in F_j$ then $C
	(b,2^{-\beta_i}) \cap R = \varnothing$}.
    \end{gather*}
    Define $\varepsilon = \inf \{ 2^{-\alpha_1} \vdim^{1/2}, 2 \vdim^{1/2} s
    \}$ and note that $\varepsilon \leq 1$. Let
    \begin{gather*}
        G = {\textstyle \bigcup} \{ F_i \with 1 < i \in \integers \},
        \quad
        {\textstyle A = C(0,1) \without \left ( N \cup \bigcup G \right )},
        \\
        B = \{ \vdim^{1/2} 2^{1-\alpha_i} \with 1 < i \in \integers \}.
    \end{gather*}
    Observe that $G$ is disjointed. Since $\card F_i \le 2^{\vdim \alpha_i}$
    for any positive integer~$i$, we have
    \begin{gather*}
        \LM^\vdim({\textstyle \bigcup G})
        \le \tsum{i = 2}{\infty} 2^{\vdim(\alpha_i - \beta_i)}
        \le \tsum{i = 2}{\infty} \phi(2^{-\alpha_{i-1}})
        \le 1-\lambda  .
    \end{gather*}
    Hence $\LM^\vdim(A) \ge \lambda$.

    Suppose $a \in A$ and $0 < r \leq \varepsilon$.

    There exist $k$ and $c$ satisfying
    \begin{gather*}
        \alpha_1 < k \in \integers, \quad 2^{-k} < \vdim^{-1/2} r \leq
        2^{1-k}, \quad a \in S = C (c,2^{-k}) \in W(k)
    \end{gather*}
    and $i$ such that
    \begin{gather*}
        1 < i \in \integers \quad \text{and} \quad \alpha_{i-1} < k
        \leq \alpha_i.
    \end{gather*}
    Defining $I = W (\alpha_i) \cap \{ Q \with Q \subset S \}$, one notes
    that
    \begin{gather*}
        \card I = 2^{\vdim(\alpha_i-k)},
    \end{gather*}
    in particular $\card I = 1$ if $r \in B$. Moreover, one concludes
    \begin{gather*}
        C (b,2^{-\alpha_i}) \in I \quad \text{implies} \quad \text{$C
        (b,2^{-\beta_i}) \subset R \subset S$ for some $R \in G$};
    \end{gather*}
    in fact, either $C (b,2^{-\beta_i}) \cap R \neq \varnothing$ for some
    integer $j$ with $1 < j \leq i-1$ and some $R \in F_j$, hence
    \begin{gather*}
        R \cap S \neq \varnothing, \quad C (b,2^{-\beta_i}) \subset R
        \subset S
    \end{gather*}
    as $\beta_i \geq \beta_j$ and $a \in S \without R$, because $a \in A \subset
    C(0,1) \without \bigcup G$, or
    \begin{gather*}
        C (b,2^{-\beta_i}) \in F_i \subset G, \quad C (b,2^{-\beta_i})
        \subset C (b,2^{-\alpha_i}) \subset S.
    \end{gather*}
    Consequently, there exists a subset $H$ of $G \cap \{ R \with R
    \subset S \}$ such that
    \begin{gather*}
        {\textstyle \bigcup \{ C (b,2^{-\beta_i}) \with C
        (b,2^{-\alpha_i}) \in I \} \subset \bigcup H}, \quad
        \text{$\card H = 1$ if $r \in B$}.
    \end{gather*}
    Noting $2^{-1} \vdim^{-1/2} r \leq \inf \{ 2^{-\alpha_{i-1}}, s \}$,
    one infers
    \begin{gather*}
        \LM^\vdim ( {\textstyle \bigcup H} ) \geq 2^{\vdim (
        \alpha_i-\beta_i-k)} \geq 2^{-2\vdim} \vdim^{-\vdim/2} \phi (
        2^{-\alpha_{i-1}} ) r^\vdim \geq \omega (r) r^\vdim.
    \end{gather*}
    Since $S \subset \oball{a}{r}$, the conclusion follows.
\end{proof}
\begin{remark} \label{remark:vitali_lebesgue}
    From the classical differentiation theory of Vitali and Lebesgue, see for
    instance \cite[2.8.17, 2.9.11]{MR41:1976}, it is evident that the lower
    bound on $\LM^\vdim ( \oball{a}{r} \without A )$ exhibited here is the
    known optimal one, see Tolstoff \cite[Th\'eor\`eme 3, p.~263]{MR0013777}.
    In order to demonstrate the sharpness of our results in
    \ref{theorem:quadratic-tilt-decay}, the additional property ``$\card H =
    1$ if $r \in B$'' will be employed in \ref{example:quadratic_tilt_excess}.
    Clearly, $B$ may not be required to equal $\{ r \with 0 < r \leq
    \varepsilon \}$ if $\omega$ violates the Dini condition and $\lambda > 0$
    by \ref{remark:topsoe}.
\end{remark}


\section{Cartesian product of varifolds}
\label{sec:varifold-theory}
The purpose of this section is to establish basic properties of the
Cartesian product of varifolds. The construction preserves rectifiability,
integrality and maps curvature varifolds to curvature varifolds, see
\ref{thm:product_varifold}. We also note a version of the coarea formula for
rectifiable varifolds, see
\ref{miniremark:rect_varifold}\,\eqref{item:rect:varifold:coarea}.

The proof of the rectifiability of the Cartesian product of rectifiable
varifolds needs to take into account that \emph{$(\mathscr{H}^\vdim,\vdim)$
rectifiable sets do not possess a similar stability property}, see
\ref{remark:product_rectifiable_sets}. Our treatment of curvature varifolds is
based on the characterisation of such varifolds in terms of generalised weakly
differentiable functions obtained in~\cite[15.6]{snulmenn:tv.v2}.

In the present paper only products of varifolds with planes are employed.
More general products will be required in the study of the geodesic distance
on the support of the weight measure of certain varifolds, see \cite[\S
6]{snulmenn:sobolev.v2}.

\begin{lemma} \label{lemma:second_fundamental_form}
    Suppose $\vdim, \adim \in \nat$, $\vdim \leq \adim$, $M$ is an $\vdim$
    dimensional submanifold of $\rel^\adim$ of class $2$, $\tau : M \to
    \Hom ( \rel^\adim, \rel^\adim )$ is defined by $\tau (z) = \project{\Tan
    (M,z)}$ for $z \in M$, and $\Hom ( \rel^\adim, \rel^\adim )$ is normed by
    $\| \cdot \|$.

    Then the following three statements hold.
    \begin{enumerate}
	\item \label{item:second_fundamental_form:b_from_tau} If $z \in M$ and
	$u,v \in \Tan (M,z)$, then $\mathbf{b} (M,z) (u,v) = \langle u,
	\langle v, \Der \tau (z) \rangle \rangle$.
	\item \label{item:second_fundamental_form:tau_from_b} If $z \in M$ and
	$u,v,w \in \rel^\adim$ then
	\begin{align*}
	    & \langle v, \langle u, \Der  \tau (z) \circ \tau (z) \rangle
	    \rangle \bullet w \\
	    & \quad = \mathbf{b} (M,z) ( \langle u, \tau(z) \rangle, \langle
	    v, \tau (z) \rangle ) \bullet w + \mathbf{b} (M,z) (  \langle u,
	    \tau (z) \rangle, \langle w, \tau (z) \rangle ) \bullet v.
	\end{align*}
	\item \label{item:second_fundamental_form:norm_equiv} If $z \in M$,
	then $\| \mathbf{b} (M,z) \| = \| \Der  \tau (z) \circ \tau (z) \|$.
    \end{enumerate}
\end{lemma}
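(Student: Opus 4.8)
The plan is to work in a fixed local coordinate patch near a point $z \in M$ and translate the three identities into statements about the orthogonal projection $\tau$, exploiting the two standard facts: that $\tau(z)$ is the symmetric idempotent with image $\Tan(M,z)$ and kernel $\Nor(M,z)$, and that differentiating $\tau$ along $M$ measures exactly how the tangent plane turns, which is what the second fundamental form encodes in the modified sense fixed in Section~\ref{sec:notation}. The crucial preliminary identity is obtained by differentiating $\tau \circ \tau = \tau$ (valid on $M$): for $u \in \Tan(M,z)$ one gets $\langle u, \Der\tau(z)\rangle \circ \tau(z) + \tau(z) \circ \langle u, \Der\tau(z)\rangle = \langle u, \Der\tau(z)\rangle$, which after composing with $\tau(z)$ on both sides shows that $\tau(z) \circ \langle u,\Der\tau(z)\rangle \circ \tau(z) = 0$; hence $\langle u,\Der\tau(z)\rangle$ maps $\Tan(M,z)$ into $\Nor(M,z)$ and $\Nor(M,z)$ into $\Tan(M,z)$, and it is symmetric since $\tau$ is symmetric-valued (differentiate $\tau = \tau^\ast$).

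For part~\eqref{item:second_fundamental_form:b_from_tau}: pick a class~$1$ normal field $g$ on $M$, so $\tau(z)(g(z)) = 0$ for all $z \in M$; differentiating in the direction $v \in \Tan(M,z)$ gives $\langle v,\Der\tau(z)\rangle(g(z)) + \tau(z)(\langle v,\Der g(z)\rangle) = 0$. Pairing with $u \in \Tan(M,z)$, the second term contributes $u \bullet \langle v,\Der g(z)\rangle$, which by the defining property of $\mathbf b(M,z)$ recalled in the ``Modifications'' paragraph equals $-\mathbf b(M,z)(u,v)\bullet g(z)$; the first term is $u \bullet \langle v,\Der\tau(z)\rangle(g(z)) = \langle u,\langle v,\Der\tau(z)\rangle\rangle \bullet g(z)$. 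Since $g(z)$ ranges over all of $\Nor(M,z)$ and both $\mathbf b(M,z)(u,v)$ and (by the preliminary identity, using $u,v$ tangent) $\langle u,\langle v,\Der\tau(z)\rangle\rangle$ lie in $\Nor(M,z)$, the two are equal. This also records the symmetry of the right-hand side in $u,v$, consistent with $\mathbf b(M,z)$ being symmetric.

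For part~\eqref{item:second_fundamental_form:tau_from_b}: apply part~\eqref{item:second_fundamental_form:b_from_tau} to the tangential components $\langle u,\tau(z)\rangle,\langle v,\tau(z)\rangle \in \Tan(M,z)$ and expand $\langle v,\langle u,\Der\tau(z)\circ\tau(z)\rangle\rangle \bullet w$ by first writing $\langle u,\Der\tau(z)\circ\tau(z)\rangle = \langle \langle u,\tau(z)\rangle, \Der\tau(z)\rangle$ and then splitting $w = \langle w,\tau(z)\rangle + \langle w, \perpproject{\Tan(M,z)}\rangle =: w^{\top} + w^{\perp}$; using that $\langle \langle u,\tau(z)\rangle,\langle v,\tau(z)\rangle,\Der\tau(z)\rangle$ sends $\Tan(M,z)$ to $\Nor(M,z)$ and vice versa, the $w^{\top}$–pairing reproduces $\mathbf b(M,z)(\langle u,\tau(z)\rangle,\langle v,\tau(z)\rangle)\bullet w$ via~\eqref{item:second_fundamental_form:b_from_tau}, while the $w^{\perp}$–pairing is rewritten using the symmetry of $\langle\cdot,\Der\tau(z)\rangle$ on $\rel^\adim$ (swapping the roles of $v$ and $w^\perp$, and noting $w^\perp$ tangential-image under $\Der\tau(z)$) into the second term; the modified $\mathbf b$ convention makes the bookkeeping of which argument is projected come out exactly as stated. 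Finally, part~\eqref{item:second_fundamental_form:norm_equiv} follows by comparing operator/tensor norms: \eqref{item:second_fundamental_form:b_from_tau} shows $\|\mathbf b(M,z)\| \le \|\Der\tau(z)\circ\tau(z)\|$ since the bilinear form is obtained by restricting $\Der\tau(z)\circ\tau(z)$ to $\Tan(M,z)\times\Tan(M,z)$ and projecting, while \eqref{item:second_fundamental_form:tau_from_b} lets one bound every entry of $\Der\tau(z)\circ\tau(z)$ by entries of $\mathbf b(M,z)$, giving the reverse inequality; the hypothesis that $\Hom(\rel^\adim,\rel^\adim)$ is normed by $\|\cdot\|$ is used only here. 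The main obstacle I anticipate is bookkeeping the tangential/normal decompositions in part~\eqref{item:second_fundamental_form:tau_from_b} carefully enough that the asymmetry between the two summands on the right-hand side emerges correctly from the symmetric object $\Der\tau(z)\circ\tau(z)$; everything else is a direct consequence of differentiating $\tau\circ\tau=\tau$ and $\tau=\tau^\ast$ together with the chosen normalisation of $\mathbf b$.
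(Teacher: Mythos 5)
Your proposal follows the paper's own route closely: differentiate $\tau\circ\tau=\tau$ to obtain the off‑diagonal block structure of $\langle u,\Der\tau(z)\rangle$, differentiate the orthogonality between a tangent vector and a normal field to get part~\eqref{item:second_fundamental_form:b_from_tau}, split one of the free vectors into tangential and normal parts together with symmetry of $\langle u,\Der\tau(z)\rangle$ to get part~\eqref{item:second_fundamental_form:tau_from_b}, and read off part~\eqref{item:second_fundamental_form:norm_equiv} by a two‑sided norm comparison (the paper makes the hard direction precise via the Cauchy--Schwarz bound $|\langle v,\tau(z)\rangle|\,|\langle w,\nu(z)\rangle|+|\langle v,\nu(z)\rangle|\,|\langle w,\tau(z)\rangle|\le|v|\,|w|$; the paper also splits $v$ rather than $w$ in part~\eqref{item:second_fundamental_form:tau_from_b}, but this is an equivalent bookkeeping choice).

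Two small corrections you should make in the write‑up. First, from $\tau(z)\circ\langle u,\Der\tau(z)\rangle\circ\tau(z)=0$ alone you only get that $\langle u,\Der\tau(z)\rangle$ maps $\Tan(M,z)$ into $\Nor(M,z)$; to get the reverse inclusion compose the product‑rule identity $\langle u,\Der\tau(z)\rangle=\langle u,\Der\tau(z)\rangle\circ\tau(z)+\tau(z)\circ\langle u,\Der\tau(z)\rangle$ with $\nu(z)=\id{\rel^\adim}-\tau(z)$ on both sides as well (the paper achieves the same by differentiating $\nu\circ\tau=0$ separately). Second, in part~\eqref{item:second_fundamental_form:tau_from_b} the attributions of the two summands are swapped: since $\mathbf{b}(M,z)$ takes values in $\Nor(M,z)$, a term of the form $\mathbf{b}(M,z)(\cdot,\cdot)\bullet w$ can only come from pairing against $w^{\perp}$, not $w^{\top}$. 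Concretely, with $B=\langle\langle u,\tau(z)\rangle,\Der\tau(z)\rangle$, the $w^{\perp}$‑pairing is $B(v)\bullet w^{\perp}=B(v^{\top})\bullet w^{\perp}=\mathbf{b}(M,z)(\langle u,\tau(z)\rangle,\langle v,\tau(z)\rangle)\bullet w$ (the first summand of the claimed identity), while the $w^{\top}$‑pairing, after using symmetry of $B$, is $B(v)\bullet w^{\top}=v^{\perp}\bullet B(w^{\top})=\mathbf{b}(M,z)(\langle u,\tau(z)\rangle,\langle w,\tau(z)\rangle)\bullet v$ (the second summand). Once the labels are corrected the argument goes through exactly as you anticipated.
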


\begin{proof}
    Define $\nu : M \to \Hom ( \rel^\adim, \rel^\adim )$ by $\nu (z) =
    \id{\rel^\adim} - \tau (z)$ for $z \in M$. Differentiating the equations
    $\tau (z) \circ \tau (z) = \tau (z)$ and $\nu (z) \circ \tau (z) = 0$
    for $z \in M$, one obtains for $z \in M$ and $u \in \Tan (M,z)$ that
    \begin{gather*}
        \tau (z) \circ \langle u, \Der  \tau (z) \rangle \circ \tau (z) = 0,
        \quad
        \nu (z) \circ \langle u, \Der  \tau (z) \rangle \circ \nu (z) = 0.
    \end{gather*}

    In order to prove \eqref{item:second_fundamental_form:b_from_tau}, suppose
    $z \in M$ and $u,v \in \Tan (M,z)$, notice that $\langle u, \langle  v,
    \Der \tau (z) \rangle \rangle \in \Nor (M,z)$, and differentiate the
    equation $\langle u, \tau (\zeta) \rangle \bullet g ( \zeta ) = 0$ for
    $\zeta \in M$, to obtain $\langle u, \langle v,\Der \tau (z) \rangle
    \rangle \bullet g(z) = - u \bullet \langle v, \Der g(z) \rangle $.
    Expressing
    \begin{align*}
	\langle v, \langle u, \Der  \tau (z) \circ \tau (z) \rangle \rangle
	\bullet w & = \langle \langle v, \tau (z) \rangle, \langle u, \Der
	\tau (z) \circ \tau (z) \rangle \rangle \bullet \langle w, \nu (z)
	\rangle \\
	& \phantom{=} \ + \langle \langle v, \nu (z) \rangle, \langle u, \Der
	\tau (z) \circ \tau (z) \rangle \rangle \bullet \langle w, \tau (z)
	\rangle
    \end{align*}
    for $z \in M$ and $u,v,w \in \rel^\adim$,
    \eqref{item:second_fundamental_form:tau_from_b} follows from the symmetry
    of $\langle u, \Der  \tau (z) \circ \tau (z) \rangle$ and
    \eqref{item:second_fundamental_form:b_from_tau}. Finally, noting
    \begin{gather*}
	| \langle v, \tau (z) \rangle || \langle w, \nu (z) \rangle | + |
	\langle v, \nu (z) \rangle | | \langle w, \tau (z) \rangle | \leq
	|v||w|
    \end{gather*}
    for $z \in M$ and $v,w \in \rel^\adim$ by H{\"o}lder's inequality,
    \eqref{item:second_fundamental_form:norm_equiv} follows from
    \eqref{item:second_fundamental_form:b_from_tau} and
    \eqref{item:second_fundamental_form:tau_from_b}.
\end{proof}

\begin{remark}
    Items
    \ref{lemma:second_fundamental_form}\,\eqref{item:second_fundamental_form:b_from_tau}\,\eqref{item:second_fundamental_form:tau_from_b}
    are in analogy with Hutchinson \cite[5.1.1\,(i)\,(ii)]{MR825628}.
\end{remark}
\begin{definition} \label{def:curvature_varifold}
    Suppose $\vdim, \adim \in \nat$, $\vdim \leq \adim$, and $U$ is an open
    subset of $\rel^\adim$.

    Then $V$ is called \emph{$\vdim$ dimensional curvature varifold in $U$} if
    and only if the following three conditions are satisfied:
    \begin{enumerate}
    	\item $V$ is an $\vdim$ dimensional integral varifold in $U$.
	\item $\| \delta V \|$ is a Radon measure absolutely continuous with
	respect to $\| V\|$.
	\item If $Y = \Hom ( \rel^\adim, \rel^\adim ) \cap \{ \sigma \with
	\sigma = \sigma^\ast \}$, $Z = U \cap \{ z \with \Tan^\vdim ( \|V \|,
	z ) \in \grass \adim \vdim \}$, and $\tau : Z \to Y$ is defined by
	$\tau (z) = \project{\Tan^\vdim ( \| V \|, z )}$ for $z \in Z$, then
	$\tau$ is a generalised $V$ weakly differentiable function.
    \end{enumerate}
    In this case one defines for $z \in Z \cap \dmn \derivative V \tau$ the
    \emph{second fundamental form of~$V$ at~$z$} by
    \begin{gather*}
        \mathbf{b} (V,z) : \Tan^\vdim ( \| V \|, z ) \times \Tan^\vdim ( \| V \|, z ) \to \rel^\adim,
        \\
	\mathbf{b} (V,z) (u,v) = \langle u, \langle v, \derivative{V}{\tau}
	(z) \rangle \rangle \quad \text{whenever $u,v \in \Tan^\vdim ( \| V
	\|, z )$}.
    \end{gather*}
\end{definition}
\begin{remark} \label{remark:curvature_varifold} In view of
    \cite[15.4--15.6]{snulmenn:tv.v2} the preceding definition of curvature
    varifold is equivalent to Hutchinson's original definition in
    \cite[5.2.1]{MR825628}. Recalling \cite[4.8]{snulmenn.c2} and
    \cite[11.2]{snulmenn:tv.v2}, one infers that, for $\| V \|$ almost all $z$, the
    second fundamental form of $V$ at $z$ is a symmetric bilinear map with
    values in $\Nor^\vdim ( \| V \|, z)$ which is related to
    $\derivative{V}{\tau} (z)$ as $\mathbf{b}(M,z)$ is related to $\Der  \tau (z)
    \circ \tau (z)$ in
    \ref{lemma:second_fundamental_form}\,\eqref{item:second_fundamental_form:tau_from_b}\,\eqref{item:second_fundamental_form:norm_equiv};
    in fact, if $M$ is an $\vdim$ dimensional submanifold of $\rel^\adim$ of
    class~$2$ and $\sigma : M \to \Hom ( \rel^\adim , \rel^\adim )$ satisfies
    $\sigma (z) = \project{\Tan (M,z)}$ for $z \in M$, then 
    \begin{gather*}
        \Tan(M,z) = \Tan^\vdim(\|V\|,z) \quad \text{and} \quad \Der  \sigma (z) = (
        \| V \|, \vdim ) \ap \Der  \tau (z)
    \end{gather*}
    for $\| V\|$ almost all $z \in U \cap M$ by \cite[2.8.18, 2.9.11,
    3.2.17]{MR41:1976} and Allard \cite[3.5\,(2)]{MR0307015}. Notice also
    that, for $\| V\|$ almost all $z$,
    \begin{gather*}
        \mathbf{b} (V,z) \circ 
        ( \project{\Tan^\vdim ( \| V \|, z )} \times \project{\Tan^\vdim ( \| V \|, z )} )
    \end{gather*}
    corresponds to the generalised second fundamental form at $(z,\Tan^\vdim (
    \| V \|, z) )$ in the sense of Hutchinson \cite[5.2.5]{MR825628}.
\end{remark}
\begin{miniremark}
    \label{miniremark:traces}
    Suppose $V$ and $W$ are finite dimensional vectorspaces, $f \in \Hom (
    V,W)$, and $g \in \Hom (W,V)$. Then $\trace ( g \circ f ) = \trace ( f \circ
    g)$; in fact, the argument of \cite[1.4.5]{MR41:1976} for the case $V=W$
    applies to the present case as well.
\end{miniremark}
\begin{miniremark}
    \label{miniremark:rect_varifold}
    Suppose $\vdim, \adim \in \nat$, $\vdim \leq \adim$, $U$ is an open subset
    of $\rel^\adim$, and $V \in \RVar_\vdim (U)$. Then the following two
    statements hold.
    \begin{enumerate}
	\item
        \label{item:rect_varifold:c1}
        There exist sequences of compact subset $C_j$ of $\vdim$ dimensional
        submanifolds of $U$ of class $1$ and $0 < d_j < \infty$ such that
    	\begin{gather*}
            V(k) = \tsum{j=1}{\infty} d_j \tint{C_j}{} k (z,\Tan^\vdim
            (\mathscr{H}^\vdim \restrict C_j,z)) \ud \mathscr{H}^\vdim z
        \end{gather*}
        for $k \in \mathscr{K} ( U \times \grass \adim \vdim )$.
	\item \label{item:rect:varifold:coarea} If $\vdim \geq \mu \in \nat$,
        $M = \{ z \with 0 < \density^\vdim ( \| V \|, z ) < \infty \}$, and $f
        : U \to \rel^\mu$ is Lipschitzian, then $f$ is $(\| V \|, \vdim)$
        differentiable at $\| V \|$ almost all $z$ and there holds
        \begin{align*}
            & \tint{}{} g(z) {\textstyle \big \| \bigwedge_\mu (
              \| V \|, \vdim ) \ap \Der f(z) \big \|} \ud \| V \| z \\
            & \qquad = \tint{}{} \tint{M \cap f^{-1} \lIm \{ y \} \rIm}{}
            g(z) \density^\vdim ( \| V \|, z ) \ud
            \mathscr{H}^{\vdim-\mu} z \ud \mathscr{L}^\mu y
        \end{align*}
        whenever $g$ is a $\| V \|$ integrable $\overline \rel$ valued
        function, where $\infty \cdot 0 = 0$.
    \end{enumerate}
    \eqref{item:rect_varifold:c1} may be verified by means of \cite[2.8.18,
    2.9.11, 3.2.17, 3.2.29]{MR41:1976}. The first half of
    \eqref{item:rect:varifold:coarea} is implied by
    \cite[4.5\,(2)]{snulmenn.decay}. Concerning the second half of
    \eqref{item:rect:varifold:coarea}, one notices that $g|C_j$ is
    $\mathscr{H}^\vdim \restrict C_j$ integrable and
    \begin{gather*}
        \density^\vdim ( \| V \|, z ) = \tsum{j \in J(z)}{} d_j \quad
        \text{for $\mathscr{H}^\vdim$ almost all $z \in U$}, \\
        ( \| V \|, \vdim ) \ap \Der f(z) = ( \mathscr{H}^\vdim \restrict C_j, \vdim
        ) \ap \Der f(z) \quad \text{for $\mathscr{H}^\vdim$ almost all $z \in C_j$},
    \end{gather*}
    where $J(z) = \nat \cap \{j \with z \in C_j \}$, by Allard
    \cite[2.8\,(4a), 3.5\,(2)]{MR0307015} and obtains
    \begin{align*}
	& {\textstyle \tint{C_j}{} g (z) \big \| \bigwedge_\mu (
	\mathscr{H}^\vdim \restrict C_j, \vdim ) \ap \Der f(z) \big \| \ud
	\mathscr{H}^\vdim z} \\
	& \qquad = \tint{}{} \tint{C_j \cap f^{-1} \lIm \{ y \} \rIm}{} g (z)
	\ud \mathscr{H}^{\vdim-\mu} z \ud \mathscr{L}^\mu y
    \end{align*}
    from \cite[2.10.35, 3.2.22\,(3)]{MR41:1976}, hence multiplying by $d_j$
    and summing over $j$ by means of \cite[2.10.25]{MR41:1976} yields the
    conclusion since $M$ and $\bigcup_{j=1}^\infty C_j$ are
    $\mathscr{H}^\vdim$ almost equal by Allard \cite[3.5\,(1b)]{MR0307015} and
    the equation for $\density^\vdim ( \| V \|, z )$.
\end{miniremark}
\begin{theorem} \label{thm:product_varifold}
    Suppose for $i \in \{ 1,2 \}$, $\vdim_i, \adim_i \in \nat$, $\vdim_i
    \leq \adim_i$, $p_i : \rel^{\adim_1} \times \rel^{\adim_2} \to
    \rel^{\adim_i}$ satisfy $p_i (z_1,z_2) = z_i$ for $(z_1,z_2) \in
    \rel^{\adim_1} \times\rel^{\adim_2}$, $U_i$ are open subsets of
    $\rel^{\adim_i}$, $V_i \in \Var_{\vdim_i} ( U_i )$, and $W \in
    \Var_{\vdim_1+\vdim_2} ( U_1 \times U_2 )$ satisfies
    \begin{gather*}
    	W(k) = \tint{}{} k ((z_1,z_2), S_1 \times S_2) \ud (V_1 \times V_2)
	((z_1,S_1),(z_2,S_2))
    \end{gather*}
    for $k \in \mathscr{K} \big ( ( U_1 \times U_2 ) \times
    \grass{\rel^{\adim_1} \times \rel^{\adim_2}}{ \vdim_1+\vdim_2 } \big )$.

    Then the following eight statements hold.
    \begin{enumerate}
	\item \label{item:product_varifold:weight} There holds $\| W \| = \|
	V_1 \| \times \| V_2 \|$.
	\item \label{item:product_varifold:rectifiable} If $V_1$ and $V_2$ are
	rectifiable, so is $W$ and, for $\| W \|$ almost all $(z_1,z_2)$,
	\begin{gather*}
	    \Tan^{\vdim_1+\vdim_2} ( \| W \|, (z_1,z_2)) = \Tan^{\vdim_1} ( \|
	    V_1 \| , z_1 ) \times \Tan^{\vdim_2} ( \| V_2 \|, z_2 ), \\
	    \density^{\vdim_1+\vdim_2} ( \| W \|, (z_1,z_2) ) =
	    \density^{\vdim_1} ( \| V_1 \|, z_1 ) \density^{\vdim_2} ( \|V_2\|
	    , z_2 ).
	\end{gather*}
	\item \label{item:product_varifold:integral} If $V_1$ and $V_2$ are
	integral, so is $W$.
	\item \label{item:product_varifold:planes} If $S_i \in \grass
	{\adim_i}{\vdim_i}$ for $i \in \{1,2\}$ and $h \in \Hom (
	\rel^{\adim_1} \times \rel^{\adim_2}, \rel^{\adim_1} \times
	\rel^{\adim_2} )$, then
    	\begin{gather*}
	    \eqproject{S_1 \times S_2} = p_1^\ast \circ \eqproject{S_1} \circ
	    p_1 + p_2^\ast \circ \eqproject{S_2} \circ p_2, \\
	    \eqproject{S_1 \times S_2} \bullet h = \eqproject{S_1} \bullet (
	    p_1 \circ h \circ p_1^\ast ) + \eqproject{S_2} \bullet ( p_2
	    \circ h \circ p_2^\ast ).
	\end{gather*}
	\item \label{item:product_varifold:variation} If $\theta \in
	\mathscr{D} (U_1 \times U_2, \rel^{\adim_1} \times \rel^{\adim_2} )$,
	then
	\begin{align*}
	    ( \delta W ) ( \theta ) & = \tint{}{} ( \delta V_1)_{z_1} ( p_1 (
	    \theta (z_1,z_2))) \ud \| V_2 \| z_2 \\
	    & \phantom{=}\ + \tint{}{} ( \delta V_2)_{z_2} ( p_2 ( \theta
	    (z_1,z_2))) \ud \| V_1 \| z_1.
	\end{align*}
	\item \label{item:product_varifold:variation_measure} If $\| \delta
	V_i \|$ are Radon measures for $i \in \{ 1,2 \}$, then
	\begin{gather*}
	    \| \delta W \| \leq \| \delta V_1 \| \times \| V_2 \| + \| V_1 \|
	    \times \| \delta V_2 \|
	\end{gather*}
	and, for $\theta \in \Lp{1} \big ( \| \delta V_1 \| \times \| V_2 \| +
	\| V_1 \| \times \| \delta V_2 \|, \rel^{\adim_1} \times
	\rel^{\adim_2} \big )$, the equation in
	\eqref{item:product_varifold:variation} holds.
	\item \label{item:product_varifold:tv_again} If, for $i \in \{ 1,2
	\}$, $V_i$ are rectifiable, $\| \delta V_i \|$ are Radon measures,
	$Y_i$ are finite dimensional normed vectorspaces and $f_i \in \trunc
	(V_i, Y_i)$, and $f : \dmn f_1 \times \dmn f_2 \to Y_1 \times Y_2$
	satisfies
	\begin{gather*}
	    f(z_1,z_2) = (f_1(z_1),f_2(z_2)) \quad \text{for $z_1 \in \dmn
	    f_1$ and $z_2 \in \dmn f_2$},
	\end{gather*}
	then $f \in \trunc (W, Y_1 \times Y_2)$ and, for $\| W \|$ almost all
	$(z_1,z_2)$,
	\begin{gather*}
	    \derivative{W}{f} (z_1,z_2) (u_1,u_2) = ( \derivative{V_1}{f_1}
	    (z_1)(u_1), \derivative{V_2}{f_2} (z_2)(u_2) )
	\end{gather*}
	whenever $u_1 \in \rel^{\adim_1}$ and $u_2 \in \rel^{\adim_2}$.
	\item \label{item:product_varifold:curvature} If $V_1$ and $V_2$ are
	curvature varifolds, then so is $W$ and, for $\| W \|$ almost all
	$(z_1,z_2)$,
	\begin{gather*}
	    \mathbf{b} (W,(z_1,z_2)) ( (u_1,u_2), (v_1,v_2)) = ( \mathbf{b}
	    (V_1,z_1) (u_1,u_2), \mathbf{b} (V_2,z_2) (u_2,v_2) )
	\end{gather*}
	whenever $u_1,v_1 \in \rel^{\adim_1}$ and $u_2,v_2 \in
	\rel^{\adim_2}$.
    \end{enumerate}
\end{theorem}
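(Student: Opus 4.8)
The plan is to establish the eight items roughly in the order \eqref{item:product_varifold:planes}, \eqref{item:product_varifold:weight}, \eqref{item:product_varifold:rectifiable}, \eqref{item:product_varifold:integral}, \eqref{item:product_varifold:variation}, \eqref{item:product_varifold:variation_measure}, \eqref{item:product_varifold:tv_again}, \eqref{item:product_varifold:curvature}, since each later item rests on the earlier ones. Item \eqref{item:product_varifold:planes} is a computation in linear algebra: for $S_i \in \grass{\adim_i}{\vdim_i}$ one decomposes an arbitrary vector $(\zeta_1,\zeta_2) \in \rel^{\adim_1} \times \rel^{\adim_2}$ into its $S_i$ and $S_i^\perp$ parts to see that the orthogonal projection onto $S_1 \times S_2$ acts as $p_1^\ast \circ \project{S_1} \circ p_1 + p_2^\ast \circ \project{S_2} \circ p_2$, and the bilinear identity then follows by pairing with $h$ via $A \bullet B = \trace ( A^\ast \circ B )$ together with \ref{miniremark:traces} and the relations $p_i \circ p_j^\ast = \id{\rel^{\adim_i}}$ for $i = j$ and $p_i \circ p_j^\ast = 0$ for $i \neq j$. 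For item \eqref{item:product_varifold:weight} one tests the defining equation for $W$ with $k((z_1,z_2),S) = g(z_1,z_2)$, obtaining $\tint{}{} g \ud \| W \| = \tint{}{} g \ud ( V_1 \times V_2 )$; as the $U_i$ marginal of $V_i$ equals $\| V_i \|$, Fubini for product Radon measures gives $\| W \| = \| V_1 \| \times \| V_2 \|$.

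For item \eqref{item:product_varifold:rectifiable} the failure of Cartesian products to preserve rectifiability of sets, flagged in \ref{remark:product_rectifiable_sets}, is bypassed by passing to the representation of \ref{miniremark:rect_varifold}\,\eqref{item:rect_varifold:c1}: write $V_i$ through compact subsets $C_{i,j}$ of $\vdim_i$ dimensional submanifolds $M_{i,j}$ of class $1$ with positive multiplicities $d_{i,j}$. Then $M_{1,j} \times M_{2,k}$ is a submanifold of $\rel^{\adim_1} \times \rel^{\adim_2}$ of class $1$ and dimension $\vdim_1 + \vdim_2$ with tangent plane $\Tan (M_{1,j},z_1) \times \Tan (M_{2,k},z_2)$ at $(z_1,z_2)$, and -- this is exactly the point where the submanifold structure, as opposed to mere rectifiability, is used -- one has $\HM^{\vdim_1+\vdim_2} \restrict ( C_{1,j} \times C_{2,k} ) = ( \HM^{\vdim_1} \restrict C_{1,j} ) \times ( \HM^{\vdim_2} \restrict C_{2,k} )$, verified by localising to graphs over the base planes, where the area integrand of the product graph map factors as a product. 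Inserting these data into the defining equation for $W$ and applying Fubini exhibits $W$ in the form of \ref{miniremark:rect_varifold}\,\eqref{item:rect_varifold:c1}; hence $W$ is rectifiable, and reading off the tangent plane and summing $d_{1,j} d_{2,k}$ over the pairs $(j,k)$ with $(z_1,z_2) \in C_{1,j} \times C_{2,k}$ yields the asserted formulas for $\Tan^{\vdim_1+\vdim_2} ( \| W \|, \cdot )$ and $\density^{\vdim_1+\vdim_2} ( \| W \|, \cdot )$. Item \eqref{item:product_varifold:integral} is then immediate, the density of $W$ being $\| W \|$ almost everywhere a product of two positive integers.

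For items \eqref{item:product_varifold:variation} and \eqref{item:product_varifold:variation_measure} one unwinds $(\delta W)(\theta) = \tint{}{} \project{S_1 \times S_2} \bullet \Der \theta (z_1,z_2) \ud ( V_1 \times V_2 )$, applies the bilinear identity of \eqref{item:product_varifold:planes} with $h = \Der \theta (z_1,z_2)$, and recognises $p_i \circ \Der \theta (z_1,z_2) \circ p_i^\ast$ as the derivative at $z_i$ of the field obtained from $p_i \circ \theta$ by freezing the complementary variable; Fubini for $V_1 \times V_2$ -- the $i$th summand's integrand being independent of the plane variable of the other factor, with $U_i$ marginal $\| V_i \|$ -- produces the iterated integral in \eqref{item:product_varifold:variation}. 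Estimating $|p_i \theta| \leq |\theta|$ there gives $|(\delta W)(\theta)| \leq \tint{}{} |\theta| \ud ( \| \delta V_1 \| \times \| V_2 \| + \| V_1 \| \times \| \delta V_2 \| )$, whence $\delta W$ is representable by integration with $\| \delta W \| \leq \| \delta V_1 \| \times \| V_2 \| + \| V_1 \| \times \| \delta V_2 \|$, and the formula of \eqref{item:product_varifold:variation} extends to $\theta$ in $\Lp 1$ of that measure by approximation with members of $\mathscr{D} ( U_1 \times U_2, \rel^{\adim_1} \times \rel^{\adim_2} )$.

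The core of the argument is item \eqref{item:product_varifold:tv_again}, which I expect to be the principal obstacle, because it requires matching the bookkeeping of the definition of $\trunc (V,Y)$ and of $\derivative V f$ from \cite[8.3]{snulmenn:tv.v2} -- whose defining identity is tested against the first variation after composition with truncation maps -- under the Cartesian product, and truncation on $Y_1 \times Y_2$ must be reconciled with truncation on the factors. Given $f_i \in \trunc (V_i,Y_i)$, one verifies the defining identity for $f = (f_1,f_2)$ with respect to $W$ by splitting $\delta W$ according to \eqref{item:product_varifold:variation_measure}, applying Fubini, and reducing each of the two pieces to the defining identity of $f_i$ against $\delta V_i$; the candidate derivative $(u_1,u_2) \mapsto ( \derivative{V_1}{f_1} (z_1)(u_1), \derivative{V_2}{f_2} (z_2)(u_2))$ then emerges, only the $V_i$ direction being differentiated in the $i$th piece. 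Finally, for item \eqref{item:product_varifold:curvature}, by \ref{def:curvature_varifold} it remains to check that $W$ is integral (item \eqref{item:product_varifold:integral}), that $\| \delta W \|$ is a Radon measure absolutely continuous with respect to $\| W \|$ (combine \eqref{item:product_varifold:variation_measure}, \eqref{item:product_varifold:weight}, and $\| \delta V_i \| \ll \| V_i \|$), and that $\tau_W (z) = \project{\Tan^{\vdim_1+\vdim_2} ( \| W \|, z )}$ is generalised $W$ weakly differentiable; but \eqref{item:product_varifold:rectifiable} and \eqref{item:product_varifold:planes} give $\tau_W (z_1,z_2) = p_1^\ast \circ \tau_{V_1} (z_1) \circ p_1 + p_2^\ast \circ \tau_{V_2} (z_2) \circ p_2$, that is $\tau_W = \ell \circ ( \tau_{V_1}, \tau_{V_2} )$ for the fixed linear map $\ell (A_1,A_2) = p_1^\ast A_1 p_1 + p_2^\ast A_2 p_2$, so that \eqref{item:product_varifold:tv_again} applied to $\tau_{V_i} \in \trunc (V_i, \cdot)$ together with the chain rule for composition with linear maps yields $\tau_W \in \trunc (W, \cdot)$ and an expression for $\derivative W{\tau_W}$; substituting this into the definition of $\mathbf{b} (W,\cdot)$ and using $p_i \circ p_j^\ast = \id{\rel^{\adim_i}}$ or $0$ produces the stated block formula for the second fundamental form.
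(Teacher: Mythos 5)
Your approach matches the paper's item by item: \eqref{item:product_varifold:weight} via Fubini; \eqref{item:product_varifold:rectifiable} via \ref{miniremark:rect_varifold}\,\eqref{item:rect_varifold:c1} and products of compact pieces of $C^1$ submanifolds together with the product Hausdorff measure and product tangent plane identities; \eqref{item:product_varifold:integral} via integer densities; \eqref{item:product_varifold:planes} via linear algebra and \ref{miniremark:traces}; \eqref{item:product_varifold:variation} and \eqref{item:product_varifold:variation_measure} from \eqref{item:product_varifold:planes} and Fubini; \eqref{item:product_varifold:tv_again} by splitting $\delta W$ and reducing to the defining identities of the $f_i$; \eqref{item:product_varifold:curvature} via $\tau_W = L \circ (\tau_{V_1},\tau_{V_2})$ together with \eqref{item:product_varifold:tv_again} and stability of generalised weak differentiability under post-composition with a linear map. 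Minor terminology aside (the paper works with the characterisation \cite[8.4]{snulmenn:tv.v2} of $\trunc(W,Y)$ by testing against $\gamma \in \mathscr{D}(Y,\rel)$, not directly with truncation maps), the structure is the same.

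There is one device in \eqref{item:product_varifold:tv_again} that you flag as the principal obstacle but do not actually supply, and which the paper needs: for an arbitrary $\gamma \in \mathscr{D}(Y_1 \times Y_2, \rel)$, after splitting $\delta W$ via \eqref{item:product_varifold:variation_measure}, the function of $z_1$ appearing inside $(\delta V_1)_{z_1}$ is $\gamma(f_1(\cdot),f_2(z_2))\,p_1\theta(\cdot,z_2)$, whose $Y_1$-test-function part $\gamma(\cdot,f_2(z_2))$ varies with $z_2$; applying \cite[8.4]{snulmenn:tv.v2} for $f_1$ with a $z_2$-dependent test function and then interchanging with $\int \cdot\, d\|V_2\|$ requires care. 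The paper sidesteps this by first reducing, via density of tensor products in $\mathscr{D}(Y_1 \times Y_2, \rel)$ (citing \cite[2.15, 3.1]{snulmenn:tv.v2} and a footnote comparing topologies), to $\gamma(y_1,y_2) = \gamma_1(y_1)\gamma_2(y_2)$, so that the $z_2$-dependence factors out as the scalar $\gamma_2(f_2(z_2))$ and the defining identity of $f_1$ is applied with the fixed $\gamma_1$; a separate remark handles the degenerate case $\dim(Y_1 \times Y_2)=1$, taking $\gamma_1 \in \mathscr{E}(Y_1,\rel)$ with compactly supported derivative and $\gamma_2 = 1$. You should add this tensor-product reduction (and the degenerate case) to turn your sketch of \eqref{item:product_varifold:tv_again} into a proof.
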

\begin{proof} [Proof of
    \eqref{item:product_varifold:weight}]
    \eqref{item:product_varifold:weight} follows from Fubini's theorem.
\end{proof}
\begin{proof} [Proof of \eqref{item:product_varifold:rectifiable}]
    If $C_i$ are compact subsets of $\vdim_i$ dimensional submanifolds of $U_i$
    of class $1$ for $i \in \{ 1,2 \}$, then $C_1 \times C_2$ is
    $(\mathscr{H}^{\vdim_1+\vdim_2},\vdim_1+\vdim_2)$ rectifiable with
    \begin{gather*}
	( \mathscr{H}^{\vdim_1} \restrict C_1 ) \times ( \mathscr{H}^{\vdim_2}
	\restrict C_2 ) = \mathscr{H}^{\vdim_1+\vdim_2} \restrict ( C_1 \times
	C_2 ),
    \end{gather*}
    by \cite[3.2.23]{MR41:1976} and, for $\mathscr{H}^{\vdim_1+\vdim_2}$
    almost all $(z_1,z_2) \in C_1 \times C_2$,
    \begin{align*}
        & \Tan^{\vdim_1} ( \mathscr{H}^{\vdim_1} \restrict C_1, z_1 ) \times
        \Tan^{\vdim_2} ( \mathscr{H}^{\vdim_2} \restrict C_2, z_2 ) \\
        & \qquad = \Tan^{\vdim_1+\vdim_2} ( \mathscr{H}^{\vdim_1+\vdim_2}
        \restrict ( C_1 \times C_2 ), (z_1,z_2) )
    \end{align*}
    by \cite[2.8.18, 2.9.11]{MR41:1976}. Therefore the assertion may be verified
    by means of \ref{miniremark:rect_varifold}\,\eqref{item:rect_varifold:c1},
    Allard \cite[2.8\,(4a), 3.5\,(2)]{MR0307015}, \cite[2.1.1\,(11)]{MR41:1976},
    and \eqref{item:product_varifold:weight}.
\end{proof}
\begin{proof} [Proof of \eqref{item:product_varifold:integral}]
    This is a consequence of \eqref{item:product_varifold:rectifiable} and
    Allard \cite[3.5\,(1c)]{MR0307015}.
\end{proof}
\begin{proof} [Proof of \eqref{item:product_varifold:planes}\,\eqref{item:product_varifold:variation}\,\eqref{item:product_varifold:variation_measure}]
    The first equation of \eqref{item:product_varifold:planes} is obvious and
    implies the second equation by \ref{miniremark:traces}.
    \eqref{item:product_varifold:variation} is a consequence of
    \eqref{item:product_varifold:planes} and Fubini's theorem.
    \eqref{item:product_varifold:variation} implies
    \eqref{item:product_varifold:variation_measure}.
\end{proof}
\begin{proof} [Proof of \eqref{item:product_varifold:tv_again}]
    Assume $\dim ( Y_1 \times Y_2 ) \geq 1$. First, consider the case $\dim
    (Y_1 \times Y_2) \geq 2$. Define $F : p_1^{-1} \lIm \dmn
    \derivative{V_1}{f_1} \rIm \cap p_2^{-1} \lIm \dmn \derivative{V_2}{f_2}
    \rIm \to \Hom ( \rel^{\adim_1} \times \rel^{\adim_2}, Y_1 \times Y_2 )$ by
    \begin{gather*}
    	F (z_1,z_2)(u_1,u_2) = ( \derivative{V_1}{f_1} (z_1)(u_1),
	\derivative{V_2}{f_2}(z_2)(u_2) )
    \end{gather*}
    whenever $z_1 \in \dmn \derivative{V_i}{f_i}$ and $u_i \in \rel^{\adim_i}$
    for $i \in \{ 1,2 \}$. By \cite[8.4]{snulmenn:tv.v2} it sufficient to prove
    \begin{align*}
	& ( \delta W ) ( ( \gamma \circ f ) \theta ) \\
	& \qquad = \tint{}{} \gamma (f(z)) \project{T} \bullet \Der  \theta
	(z) \ud W (z,T) + \tint{}{} \langle \theta (z), \Der  \gamma ( f(z))
	\circ F (z) \rangle \ud \| W \| z
    \end{align*}
    whenever $\theta \in \mathscr{D} (U_1 \times U_2, \rel^{\adim_1} \times
    \rel^{\adim_2} )$ and $\gamma \in \mathscr{D} ( Y_1 \times Y_2, \rel )$.
    Recalling \cite[2.15, 3.1]{snulmenn:tv.v2},\footnote{The topologies on
    $\mathscr{D} ( Y_1 \times Y_2, \rel )$ considered in
    \cite[2.13]{snulmenn:tv.v2} and \cite[4.1.1]{MR41:1976} differ but possess
    the same convergent sequences, see \cite[2.15, 2.17\,(3)]{snulmenn:tv.v2}.}
    one may assume additionally that for some $\gamma_i \in \mathscr{D}
    (Y_i,\rel)$ for $i \in \{1,2 \}$, there holds
    \begin{gather*}
	\gamma (y_1,y_2) = \gamma_1 (y_1) \gamma_2 (y_2) \quad \text{for
	$(y_1,y_2) \in Y_1 \times Y_2$}.
    \end{gather*}
    In this case one computes, noting
    \begin{align*}
    	& ( \delta V_i )_{z_i} ( \gamma_i (f_i(z_i)) p_i ( \theta (z_1,z_2)) )
	\\
	& \qquad = \tint{}{} \gamma_i (f_i(z_i)) \eqproject{S_i} \bullet ( p_i
	\circ \Der  \theta (z_1,z_2) \circ p_i^\ast ) \ud V_i (z_i,S_i) \\
	& \qquad \phantom = \ + \tint{}{} \langle p_i (\theta (z_1,z_2)), \Der 
	\gamma_i ( f_i (z_i) ) \circ \derivative{V_i}{f_i} (z_i) \rangle \ud
	\| V_i \| z_i
    \end{align*}
    whenever $i,j \in\{ 1,2 \}$, $i \neq j$ and $z_j \in \dmn f_j$, and using
    \eqref{item:product_varifold:planes} and
    \eqref{item:product_varifold:variation_measure},
    \begin{align*}
    	& ( \delta W ) ( ( \gamma \circ f ) \theta ) \\
	& \qquad = \tint{}{} \gamma_2 ( f_2 (z_2) ) ( \delta V_1 )_{z_1} (
	\gamma_1 ( f_1 (z_1)) p_1 ( \theta (z_1,z_2)) ) \ud \| V_2 \| z_2 \\
	& \qquad \phantom = \ + \tint{}{} \gamma_1 ( f_1 (z_1) ) ( \delta V_2
	)_{z_2} ( \gamma_2 ( f_2 (z_2)) p_2 ( \theta (z_1,z_2)) ) \ud \| V_1
	\| z_1 \\
	& \qquad = \tint{}{} \gamma (f(z)) \project{T} \bullet \Der  \theta
	(z) \ud W (z,T) + \tint{}{} \langle \theta (z), \Der  \gamma ( f(z))
	\circ F (z) \rangle \ud \| W \| z.
    \end{align*}
    If $\dim ( Y_1 \times Y_2 ) = 1$, one may assume $\dim Y_1 = 1$, hence
    $\dim Y_2 = 0$, and similarly consider $\gamma_1 \in \mathscr{E} (Y_1,
    \rel )$ such that $\spt \Der  \gamma_1$ is compact and $\gamma_2 = 1$.
\end{proof}
\begin{proof} [Proof of \eqref{item:product_varifold:curvature}]
    Define $Y_i = \Hom ( \rel^{\adim_i}, \rel^{\adim_i} ) \cap \{ \sigma \with
    \sigma = \sigma^\ast \}$ and functions $f_i \in \trunc ( V_i, Y_i)$ by $f_i
    (z_i) = \project{\Tan^{\vdim_i} ( \| V_i \|, z_i )}$ whenever $z_i \in U_i$
    and $\Tan^{\vdim_i} ( \| V_i \|, z_i ) \in \grass {\adim_i}{\vdim_i}$ for $i
    \in \{ 1, 2 \}$. Associate $f \in \trunc (W,Y_1 \times Y_2 )$ to $f_1$ and
    $f_2$ as in \eqref{item:product_varifold:tv_again}. Define $Y$ to be the
    vectorspace of symmetric endomorphisms of $\rel^{\adim_1} \times
    \rel^{\adim_2}$ and let the linear map $L : Y_1 \times Y_2 \to Y$ be defined
    by
    \begin{gather*}
    	L ( \sigma_1, \sigma_2 ) = p_1^\ast \circ \sigma_1 \circ p_1 + p_2^\ast
	\circ \sigma_2 \circ p_2 \quad \text{for $\sigma_1 \in Y_1$ and
	$\sigma_2 \in Y_2$}.
    \end{gather*}
    In view of \eqref{item:product_varifold:rectifiable} and
    \eqref{item:product_varifold:planes}, one infers
    \begin{gather*}
	\project{\Tan^{\vdim_1+\vdim_2} ( \| W \|, (z_1,z_2))} = L ( f
	(z_1,z_2)) \quad \text{for $\| W \|$ almost all $(z_1,z_2)$}.
    \end{gather*}
    Therefore $W$ is a curvature varifold by
    \eqref{item:product_varifold:weight},
    \eqref{item:product_varifold:integral},
    \eqref{item:product_varifold:variation_measure}, and
    \cite[8.6]{snulmenn:tv.v2} and, by \eqref{item:product_varifold:tv_again},
    there holds, for $\| W \|$ almost all $(z_1,z_2)$,
    \begin{gather*}
	\derivative{W}{f} (z_1,z_2) (u_1,u_2) = L ( \derivative{V_1}{f_1}
	(z_1)(u_1), \derivative{V_2}{f_2} (z_2)(u_2) )
    \end{gather*}
    for $u_1 \in \rel^{\adim_1}$ and $u_2 \in \rel^{\adim_2}$. Recalling
    \eqref{item:product_varifold:rectifiable}, the equation
    for the second fundamental form of $W$ now follows.
\end{proof}
\begin{remark}
    \label{remark:product_rectifiable_sets}
    The behaviour of rectifiable varifolds described in
    \eqref{item:product_varifold:rectifiable} is in contrast with the more
    subtle behaviour of $(\mathscr{H}^\vdim,\vdim)$ rectifiable sets; in fact,
    if $\vdim, \adim \in \nat$ and $\vdim < \adim$ there exist compact subsets
    $C_1$ and $C_2$ of $\rel^\adim$ such that
    \begin{gather*}
	\mathscr{H}^\vdim ( C_1 ) = \mathscr{H}^\vdim (C_2) =0, \quad
	\mathscr{H}^l ( C_1 \times C_2 ) = \infty
    \end{gather*}
    whenever $0 \leq l < \vdim + \adim$, see \cite[2.10.29,
    3.2.24]{MR41:1976}.
\end{remark}
\begin{remark}
    Concerning \eqref{item:product_varifold:tv_again} and
    \eqref{item:product_varifold:curvature}, notice that, for general $W$,
    neither is membership in $\trunc (W, Y_1 \times Y_2 )$ implied by membership
    of the component functions in $\trunc (W,Y_i)$ nor is $\trunc (W, Y_1 \times
    Y_2 )$ closed under addition, see \cite[8.25]{snulmenn:tv.v2}.
\end{remark}


\section{Differentiation results}
\label{sec:differentiation}

In the present section a differentiation theorem for measures relative to
varifolds is provided.  It slightly generalises
\cite[3.1]{snulmenn.isoperimetric} so as to become applicable in the proof of
the sharp decay rate almost everywhere for the quadratic tilt-excess of
certain two-dimensional varifolds in Section~\ref{sec:quadratic_tilt_decay}.
Additionally, a corollary for use in the study of one-dimensional varifolds in
Section~\ref{sec:one_dimensional_decay} is noted.

Results of the type considered here occur for instance in Calder{\'o}n and
Zygmund \cite[Theorem~10, p.~189]{MR0136849} in which paper a differentiation
theory of higher order in Lebesgue spaces is developed. The classical Rademacher
theorem is contained in that theory as special case, see Calder{\'o}n and
Zygmund \cite[Theorem~12, p.~204]{MR0136849}.

\begin{miniremark}
    \label{miniremark:situation_general}
    Suppose $\vdim, \adim \in \nat$, $\vdim \leq \adim$, $1 \leq p \leq \infty$,
    $U$ is an open subset of $\rel^\adim$, $V \in \Var_\vdim ( U)$, $\| \delta V
    \|$ is a Radon measure, $\density^\vdim ( \| V \|, z) \geq 1$ for $\| V \|$
    almost all $z$. If $p > 1$, then suppose additionally that $\mathbf{h} ( V,
    \cdot ) \in \Lploc{p} ( \| V \|, \rel^\adim)$ and
    \begin{gather*}
	\delta V (\theta) = - \tint{}{} \mathbf{h} (V,z) \bullet \theta(z) \ud
	\| V \| z \quad \text{for $\theta \in \mathscr{D} ( U, \rel^\adim)$}.
    \end{gather*}
    Therefore $V \in \RVar_\vdim (U)$ by Allard \cite[5.5\,(1)]{MR0307015}.
    If~$p = 1$ let $\psi = \| \delta V \|$. If $1 < p < \infty$ define a Radon
    measure $\psi$ over $U$ by the requirement $\psi (B) = \tint{B}{} |
    \mathbf{h} ( V, z ) |^p \ud \| V \| z$ whenever $B$ is Borel subset of~$U$.
\end{miniremark}
\begin{theorem}
    \label{thm:O_o}
    Suppose $\vdim$, $\adim$, $p$, $U$ and $V$ are as in
    \ref{miniremark:situation_general}, $1 \le p \le \vdim$, $\omega$ is a
    modulus of continuity, $\mu$ measures $U$ with $\mu(U \without \spt \|V\|) =
    0$, and $Z$ is a $\|V\|$ measurable set with $\mu(Z) = 0$. In case $p <
    \vdim$ suppose additionally that there exist $1 < q \leq \infty$ and $f \in
    \Lploc{q} (\|V\|)$ such that
	\begin{gather*}
        \liminf_{r \to 0+} r^{(1-1/q)\vdim p/(p-\vdim)} \omega (r) > 0, \\
        \mu (B) = \tint{B}{} f \ud \|V\| \quad \text{whenever $B$ is Borel subset of $U$.}
    \end{gather*}

    Then for $\HM^\vdim $ almost all $z \in Z$
    \begin{gather*}
	\limsup_{r \to 0+}  r^{-\vdim} \omega(r)^{-1} \measureball{\mu}{\cball
	zr} \quad \text{equals either $0$ or $\infty$}.
    \end{gather*}
\end{theorem}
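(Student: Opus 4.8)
The plan is to derive the dichotomy from a differentiation theorem for the measure $\mu$ relative to $\|V\|$, following the strategy of \cite[3.1]{snulmenn.isoperimetric}, adapted so that the normalisation is by $r^\vdim \omega(r)$ rather than by $r^\vdim$. First I would reduce to a statement about the density $\density^\vdim(\mu,\|V\|,z)$ versus $0$ and $\infty$: using the rectifiability of $V$ from \ref{miniremark:situation_general} together with the standard differentiation theory for Radon measures (cf.\ \cite[2.8.17, 2.9.11]{MR41:1976}), for $\|V\|$ almost all $z$ the limit $\lim_{r \to 0+} \measureball{\mu}{\cball zr}/\measureball{\|V\|}{\cball zr}$ exists, and since $\density^\vdim(\|V\|,z)$ is a positive finite real number for $\mathscr{H}^\vdim$ almost all $z \in \spt\|V\|$, one has $\measureball{\|V\|}{\cball zr} \sim \unitmeasure\vdim \density^\vdim(\|V\|,z) r^\vdim$ as $r \to 0+$. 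Thus $r^{-\vdim} \measureball{\mu}{\cball zr}$ converges to a nonnegative real limit $g(z)$ for $\|V\|$ almost all $z$, where $g$ is the Radon--Nikodym density of $\mu$ with respect to $\unitmeasure\vdim \cdot (\density^\vdim(\|V\|,\cdot))\|V\|$ up to a harmless factor. On the set where $g(z) = 0$ the quotient $r^{-\vdim}\omega(r)^{-1}\measureball{\mu}{\cball zr}$ need not vanish, so the real work is to show that on $\{g = 0\}$ the $\limsup$ is either $0$ or $\infty$ for $\mathscr{H}^\vdim$ almost all $z$.

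The heart of the argument is therefore a statement of the following shape: if $\nu$ is a Radon measure on $U$ that is absolutely continuous with respect to $\|V\|$ with density zero $\|V\|$-almost everywhere on some set $S$ — which applies to $\mu \restrict S$ for $S = \{z : g(z) = 0\}$ — then for $\mathscr{H}^\vdim$ almost all $z \in S$ the quantity $\limsup_{r \to 0+} r^{-\vdim}\omega(r)^{-1}\measureball{\nu}{\cball zr}$ takes only the values $0$ and $\infty$. The mechanism is a Vitali-type covering/rescaling argument: suppose the $\limsup$ at $z$ is a finite positive number $L$; one extracts a sequence $r_j \to 0$ along which $r_j^{-\vdim}\omega(r_j)^{-1}\measureball{\nu}{\cball{z}{r_j}}$ is bounded below, and then compares the mass in $\cball{z}{r_j}$ at different scales using the scaling of $\omega$ together with the fact that $\density^\vdim(\nu,\|V\|,z) = 0$ forces the mass at the \emph{same} scale to be $o(r_j^\vdim)$; the growth condition $\liminf_{r \to 0+} r^{(1-1/q)\vdim p/(p-\vdim)}\omega(r) > 0$ when $p < \vdim$ (and the local $\Lp q$ bound on $f$) is exactly what is needed to estimate the tail of $\nu$ in annuli via Hölder's inequality and the isoperimetric/monotonicity-type density bounds available under \ref{miniremark:situation_general}. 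When $p = \vdim$, the Allard bound $\mathscr{H}^\vdim\restrict\spt\|V\| \leq \|V\|$ together with the absolute continuity already gives enough control. The conclusion is that once the normalised mass is bounded above and below along one sequence of scales, a comparison argument propagates it to all sufficiently small scales at $\mathscr{H}^\vdim$-almost every such point, contradicting $\density^\vdim(\nu,\|V\|,z) = 0$ unless $L = 0$ or $L = \infty$; the exceptional set of $z$ where the propagation fails has $\mathscr{H}^\vdim$ measure zero by a standard maximal-function / Vitali covering estimate.

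The main obstacle I anticipate is the interplay between the varifold geometry and the modulus $\omega$: in the Lebesgue-space model (\cite[3.1]{snulmenn.isoperimetric}) the underlying measure is translation-invariant and the rescaling is exact, whereas here $\|V\|$ is only asymptotically self-similar near $\|V\|$-almost every point, and the excess mass contributed by the ``curvature'' term (controlled by $\psi$ from \ref{miniremark:situation_general}) must be shown to be negligible at the relevant scales — this is where the dimensional restriction $1 \leq p \leq \vdim$ and the growth condition on $\omega$ enter crucially, and getting the quantitative covering estimate uniform enough to discard a single $\mathscr{H}^\vdim$-null set (rather than one depending on the scale) is the delicate point. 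I would handle it by first proving the dichotomy for the ``flat'' pieces $C_j$ of \ref{miniremark:rect_varifold}\,\eqref{item:rect_varifold:c1}, where $\|V\|$ restricted to $C_j$ is comparable to $\mathscr{H}^\vdim\restrict C_j$ and the ambient differentiation theory of Federer applies directly, and then patching, using that $\|V\|$ and $\sum_j \mathscr{H}^\vdim\restrict C_j$ agree up to an $\mathscr{H}^\vdim$-null set and that the tilt between nearby pieces is controlled almost everywhere by the approximate differentiability of $\tau$.
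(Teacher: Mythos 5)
Your proposal assembles some of the right tools (a Besicovitch/Vitali covering and the isoperimetric density lower bound both appear in the paper's proof), but the argument as described has several genuine gaps, the central one being that your proposed dichotomy mechanism does not actually close.

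First, the reduction via a Radon--Nikodym density of $\mu$ relative to $\|V\|$ is not available in general: when $p = \vdim$ the theorem does not assume $\mu \ll \|V\|$, nor even that $\mu$ is a Radon measure --- the hypothesis ``$\mu$ measures $U$'' only means $\mu$ is an outer measure on $U$ in Federer's sense. This generality is actually used in the application \ref{thm:differentiation}, where $\mu(B) = (\sup(\{0\} \cup g\lIm B\rIm))^{(\vdim+1)/q}$, which is not a Radon measure. The absolute-continuity hypothesis with $f \in \Lploc{q}$ is made only for $p < \vdim$, and its role is also narrower than you describe: together with the growth condition on $\omega$ and the sets $G_i$ where the first variation concentrates, it controls $\mu$ on the $\HM^\vdim$-conull-in-$Z$ set where $\density^\vdim(\|V\|,\cdot) = 0$; it is not used to estimate annular tails in the covering step.

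Second, and more importantly, the ``contradiction'' you invoke is vacuous. Knowing $\density^\vdim(\nu,\|V\|,z) = 0$, i.e.\ $\nu(\cball zr) = o(r^\vdim)$, is perfectly consistent with $r^{-\vdim}\omega(r)^{-1}\nu(\cball zr)$ being bounded above and below by positive constants, because $\omega(r) \to 0$. So propagating a lower bound $\nu(\cball zr) \gtrsim L r^\vdim\omega(r)$ to all small $r$ gives no contradiction at any individual point, and in fact the theorem does \emph{not} assert that individual points cannot have a finite positive $\limsup$. The argument that works is a one-scale covering estimate with a measure-theoretic selection, not a cross-scale propagation: one introduces the compact sets $Z_j = Z \cap \{z \with \mu(\cball zr) \leq j r^\vdim\omega(r+) \text{ for all } 0 < r < 1/j\}$, whose union exhausts the set where the $\limsup$ is finite, and proves directly that for $\HM^\vdim$-a.e.\ $c \in Z_j$ one has $\mu(\cball cr) = o(r^\vdim\omega(r))$. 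The mechanism is: take $c$ a $\|V\|$-density point of $Z_j$ lying outside $\bigcap_i G_i$; then $\|V\|(\cball c{2r} \setminus Z_j)$ is small, and for any $\zeta \in \cball cr \cap \spt\|V\| \setminus (G_i \cup Z_j)$ at distance $s$ from $Z_j$ the isoperimetric bound on the complement of $G_i$ gives $\|V\|(\cball\zeta{s/2}) \gtrsim s^\vdim$ (so $s$ is forced small), while the definition of $Z_j$ gives $\mu(\cball\zeta{s/2}) \leq \mu(\cball z{3s/2}) \leq j(3s/2)^\vdim\omega(3s/2) \lesssim \omega(r)\|V\|(\cball\zeta{s/2})$ with $z$ the nearest point of $Z_j$. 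Summing over a Besicovitch--Federer cover and using $\mu(Z_j) = \mu(U\setminus\spt\|V\|) = 0$ yields the bound. The ``bounded below along a sequence of scales'' plays no role, and the patching through the flat pieces $C_j$ with tilt control is unnecessary: the covering works directly at the level of $\|V\|$.
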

\begin{proof}
    \setcounter{equation}{0}
    For $i \in \nat$ let $G_i$ denote the set of all $z \in \spt \| V \|$ such
    that either $\oball z{1/i} \not \subset U$ or
    \begin{gather*}
        \measureball{\| \delta V \|}{\cball zr } > (2 \isoperimetric{\vdim})^{-1} \|V\|(\cball zr)^{1-1/\vdim}
        \quad
        \text{for some $0 < r < 1/i$}.
    \end{gather*}
    Notice that $G_{i+1} \subset G_i$ and that $G_i$ is relatively open in $\spt
    \| V \|$ by an argument analogous to \cite[2.9.14]{MR41:1976}.

    We start with some preliminary observations. Consider the \emph{case $p <
      \vdim$}. If $q < \infty$, define $\nu$ to be the Radon measure over $U$
    characterised by the requirement $\nu (B) = \tint{B}{} f^q \ud \| V \|$
    whenever $B$ is a Borel subset of $U$. Using H{\"o}lder's inequality, one
    may employ \cite[2.9, 2.10]{snulmenn.isoperimetric} with $m$, $n$, $\mu$,
    $s$, $\varepsilon$, and $\Gamma$ replaced by $\codim$, $\vdim$, $\| V \|$,
    $\vdim$, $(2 \isoperimetric{\vdim} )^{p/(p-\vdim)}$, and $5 \vdim \gamma (
    \vdim )$ to see that for $\HM^\vdim$ almost all $z \in U$ there exists an
    $i \in \nat$ such that
    \begin{gather*}
        \density^{\vdim^2/(\vdim-p)} ( \| V \| \restrict G_i, z ) = 0  .
    \end{gather*}
    This implies, by another use of H{\"o}lder's inequality, that for $\HM^\vdim$
    almost all $z \in U$ there exists an $i \in \nat$ such that
    \begin{gather*}
        \density^{\vdim(1+(1-1/q)p/(\vdim-p))} ( \mu \restrict G_i, z) = 0  ,
    \end{gather*}
    since, if $q < \infty$, then $\density^{\ast \vdim} ( \nu, z ) < \infty$ for
    $\mathscr{H}^\vdim$ almost all $z \in U$ due
    to~\cite[2.10.19\,(3)]{MR41:1976}. Observe that if $z \in \spt \|V\|
    \without \bigcup_{i=1}^{\infty} G_i$, then, according to
    \cite[2.5]{snulmenn.isoperimetric}, $\density_*^\vdim(\|V\|,z) > 0$;
    hence, $\spt \| V \| \cap \{ z \with \density^\vdim ( \| V \|, z ) = 0
    \} \subset \bigcap_{i=1}^\infty G_i$. Recalling that $G_i$ are relatively
    open in $\spt \|V\|$ and $\mu(U \without \spt \|V\|) = 0$, one infers that
    \begin{gather*}
        \density^\vdim( \| V \|, z ) = 0 
        \quad \text{implies} \quad
        \density^{\vdim(1+(1-1/q)p/(\vdim-p))} ( \mu, z) = 0
    \end{gather*}
    for $\mathscr{H}^\vdim$ almost all $z \in U$. In \emph{case $p = \vdim$},
    one notices that
    \begin{gather*}
        \density^\vdim_\ast ( \| V \|, z ) \geq 1/2
        \quad \text{whenever $z \in \spt \| V \|$},
        \\
        {
          \textstyle \spt \| V \| \cap
          \big \{ z \with \| \delta V \| ( \{ z \} ) < (2\isoperimetric{\vdim})^{-1} \big \}
          \subset U \cap \bigcup_{i=1}^\infty \big\{ c \with G_i \cap \cball{c}{1/i} = \varnothing \big\}  ,
        }
        \\
        \spt \| V \| \cap  \{ z \with \| \delta V \| ( \{ z \} ) \ge (2\isoperimetric{\vdim})^{-1}\} \text{ is countable}
    \end{gather*}
    by \cite[4.8\,(4), 7.6]{snulmenn:tv.v2} and
    \cite[2.5]{snulmenn.isoperimetric}
    
    Therefore, one may assume in both cases that $Z \subset \{ z \with
    \density^{\ast \vdim} ( \| V \|, z ) > 0 \}$. As this implies that $Z$ is
    a~union of countably many $\mathscr{H}^\vdim$~measurable sets of finite
    $\mathscr{H}^\vdim$~measure by~\cite[2.10.19\,(3)]{MR41:1976}, one may also
    assume that $Z$ is compact.
    
    Define sets
    \begin{gather*}
        Z_j = Z \cap \{ z \with \measureball{\mu}{\cball zr} \leq j r^{\vdim} \omega(r+) \text{ for all } 0 < r < 1/j \}
    \end{gather*}
    whenever $j \in \nat$ and $1/j < \dist ( Z, \rel^\adim \without U)$ whose
    union contains
    \begin{gather*}
        Z \cap \Big\{ z \with \limsup_{r \to 0+} r^{-\vdim} \omega(r)^{-1} \measureball{\mu}{\cball zr} < \infty \Big\}
    \end{gather*}
    and observe that the sets $Z_j$ are compact, see~\cite[2.9.14]{MR41:1976}.
    Hence, it is sufficient to prove for each $j \in \nat$ with $1/j < \dist ( Z,
    \rel^\adim \without U )$ that
    \begin{gather*}
        \lim_{r \to 0+} r^{-\vdim} \omega(r)^{-1} \measureball{\mu}{\cball cr}
        = 0 \quad \text{for $\HM^\vdim $ almost all $c \in Z_j$}.
    \end{gather*}
    In fact, this equality will be proven whenever $c \in Z_j$ and $j \in \nat$
    with $1/j < \dist (Z, \rel^\adim \without U )$ satisfy for some $i \in \nat$
    with $i \geq 2j$ that $\density^\vdim ( \|V\| \restrict U \without Z_j, c )
    = 0$ and
    \begin{gather*}
        \density^{\vdim^2/(\vdim-p)} ( \|V\| \restrict G_i, c ) = 0 
        \quad
        \text{if $p < \vdim $},
        \qquad G_i \cap \cball{c}{1/i} = \varnothing
        \quad \text{if $p = \vdim$}
    \end{gather*}
    as $\HM^\vdim$ almost all $c \in Z_j$ do according to
    \cite[2.10.19\,(4)]{MR41:1976} and the second paragraph of this proof.
    
    For such $c$, $j$, and $i$, suppose $0 < \varepsilon \leq ( 6 \vdim
    \isoperimetric{\vdim} )^{-1}$, $0 < r < 1/i$, and
    \begin{gather*}
	\| V \| ( \cball{c}{2r} \without Z_j ) < \varepsilon^\vdim r^\vdim.
    \end{gather*}
    Whenever $\zeta \in \cball cr \cap ( \spt \|V\| ) \without ( G_i \cup
    Z_j)$ and $s = \dist (\zeta,Z_j)$ there exists $z \in Z_j$ with $s =
    |\zeta-z|$ and one infers, using \cite[2.5]{snulmenn.isoperimetric} with
    $n$, $m$, $U$, $\mu$, $\varepsilon$, and $\varrho$ replaced by $\vdim$,
    $\codim$, $\oball{\zeta}{1/i}$, $\| V \|$, $(2 \isoperimetric{\vdim}
    )^{-1}$, and $s/2$, that
    \begin{gather*}
        s \leq |\zeta-c| \leq r < 1/i \leq 1/(2j),
        \quad
        \cball \zeta{s/2} \subset \cball z{3s/2} \cap \cball c{2r} \without Z_j,
        \\
        ( 4 \vdim \isoperimetric{\vdim} )^{-\vdim} s^\vdim
        \leq \measureball{\| V \|}{\cball{\zeta}{s/2}} 
        \leq \| V \| ( \cball{c}{2r} \without Z_j ) 
        < \varepsilon^\vdim r^\vdim,
        \quad
        3s/2 < r,
        \\
        \measureball{\mu}{\cball \zeta{s/2}} 
        \leq \measureball{\mu}{\cball z{3s/2}}
        \leq j (3s/2)^{\vdim} \omega((3s/2)+)
        \leq \gamma \omega (r) \measureball{\|V\|}{\cball \zeta{s/2}},
    \end{gather*}
    where $\gamma = j ( 6 \vdim \isoperimetric{\vdim } )^{\vdim}$. Therefore the
    Besicovitch-Federer covering theorem yields the existence of~countable
    disjointed families $F_1, \ldots, F_{\besicovitch{\adim}}$ of closed balls
    such that
    \begin{gather*}
        \cball cr \cap ( \spt \|V\| ) \without ( G_i \cup Z_j )
        \subset {\textstyle\bigcup\bigcup \{ F_k \with k = 1, \ldots,
        \besicovitch{\adim} \}} \subset \cball c{2r} \without Z_j, \\
        \mu (S) \leq \gamma \omega(r) \| V \| (S) \quad \text{whenever
        $S \in F_k$ and $k = 1, \ldots, \besicovitch{\adim}$}  .
    \end{gather*}
    Recalling $\mu(Z) = 0$, the conclusion follows from
    \begin{align*}
        \mu ( \cball cr \without G_i )
        & = \mu ( \cball cr \cap ( \spt \|V\| ) \without ( G_i \cup Z_j ) )
        \\
        & \leq \tsum{k=1}{\besicovitch{\adim}} \tsum{S \in F_k}{} \mu (S) 
        \leq \tsum{k=1}{\besicovitch{\adim}} \gamma \omega (r) \tsum{S \in F_k}{} \| V \| (S)
        \\
        & \le \besicovitch{\adim } \gamma \omega (r) \|V\| ( \cball c{2r} \without Z_j)
        \leq \besicovitch{\adim} \gamma \varepsilon^\vdim r^\vdim \omega (r)  ,
    \end{align*}
    since $\varepsilon$ can be chosen arbitrary small.
\end{proof}
\begin{remark}
    The preceding theorem is a slight generalisation of
    \cite[3.1]{snulmenn.isoperimetric} which treated the case that $\omega
    (r)$ equals a positive power of $r$; see
    \cite[3.2--3.4]{snulmenn.isoperimetric} for comments on earlier
    developments and the sharpness of certain hypotheses. The case $q=1$ is
    excluded since in this case there is no modulus of continuity satisfying
    the condition on the limit inferior.
\end{remark}

\begin{corollary}
    \label{thm:differentiation}
    Suppose $\vdim$, $\adim$, $p$, $U$, and $V$ are as in
    \ref{miniremark:situation_general}, $p=\vdim$, $g$ maps $\| V \|$ almost all
    of $U$ into $\{ t \with 0 \leq t \leq \infty \}$, $\dmn g \subset \spt
    \|V\|$, and $0 < q < \infty$.

    Then, for $\| V \|$ almost all $z$, either
    \begin{enumerate}
	\item $\limsup_{\zeta \to z} |\zeta-z|^{-q} g(\zeta) = \infty$, or
	\item $g(z) = 0$ and $\lim_{\zeta \to z} |\zeta-z|^{-q} g(\zeta) =
	0$.\footnote{Our usage of limits is affected by the requirement
	    \begin{gather*}
		\limsup_{z \to c} g( z) = \lim_{\varepsilon \to 0+} \sup \{
		g(z) \with z \in \cball c\varepsilon \cap \dmn g \},
	\end{gather*}
	in particular $\limsup_{z \to c} g ( z ) = - \infty$ if $c \notin
	\Clos \dmn g$.}
    \end{enumerate}
\end{corollary}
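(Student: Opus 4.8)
The plan is to deduce this from \ref{thm:O_o} by a density argument, after some harmless reductions. Write $L(z) = \limsup_{\zeta \to z} |\zeta - z|^{-q} g(\zeta)$ in the sense of the footnote. Then $L(z) = \infty$ whenever $z \in \dmn g$ and $g(z) > 0$ (the value $\zeta = z$ already forces it), and if $L(z) = 0$ then necessarily $g(z) = 0$ and, since $g \geq 0$, also $\liminf_{\zeta \to z} |\zeta - z|^{-q} g(\zeta) \geq 0 = \limsup$, so $\lim_{\zeta \to z} |\zeta-z|^{-q} g(\zeta) = 0$. Moreover $\|V\|$ almost every $z$ lies in $\spt \|V\| \subset \Clos \dmn g$. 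Hence the assertion is equivalent to ``$L(z) \in \{0,\infty\}$ for $\|V\|$ almost all $z$'', i.e.\ to showing $D = \{ z \in \dmn g : g(z) = 0 \text{ and } 0 < L(z) < \infty \}$ is $\|V\|$ null. Since $D = \bigcup \{ D_{c,C} : 0 < c < C,\ c,C \in \mathbf{Q} \}$ with $D_{c,C} = \{ z \in \dmn g : g(z) = 0,\ c \leq L(z) \leq C \}$, it suffices to prove $\|V\|(D_{c,C}) = 0$ for each such pair; fix $c < C$ and suppose for contradiction that $\|V\|(D_{c,C}) > 0$.

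Using inner regularity of $\|V\|$, the definition of the limit superior, the finiteness of the upper density of $\|V\|$ almost everywhere (recall $V \in \RVar_\vdim(U)$), and the Lebesgue density theorem for $\|V\|$, one passes to a compact set $B \subset D_{c,C}$ with $\|V\|(B) > 0$ and numbers $0 < \rho_0, \Gamma < \infty$ such that $g(\zeta) \leq 2C|\zeta - z|^q$ whenever $z \in B$, $\zeta \in \dmn g$, $|\zeta-z| \leq \rho_0$; such that $\|V\|(\cball z r) \leq \Gamma r^\vdim$ whenever $z \in B$, $0 < r \leq \rho_0$; and such that every $z \in B$ is a $\|V\|$ density point of $B$. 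In particular $g$ vanishes on $B$ and is bounded on the $\rho_0/2$ neighbourhood of $B$ intersected with $\dmn g$.

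The core step is to build a Radon measure $\mu$ over $U$ with $\mu(U \without \spt \|V\|) = 0$, together with a $\|V\|$ measurable set $Z$ with $\mu(Z) = 0$ and $B \subset Z$, which registers the supremal behaviour of $g$ near $B$: informally $\mu(\cball z r)$ should be comparable, up to fixed multiplicative constants, to $r^\vdim \sup \{ g(\zeta) : \zeta \in \cball{z}{\lambda r} \cap \dmn g \}$ for $z$ near $B$ and some fixed $0 < \lambda \leq 1$, uniformly for small $r$; the natural candidate fattens each point $\zeta$ with $g(\zeta) > 0$ over a ball of radius comparable to $g(\zeta)^{1/q}$, weighted by $\|V\|$. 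Granting such $\mu$, take $\omega(r) = r^q$ (a modulus of continuity as $q > 0$) and apply \ref{thm:O_o}, whose extra hypothesis involving $q$ is vacuous here since $p = \vdim$: for $\HM^\vdim$ almost all, hence $\|V\|$ almost all, $z \in B$ one obtains $\limsup_{r \to 0+} r^{-\vdim-q} \mu(\cball z r) \in \{0,\infty\}$. The upper comparison together with $g(\zeta) \leq 2C|\zeta-z|^q$ on $B$ forces this limit superior to be finite, hence $0$ for $\|V\|$ almost all $z \in B$; the lower comparison then gives $L(z) = 0$ for $\|V\|$ almost all $z \in B$, contradicting $L \geq c > 0$ on $B$. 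Combined with the first paragraph, this proves the corollary.

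The main obstacle is exactly the construction of $\mu$ and $Z$. The tension is that $B$ is ``riddled'' with near-critical values of $g$ — each $z \in B$ has a sequence of points tending to it along which $|\zeta-z|^{-q}g(\zeta)$ stays above $c$ — so a measure obtained by integrating $g$ against $\|V\|$ either fails to register these spikes (if it is absolutely continuous with respect to $\|V\|$ and concentrated where $g$ is positive) or else charges $B$ itself, obstructing $\mu(Z) = 0$ with $B \subset Z$. The correct object must fatten each spike while remaining locally finite and vanishing off $\spt\|V\|$; here the lower density bound $\density^\vdim_\ast(\|V\|,\cdot) \geq 1/2$, valid at every point of $\spt\|V\|$ when $p = \vdim$, is what keeps a fattened measure comparable to the $\vdim$ dimensional scale, and the Besicovitch--Federer covering theorem — as used inside the proof of \ref{thm:O_o} — is what turns scale-by-scale smallness into the pointwise conclusion. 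Once $\mu$ and $Z$ are fixed, the two comparisons and the passage from the density dichotomy of $\mu$ to the pointwise dichotomy of $L$ are routine.
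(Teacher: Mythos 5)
Your preliminary reductions are correct and you have identified the right theorem, but the proposal has the gap you name yourself: the auxiliary measure $\mu$ is never constructed. What makes that step look hard --- and underlies the ``tension'' in your last paragraph --- is the extra assumption that $\mu$ must be a \emph{Radon} measure obtained by fattening and weighting against $\| V \|$. That is not required: in \ref{thm:O_o} the hypothesis is merely that ``$\mu$ measures $U$'', i.e.\ Federer's notion of an arbitrary outer measure on all subsets of $U$, with no additivity or regularity demanded beyond $\mu ( U \without \spt \| V \| ) = 0$.

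The paper instead takes $\omega(r) = r$ and the sup-based outer measure $\mu ( B ) = \big ( \sup ( \{ 0 \} \cup g \lIm B \rIm ) \big )^{(\vdim+1)/q}$. Subadditivity is clear because $\mu ( \bigcup_j B_j ) = \sup_j \mu ( B_j )$; the condition $\mu ( U \without \spt \| V \| ) = 0$ holds since $\dmn g \subset \spt \| V \|$; and, decisively, $\mu$ vanishes on any set where $g$ vanishes, which dissolves exactly the tension you describe between registering the spikes of $g$ and not charging the base set. The compact-set and density-point reduction is then also unnecessary: with $C = \{ z \with g(z) = 0 \}$ and $Z = U \cap \{ z \with \limsup_{\zeta \to z} | \zeta - z |^{-q} g(\zeta) < \infty \}$, one checks that $Z$ is a countable union of relatively closed sets $Z_i$ (defined by $g(\zeta) \leq i | \zeta - z |^q$ for $\zeta \in \dmn g$ with $0 < | \zeta - z | < 1/i$), that any $z \in Z_i \cap \dmn g$ with $g(z) > 0$ is isolated in $Z_i$, and hence that $\| V \| ( Z \without C ) = 0$. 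Applying \ref{thm:O_o} with $Z$ replaced by $C \cap Z$ gives, for $\| V \|$ almost all $z \in Z$, that $\limsup_{r \to 0+} r^{-\vdim-1} \mu ( \cball zr ) \in \{ 0, \infty \}$; since finiteness of $\limsup_{\zeta \to z} | \zeta - z |^{-q} g(\zeta)$ together with $g(z) = 0$ gives $\sup_{\cball zr \cap \dmn g} g = O(r^q)$, the ratio is bounded, hence $0$, and unwinding the exponent yields $\lim_{\zeta \to z} | \zeta - z |^{-q} g(\zeta) = 0$.
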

\begin{proof}
    Define $C = \{ z \with g(z) = 0 \}$ and $Z = U \cap \{ \limsup_{\zeta \to
    z} |\zeta-z|^{-q} g(\zeta) < \infty \}$. Then $Z$ is a Borel subset of $U$
    and $Z \cap ( \dmn g ) \without C$ is countable; in fact, $Z$ is the union
    of the relatively closed subsets of $U$ defined by
    \begin{gather*}
    	Z_i = U \cap \{ z \with \text{$g( \zeta ) \leq i | \zeta-z |^q$
	whenever $\zeta \in \dmn g$ and $0 < | \zeta-z | < 1/i$} \}
    \end{gather*}
    for $i \in \nat$ and $Z_i \cap ( \dmn g ) \without C$ is contained in the
    set of isolated points of $Z_i$. Therefore $\| V \| ( Z \without C ) = 0$
    and $C \cap Z$ is $\| V \|$ measurable. Hence, applying \ref{thm:O_o} with
    $\omega (r)$, $\mu (B)$, and $Z$ replaced by $r$, $( \sup ( \{ 0 \} \cup g
    \lIm B \rIm))^{(\vdim+1)/q}$, and $C \cap Z$, one derives
    \begin{gather*}
    	\limsup_{\zeta \to z} | \zeta-z |^{-q} g ( \zeta ) = 0 \quad \text{for
	$\| V \|$ almost all $z \in Z$}.
    \end{gather*}
    As $\dmn g$ does not contain isolated points, the conclusion follows.
\end{proof}
\begin{remark}
    Clearly, the special case \cite[3.1]{snulmenn.isoperimetric} of
    \ref{thm:O_o} would also be sufficient for the proof of the corollary.
\end{remark}


\section{Quadratic tilt-excess, an example}
\label{sec:qte-ex}
The purpose of this section is to exhibit a two dimensional integral
varifold whose first variation is representable by integration in
order to render the decay estimates of
Section~\ref{sec:quadratic_tilt_decay} sharp. The varifold constructed
for that purpose in~\ref{example:quadratic_tilt_excess} will in fact
be associated to the graph of a Lipschitzian function with small
Lipschitz constant.

The basic building block of this example is a varifold consisting of a~piece of
a~sphere, a~piece of a~catenoid and a~smooth join to a~plane,
see~\ref{lemma:ball_catenoid}. Suitably rescaled copies of~this varifold will
then be used to fill the holes of a~set previously constructed
in~\ref{example:yet_another_cantor_set}.
\begin{figure}[!htb]
  \centering
  \includegraphics[width=\textwidth, keepaspectratio=true]{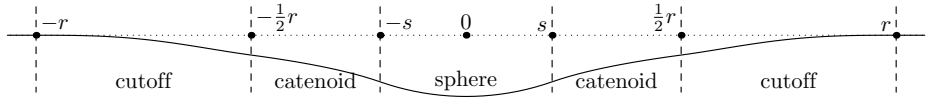}
  \caption{Rotating the solid line around the vertical axis illustrates the
    support of the varifold constructed in \ref{lemma:ball_catenoid}.}

  \label{F:catenoid}
\end{figure}
\begin{miniremark} [see \protect{\cite[5.1.9]{MR41:1976}}]
    \label{miniremark:pqT}
    Suppose $1 < \adim \in \nat$ and $\pp : \rel^\adim \to
    \rel^{\adim-1}$ and $\qq : \rel^\adim \to \rel$ satisfy
    \begin{gather*}
        \pp (z) = (z_1,\ldots,z_{\adim-1}) \quad \text{and} \quad \qq (z) =
	z_\adim \quad \text{whenever $z = (z_1,\ldots,z_\adim) \in
	\rel^\adim$}.
    \end{gather*}
    Then the statements of
    Allard \cite[8.9]{MR0307015} may be supplemented as follows.
    \begin{enumerate}
	\item \label{item:pqT:norm_equiv} If $S, T \in \grass \adim{\adim-1}$,
	then $| \project S - \project T | = 2^{1/2} \| \project S - \project T
	\|$.
	\item \label{item:pqT:graphs} If $L \in \Hom ( \rel^{\adim-1}, \rel
	)$, $S = \rel^\adim \cap \{ z \with L ( \pp ( z )) = \qq (z) \}$, and
	$T = \im \pp^\ast$, then $| L | = \| L \|$ and $\| \project S -
	\project T \| = ( 1 + \| L \|^2 )^{-1/2} \| L \|$.
    \end{enumerate}
    In fact, if $v \in \mathbf{S}^{\adim-1}$ and with $S = \{ z \with z
    \bullet v = 0 \}$ then \eqref{item:pqT:norm_equiv} is implied by
    \begin{gather*}
	2^{-1} | \project S - \project T |^2 = \project T \bullet \perpproject
	S = | \project T (v) |^2 = \| \project{T} \circ \perpproject{S} \|^2 =
	\| \project{S} - \project{T} \|^2
    \end{gather*}
    and in case of \eqref{item:pqT:graphs} one may take $v = ( 1 + \| L \|^2
    )^{-1/2} ( \pp^\ast ( L^\ast (1)) - \qq^\ast (1) )$.
\end{miniremark}

\begin{miniremark}
    \label{miniremark:radial_function}
    If $1 < \adim \in \nat$, $I$ is an open subset of $\{ t \with 0 < t <
    \infty \}$, and $g : I \to \rel$ is of class $2$, then $N = \rel^\adim
    \cap \{ z \with \qq (z) = g ( |\pp (z)| ) \}$, see \ref{miniremark:pqT},
    is an $\adim-1$ dimensional submanifold of $\rel^\adim$ of class $2$ and
    if $z \in N$ and $t = | \pp (z) |$ then
    \begin{align*}
	| \mathbf{h} (N,z) | & = ( 1 + g'(t)^2 )^{-1/2} \big | (\adim-2)
	t^{-1} g' ( t ) + (1+g'(t)^2)^{-1} g'' ( t ) \big | \\
	& \leq (\adim-2) t^{-1} |g'(t)| + |g''(t)|, \\
	\| \mathbf{b} (N,z) \| & = ( 1 + g'(t)^2 )^{-1/2} \sup \{ t^{-1}
	|g'(t)|, (1+g'(t)^2 )^{-1} |g''(t)| \} \\
	& \leq \sup \{ t^{-1} |g'(t)|, | g''(t)| \}
    \end{align*}
    as may be verified using the formulae occurring in
    \cite[p.~356--357,~388--391]{MR1814364}.
\end{miniremark}

\begin{miniremark} \label{miniremark:arcosh}
    We will employ the area cosinus hyperbolicus, $\ach : \{ t \with 1 \leq
    t < \infty \} \to \rel$, given by $\ach (t) = \log \big ( t +
    (t^2-1)^{1/2} \big )$ for $1 \leq t < \infty$. Notice that
    \begin{gather*}
	\ach' (t) = (t^2-1)^{-1/2}, \quad \ach''(t) = -t (t^2-1)^{-3/2}
    \end{gather*}
    for $1 < t < \infty$, hence for $3/2 \leq t < \infty$ also that
    \begin{gather*}
	\log t \leq \ach(t) \leq 3 \log t, \quad 1/t \leq \ach'(t) \leq 3/t,
	\quad - 3/t^2 \leq \ach''(t) < 0.
    \end{gather*}
    Moreover, $\ach ( t ) - \ach ( s ) \geq \log ( t/s)$ for $1 \leq s \leq t
    < \infty$.
\end{miniremark}

\begin{lemma} \label{miniremark:catenoid}
    Suppose $N = \rel^3 \cap \{ z \with | \qq (z) | = \ach ( | \pp (z) | )
    \}$, see \ref{miniremark:pqT} and \ref{miniremark:arcosh}.

    Then $N$ is a $2$ dimensional submanifold of $\rel^3$ of class $\infty$,
    $\mathbf{h} (N, z ) = 0$ for $z \in N$, and whenever $1 \leq r < \infty$
    there holds
    \begin{gather*}
	\HM^2 \big (N \cap \pp^{-1}\lIm \cball 0r \rIm \big ) = 2
	\unitmeasure{2} \big (\ach(r) + r (r^2 - 1)^{1/2} \big ), \\
	\tint{N \cap \pp^{-1}\lIm \cball 0r \rIm}{} | \project{\Tan(N,z)} -
	\pp^\ast \circ \pp |^2 \ud \HM^2 z = 8 \unitmeasure{2} \ach(r)  .
    \end{gather*}
\end{lemma}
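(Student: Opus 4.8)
The plan is to recognise $N$ as a catenoid. From the definition of $\ach$ in \ref{miniremark:arcosh} one checks immediately that $\cosh ( \ach (t) ) = t$ for $1 \leq t < \infty$ and $\ach ( \cosh s ) = |s|$ for $s \in \rel$, whence
\begin{gather*}
	N = \rel^3 \cap \big \{ z \with |\pp (z)| = \cosh ( \qq (z) ) \big \} ;
\end{gather*}
thus $N$ is obtained by revolving the curve $\{ ( \cosh s, s ) \with s \in \rel \}$ about the $\qq$~axis. As this curve is an embedded curve of class $\infty$ lying in the open half plane $\{ ( \varrho, s ) \with \varrho > 0 \}$, it follows that $N$ is a $2$~dimensional submanifold of $\rel^3$ of class $\infty$; a convenient parametrisation is $\Psi : \rel \times \rel \to \rel^3$ with $\Psi ( \theta, s ) = ( \cosh s \cos \theta, \cosh s \sin \theta, s )$, a $2\pi$~periodic immersion of class $\infty$ onto~$N$ whose partial derivatives satisfy $\Psi_\theta \bullet \Psi_s = 0$ and $|\Psi_\theta| = |\Psi_s| = \cosh s$, and $v ( \theta, s ) = ( \cosh s )^{-1} ( \cos \theta, \sin \theta, - \sinh s )$ is a unit normal to~$N$ at~$\Psi ( \theta, s )$.

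Next I would establish $\mathbf{h} (N,z) = 0$ for $z \in N$. On $N \cap \{ z \with \qq (z) > 0 \}$, which is the graph of $\ach \circ | \pp ( \cdot ) |$ over $\{ z \with |\pp (z)| > 1 \}$, I would apply \ref{miniremark:radial_function} with $\adim$ and $g$ replaced by $3$ and the restriction of $\ach$ to $\{ t \with t > 1 \}$; inserting the expressions for $\ach'$ and $\ach''$ from \ref{miniremark:arcosh} into the formula for $| \mathbf{h} |$ displayed there, the resulting expression is identically zero (indeed $1 + \ach'(t)^2 = t^2 ( t^2-1)^{-1}$, so $t^{-1} \ach'(t) + ( 1 + \ach'(t)^2)^{-1} \ach''(t) = t^{-1}(t^2-1)^{-1/2} - t^{-1}(t^2-1)^{-1/2} = 0$), so $\mathbf{h} (N,z) = 0$ whenever $\qq (z) > 0$, and the case $\qq (z) < 0$ is symmetric. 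Since $N$ is real analytic and $z \mapsto \mathbf{h} (N,z)$ is continuous, this vanishing extends to all of~$N$ (alternatively, $\mathbf{h}$ may be read off directly from~$\Psi$).

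It then remains to evaluate the two integrals by means of $\Psi$ and the area formula. Because $|\pp ( \Psi ( \theta, s ) )| = \cosh s$ and $\cosh ( \ach ( r ) ) = r$, one has $\Psi ( \theta, s ) \in \pp^{-1} \lIm \cball 0r \rIm$ if and only if $|s| \leq \ach ( r )$, and the area integrand equals $|\Psi_\theta| | \Psi_s | = \cosh^2 s$ as $\Psi_\theta \bullet \Psi_s = 0$. Using $\tint{-a}{a} \cosh^2 t \ud \LM^1 t = a + \sinh a \cosh a$ with $a = \ach ( r )$, $\cosh a = r$, and $\sinh a = ( r^2 - 1 )^{1/2}$, one obtains
\begin{gather*}
	\HM^2 \big ( N \cap \pp^{-1} \lIm \cball 0r \rIm \big ) = \tint{0}{2\pi}
	\tint{-\ach(r)}{\ach(r)} \cosh^2 s \ud \LM^1 s \ud \LM^1 \theta = 2
	\unitmeasure 2 \big ( \ach ( r ) + r ( r^2 - 1 )^{1/2} \big ) .
\end{gather*}
For the second integral, note $\pp^\ast \circ \pp = \project T$ with $T = \im \pp^\ast \in \grass 32$, and that for $S \in \grass 32$ with unit normal $w$, \ref{miniremark:pqT}\,\eqref{item:pqT:norm_equiv} and the identities displayed in \ref{miniremark:pqT} give $| \project S - \project T |^2 = 2 \| \project S - \project T \|^2 = 2 | \project T ( w ) |^2 = 2 ( 1 - ( \qq ( w ) )^2 )$. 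Taking $w = v ( \theta, s )$ yields $| \project{\Tan ( N, \Psi ( \theta, s ) )} - \pp^\ast \circ \pp |^2 = 2 ( 1 - \tanh^2 s ) = 2 ( \cosh s )^{-2}$, which multiplied by the area integrand $\cosh^2 s$ is the constant~$2$; hence
\begin{gather*}
	\tint{N \cap \pp^{-1} \lIm \cball 0r \rIm}{} | \project{\Tan ( N, z )} -
	\pp^\ast \circ \pp |^2 \ud \HM^2 z = \tint{0}{2\pi}
	\tint{-\ach(r)}{\ach(r)} 2 \ud \LM^1 s \ud \LM^1 \theta = 8 \unitmeasure
	2 \ach ( r ) .
\end{gather*}

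The one subtlety, and the main obstacle, is that the graph description of~$N$ degenerates at the ``throat'' $\{ z \with |\pp (z)| = 1 \}$, where $\ach$ is not differentiable; this is why $N$ is handled throughout via the parametrisation~$\Psi$, which is regular there, and why the vanishing of $\mathbf{h}$ at the throat is obtained by a continuity (or direct) argument rather than from \ref{miniremark:radial_function} alone.
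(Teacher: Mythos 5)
Your proof is correct and uses exactly the ingredients the paper cites (\ref{miniremark:pqT}, \ref{miniremark:radial_function}, \ref{miniremark:arcosh}), merely filling in the computations the authors declare ``readily verified''. Your explicit use of the parametrisation $\Psi$ is a sensible way to avoid the improper integral at the throat $|\pp(z)|=1$ that would otherwise arise when applying \ref{miniremark:radial_function} with $g=\ach$, and the calculations $|\Psi_\theta||\Psi_s|=\cosh^2 s$, $|\project{\Tan(N,\cdot)}-\pp^\ast\circ\pp|^2=2\operatorname{sech}^2 s$, and $\int_{-a}^a\cosh^2 = a+\sinh a\cosh a$ all check out.
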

\begin{proof}
    The asserted equations are readily verified by means of
    \ref{miniremark:pqT}--\ref{miniremark:arcosh}.
\end{proof}
\begin{remark}
    The surface $N$ is known as the catenoid, see e.g.~\cite[p.~18]{MR852409}.
\end{remark}
\begin{lemma}
    \label{lemma:ball_catenoid}
    Suppose $\adim = 3$, $\pp$ and $\qq$ are related to $\adim$ as in
    \ref{miniremark:pqT}, and $2 \leq s \leq r/2 < \infty$.

    Then there exist $h : \rel^2 \to \rel$ of class $1$ and $V \in \IVar_2 (
    \rel^3 )$ satisfying
    \begin{gather*}
	\Lip h \leq \Gamma/s, \quad \Lip \Der h< \infty, \quad \| V \| = \HM^2
	\restrict \im ( \pp^\ast + \qq^\ast \circ h ), \\
	\| V \| \restrict \pp^{-1} \lIm \rel^2 \without \oball 0r \rIm = \HM^2
	\restrict \pp^\ast \lIm \rel^2 \without \oball 0r \rIm, \\
	1 \leq \unitmeasure{2}^{-1} r^{-2} \| V \| \big ( \pp^{-1} \lIm
	\oball{0}{r} \rIm \big ) \leq 1 + \Gamma r^{-2} \log r, \quad \| \delta
	V \| ( \rel^3 ) \leq \Gamma, \\
	\text{$\| \delta V \|$ is absolutely continuous with respect to $\| V
	\|$}, \\
	\text{$\density^2 ( \| V \|,z ) = 1$ and $| \qq (z)| \leq 3 \log r$
	whenever $z \in \spt \| V \|$}, \\
	\tint{\pp^{-1} \lIm \oball{0}{r} \rIm \times \grass{3}{2}}{} |
	\project{S} - \pp^\ast \circ \pp |^2 \ud V (z,S) \geq \log (r/(2s)),
    \end{gather*}
    where $\Gamma$ is a universal, positive, finite number.
\end{lemma}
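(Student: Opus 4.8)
The plan is to realise $V$ as the $2$ dimensional integral varifold in $\rel^3$ defined, with multiplicity one, by the graph $M = \im ( \pp^\ast + \qq^\ast \circ h )$ of a rotationally symmetric function $h(x) = g(|x|)$ whose profile $g : \{ t \with 0 \leq t < \infty \} \to \rel$ vanishes for $t \geq r$ and is otherwise assembled, glued to first order at $t = s$ and $t = r/2$, from the following three pieces:
\begin{enumerate}
    \item on $\{ t \with 0 \leq t \leq s \}$: the lower cap $g(t) = c - ( s^4 - t^2 )^{1/2}$ of the sphere of radius $s^2$, the value $c$ being fixed by $C^1$ matching at $t = s$ --- an elementary computation shows that \emph{matching the slope there forces the sphere's radius to equal $s^2$};
    \item on $\{ t \with s \leq t \leq r/2 \}$: the vertical translate $g(t) = \ach(t) - \ach(r/2)$ of the upper sheet of the catenoid of \ref{miniremark:catenoid} (omitted if $s = r/2$);
    \item on $\{ t \with r/2 \leq t \leq r \}$: the rescaled cubic $g(t) = \ach'(r/2) \, \tfrac{r}{2} \, p ( (2t-r)/r )$, $p(u) = u(u-1)^2$, so that $g$ and $g'$ equal $0$ at $t = r$ and match the catenoid at $t = r/2$.
\end{enumerate}
Taking $V$ this way immediately gives $\| V \| = \HM^2 \restrict M$, $\spt \| V \| = M$, $\density^2 ( \| V \|, z ) = 1$ for $z \in M$ by rectifiability, and $\| V \| \restrict \pp^{-1} \lIm \rel^2 \without \oball 0r \rIm = \HM^2 \restrict \pp^\ast \lIm \rel^2 \without \oball 0r \rIm$ since $g = 0$ for $t \geq r$.

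The routine verifications use only \ref{miniremark:arcosh} and elementary estimates: the pieces match to first order by the choice of constants, each being of class $2$ with bounded $g''$, so $h$ is of class $1$ and $\Der h$ is Lipschitzian; on each piece $|g'(t)| \leq 3/s$ (on the cap $|g'(t)| = t(s^4-t^2)^{-1/2} \leq \ach'(s) \leq 3/s$, on the catenoid $|g'(t)| = \ach'(t) \leq \ach'(s) \leq 3/s$, on the cubic $|g'(t)| \leq \ach'(r/2) \leq 6/r \leq 3/s$), whence $\Lip h \leq \Gamma/s$; and $g$ ranges in $[g(0),0]$ on $\{t \with 0 \leq t \leq r/2\}$ with $|g(0)| = \ach(r/2) - \ach(s) + ( s^2 - s(s^2-1)^{1/2} ) \leq \ach(r/2) \leq 3\log r$, while $0 \leq g \leq 1$ on $\{t \with r/2 \leq t \leq r\}$, so $|\qq(z)| \leq 3\log r$ on $\spt\|V\|$. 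Since $M$ is a $C^1$ hypersurface that is of class $2$ off a finite union of circles ($\HM^2$ null), $\delta V$ is represented by integration against the $\HM^2$ almost everywhere defined classical mean curvature vector $\mathbf h(V,\cdot)$, and $\| \delta V \| = \tint{(\cdot)}{} | \mathbf h(V,z) | \ud \| V \| z$ is absolutely continuous with respect to $\| V \|$; so it remains to estimate $\| \delta V \| ( \rel^3 )$, the area of $M$ over $\oball 0r$, and the tilt-excess.

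For this, note that the $\HM^2$ element of $M$ at radius $t$ is $2\pi t(1+g'(t)^2)^{1/2}\ud t$, and apply \ref{miniremark:radial_function} with $\adim = 3$ on the cap and the cubic (giving $|\mathbf h(V,z)| \leq t^{-1}|g'(t)| + |g''(t)|$) and \ref{miniremark:catenoid} on the catenoid piece (giving $\mathbf h(V,\cdot) = 0$ there). The catenoid thus contributes nothing to $\|\delta V\|(\rel^3)$; the cap contributes $|\mathbf h(V,\cdot)| = 2s^{-2}$ over area $\leq 2^{1/2}\pi s^2$, i.e.\ at most $2^{3/2}\pi$; and the cubic contributes at most $2\pi\tint{r/2}{r}(|g'(t)| + t|g''(t)|)(1+g'(t)^2)^{1/2}\ud t$, bounded by a universal constant because there $|g'| \leq 6/r$ and $|g''| \leq 48 r^{-2}$; hence $\|\delta V\|(\rel^3) \leq \Gamma$. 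For the area, $(1+g'(t)^2)^{1/2} \leq 1 + \tfrac12 g'(t)^2$ gives
\begin{gather*}
    \unitmeasure 2 r^2 \leq \| V \| \big ( \pp^{-1} \lIm \oball 0r \rIm \big ) \leq \unitmeasure 2 r^2 + \pi \tint{0}{r} t \, g'(t)^2 \ud t ,
\end{gather*}
and on the catenoid $t \, g'(t)^2 = t \, \ach'(t)^2 \leq 9 t^{-1}$ by \ref{miniremark:arcosh}, while the cap and cubic contribute $O(1)$, so the last integral is at most $9 \log ( r/(2s)) + \Gamma \leq \Gamma \log r$, which is the asserted two-sided bound. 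Finally, for the tilt-excess, keep only the catenoid annulus $\{ z \with s \leq | \pp(z) | \leq r/2 \}$; there $\project{\Tan(M,z)}$ equals $\project{\Tan(N,z-v)}$ for a vertical translation $v$ and the catenoid $N$ of \ref{miniremark:catenoid}, so translation invariance of $\HM^2$, the reflection symmetry of $N$, and \ref{miniremark:catenoid} yield
\begin{gather*}
    \tint{\pp^{-1}\lIm\oball 0r\rIm \times \grass 32}{} | \project S - \pp^\ast \circ \pp |^2 \ud V(z,S) \geq 4 \unitmeasure 2 \big ( \ach(r/2) - \ach(s) \big ) \geq \log ( r/(2s) ) ,
\end{gather*}
the last step by $\ach(r/2) - \ach(s) \geq \log(r/(2s))$ from \ref{miniremark:arcosh} and $4\unitmeasure 2 = 4\pi \geq 1$.

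The main obstacle is to arrange the three pieces to meet to first order while keeping \emph{every} curvature and area integral bounded by a constant that does not depend on how large $r/s$ is. This is exactly why the middle piece is a catenoid: its vanishing mean curvature keeps $\|\delta V\|(\rel^3)$ bounded although the catenoid annulus runs from radius $s$ out to $r/2$, and the only price it charges --- area excess of order $\log r$ and tilt-excess of order $\log(r/(2s))$ --- is precisely the quantity whose sharpness \ref{lemma:ball_catenoid} serves to certify. Correspondingly the inner cap must belong to the \emph{large} sphere of radius $s^2$, so that its curvature $2s^{-2}$ times its area of order $s^2$ is only $O(1)$; verifying this matching, the cubic join near $t=r$, and the Lipschitz regularity of $\Der h$ at the origin are the routine but slightly delicate points.
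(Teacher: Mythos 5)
Your proof is correct and follows the same approach as the paper: a spherical cap of radius $s^2$ (forced by $C^1$ matching at $t=s$), a piece of a catenoid, and a smooth join to the plane near $t=r$, together with the same estimates for mean curvature, area, and tilt-excess; you also apply \ref{miniremark:catenoid} with the right factor $4\unitmeasure 2$ for the single-sheet catenoid piece. The only implementation difference is the explicit cubic interpolant for the join, where the paper instead multiplies the catenoid profile $\ach(t)-\ach(r)$ by a fixed cut-off $\gamma(t/r)$; both yield the same bounds on $g'$ and $g''$ there.
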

\begin{proof}
    Abbreviate $f_1 = \ach$, see \ref{miniremark:arcosh}. Define $d : \{
    \sigma \with 1 < \sigma < \infty \} \to \rel$ by
    \begin{gather*}
        d(\sigma) = f_1(\sigma) + \sigma/f_1'(\sigma)
    \end{gather*}
    for $1 < \sigma < \infty$ and note that $d(\sigma)-\sigma^2$ is a
    nondecreasing as a function of $\sigma$ with $d(\sigma)-\sigma^2 \to -1$
    as $\sigma \to 1+$, hence
    \begin{gather*}
        d(\sigma)-\sigma^2 \geq -1 \quad \text{for $1 < \sigma < \infty$}.
    \end{gather*}
    Define $f_2 : \{ t \with -s^2 < t < s^2 \} \to \rel$ by
    \begin{gather*}
        f_2 (t) = a(s)- \big ( s^4 - t^2 \big )^{1/2} \quad
        \text{for $-s^2 < t < s^2$},
    \end{gather*}
    hence $f_2 (t) \geq -1$ for $-s^2 < t < s^2$. Choose $\gamma \in
    \mathscr{E} ( \rel, \rel )$ with $0 \leq \gamma \leq 1$ and
    \begin{gather*}
    	\gamma (t) = 1 \quad \text{if $t \leq 1/2$}, \qquad \gamma(t) = 0
	\quad \text{if $t \geq 1$}, \qquad -3 \leq \gamma'(t) \leq 0
    \end{gather*}
    whenever $t \in \rel$. Noting
    \begin{gather*}
        s^4-s^2 = s^2/f_1'(s)^2 > 0, \quad f_2(s) = f_1(s), \quad
        f_2'(s) = f_1'(s),
    \end{gather*}
    one defines a function $g : \rel \to \rel$ of class $1$ with $\Lip g'
    < \infty$ by
    \begin{gather*}
        g (t) = f_2 (|t|) - f_1 (r) \quad \text{if $|t| \leq s$},
        \qquad g (t) = ( f_1(|t|) - f_1(r) ) \gamma ( |t|/r ) \quad
        \text{else}
    \end{gather*}
    whenever $t \in \rel$.

    One computes
    \begin{gather*}
        \begin{aligned}
            g'(t) & = f_1'(t) \gamma(t/r) + (f_1(t)-f_1(r)) r^{-1}
            \gamma'(t/r), \\
            g''(t) & = f_1''(t) \gamma(t/r) + 2 f_1'(t) r^{-1}
            \gamma'(t/r) + (f_1(t)-f_1(r)) r^{-2} \gamma''(t/r)
        \end{aligned}
    \end{gather*}
    for $s < t < \infty$, hence, taking into account \ref{miniremark:arcosh},
    there exists a positive, finite number $\Delta_1$ determined by $\gamma$
    such that
    \begin{gather*}
        - \Delta_1 \log r \leq g(t) \leq 0
        \quad \text{for $0 \leq t < \infty$},
        \\
        0 \leq g'(t) \leq \Delta_1 \inf \{ 1/s, 1/t \}
        \quad \text{for $0 \leq t < r$},
        \\
        g'(t) \geq 1 / t
        \quad \text{for $s \leq t \leq r/2$},
        \qquad |g''(t)| \leq \Delta_1 r^{-2}
        \quad \text{for $r/2 < t < r$}.
    \end{gather*}
    Define $h : \rel^2 \to \rel$ by
    \begin{gather*}
        h(x) = g(|x|) \quad \text{for $x \in \rel^2$}
    \end{gather*}
    and note that $h$ is of class $1$ with
    \begin{gather*}
        \Lip h \leq \Delta_1/s, \quad \Lip \Der h < \infty.
    \end{gather*}
    Let $M = \rel^3 \cap \{ z \with \qq(z) = h(\pp(z)) \}$, define $V \in
    \IVar_2 ( \rel^3 )$ by
    \begin{gather*}
	V(k) = \tint{M}{} k(z,\Tan(M,z)) \ud \HM^2 z \quad \text{for $k \in
	\mathscr{K} ( \rel^3 \times \grass{3}{2} )$}
    \end{gather*}
    and observe
    \begin{gather*}
	\delta V(\theta) = - \tint{M}{} \mathbf{h}(M,z) \bullet \theta(z) \ud
	\HM^2 z \quad \text{for $\theta \in \mathscr{D} (\rel^3, \rel^3 )$}.
    \end{gather*}
    Since $M \cap \pp^{-1} \lIm \oball{0}{s} \rIm$ is a~piece of a~sphere of
    radius~$s^2$ one computes
    \begin{gather*}
        | \mathbf{h}(M,z) | = 2 s^{-2}
        \quad \text{whenever $z \in M \cap \pp^{-1} \lIm \oball{0}{s} \rIm$}, \\
        \tint{M \cap \pp^{-1} \lIm \oball{0}{s} \rIm}{} |
        \mathbf{h}(M,z) | \ud \HM^2 z
        = 4 \unitmeasure{2} s / (s + (s^2 - 1)^{1/2}) \leq 4 \unitmeasure 2
    \end{gather*}
    and since $M \cap \pp^{-1} \lIm \cball{0}{r/2} \without \cball{0}{s} \rIm$
    is a~piece of a~catenoid one obtains
    \begin{gather*}
        \mathbf{h}(M,z) = 0
        \quad
	\text{whenever $z \in M \cap \pp^{-1} \lIm \cball{0}{r/2} \without
	\cball{0}{s} \rIm$}
    \end{gather*}
    from \ref{miniremark:catenoid}. Moreover, recalling $2 \le s \le r/2$, one
    estimates
    \begin{gather*}
        \begin{aligned}
	    0 \leq \HM^2 \big ( M \cap \pp^{-1} \lIm \oball{0}{r} \rIm \big )
	    - \unitmeasure{2} r^2 & = 2 \unitmeasure 2 \tint{0}{r} \big ( ( 1
	    + g'(t)^2)^{1/2} -1 \big ) t \ud \LM^1 t \\
            & \leq \unitmeasure 2 \tint{0}{r} g'(t)^2 t \ud \LM^1 t
            \leq 2 \unitmeasure{2} \Delta_1^2 \log r.
        \end{aligned}
    \end{gather*}
    From \ref{miniremark:radial_function} and the estimates for $g'$ and
    $g''$, one obtains a positive, finite number $\Delta_2$ determined by
    $\gamma$ such that
    \begin{gather*}
        | \mathbf{h}(M,z) | \leq \Delta_2 r^{-2}
        \quad
	\text{whenever $z \in M \cap \pp^{-1} \lIm \oball{0}{r} \without
	\cball{0}{r/2} \rIm$}.
    \end{gather*}
    Combining the preceding estimates, we obtain, recalling $r \geq 2$, that
    \begin{gather*}
	\| \delta V \| ( \rel^3 ) \leq 4 \unitmeasure 2 + \unitmeasure{2}
	\Delta_2(1 + \Delta_1^2)  .
    \end{gather*}
    Since $M \cap \pp^{-1}\lIm \oball{0}{r/2} \without \cball{0}{s} \rIm$ is
    a~piece of a~catenoid, \ref{miniremark:catenoid} implies
    \begin{gather*}
	    \tint{\pp^{-1} \lIm \oball{0}{r} \rIm \times \grass{3}{2}}{} |
	    \project{S} - \pp^\ast \circ \pp |^2 \ud V (z,S) \geq 8
	    \unitmeasure 2 ( \ach ( r/2) - \ach (s) ),
    \end{gather*}
    hence \ref{miniremark:arcosh} implies the conclusion.
\end{proof}

\begin{miniremark}
    \label{miniremark:open_cube}
    Occasionally, we denote the open cube with centre~$a$ and side length~$2r$
    by
    \begin{gather*}
        \ocube ar = \rel^\vdim \cap \{ (x_1,\ldots,x_\vdim) \with |x_i - a_i|
	< r \text{ for $i = 1,\ldots,\vdim$}\}
    \end{gather*}
    for $\vdim \in \nat$, $a = (a_1,\ldots,a_\vdim) \in \rel^\vdim$, and $0 < r
    < \infty$.
\end{miniremark}

\begin{example}
    \label{example:quadratic_tilt_excess}
    Suppose $\adim = 3$, $\pp$ and $\qq$ are related to $\adim$ as in
    \ref{miniremark:pqT}, $T = \im \pp^\ast$, $\varepsilon > 0$, and $\omega$
    is a modulus of~continuity.

    Then there exist $f : \rel^2 \to \rel$, $C \subset T$, and $V \in \IVar_2
    ( \rel^3 )$ satisfying
    \begin{gather*}
        \Lip f \leq \varepsilon, \quad \| V \| = \HM^2
        \restrict \im ( \pp^\ast + \qq^\ast \circ f ), \quad \| V \|
        ( C ) > 0, \\
        \| \delta V \| ( \rel^3 ) < \infty, \quad \text{$\| \delta V
        \|$ is absolutely continuous with respect to $\| V \|$}, \\
        \limsup_{r \to 0+} r^{-1} ( \log  (1/r) )^{-1/2}
        \omega(r)^{-1} \big ( \tfint{\cball{c}{r} \times
        \grass{3}{2}}{} | \project{S} - \project{T} |^2 \ud V (z,S)
        \big )^{1/2} > 0
    \end{gather*}
    whenever $c \in C$, here $0^{-1} = \infty$.
\end{example}
\begin{proof} [Construction]
    Take $G$ and $A$ as furnished by \ref{example:yet_another_cantor_set} with
    $\vdim$ and $\lambda$ replaced by $2$ and $1/2$, abbreviate $\Delta =
    \Gamma_{\ref{lemma:ball_catenoid}}$, $\lambda = \inf \{ 1/4, \varepsilon
    /(2\Delta) \}$, and let $C = \pp^\ast \lIm A \rIm$.  Define $W \in \IVar_2
    ( \rel^3 )$ by
    \begin{gather*}
	W(k) = \tint{T \without \pp^\ast \lIm \bigcup G \rIm}{} k (z,T) \ud
	\HM^2 z \quad \text{for $k \in \mathscr{K} ( \rel^3 \times
	\grass{3}{2} )$}.
    \end{gather*}
    Whenever $Q = \ocube at \in G$ and $t > \lambda$ let $f_Q : \rel^2 \to \rel$
    and $X_Q \in \IVar_2 ( \rel^3 )$ be defined by
    \begin{gather*}
        f_Q (x) = 0 \quad \text{for $x \in \rel^2$}, \qquad \| X_Q \|
        = \HM^2 \restrict \im ( \pp^\ast + \qq^\ast \circ f_Q
        ).
    \end{gather*}
    Whenever $Q = \ocube{a}{t} \in G$ and $t \leq \lambda$ apply
    \ref{lemma:ball_catenoid} with $s$ and $r$ replaced by $(2\lambda)^{-1}$
    and $1/t$ to construct $f_Q : \rel^2 \to \rel$ and $X_Q \in \IVar_2 (
    \rel^3 )$ such that $t^{-2} f_Q \circ \boldsymbol{\tau}_a \circ
    \boldsymbol{\mu}_{t^2}$ and $( \boldsymbol{\mu}_{t^{-2}} \circ
    \boldsymbol{\tau}_{-a} )_\# X_Q$ satisfy the conditions of
    \ref{lemma:ball_catenoid} in place of $h$ and $V$ implying
    \begin{gather*}
        \spt f_Q \subset \Clos Q ,
        \quad
        \Lip f_Q \leq \varepsilon,
        \quad
        \| X_Q \| = \HM^2 \restrict \im ( \pp^\ast + \qq^\ast \circ f_Q ),
        \\
	\| X_Q \| \restrict \pp^{-1} \lIm \rel^2 \without Q \rIm = \HM^2
	\restrict \pp^\ast \lIm \rel^2 \without Q \rIm, \\
        \| \delta X_Q \| ( \rel^3 ) \leq \Delta t^2,
        \quad
	\text{$\| \delta X_Q \|$ is absolutely continuous with respect to $\|
	X_Q \|$}, \\
	| \qq (z) | \leq 3 t^2 \log ( 1/t ) \quad \text{whenever $z \in \spt
	\| X_Q \|$}, \\
        \tint{\pp^{-1} \lIm Q \rIm \times \grass{3}{2}}{}
	| \project{S} - \project{T} |^2 \ud V (z,S) \geq t^4 \big ( \log (1/t)
	- \log (1/\lambda) \big ).
    \end{gather*}
    Recall that $G$ is disjointed and define $f : \rel^2 \to \rel$ and $V \in
    \IVar_2 ( \rel^3 )$ by
    \begin{gather*}
        f(x) = \sum_{Q \in G} f_Q (x) \quad \text{for $x \in \rel^2$},
        \qquad \| V \| = \HM^2 \restrict \im ( \pp^\ast + \qq^\ast \circ f ).
    \end{gather*}
    Note that $V = W + \sum_{Q \in G} X_Q \restrict ( \pp^{-1} \lIm Q \rIm
    \times \grass{3}{2} )$ and $\| V \| ( C ) \geq 1/2$. Observe
    \begin{gather*}
        \| \delta V \| ( \rel^3 ) < \infty, \quad \text{$\| \delta V
        \|$ is absolutely continuous with respect to $ \| V \|$}.
    \end{gather*}

    Suppose $c \in C$ and $\delta > 0$.

    Then there exist $r$ and $\ocube{a}{t} = Q$ such that
    \begin{gather*}
        0 < r \leq \delta, \quad Q \in G, \quad Q \subset
        \oball{a}{r}, \quad \LM^2 ( Q ) \geq \omega (r) r^2  ,
    \end{gather*}
    hence $t \leq r$. Since $(2t)^2 \geq \omega (r) r^2$ and
    \begin{gather*}
	t^4 \big ( \log (1/t) - \log (1/\lambda) \big ) \geq 2^{-4} \omega
	(r)^2 r^4 \big ( \log (1/r) - \log (1/\lambda) \big ),
    \end{gather*}
    the estimates for $X_Q$ imply the assertion.
\end{proof}


\section{A coercive estimate}
\label{sed:coercive}

In this section we provide, in \ref{lemma:coercive-estimate}, the first main
ingredient for the proof of the decay rates almost everywhere of the quadratic
tilt-excess of two dimensional integral varifolds whose first variation is
representable by integration, namely a coercive estimate. In this estimate the
quadratic tilt-excess is controlled by the variation measure of the first
variation and the height-excess. In order to be effective for the present
purpose, two aspects are crucial. Firstly, in the height-excess only the set of
points where the density ratio is bounded from below are taken into
account. Secondly, the height-excess term which is multiplied by a first
variation term is measured in the Orlicz space seminorm naturally corresponding
to square summable weak derivatives in two dimensions.

In the basic form of such coercive estimate all quantities are measured as
square integrals, see Allard~\cite[8.13]{MR0307015}. In \cite[5.5]{MR485012}
Brakke devised an interpolation procedure to obtain estimates in which the
variation is measured by its variation measure. This was further refined
in~\cite[4.14]{snulmenn.decay} by allowing the height-excess to be measured in
different Lebesgue spaces and in \cite[4.10]{snulmenn.decay} by restricting the
height-excess to the set of points where the density ratio is bounded from below
using a possibly discontinuous ``cut-off'' function, see
\ref{remark:tv-cutoff}. In \ref{lemma:coercive-estimate} we additionally refine
Brakke's interpolation procedure to include the relevant Orlicz space norm.

\begin{definition}
    \label{def:orlicz}
    If $\Phi : \{ t \with 0 \leq t < \infty \} \to \{ t \with 0 \leq t < \infty
    \}$ is a nondecreasing convex function with $\Phi(0)=0$ and $\lim_{t \to
      \infty} \Phi (t) = \infty$, $\mu$ measures $X$, and $Z$ is a Banach
    space, then one defines the seminorm $\mu_{(\Phi)}$ on $\mathbf{A} ( \mu,
    Z )$ by
    \begin{gather*}
        \Lpnorm{\mu}{\Phi}{f} = \inf \big \{ \lambda \with
        \text{$0 < \lambda \leq \infty$, $\orlicz{\Phi}{\lambda^{-1}f}{\mu} \leq 1$} \big \}
        \quad \text{for $f \in \mathbf{A} ( \mu, Z)$}.
    \end{gather*}
\end{definition}
\begin{remark}
    \label{remark:orlicz_eq}
    Notice that $\Lpnorm{\mu}{\Phi}{f} = 0$ if and only if $f (x) = 0$ for
    $\mu$ almost all $x$. Moreover, if $\Lpnorm{\mu}{\Phi}{f} > 0$ then
    $\orlicz{\Phi}{\lambda^{-1}f}{\mu} \leq 1$ for $\lambda =
    \Lpnorm{\mu}{\Phi}{f}$ and equality holds if $\tint{}{} \Phi \circ |
    s^{-1} f | \ud \mu < \infty$ for some $0 < s < \Lpnorm{\mu}{\Phi}{f}$.
\end{remark}
\begin{remark}
    The functions $\Phi$ and $\mu_{(\Phi)}$ are a ``Young's function'' and
    its corresponding ``Luxemburg norm'' in the terminology of \cite[Chapter
    4, 8.1, 8.6]{MR928802}.
\end{remark}
\begin{remark}
    \label{remark:basic_orlicz}
    Suppose $\Phi$, $\mu$, $X$ and $Z$ are as in \ref{def:orlicz}. Then the
    following basic properties hold.
    \begin{enumerate}
	\item
        \label{item:basic_orlicz:mult} 
        If $0 < c < \infty$ and $f \in \mathbf{A} (\mu,Z)$, then
        $\eqLpnorm{c\mu}{\Phi}{f} = \Lpnorm{\mu}{c \Phi}{f}$.
	\item
        \label{item:basic_orlicz:estimate}
        If $0 < \varepsilon \leq 1$ and $f \in \mathbf{A} ( \mu,Z )$, then
        $\varepsilon \Lpnorm{\mu}{\Phi}{f} \leq \Lpnorm{\mu}{\varepsilon
          \Phi}{f}$.
	\item
        \label{item:basic_orlicz:push}
        If $u : X \to Y$, $f : Y \to Z$, and $f \circ u \in \mathbf{A} (\mu,
        Z)$, then $f \in \mathbf{A} ( u_\# \mu, Z)$ and $\eqLpnorm{u_\#
          \mu}{\Phi}{f} = \Lpnorm{\mu}{\Phi}{f \circ u}$, see \cite[2.1.2,
        2.4.18\,(1)]{MR41:1976}.
    \end{enumerate}
\end{remark}
\begin{miniremark}
    \label{miniremark:kappa}
    Suppose $2 \leq \vdim \in \nat$ and $\kappa : \{ t \with 0 \leq t < \infty
    \} \to \{ t \with 0 \leq t < \infty \}$ satisfies
    \begin{gather*}
        \kappa (0) = 0,
        \qquad
        \kappa(t) = t \big ( 1 + (\log(1 + 1/t))^{1-1/\vdim} \big )
        \quad
        \text{for $0 < t < \infty$}.
    \end{gather*}
    Then one verifies that $\kappa$ is continuous increasing and concave, in
    particular $\kappa ( \tau t ) \leq \tau \kappa (t)$ for $1 \leq \tau <
    \infty$ and $0 \leq t < \infty$.
\end{miniremark}
\begin{miniremark}
    \label{miniremark:Phi}
    Suppose $2 \leq \vdim \in \nat$ and $\Phi : \{ t \with 0 \leq t < \infty
    \} \to \{ 0 \leq t < \infty \}$ is defined by
    \begin{gather*}
        \Phi (t) = \exp \big ( t^{\vdim/(\vdim-1)} \big ) -1 \quad \text{for
          $0 \leq t < \infty$}.
    \end{gather*}
    Then $\Phi$ satisfies the conditions of \ref{def:orlicz}, and $\Phi$ maps
    $\{ t \with 0 \leq t < \infty \}$ univalently onto $\{ t \with 0 \leq t <
    \infty \}$ with
    \begin{gather*}
        \Phi^{-1} (t) = ( \log  ( 1 + t ) )^{1-1/\vdim} \quad \text{for $0
          \leq t < \infty$}.
    \end{gather*}
    Therefore $\Lpnorm{\mu}{\Phi}{1} = 1 / \Phi^{-1}(1/\mu(X))$ whenever
    $\mu$ measures $X$ and $0 < \mu (X) < \infty$ by \ref{remark:orlicz_eq}.
    Notice, if $0 \leq \alpha < \infty$ then
    \begin{gather*}
        \inf \{ \alpha t + 1/\Phi(t) \with 0 < t < \infty \}
        \leq \kappa ( \alpha ),
    \end{gather*}
    where $\kappa$ is as in \ref{miniremark:kappa}; in fact, consider $t =
    \Phi^{-1} ( 1 / \alpha )$ if $\alpha > 0$.
\end{miniremark}
\begin{theorem}
    \label{lemma:coercive-estimate}
    Suppose $\vdim$, $\adim$, $p$, $U$, and $V$ are as in
    \ref{miniremark:situation_general}, $p = 1< \vdim$, $\kappa$ and $\Phi$ are
    related to $\vdim$ as in~\ref{miniremark:kappa} and~\ref{miniremark:Phi},
    $C$ and $K$ are compact subsets of $U$, $C \subset K$, $0 < r < \infty$, $H$
    is the set of all $z \in \spt \| V \|$ such that
    \begin{gather*}
    	\measureball{\| V \|}{\cball zs} \geq (40 \isoperimetric{\vdim} \vdim )^{-\vdim} s^\vdim
        \quad \text{whenever $0 < s < \infty$, $\cball zs \subset K$},
    \end{gather*}
    $c \in \rel^\adim$, $T \in \grass{\adim}{\vdim}$, and $h : U \to \rel$
    satisfies $h(z) = \dist (z-c,T)$ for $z \in U$.
    
    Then there holds
    \begin{multline*}
        r^{-\vdim} \tint{\{ z \with \oball{z}{r} \subset C\} \times \grass{\adim}{\vdim}}{}
        \| \project{S} - \project{T} \|^2 \ud V (z,S)
        \leq \Gamma \big ( r^{-\vdim} \| \delta V \| ( K)^{\vdim/(\vdim-1)}
        \\
        + \kappa ( r^{-\vdim} \| \delta V \| ( K )
        \eqLpnorm{\| V \| \restrict C \cap H}{r^{-\vdim} \Phi}{h} )
        + r^{-\vdim-2} \tint{C \cap H}{} |h|^2 \ud \| V \| \big ),
    \end{multline*}
    where $\Gamma$ is a positive, finite number depending only on $\vdim$.
\end{theorem}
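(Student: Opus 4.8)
The plan is to follow Brakke's interpolation scheme as refined in \cite[4.10, 4.14]{snulmenn.decay}, but inserting the Orlicz seminorm $\mu_{(\Phi)}$ in place of the Lebesgue norm $\Lp{q}$ that occurs there. Let $\phi \in \mathscr{D}(U,\rel)$ be a standard cut-off equal to $1$ on a neighbourhood of $\{z \with \oball{z}{r} \subset C\}$, supported in the $r$-neighbourhood of that set, hence in $C$, with $|\Der\phi| \leq \Gamma/r$ and $|\Der^2\phi| \leq \Gamma/r^2$. First I would record the basic tilt-excess identity: testing the first variation with a vectorfield of the form $\theta(z) = \phi(z)^2\, \perpproject{T}(z-c)$ yields, after the usual algebraic manipulation exploiting $\perpproject{T}\circ\perpproject{T} = \perpproject{T}$ and $\project{S}\bullet\perpproject{T} = \tfrac12\|\project{S}-\project{T}\|^2$ (see Allard \cite[8.13]{MR0307015} and \ref{miniremark:pqT}), an inequality of the shape
\begin{gather*}
    \tint{}{} \phi^2 \|\project{S}-\project{T}\|^2 \ud V(z,S)
    \leq \Gamma \tint{}{} \phi |\Der\phi| \, h \ud \|V\| z
    + \tint{}{} \phi^2 h \, |\mathbf{h}(V,\cdot)| \ud \|V\| z ,
\end{gather*}
where $h(z) = \dist(z-c,T) = |\perpproject{T}(z-c)|$ and I have used $p=1$ so that $\|\delta V\|$ carries the first variation; when $p=1$ the generalised mean curvature term is literally $\int \phi^2 h \ud \|\delta V\|$, bounded by $\Gamma r^{-1}(\max_{\spt\phi} h)\|\delta V\|(K)$ when combined crudely, but the point is to keep it as $\int \phi^2 h \ud\|\delta V\|$ for the finer estimate below.

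The heart of the argument is the refinement of the interpolation that lets the "bad set" where the density ratio is small be discarded and that measures the surviving height-excess in the Orlicz seminorm. Following \ref{remark:tv-cutoff} and \cite[4.10]{snulmenn.decay}, I would multiply by a possibly discontinuous cut-off that is $1$ on $C \cap H$ and $0$ where $\measureball{\|V\|}{\cball zs}$ is too small for some admissible $s$; the isoperimetric inequality of Michael--Simon \cite[\S 2]{MR0344978}, in the packaged form used to define $H$ (the constant $40\isoperimetric{\vdim}\vdim$ being chosen exactly so the covering/Vitali argument closes), shows that the portion of $\spt\|V\|$ outside $H$ carries first variation controlled by $\|\delta V\|(K)$ and contributes only to the $r^{-\vdim}\|\delta V\|(K)^{\vdim/(\vdim-1)}$ term after Young's inequality; this is the place the exponent $\vdim/(\vdim-1)$ enters. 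Then for the surviving term $\int_{C\cap H} \phi^2 h \ud\|\delta V\|$ I would apply the Orlicz--Hölder inequality (\cite[Chapter 4]{MR928802}) against the conjugate pair $(\Phi, \Phi^*)$, but more efficiently use the elementary pointwise bound from \ref{miniremark:Phi}: for the contribution one writes, with a free parameter $t > 0$,
\begin{gather*}
    \tint{C\cap H}{} \phi^2 h \ud \|\delta V\| z
    \leq \|\delta V\|(K) \, \eqLpnorm{\|V\|\restrict C\cap H}{\Phi}{h} \cdot (\text{duality constant}),
\end{gather*}
and then the inequality $\inf_{t>0}\{\alpha t + 1/\Phi(t)\} \leq \kappa(\alpha)$ from \ref{miniremark:Phi} converts the product $\|\delta V\|(K)\cdot\mu_{(\Phi)}(h)$, after rescaling $\mu$ by $r^{-\vdim}$ via \ref{remark:basic_orlicz}\,\eqref{item:basic_orlicz:mult} so that $\eqLpnorm{\|V\|\restrict C\cap H}{r^{-\vdim}\Phi}{h}$ appears, into the concave expression $\kappa(r^{-\vdim}\|\delta V\|(K)\,\eqLpnorm{\|V\|\restrict C\cap H}{r^{-\vdim}\Phi}{h})$. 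The cross term $\int \phi|\Der\phi| h \ud\|V\|$, the $\phi|\Der^2\phi|$ terms coming from a second integration by parts needed to handle $\int\phi^2 h\,|\mathbf{h}|$ more carefully, and the square of the cut-off's derivative against $h$, are all absorbed by Cauchy--Schwarz into $\tfrac12\int\phi^2\|\project{S}-\project{T}\|^2\ud V$ on the left plus the stated $r^{-\vdim-2}\int_{C\cap H}|h|^2\ud\|V\|$ on the right, together with the $\|\delta V\|(K)^{\vdim/(\vdim-1)}$ term.

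The main obstacle I anticipate is bookkeeping the discontinuous cut-off for $H$ through the integration by parts: one cannot simply plug a non-smooth $\phi$ into $\delta V$, so the estimate has to be obtained by the covering argument on $\spt\|V\|\setminus H$ (as in the proof of \ref{thm:O_o}, via Besicovitch--Federer and the isoperimetric lower density bound defining $H$) rather than by a pointwise calculus identity, and one must check that the geometric constants — in particular the $40\isoperimetric{\vdim}\vdim$ in the definition of $H$ and the universal $\Gamma$ in the conclusion — are consistent with the radii $s$ appearing in the covering. The second delicate point is the precise form of the Orlicz duality step: one needs that $h \in \mathbf{A}(\|V\|\restrict C\cap H, \rel)$ has finite $\mu_{(\Phi)}$ seminorm (which holds since $h$ is bounded on the compact set $C$ and $\|V\|(C) < \infty$, so $\Phi\circ|s^{-1}h|$ is integrable for $s$ large, by \ref{remark:orlicz_eq}), and that the constant in Orlicz--Hölder for this explicit $\Phi$ is universal, which it is because $\Phi^{-1}(t) = (\log(1+t))^{1-1/\vdim}$ is pinned down; everything else is routine manipulation of the kind in \cite[4.14]{snulmenn.decay}.
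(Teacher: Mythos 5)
Your overall skeleton is close to the paper's: a smooth cut-off $\phi$, splitting off a ``bad'' part of the varifold (where the density ratio is small) to produce the $\|\delta V\|(K)^{\vdim/(\vdim-1)}$ term, Brakke's interpolation, a free truncation parameter $t$, and the optimization $\inf_{t>0}\{\alpha t + 1/\Phi(t)\} \leq \kappa(\alpha)$. But there is a genuine gap at the central Orlicz step.

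The displayed inequality
\begin{gather*}
\tint{C\cap H}{} \phi^2\, h \ud\|\delta V\| \leq \|\delta V\|(K)\,\eqLpnorm{\|V\|\restrict C\cap H}{\Phi}{h} \cdot (\text{duality constant})
\end{gather*}
is false as stated: the seminorm on the right is taken with respect to $\|V\|\restrict C\cap H$, whereas the integral on the left is against $\|\delta V\|$, which for $p=1$ need not be absolutely continuous with respect to $\|V\|$ at all. Orlicz--H\"older duality cannot connect two unrelated measures, and even on a common measure it would produce the conjugate function $\Phi^*$ of the other factor, not its total mass. The paper does not use Orlicz--H\"older anywhere. What it does is a level-set truncation of the test vector field: with $g = \phi^2(\perpproject{T}|U)$, set $\eta(s) = \inf\{1,t/s\}$, $g_1 = \phi^2(\eta\circ h)\perpproject{T}|U$ (so $|g_1|\leq t$) and $g_2 = g-g_1$ (supported on $Z=\{h>t\}$). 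Then $|(\delta V_2)(g_1)| \leq t\|\delta V_2\|(K)$, while for $g_2$ the pointwise bound $\|\Der g_2(z)\|\leq 2\phi^2\Phi(t/\mu)^{-1}\Phi(h(z)/\mu)+|\Der\phi|^2 h(z)^2$ on $Z$ (valid because $\Phi$ is increasing, so $\Phi(h/\mu)\geq\Phi(t/\mu)$ there) is integrated against $\|V_2\|$, and the defining normalization $\tint{}{}\Phi\circ|\mu^{-1}h|\ud\|V_2\|\leq 1$ of the Orlicz seminorm $\mu = \eqLpnorm{\|V\|\restrict C\cap H}{\Phi}{h}$ yields the $\Phi(t/\mu)^{-1}$ term. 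Only then does one optimize over $t$ with $\alpha=\|\delta V\|(K)\mu$ to obtain $\kappa(\alpha)$. You introduce $t$ but never say how it couples to the Orlicz seminorm; the duality step written in its place does not do the job.

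Two smaller discrepancies: the bad-set splitting is not a covering argument performed in this proof, but a direct invocation of \cite[4.7]{snulmenn.decay}, which supplies a Borel function $f$ decomposing $V = V_1 + V_2$ with $\|V_2\|\leq\|V\|\restrict H$, a bound $\|\delta V_2\|\leq(1-f)\|\delta V\| + |\ap\Der f|\|V\|$, and $\|V_1\|(K)\lesssim\|\delta V\|(K)^{\vdim/(\vdim-1)}$; the exponent $\vdim/(\vdim-1)$ enters through the isoperimetric inequality encoded in that lemma, not through Young's inequality. Also, no second integration by parts and hence no $|\Der^2\phi|$ is needed; the first-variation terms of $V_1$ never appear because they are absorbed directly by the crude bound $\tint{}{}\phi^2\|\project{S}-\project{T}\|^2\ud V_1\leq 4\|V_1\|(K)$.
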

\begin{proof}
    Assume $c = 0$, hence $h = \big |\perpproject{T}|U \big |$, and notice that
    in view of \ref{remark:basic_orlicz}, \cite[3.2(2) and 4.12(1)]{MR0307015},
    one may employ homotheties to reduce the problem to the case
    $r=1$. Abbreviate $\mu = \eqLpnorm{\| V \| \restrict C \cap H}{\Phi}{h}$ and
    denote $(\| V \|, \vdim )$ approximate differentials by ``$\ap \Der $''.

    Select $\phi \in \mathscr{D} (U,\rel)$ with
    \begin{gather*}
        0 \leq \phi \leq 1, \quad \spt \phi \subset C, \quad \{ z \with \oball
        z1 \subset C \} \subset \{ z \with \phi (z) = 1 \}, \quad | \Der  \phi |
        \leq 2.
    \end{gather*}
    Using \cite[4.7]{snulmenn.decay} with $\delta = \frac{1}{40}$, one obtains
    a Borel function $f : U \to \{ t \with 0 \leq t \leq 1 \}$ with $f| U
    \without K = 0$ such that the varifolds $V_1, V_2 \in \RVar_\vdim ( U )$
    defined by
    \begin{gather*}
        V_1 (A) = \tint{A}{\ast} f(z) \ud V(z,S) \quad \text{for $A \subset U
          \times \grass{\adim}{\vdim}$}
    \end{gather*}
    and $V_2 = V-V_1$ satisfy
    \begin{gather*}
        \text{$f(z) = 1$ and $\ap \Der f(z) = 0$ for $\| V \|$ almost all $z \in U
          \without H$}, \\
        \tint{}{} \phi(z)^2 \| \project{S} - \project{T} \|^2 \ud V_1 (z,S) \leq
        4 \| V_1 \| (K) \leq \Delta \| \delta V \| (K)^{\vdim/(\vdim-1)}, \\
        \| \delta V_2 \| \leq (1-f) \| \delta V \| + | \ap \Der f | \| V \|, \quad
        \| V \| ( | \ap \Der f| ) \leq (400)^\vdim \| \delta V \| ( K ),
    \end{gather*}
    where $\Delta = 4
    (400)^{\vdim^2/(\vdim-1)}(\isoperimetric{\vdim}\vdim)^{\vdim/(\vdim-1)}$;
    compare \cite[p.~24, l.~14--20]{snulmenn.decay}. In~particular, one infers
    \begin{gather*}
    	\| V_2 \| \leq \| V \| \restrict H, \quad \| \delta V_2 \| ( K ) \leq
        (800)^\vdim \| \delta V \| ( K  ).
    \end{gather*}

    Defining $g = \phi^2 ( \perpproject{T}|U)$, one derives
    \begin{gather*}
        \tint{}{} \phi (z)^2 \| \project{S} - \project{T} \|^2 \ud V_2 (z,S)
        \leq \sup \big \{ 16 \tint{}{} |\Der \phi|^2 |h|^2 \ud \| V_2 \|, 2 | (
        \delta V_2 ) ( g ) | \big \}
    \end{gather*}
    as in Brakke~\cite[5.5, p.~139, l.~1--14]{MR485012}, hence
    \begin{multline*}
    	\tint{\{ z \with \oball z1 \subset C \} \times \grass \adim \vdim}{}
        \| \project{S} - \project{T} \|^2 \ud V(z,S) \\
        \leq \Delta \| \delta V \| (K)^{\vdim/(\vdim-1)} + 2 | ( \delta V_2 )
        (g) | + 64 \tint{C \cap H}{} |h|^2 \ud \| V \|.
    \end{multline*}
    If $\mu = 0$, then $g(z) = 0$ for $\| V \|$ almost all $z \in H$, hence
    $\Der g(z)|\Tan^\vdim ( \| V \|, z ) = 0$ for $\| V \|$ almost all $z \in H$
    by \cite[2.10.19\,(4), 3.2.16]{MR41:1976} and $( \delta V_2 ) (g) = 0$.
    Therefore one may assume $\mu > 0$.

    In order to estimate $| ( \delta V_2 ) ( g ) |$, suppose $0 < t < \infty$,
    define $\eta : \{ s \with 0 \leq s < \infty \} \to \rel$ by
    \begin{gather*}
        \eta (0) = 1, \qquad \eta (s) = \inf \{ 1, t/s \} \quad \text{for $0
          < s < \infty$}.
    \end{gather*}
    Moreover, let $Z = \{ z \with t < h(z) \}$ and define Lipschitzian maps by
    \begin{gather*}
        g_1 = \phi^2 ( \eta \circ h ) \perpproject{T}|U, \quad g_2 =
        g-g_1.
    \end{gather*}
    Since $g_2 | U \without Z = 0$, one notices that
    \begin{gather*}
    	\ap \Der  g_2 (z) = 0 \quad \text{for $\| V \|$ almost all $z \in U
          \without Z$}
    \end{gather*}
    by \cite[2.10.19\,(4)]{MR41:1976}. Additionally, one computes
    \begin{gather*}
        | g_1 (z) | \leq t \quad \text{for $z \in U$}, \qquad \| \Der 
        g_2 (z) \| \leq 2 \phi(z)^2 + | \Der \phi(z)|^2 h(z)^2 \quad
        \text{for $z \in Z$},
    \end{gather*}
    see the case $r=1$ of \cite[4.10, p.~24, l.~26 -- p.~25,
    l.~12]{snulmenn.decay}. It follows that
    \begin{gather*}
        \| \Der g_2 (z) \| \leq 2 \phi (z)^2 \Phi (t/\mu)^{-1} \Phi (
        h(z)/\mu) + | \Der \phi(z)|^2 h(z)^2 \quad \text{for $z \in Z$}.
    \end{gather*}
    Therefore one estimates, using \cite[4.5\,(4)]{snulmenn.decay} and
    \ref{remark:orlicz_eq},
    \begin{align*}
        | ( \delta V_2 ) (g) | & \leq t \| \delta V_2 \| ( K ) + 2 \Phi
        (t/\mu)^{-1} \tint{C}{} \Phi \circ | \mu^{-1} h | \ud \| V_2 \| + 4
        \tint{C}{} |h|^2 \ud \| V_2 \| \\
        & \leq (800)^\vdim t \| \delta V \| (K) + 2 \Phi(t/\mu)^{-1} + 4
        \tint{C \cap H}{} |h|^2 \ud \| V \|,
    \end{align*}
    hence, noting \ref{miniremark:Phi} with $\alpha = \|\delta V\|(K) \mu$, one
    may take $\Gamma = \Delta + (1600)^\vdim$.
\end{proof}
\begin{remark} \label{remark:tv-cutoff}
    Notice that the function $f$ furnished by \cite[4.7]{snulmenn.decay} will
    necessarily be discontinuous in some cases, see
    \cite[4.8]{snulmenn.decay}. However, inspecting the proof of
    \cite[4.7]{snulmenn.decay} and using \cite[8.7]{snulmenn:tv.v2}, one is at
    least assured that $f$ is generalised weakly differentiable in the sense
    of \cite[8.3]{snulmenn:tv.v2} with
    \begin{gather*}
    	\derivative{V}{f} (z) = ( \| V \|, \vdim ) \ap \Der f(z) \circ \project {
	\Tan^\vdim ( \| V \|, z ) } \quad \text{for $\| V \|$ almost all $z$}.
    \end{gather*}
\end{remark}


\section{Approximation} \label{sec:approximation}

In this section we construct a real valued Lipschitzian auxiliary function
from an integral varifold in a cylinder whose first variation is representable
by integration, see \ref{lemma:mini-lip-approx}. This auxiliary function
captures information on the height-excess measured in Lebesgue and Orlicz
spaces and the quadratic tilt-excess of the varifold. In conjunction with the
basic coercive estimate in \ref{lemma:coercive-estimate} and the interpolation
inequalities of Section \ref{sec:interpolation} it will be used in
\ref{lemma:app-coercive} to obtain a coercive estimate involving an
approximate height quantity.

The auxiliary function is constructed as ``upper envelope of the
modulus'' of an approximating Lipschitzian $\qspace_Q ( \rel^\codim
)$~valued function constructed in
\cite[3.15]{snulmenn.poincare}. Approximations by $\qspace_Q (
\rel^\codim)$~valued functions are a powerful tool, originating from
Almgren \cite[3.1--3.12]{MR1777737}, whose handling is at times
complex. The fact that in the present setting we are able to
encapsulate their usage in the construction of the real valued
auxiliary function considerably simplifies our proof of decay rates
for the quadratic tilt-excess in \ref{theorem:quadratic-tilt-decay}.

\begin{definition}[see Almgren \protect{\cite[1.1\,(1)\,(3), 2.3\,(2)]{MR1777737}}] \label{miniremark:almgren1}
    Suppose $Q$ is a positive integer and $Y$ is a finite dimensional
    inner product space. Then
    \begin{gather*}
    	{\textstyle \qspace_Q (Y) = \left \{ \sum_{i=1}^Q \Lbrack y_i \Rbrack
	\with y_1, \ldots, y_Q \in Y \right \}}
    \end{gather*}
    is metrised by $\mathscr{G}$ such that, whenever $y_1,\ldots,y_Q \in Y$
    and $\upsilon_1,\ldots,\upsilon_Q \in Y$, $\mathscr{G} \big ( \sum_{i=1}^Q
    \Lbrack y_i \Rbrack , \sum_{i=1}^Q \Lbrack \upsilon_i \Rbrack \big)$
    equals the infimum of the set of numbers
    \begin{gather*}
	{\textstyle \left ( \sum_{i=1}^Q \left | y_i - \upsilon_{\pi(i)} \right
	|^2 \right)^{1/2}}
    \end{gather*}
    corresponding to all permutations $\pi$ of $\{ 1, \ldots, Q \}$.
\end{definition}

\begin{definition}[see Almgren \protect{\cite[1.1\,(9)\,(10)]{MR1777737}}]
    \label{miniremark:almgren2}
    Suppose $\vdim$ and $Q$ are positive integers and $Y$ is a finite
    dimensional inner product space.

    A function $f : \rel^\vdim \to \qspace_Q(Y)$ is called \emph{affine} if
    and only if there exist affine functions $f_i : \rel^\vdim \to Y$
    corresponding to $i = 1,\ldots,Q$ such that
    \begin{gather*}
        f (x) = {\textstyle\sum_{i=1}^Q} \Lbrack f_i (x) \Rbrack
        \quad \text{whenever $x \in \rel^\vdim$}
    \end{gather*}
    and in this case $\| f \| = \Lip f$. Moreover, if $a \in A \subset
    \rel^\vdim$ and $f : A \to \qspace_Q ( Y )$ then $f$ is \emph{affinely
    approximable at $a$} if and only if $a \in \Int A$ and there exists an
    affine function $g : \rel^\vdim \to \qspace_Q ( Y)$ such that
    \begin{gather*}
	g(a) = f(a) \quad \text{and} \quad \lim_{x \to a} \mathscr{G} ( f(x),
	g(x) ) / |x-a| = 0.
    \end{gather*}
    The function $g$ is unique and denoted by $\Aff f(a)$. The concept of
    \emph{approximate affine approximability} is obtained through replacement
    of the condition $a \in \Int A$ by $a \in A$ and replacement of $\lim$ by
    $\aplim$. The corresponding affine function is denoted by $\ap \Aff f(a)$.
\end{definition}
\begin{remark}
    \label{remark:apAf-apDf}
    In comparison to Almgren \cite[1.1\,(10)]{MR1777737}, the requirement
    ``$g(a)=f(a)$'' has been added.  Consequently, [approximate] affine
    approximability implies [approximate] continuity. Moreover, supposing $Q =
    1$ and denoting by $i : \qspace_1 (Y) \to Y$ the canonical isometry, the
    function $f$ is [approximately] affinely approximable at $a$ if and only if
    $i \circ f$ is [approximately] differentiable at $a$, see \cite[3.1.1,
    3.1.2]{MR41:1976}, and in this case
    \begin{gather*}
        i \circ \ap \Aff f(a) = i(f(a)) + \ap \Der (i \circ f)(a).
    \end{gather*}
\end{remark}
\begin{miniremark}
    \label{miniremark:Q-affine}
    Suppose $Q$ is a positive integer, $Y$ is a finite dimensional inner product
    space, $a \in \rel^\vdim$, and $f : \rel^\vdim \to \qspace_Q (Y)$ is
    affine. Then
    \begin{gather*}
        \Lip f = \limsup_{x \to a } |x-a|^{-1} \mathscr{G} (f(x),f(a))
        = \aplimsup_{x \to a} |x-a|^{-1} \mathscr{G} (f(x),f(a));
    \end{gather*}
    in fact, in view of \cite[1.1\,(9)]{MR1777737} only the last equation needs
    to be proven. For this purpose denote the approximate limit superior by
    $\lambda$ and define the Lipschitzian function $g : \rel^\vdim \to \rel$ by
    $g(x) = \mathscr{G} (f(x),f(a)) - \lambda |x-a|$ for $x \in \rel^\vdim$. One
    infers $\ap \Der  g^+ (a) = 0$, whence it follows $\Der g^+ (a) =0$ by
    \cite[3.1.5]{MR41:1976} with $C$, $B$, $f$, $\eta$, and $M$ replaced by
    $\rel^\vdim$, $\rel^\vdim$, $g^+$, $1$, and $\Lip g$; therefore $\limsup_{x
      \to a} |x-a|^{-1} \mathscr{G}(f(x),f(a)) \leq \lambda$. The reverse
    inequality follows since $a \in \Int \dmn f$.
\end{miniremark}
\begin{miniremark}
    \label{miniremark:Q-affine-approx}
    Suppose $Q$ is a positive integer, $Y$ is a finite dimensional inner product
    space, $a \in \rel^\vdim$, $f$ maps a subset of $\rel^\vdim$ into
    $\qspace_Q(Y)$, and $f$ is approximately affine approximable at~$a$. Then
    \ref{miniremark:Q-affine} implies
    \begin{gather*}
        \| \ap \Aff f(a) \| = \aplimsup_{x \to a} |x-a|^{-1} \mathscr{G}(f(x),f(a)).
    \end{gather*}
\end{miniremark}
\begin{lemma}
    \label{lemma:q-sup-function}
    Suppose $Q$ is a positive integer, $Y$ is a finite dimensional inner product
    space, and $\sigma : \qspace_Q ( Y ) \to \rel$ satisfies
    \begin{gather*}
        \sigma (S) = \sup \{ |y| \with y \in \spt S \}
        \quad \text{for $S \in \qspace_Q (Y)$}.
    \end{gather*}
    
    Then $\Lip \sigma \leq 1$.
\end{lemma}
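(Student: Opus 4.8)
The plan is to unwind the definitions. For $S = \sum_{i=1}^Q \Lbrack y_i \Rbrack \in \qspace_Q(Y)$ one has $\spt S = \{ y_1, \ldots, y_Q \}$, so $\sigma(S) = \max \{ |y_i| \with i = 1, \ldots, Q \}$. Hence the assertion $\Lip \sigma \le 1$ is equivalent to the inequality $|\sigma(S) - \sigma(U)| \le \mathscr{G}(S,U)$ for all $S, U \in \qspace_Q(Y)$, and this is what I would establish.

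First I would fix representations $S = \sum_{i=1}^Q \Lbrack y_i \Rbrack$ and $U = \sum_{i=1}^Q \Lbrack \upsilon_i \Rbrack$. Since in \ref{miniremark:almgren1} the quantity $\mathscr{G}(S,U)$ is an infimum over the \emph{finite} set of permutations of $\{ 1, \ldots, Q \}$, it is attained: I choose a permutation $\pi$ with $\mathscr{G}(S,U) = \big( \sum_{i=1}^Q |y_i - \upsilon_{\pi(i)}|^2 \big)^{1/2}$, so that in particular $|y_j - \upsilon_{\pi(j)}| \le \mathscr{G}(S,U)$ for each $j \in \{ 1, \ldots, Q \}$.

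Next, interchanging the roles of $S$ and $U$ if necessary, I may assume $\sigma(S) \ge \sigma(U)$, and I pick an index $j$ with $|y_j| = \sigma(S)$. Using $|\upsilon_{\pi(j)}| \le \sigma(U)$ together with the reverse triangle inequality in $Y$,
\[
    \sigma(S) - \sigma(U) = |y_j| - \sigma(U) \le |y_j| - |\upsilon_{\pi(j)}| \le |y_j - \upsilon_{\pi(j)}| \le \mathscr{G}(S,U),
\]
which is the desired bound. There is no real obstacle here; the only points deserving a moment's care are the correct reading of $\spt S$ for $S \in \qspace_Q(Y)$ and the observation that the optimal permutation controls each individual summand $|y_j - \upsilon_{\pi(j)}|$ and not merely their $\ell^2$-aggregate. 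Everything after that is the reverse triangle inequality.
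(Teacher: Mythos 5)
Your proof is correct, but it takes a genuinely different route from the paper's. The paper factors $\sigma = p \circ \xi \circ g$, where $g : \qspace_Q(Y) \to \qspace_Q(\rel)$ is the push-forward by the norm, $\xi : \qspace_Q(\rel) \to \rel^Q$ is the sorting map, and $p : \rel^Q \to \rel$ is projection onto the last coordinate; it then checks each factor is $1$-Lipschitzian, citing Almgren \cite[1.1\,(4)]{MR1777737} for the nontrivial bound $\Lip \xi \le 1$. You instead argue directly from the definition: fix an optimal permutation $\pi$ realising $\mathscr{G}(S,U)$, observe that the $\ell^2$ aggregate controls each individual term $|y_j - \upsilon_{\pi(j)}|$, and conclude by the reverse triangle inequality in $Y$ applied at an index $j$ where the larger supremum is attained. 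Your argument is more elementary and self-contained, needing no external lemma; the paper's factorization is a bit more conceptual and fits the surrounding usage of Almgren's machinery, but for this particular statement your direct computation is arguably the shorter path. One small aside: after interchanging $S$ and $U$ to assume $\sigma(S) \ge \sigma(U)$, symmetry of $\mathscr{G}$ indeed licenses this reduction, which you implicitly use; otherwise the argument is complete.
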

\begin{proof}
    One may express $\sigma = p \circ \xi \circ g$, where $g : \qspace_Q(Y) \to
    \qspace_Q(\rel)$ denotes the push forward induced by the norm on $Y$ mapping
    $Y$ into $\rel$, and $\xi : \qspace_Q( \rel ) \to \rel^Q$ and $p : \rel^Q
    \to \rel$ are characterised by
    \begin{gather*}
        \xi \big ( \tsum{i=1}{Q} \Lbrack y_i \Rbrack \big ) = (y_1, \ldots, y_Q )
        \quad \text{if $y_i \leq y_{i+1}$ for $i = 1, \ldots, Q-1$},
        \\
        p ( y_1, \ldots, y_Q ) = y_Q
    \end{gather*}
    whenever $(y_1, \ldots, y_Q ) \in \rel^Q$. Clearly, $\Lip p \leq 1$.
    Moreover, one readily verifies $\Lip g \leq 1$.  Finally, $\Lip \xi \leq 1$
    by Almgren \cite[1.1\,(4)]{MR1777737}.
\end{proof}
\begin{lemma}
    \label{lemma:comp-Q-valued}
    Suppose $Q$ is a positive integer, $Y$ is a finite dimensional inner product
    space, $a \in \rel^\vdim$, $f$ maps a subset of $\rel^\vdim$ into
    $\qspace_Q(Y)$, and $\sigma : \qspace_Q (Y) \to \rel$ is Lipschitzian.

    Then the following two statements hold.
    \begin{enumerate}
    \item \label{item:comp-Q-valued:classical} If $f$ is affinely approximable
        at~$a$ and $\sigma \circ f$ is differentiable at~$a$, then $| \Der (\sigma
        \circ f)(a)| \leq \Lip (\sigma) \| \Aff f(a) \|$.
    \item \label{item:comp-Q-valued:approximate} If $f$ is approximately
        affinely approximable at~$a$ and $\sigma \circ f$ is approximately
        differentiable at~$a$, then $| \ap \Der  ( \sigma \circ f ) (a) | \leq
        \Lip(\sigma) \| \ap \Aff f(a) \|$.
    \end{enumerate}
\end{lemma}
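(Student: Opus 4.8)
The plan is to deduce both statements from the chain-rule type behaviour of $\sigma \circ f$ at $a$ together with the characterisations of $\|\Aff f(a)\|$ and $\|\ap\Aff f(a)\|$ as [approximate] limit superiors of difference quotients recorded in \ref{miniremark:Q-affine} and \ref{miniremark:Q-affine-approx}. For \eqref{item:comp-Q-valued:classical}, suppose $f$ is affinely approximable at $a$ and $\sigma\circ f$ is differentiable at $a$. Using the Lipschitz bound on $\sigma$ and the approximation $\mathscr{G}(f(x), \Aff f(a)(x)) = o(|x-a|)$ as $x \to a$, one estimates
\begin{gather*}
    | (\sigma\circ f)(x) - (\sigma\circ f)(a) |
    \leq \Lip(\sigma)\, \mathscr{G}( f(x), f(a) )
    \leq \Lip(\sigma)\big( \mathscr{G}( \Aff f(a)(x), \Aff f(a)(a) ) + o(|x-a|) \big),
\end{gather*}
so dividing by $|x-a|$, taking the limit superior as $x\to a$, and invoking \ref{miniremark:Q-affine} (with the affine function $\Aff f(a)$ in place of $f$) yields
\begin{gather*}
    | \Der(\sigma\circ f)(a) |
    = \limsup_{x\to a} |x-a|^{-1} | (\sigma\circ f)(x) - (\sigma\circ f)(a) |
    \leq \Lip(\sigma)\, \| \Aff f(a) \|.
\end{gather*}

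For \eqref{item:comp-Q-valued:approximate}, the argument is the same with $\lim$ and $\limsup$ replaced by $\aplim$ and $\aplimsup$ throughout, using \ref{miniremark:Q-affine-approx} in place of \ref{miniremark:Q-affine}. One should check that the elementary properties of the approximate limit superior permit the two steps used above: that the approximate continuity of $f$ at $a$ (which follows from approximate affine approximability, see \ref{remark:apAf-apDf}) lets us replace $f(a)$ consistently, and that the density-zero set on which $\mathscr{G}(f(x), \ap\Aff f(a)(x)) = o(|x-a|)$ fails can be combined with the density-zero set on which the difference quotient of $\sigma\circ f$ approaches its approximate derivative; since a finite union of sets of density zero at $a$ again has density zero at $a$, see \cite[2.9.12, 2.9.13]{MR41:1976}, the estimate passes to the approximate limit superior without difficulty.

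The only point requiring a little care — and the closest thing to an obstacle — is that $\aplimsup$ of a sum is in general only bounded above by the sum of the $\aplimsup$'s, not equal to it; but here this inequality is exactly what is needed, since the error term $o(|x-a|)$ has approximate limit zero, so adding it changes nothing. Likewise, since $\Lip(\sigma) \geq 0$, multiplying the difference-quotient inequality by $\Lip(\sigma)$ and passing to the (approximate) limit superior is harmless. Thus both parts follow immediately once the characterisations in \ref{miniremark:Q-affine} and \ref{miniremark:Q-affine-approx} are in hand.
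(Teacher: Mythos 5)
Your proof is correct and uses the same core ingredients as the paper's: the Lipschitz bound on $\sigma$ together with the characterisation of $\|\Aff f(a)\|$ and $\|\ap\Aff f(a)\|$ as (approximate) limit superiors of difference quotients from \ref{miniremark:Q-affine} and \ref{miniremark:Q-affine-approx}. The only difference is one of economy: the paper proves part \eqref{item:comp-Q-valued:approximate} first, citing \ref{miniremark:Q-affine-approx} together with \ref{remark:apAf-apDf}, and then observes that part \eqref{item:comp-Q-valued:classical} follows as a special case, whereas you prove \eqref{item:comp-Q-valued:classical} first and repeat the argument for \eqref{item:comp-Q-valued:approximate} with $\limsup$ replaced by $\aplimsup$.
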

\begin{proof}
    \eqref{item:comp-Q-valued:approximate} is a consequence
    of~\ref{miniremark:Q-affine-approx} together with~\ref{remark:apAf-apDf}
    and implies \eqref{item:comp-Q-valued:classical}.
\end{proof}
\begin{miniremark}
    \label{remark:unitmeasure}
	Notice that
	\begin{gather*}
		\sup \{ \unitmeasure{\vdim} \with \vdim \in \nat \} < 6;
	\end{gather*}
	in fact, using $3 < \boldsymbol{\Gamma} ( 1/2 )^2 < 3.2$ and $(\vdim+2)
    \unitmeasure{\vdim+2} = 2 \boldsymbol{\Gamma} (\frac{1}{2})^2
    \unitmeasure{\vdim}$ for $\vdim \in \nat$ by \cite[3.2.13]{MR41:1976}, one
    obtains $\unitmeasure{6} = \frac{1}{6} \boldsymbol{\Gamma}( \frac{1}{2} )^6
    < \frac{8}{15} \boldsymbol{\Gamma} ( \frac{1}{2} )^4 = \unitmeasure{5} < 6$
    and the supremum does not exceed $\unitmeasure{5}$.
\end{miniremark}
\begin{theorem}
    \label{lemma:mini-lip-approx}
    Suppose $\vdim, \adim, Q \in \nat$ and $1 <\vdim < \adim$.

    Then there exists a positive, finite number $\Gamma$ with the following
    property.

    If $0 < r < \infty$, $T = \im \pp^\ast$, $V \in \IVar_\vdim ( \rel^\adim
    \cap \oball{0}{4r} )$,
    \begin{gather*}
        (Q-1/2) \unitmeasure{\vdim} r^\vdim
        \leq \| V \| ( \cylinder{T}{0}{r}{r} )
        \leq (Q+1/2) \unitmeasure{\vdim} r^\vdim,
        \\
        \| V \| ( \cylinder{T}{0}{r}{2r} \without \cylinder{T}{0}{r}{r/2} )
        \leq (1/2) \unitmeasure{\vdim} r^\vdim,
        \\
	\measureball{\| V \|}{\oball{0}{4r}} \leq Q \unitmeasure{\vdim}
	(5r)^\vdim, \\
	\eta = \| \delta V \| ( \oball 0{4r} )^{\vdim/(\vdim-1)} + \tint{}{}
	\| \project{S} - \project{T} \|^2 \ud V (z,S),
    \end{gather*}
    $H$ consists of all $z \in \cylinder T0rr$ such that
    \begin{gather*}
        \measureball{\| V \|}{\cball{z}{s}} 
        \geq (40\isoperimetric{\vdim}\vdim)^{-\vdim} s^\vdim
        \quad \text{whenever $0 < s < 2r$},
    \end{gather*}
    and $\Phi$ is as in \ref{miniremark:Phi}, then there exists a Borel
    subset~$X$ of $\rel^\vdim \cap \cball{0}{r}$ and a function $f : X \to \rel$
    with $\Lip f \leq 1$ satisfying the following five conditions whenever $1
    \leq q \leq \infty$ and $A$ is a subset of~$X$:
    \begin{enumerate}
	\item \label{item:mini-lip-approx:bad-set} $\mathscr{L}^\vdim ( \cball 0r
        \without X ) \leq \Gamma \eta$.
	\item \label{item:mini-lip-approx:2-height-control} $\eqLpnorm{\| V \|
          \restrict H}{q} {\perpproject{T}} \leq \Gamma \big (
        \eqLpnorm{\mathscr{L}^\vdim \restrict X}{q}{f} + \eta^{1/q+1/\vdim} \big
        )$.
	\item \label{item:mini-lip-approx:Phi-height-control} $\eqLpnorm{\| V \|
          \restrict H}{r^{-\vdim} \Phi} { \perpproject{T} } \leq \Gamma \big (
        \eqLpnorm{\mathscr{L}^\vdim \restrict X}{r^{-\vdim} \Phi} f +
        \eta^{1/\vdim} \big )$.
	\item \label{item:mini-lip-approx:inverse-2-height-control}
        $\eqLpnorm{\mathscr{L}^\vdim \restrict A}{2}{f} \leq \eqLpnorm{\| V \|
          \restrict H \cap \pp^{-1} \lIm A \rIm}{2}{\perpproject{T}}$.
	\item \label{item:mini-lip-approx:tilt-control} $\eqLpnorm{\mathscr{L}^\vdim
          \restrict X}{2}{\ap \Der f} \leq (2Q \eta)^{1/2}$.
    \end{enumerate}
\end{theorem}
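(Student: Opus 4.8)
The plan is to reduce the five assertions to facts about a multiple valued Lipschitzian approximation of $V$, composed with a scalar ``envelope''. First one disposes of a trivial case: if $\eta \geq c \unitmeasure{\vdim} r^\vdim$ for a suitable number $c$ depending only on $\vdim$, take $X = \varnothing$; then \eqref{item:mini-lip-approx:bad-set} holds once $\Gamma \geq 1/c$, the left hand sides in \eqref{item:mini-lip-approx:inverse-2-height-control} and \eqref{item:mini-lip-approx:tilt-control} vanish, and \eqref{item:mini-lip-approx:2-height-control}, \eqref{item:mini-lip-approx:Phi-height-control} follow (for large $\Gamma$ depending only on $\vdim,\adim,Q$) from $| \perpproject{T} | \leq r$ on $H \subset \cylinder T0rr$, from $\| V \| ( H ) \leq ( Q + 1/2 ) \unitmeasure{\vdim} r^\vdim$, from the value of $\eqLpnorm{\mu}{r^{-\vdim}\Phi}{1}$ recorded in \ref{miniremark:Phi}, and from $\unitmeasure{\vdim} < 6$ (see \ref{remark:unitmeasure}). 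Hence one may assume $\eta$ small relative to $\unitmeasure{\vdim} r^\vdim$, which is precisely the regime in which the Lipschitz approximation theorem \cite[3.15]{snulmenn.poincare} applies to $V$ under the present normalisation of the hypotheses.

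Invoking \cite[3.15]{snulmenn.poincare} produces a Borel set $X \subset \rel^\vdim \cap \cball 0r$ and a Lipschitzian $\qspace_Q ( \rel^\codim )$ valued function $g$ on $X$ with $\Lip g \leq 1$, written locally as $g ( x ) = \tsum{i=1}{Q} \Lbrack g_i ( x ) \Rbrack$, such that: (a) $\mathscr{L}^\vdim ( \cball 0r \without X ) \leq \Gamma_1 \eta$; (b) $\spt \| V \| \cap H \cap \pp^{-1} \lIm X \rIm$ is the graph of $g$ over $X$ and $\| V \| \restrict H \cap \pp^{-1} \lIm X \rIm$ is the associated integral varifold, so that in particular $\perpproject{T} ( z ) \in \spt g ( \pp ( z ) )$ for $\| V \|$ almost all such $z$; (c) $\spt \| V \| \cap H \subset \{ z \with | \perpproject{T} ( z ) | \leq \Gamma_1 \eta^{1/\vdim} \}$; (d) $\| V \| ( H \cap \pp^{-1} \lIm \cball 0r \without X \rIm ) \leq \Gamma_1 \eta$; and (e) $g$ is approximately affinely approximable $\mathscr{L}^\vdim$ almost everywhere on $X$ with $\tint{X}{} \| \ap \Aff g \|^2 \ud \mathscr{L}^\vdim \leq 2Q\eta$. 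Then define $\sigma : \qspace_Q ( \rel^\codim ) \to \rel$ by $\sigma ( S ) = \sup \{ | y | \with y \in \spt S \}$ and put $f = \sigma \circ g : X \to \rel$; by \ref{lemma:q-sup-function} one has $\Lip \sigma \leq 1$, hence $\Lip f \leq 1$, and $| g_i ( x ) | \leq f ( x )$ for every $i$ with equality for at least one $i$.

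Property \eqref{item:mini-lip-approx:bad-set} is (a). For \eqref{item:mini-lip-approx:tilt-control}, $f$ is Lipschitzian, hence approximately differentiable $\mathscr{L}^\vdim$ almost everywhere, and \ref{lemma:comp-Q-valued}\,\eqref{item:comp-Q-valued:approximate} with $\Lip \sigma \leq 1$ yields $| \ap \Der f | \leq \| \ap \Aff g \|$ almost everywhere on $X$, so (e) gives $\eqLpnorm{\mathscr{L}^\vdim \restrict X}{2}{\ap \Der f} \leq ( 2Q\eta )^{1/2}$. Since $\Lip g \leq 1$, the Jacobian of each sheet $x \mapsto ( x, g_i ( x ) )$ lies in $[ 1, 2^{\vdim/2} ]$, so the area formula applied to the decomposition in (b) gives, for every Borel $A \subset X$ and every nondecreasing $\psi$ with $\psi ( 0 ) = 0$,
\begin{gather*}
    \tint{A}{} f^2 \ud \mathscr{L}^\vdim \leq \tsum{i=1}{Q} \tint{A}{} | g_i |^2 \ud \mathscr{L}^\vdim \leq \tint{H \cap \pp^{-1} \lIm A \rIm}{} | \perpproject{T} |^2 \ud \| V \|, \\
    \tint{H \cap \pp^{-1} \lIm A \rIm}{} \psi ( | \perpproject{T} | ) \ud \| V \| \leq 2^{\vdim/2} \tsum{i=1}{Q} \tint{A}{} \psi ( | g_i | ) \ud \mathscr{L}^\vdim \leq 2^{\vdim/2} Q \tint{A}{} \psi ( f ) \ud \mathscr{L}^\vdim,
\end{gather*}
where $\perpproject{T} ( z ) \in \spt g ( \pp ( z ) )$ is used. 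The first chain is \eqref{item:mini-lip-approx:inverse-2-height-control}. For \eqref{item:mini-lip-approx:2-height-control} and \eqref{item:mini-lip-approx:Phi-height-control} one splits $H$ into $H \cap \pp^{-1} \lIm X \rIm$ and $H \cap \pp^{-1} \lIm \cball 0r \without X \rIm$: on the former the second chain with $\psi ( t ) = t^q$, respectively $\psi$ a multiple of $\Phi$, bounds the contribution by $\Gamma$ times $\eqLpnorm{\mathscr{L}^\vdim \restrict X}{q}{f}$, respectively $\eqLpnorm{\mathscr{L}^\vdim \restrict X}{r^{-\vdim}\Phi}{f}$, the factor $2^{\vdim/2}Q$ being absorbed by means of \ref{remark:basic_orlicz}; on the latter, (c) and (d) give $\eqLpnorm{\| V \| \restrict H \cap \pp^{-1} \lIm \cball 0r \without X \rIm}{q}{\perpproject{T}} \leq \Gamma_1 \eta^{1/\vdim} ( \Gamma_1 \eta )^{1/q} \leq \Gamma \eta^{1/q + 1/\vdim}$ and, using in addition the value of $\eqLpnorm{\mu}{r^{-\vdim}\Phi}{1}$ from \ref{miniremark:Phi} together with (d) and the smallness of $\eta / r^\vdim$, $\eqLpnorm{\| V \| \restrict H \cap \pp^{-1} \lIm \cball 0r \without X \rIm}{r^{-\vdim}\Phi}{\perpproject{T}} \leq \Gamma \eta^{1/\vdim}$. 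Adding the two contributions — for the Orlicz seminorm through convexity of $\Phi$, as in the proof of \ref{lemma:coercive-estimate} — completes \eqref{item:mini-lip-approx:2-height-control} and \eqref{item:mini-lip-approx:Phi-height-control}.

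The real work is thus concentrated in \cite[3.15]{snulmenn.poincare}: one needs the approximating function on a good set whose complement has measure $\lesssim \eta$, the exact graph identity for $\| V \| \restrict H$ over that set with multiplicity at most $Q$, the sharp $L^\infty$ height bound $| \perpproject{T} | \lesssim r ( \eta r^{-\vdim} )^{1/\vdim}$ on $\spt \| V \| \cap H$ (so that the exceptional-set term comes out with the stated exponent $\eta^{1/q+1/\vdim}$ rather than a weaker one), and the tilt estimate $\tint{X}{} \| \ap \Aff g \|^2 \ud \mathscr{L}^\vdim \leq 2Q\eta$, all under the normalisation fixed by the hypotheses above. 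Granting these inputs, the remaining steps — disposing of the trivial case, composing with $\sigma$ via \ref{lemma:comp-Q-valued}, and the bookkeeping of the multiplicity factor $2^{\vdim/2}Q$ and of the Orlicz seminorm estimates over the exceptional set — are routine.
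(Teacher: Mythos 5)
Your overall strategy matches the paper's: reduce to $r=1$, dispose of the trivial large-$\eta$ case, invoke the $\qspace_Q$-valued Lipschitz approximation of \cite[3.15]{snulmenn.poincare}, set $f = \sigma \circ \bar f$ where $\sigma$ is the envelope of \ref{lemma:q-sup-function}, and prove \eqref{item:mini-lip-approx:tilt-control} via \ref{lemma:comp-Q-valued}. But the detailed derivation of \eqref{item:mini-lip-approx:2-height-control}, \eqref{item:mini-lip-approx:Phi-height-control}, and \eqref{item:mini-lip-approx:inverse-2-height-control} differs, and two of the ingredients you attribute to \cite[3.15]{snulmenn.poincare} deserve scrutiny.

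For \eqref{item:mini-lip-approx:2-height-control}, the paper does \emph{not} split $H$ into $\pp^{-1}\lIm X\rIm$ and $\pp^{-1}\lIm \cball 0r\without X\rIm$; it cites \cite[3.15\,(6)]{snulmenn.poincare} with $S = Q\Lbrack 0\Rbrack$, which is already the uniform $L^q$ estimate
\begin{gather*}
\eqLpnorm{\| V \| \restrict H}{q}{\perpproject T} \leq (12)^{\vdim+1}Q\bigl( Q^{1/2}\eqLpnorm{\LM^\vdim \restrict X}{q}{f} + \Delta_8\,\LM^\vdim(\cball 01\without X)^{1/q + 1/\vdim}\bigr)
\end{gather*}
for all $1 \leq q \leq \infty$. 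You instead postulate a pointwise $L^\infty$ height bound $|\perpproject T| \lesssim \eta^{1/\vdim}$ on $\spt\|V\| \cap H$ (your item (c)) and recover the exceptional-set term from that. Whether (c) is literally available from \cite[3.15]{snulmenn.poincare} is not supported by anything the paper cites; the paper's route through \cite[3.15\,(6)]{snulmenn.poincare} sidesteps the need for a pointwise height bound entirely. Once you have \eqref{item:mini-lip-approx:2-height-control} for all $q$, the paper obtains \eqref{item:mini-lip-approx:Phi-height-control} by expanding $\Phi$ in its Taylor series and summing the $L^{\beta i}$ estimates — a cleaner route than your direct Orlicz computation over good and bad sets.

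For \eqref{item:mini-lip-approx:inverse-2-height-control} and the level-set transfer in \eqref{item:mini-lip-approx:tilt-control}, your argument decomposes the $\qspace_Q(\rel^\codim)$-valued $g$ into sheets $g_i$ and applies the area formula sheet by sheet. A multiple-valued Lipschitz map does not globally split into Lipschitz sheets, and even a measurable selection requires a nontrivial argument. The paper avoids this by a self-contained observation: if $g$ and $h$ are measurable with $\LM^\vdim(A \cap \{g > t\} \without \pp\lIm H \cap \{h > t\}\rIm) = 0$ for all $t$, then $\eqLpnorm{\LM^\vdim\restrict A}{q}{g} \leq \eqLpnorm{\|V\|\restrict H \cap \pp^{-1}\lIm A\rIm}{q}{h}$, which follows from $\pp$ being $1$-Lipschitz, \cite[2.10.35]{MR41:1976}, and $\HM^\vdim\restrict\spt\|V\| \leq \|V\|$ (Allard \cite[3.5\,(1b)]{MR0307015}). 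Applying this with $(g,h) = (f, |\perpproject T|)$ gives \eqref{item:mini-lip-approx:inverse-2-height-control}, and with $(g,h) = (|\ap\Der f|, (2Q)^{1/2}\|\project{\Tan^\vdim(\|V\|,\cdot)} - \project T\|)$ — via \cite[3.15\,(7d)]{snulmenn.poincare}, not the integrated estimate (e) you invoke — gives \eqref{item:mini-lip-approx:tilt-control}. This level-set comparison is the essential device you are missing; it removes the need for a sheet decomposition and for the Jacobian bookkeeping. Finally, you silently identify the set $H$ of the theorem with the set furnished by \cite[3.15]{snulmenn.poincare}; the paper devotes a step to verifying this equality from the hypotheses on $\|\delta V\|$ and the tilt, and you should do the same.
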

\begin{proof}
    Notice that $(\isoperimetric{\vdim} \vdim )^{-\vdim} \leq \unitmeasure
    \vdim$, see for instance~\cite[2.4]{snulmenn.isoperimetric}. Define $\beta
    = \vdim/(\vdim-1)$, and
    \begin{gather*}
        \Delta_1 = \varepsilon_{\text{\cite[3.15]{snulmenn.poincare}}} 
        \left( \codim, \vdim, Q, 1, 5^\vdim Q, \tfrac 12, \tfrac 12, \tfrac 12, \tfrac 14, 
        (40 \isoperimetric{\vdim}\vdim)^{-\vdim}/\unitmeasure{\vdim} \right),
        \\
        \Delta_2 = ( \log 2 )^{1/\beta},
        \quad
        \Delta_3 = 1/ \Phi^{-1} ( 1/(6(Q+1))),
        \quad
        \Delta_4 = ( 20 \isoperimetric{\vdim} \vdim )^{-\vdim} \Delta_1^\beta,
        \\
        \Delta_5 = \Delta_1^2 ( Q \unitmeasure{\vdim} \adim)^{-1} 
        ( 60 \isoperimetric{\vdim} \vdim )^{-2\vdim},
        \\
        \Delta_6 = \sup \{ 3 + 2Q + (12Q + 6) 5^\vdim, 8 (Q+2) \},
        \\
        \Delta_7 = (1/2) \unitmeasure{\vdim}
        \lambda_{\text{\cite[3.15\,(4)]{snulmenn.poincare}}} ( \vdim, 1/2, 1/4)^\vdim 6^{-\vdim},
        \\
        \Delta_8 = \sup \{ \Gamma_{\text{\cite[3.15\,(6)]{snulmenn.poincare}}} ( \vdim ),
        2 \unitmeasure{\vdim}^{-1/\vdim} \},
        \quad
        \Delta_9 = \Delta_6 \adim \besicovitch{\adim} \sup \{ 1,\Delta_1^{-2} \},
        \\
        \Delta_{10} = (12)^{\vdim+1} Q \sup \{ Q, \Delta_8 \Delta_9^{1+1/\vdim} \},
        \quad
        \Delta_{11} = \inf \{ 1, \Delta_4, \Delta_5, \Delta_9^{-1} \Delta_7 \},
        \\
        \Delta_{12} = 2 \Delta_{10} \Delta_2^{-1},
        \quad
        \Delta_{13} = \sup \{ 6 (Q+1) \Delta_{11}^{-1-1/\vdim}, \Delta_3 \Delta_{11}^{-1/\vdim} \},
        \\
        \Gamma = \sup \{ \Delta_9, \Delta_{10}, \Delta_{12}, \Delta_{13} \}.
    \end{gather*}
    Notice that $\Delta_2 < 1 \leq \Delta_9$.

    Suppose $r$, $T$, $V$, $\eta$, $H$, and $\Phi$ are related to $\vdim$,
    $\adim$, and $Q$ as in the body of the lemma. Since the statement of the
    lemma is invariant by replacing $V$, $f$ with
    $(\boldsymbol{\mu}_{(1/r)})_{\#} V$, $r^{-1} f \circ \boldsymbol{\mu}_{r}$,
    we can assume $r=1$.

    One may also assume $\eta \leq \Delta_{11}$ since otherwise
    \begin{gather*}
        \measureball{\mathscr{L}^\vdim}{\cball 01} 
        \leq 6 \leq \Delta_{13} \Delta_{11} \leq \Gamma \eta,
        \\
        \eqLpnorm{\| V \| \restrict \cylinder T011}{q}{ \perpproject{T} } 
        \leq 6 (Q+1) \leq \Delta_{13} \Delta_{11}^{1+1/\vdim} \leq \Gamma \eta^{1/q+1/\vdim},
        \\
        \eqLpnorm{\| V \| \restrict \cylinder T011}{\Phi}{ \perpproject{T} }
        \leq \Delta_3 \leq \Delta_{13} \Delta_{11}^{1/\vdim} \leq \Gamma \eta^{1/\vdim}
    \end{gather*}
    by \ref{remark:unitmeasure} and \ref{miniremark:Phi}, hence one may take
    $X = \varnothing$ and $f = \varnothing$.

    One applies \cite[3.15]{snulmenn.poincare} with
    \begin{gather*}
    	\text{$m$, $n$, $L$, $M$, $\delta_1$, $\delta_2$, $\delta_3$,
          $\delta_4$, $\delta_5$, $a$, $h$, $\mu$, and $\varepsilon_1$}
        \quad \text{replaced by} \\
        \text{$\codim$, $\vdim$, $1$, $5^\vdim Q$, $\tfrac 12$, $\tfrac 12$, $\tfrac 12$, $\tfrac 14$,
          $(40 \isoperimetric{\vdim}\vdim)^{-\vdim}/\unitmeasure{\vdim}$, $0$,
          $r$, $\| V \|$, and $\Delta_1$}
    \end{gather*}
    to obtain $\bar B$, $\bar{f}$ and $\bar{H}$ named $B$, $f$ and $H$ there.

    First, \emph{it will be shown that $H = \bar{H}$}; in fact, noting $\eta
    \leq \inf \{ \Delta_4, \Delta_5 \}$, one estimates
    \begin{gather*}
	\measureball{\| \delta V \|}{ \oball{z}{2} } \leq \eta^{1/\beta} \leq
	\Delta_4^{1/\beta} \leq \Delta_1 \| V \|( \oball{z}{2} )^{1/\beta}, \\
	\begin{aligned}
	    \tint{\oball{z}{2} \times \grass{\adim}{\vdim}}{} | \project{S}-
	    \project{T}| \ud V(z,S) & \leq \| V \| ( \oball 04)^{1/2}
	    \adim^{1/2} \eta^{1/2} \\
	    & \leq ( Q \unitmeasure{\vdim})^{1/2} 3^\vdim \adim^{1/2}
	    \Delta_5^{1/2} \leq \Delta_1 \measureball{\| V \|}{\oball z2}.
	\end{aligned}
    \end{gather*}
    whenever $z \in \cylinder T011$ and $\measureball{\| V \|}{ \oball z2 }
    \geq ( 20 \isoperimetric{\vdim} \vdim )^{-\vdim}$.

    Choose a Borel subset $X$ of $\dmn \bar{f}$ with $\mathscr{L}^\vdim ( (\dmn
    \bar f) \without X ) = 0$ and define $f : X \to \rel$ by
    \begin{gather*}
	f(x) = \sup \{ |y| \with y \in \spt \bar f(x) \} \quad \text{whenever
	$x \in X$}.
    \end{gather*}
    Clearly, $\Lip f \leq \Lip \bar f \leq 1$ by \ref{lemma:q-sup-function} and
    one infers
    \begin{gather*}
    	| \ap \Der f(x) | \leq \| \ap \Aff  \bar f (x) \| 
        \quad \text{for $\mathscr{L}^\vdim$ almost all $x \in X$} ;
    \end{gather*}
    in fact, $\bar f$ is approximately affinely approximable at $\LM^\vdim$
    almost all $x \in X$ by~\cite[3.15\,(7a)]{snulmenn.poincare} and $f$ is
    approximately differentiable by~\cite[2.8.18, 2.9.11, 3.1.8]{MR41:1976} at
    $\LM^\vdim$ almost all $x \in X$ so the assertion follows from
    \ref{lemma:q-sup-function} and
    \ref{lemma:comp-Q-valued}\,\eqref{item:comp-Q-valued:approximate}.

    Next, \emph{it will be proven that $\mathscr{L}^\vdim ( \cball 01 \without
    X ) \leq \Delta_9 \eta$}. For this purpose define sets $B_1$ and $B_2$
    consisting of those $z \in \cylinder T011$ satisfying
    \begin{gather*}
        \text{$
          \measureball{\| \delta V \|} { \cball zs } > \Delta_1 \| V \| ( \cball zs )^{1/\beta}
          $ for some $0<s<2 $},
        \\
        \text{$
          \tint{\cball zs \times  \grass \adim \vdim}{} \| \project{S} - \project{T} \|^2 \ud V(z,S)
          > \Delta_1^2 \adim^{-1} \measureball{\| V \|}{ \cball zs }
          $ for some $0 < s < 2$}
    \end{gather*}
    respectively. To estimate $\| V \| ( B_1 )$ we employ the
    Besicovitch-Federer covering theorem which provides disjointed families
    $F_1, \ldots, F_{\besicovitch{\adim}}$ of closed balls such that
    \begin{gather*}
    	{\textstyle B_1 \subset \bigcup \bigcup \{ F_i \with i = 1, \ldots, \besicovitch \adim \}
          \subset \oball 04},
        \\
        \| V \| (C) < \Delta_1^{-\beta} \| \delta V \| ( C )^\beta 
        \quad \text{whenever $C \in F_i$ and $i = 1, \ldots, \besicovitch{\adim}$},
    \end{gather*}
    and we obtain
    \begin{align*}
    	\| V \| ( B_1 ) & \leq \Delta_1^{-\beta}
        \tsum{i=1}{\besicovitch \adim} \tsum{C \in F_i}{} \| \delta V \| ( C )^\beta 
        \\
        & \leq \Delta_1^{-\beta} \tsum{i=1}{ \besicovitch \adim } \big(
        \tsum{C \in F_i}{} \| \delta V \| ( C ) \big)^\beta 
        \leq \Delta_1^{-\beta} \besicovitch{\adim} \| \delta V \| ( \oball 04 )^\beta.
    \end{align*}
    In a similar fashion we find another disjointed families $F_1, \ldots,
    F_{\besicovitch \adim}$ of closed balls such that
    \begin{gather*}
    	{\textstyle B_2 \subset \bigcup \bigcup \{ F_i \with i = 1, \ldots, \besicovitch \adim \}
          \subset \oball 04},
        \\
        \| V \| (C) < \Delta_1^{-2} \adim
        \tint{C \times \grass \adim \vdim}{} \| \project{S} - \project{T} \|^2 \ud V(z,S)
        \quad \text{for $C \in F_i$, $i = 1, \ldots, \besicovitch \adim$},
    \end{gather*}
    and in consequence
    \begin{align*}
        \| V \| ( B_2 ) & \leq \Delta_1^{-2} \adim \tsum{i = 1}{\besicovitch \adim}
        \tsum{C \in F_i}{} \tint{C \times \grass \adim \vdim}{} \| \project{S} - \project{T} \|^2 \ud V(z,S)
        \\
        & \leq \Delta_1^{-2} \adim \besicovitch \adim \tint{}{} \| \project{S} - \project{T} \|^2 \ud V(z,S).
    \end{align*}
    Verifying $\bar B \subset B_1 \cup B_2$ by means of H{\"o}lder's inequality,
    the asserted estimate follows from \cite[3.15\,(3)]{snulmenn.poincare}.

    Since in particular $\mathscr{L}^\vdim ( \cball 01 \without X ) \leq
    \Delta_9 \Delta_{11} \leq \Delta_7$, one applies
    \cite[3.15\,(6)]{snulmenn.poincare} with $S$ replaced by $Q \Lbrack 0
    \Rbrack$ to estimate, concerning
    \eqref{item:mini-lip-approx:2-height-control},
    \begin{align*}
        & \eqLpnorm{\| V \| \restrict H}{q}{ \perpproject T} \\
        & \qquad \leq (12)^{\vdim+1}
        Q \big ( Q^{1/2} \eqLpnorm{\mathscr{L}^\vdim \restrict X} qf 
        + \Delta_8 \mathscr{L}^\vdim ( \cball 01 \without X )^{1/q+1/\vdim} \big ) \\
        & \qquad \leq \Delta_{10} \big ( \eqLpnorm{\mathscr{L}^\vdim \restrict X} qf 
        + \eta^{1/q+1/\vdim} \big )
        \quad \text{for $1 \le q \le \infty$.}
    \end{align*}
    Consequently, concerning \eqref{item:mini-lip-approx:Phi-height-control},
    one notes that $\eta \leq 1$ and $\Phi ( \Delta_2 ) = 1$ and estimates
    \begin{align*}
        \tint{H}{} \Phi \circ | \gamma^{-1} \perpproject T | \ud \| V \| 
        & = \tsum{i=1}{\infty} i!^{-1} \gamma^{-i\beta} \tint{H}{} | \perpproject{T} |^{\beta i} \ud \| V \| 
        \\
        & \leq \tfrac 12 \tsum{i=1}{\infty} i!^{-1} ( 2 \Delta_{10}/ \gamma )^{\beta i}
        \big ( \tint{X}{} |f|^{\beta i} \ud \mathscr{L}^\vdim + \eta^{1+\beta i/\vdim} \big )
        \\
        & = \tfrac 12 \tint{X}{} \Phi \circ | 2 \Delta_{10} \gamma^{-1} f | \ud \mathscr{L}^\vdim
        + \tfrac 12 \eta \Phi ( 2 \Delta_{10} \eta^{1/\vdim} \gamma^{-1} ) \leq 1
    \end{align*}
    whenever $2 \Delta_{10} \Delta_2^{-1} \big ( \eqLpnorm{\mathscr{L}^\vdim
      \restrict X}{\Phi}{f} + \eta^{1/\vdim} \big ) < \gamma < \infty$, hence
    \begin{gather*}
    	\eqLpnorm{\| V \| \restrict H}{\Phi}{\perpproject{T}}
        \leq \Delta_{12} \big ( \eqLpnorm{\mathscr{L}^\vdim \restrict X}{\Phi}{f} 
        + \eta^{1/\vdim} \big ).
    \end{gather*}

    To prove \eqref{item:mini-lip-approx:inverse-2-height-control} and
    \eqref{item:mini-lip-approx:tilt-control}, recall
    \begin{gather*}
        H \cap \pp^{-1} \lIm \dmn \bar f \rIm 
        = \{ z \with \qq (z) \in \spt \bar f ( \pp (z)) \} 
        \subset \{ z \with \density^\vdim ( \| V \|, z) \in \nat \}
    \end{gather*}
    from \cite[3.15\,(2)\,(4)]{snulmenn.poincare} and observe: \emph{If $A$ is
      a subset of $X$, $g$ is an $\mathscr{L}^\vdim \restrict A$ measurable real
      valued function and $h$ is an $\| V \| \restrict H \cap \pp^{-1} \lIm A
      \rIm$ measurable real valued function such that
      \begin{gather*}
          \mathscr{L}^\vdim ( A \cap \{x \with g(x)> t \} \without \pp \lIm H
          \cap \{ z \with h(z) > t \} \rIm ) = 0 \quad \text{for $0 < t <
            \infty$},
      \end{gather*}
      then $\eqLpnorm{\mathscr{L}^\vdim \restrict A} q g \leq \eqLpnorm{\| V \|
        \restrict H \cap \pp^{-1} \lIm A \rIm} qh$;} in fact
    \begin{align*}
    	\mathscr{L}^\vdim ( A \cap \{ x \with g(x) > t \} ) 
        & \leq \mathscr{H}^\vdim ( H \cap \pp^{-1} \lIm A \rIm \cap \{ z \with h(z) > t \} )
        \\
        & \leq \| V \| ( H \cap \pp^{-1} \lIm A \rIm \cap \{ z \with h(z) > t \} )
    \end{align*}
    by \cite[2.10.35]{MR41:1976} and Allard \cite[3.5\,(1b)]{MR0307015}.

    One applies this observation with $g$ and $h$ replaced by $f$ and $\big |
    \perpproject T | U \big |$ to deduce
    \eqref{item:mini-lip-approx:inverse-2-height-control}. Recalling $|\ap
    \Der f(x)| \le \|\ap \Aff f(x) \|$ for $\LM^\vdim$ almost all $x \in X$ together
    with~\cite[3.15\,(7d)]{snulmenn.poincare}, one applies the observation once
    more, with
    \begin{gather*}
        \text{$g(x)$ and $h(z)$} 
        \quad \text{replaced by} \quad
        \text{$| \ap \Der f(x) |$ and $(2Q)^{1/2} \project{\| \Tan^\vdim ( \| V \|, z )} - \project{T} \|$}
    \end{gather*}
    to infer \eqref{item:mini-lip-approx:tilt-control}. 
\end{proof}


\section{Embedding results}
\label{sec:interpolation}

In the present section we formulate for convenient reference two embedding
results for Sobolev functions in Euclidean space which measure the lower order
term only on a set of suitably large Lebesgue measure.

\begin{lemma} \label{lemma:int-ineq}
	Suppose $2 \leq \vdim \in \nat$, $a \in \rel^\vdim$, $0 < r < \infty$,
	$0 < \varepsilon \le (\unitmeasure{\vdim}/2)^{1/\vdim}$, $A$ is an
	$\LM^\vdim$ measurable subset of $\oball ar$, $\LM^\vdim(\oball ar
	\without A) \le (\varepsilon r)^\vdim$, and $f \in \Sob{}{1}{1}(\oball
	ar)$.

	Then there holds
	\begin{multline*}
		r^{-1} \eqLpnorm{\mathscr{L}^\vdim \restrict \oball
		ar}{\vdim}{f} \\
		\leq \Gamma \big ( \varepsilon^{1/2}
		\eqLpnorm{\mathscr{L}^\vdim \restrict \oball ar}{\vdim}{\weakD
		f} + \varepsilon^{-1/2} r^{-1} \eqLpnorm{\mathscr{L}^\vdim
		\restrict A}{\vdim}{f} \big ),
	\end{multline*}
	where $\Gamma$ is a positive, finite number depending only on $\vdim$.
\end{lemma}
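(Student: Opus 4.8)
The plan is to reduce the statement to the standard Sobolev--Poincaré inequality on the ball $\oball ar$, controlling the deficit that arises because the lower-order term is measured only on the subset $A$ rather than on all of $\oball ar$. First I would note that by scaling (replacing $f$ with $f \circ \boldsymbol{\tau}_a \circ \boldsymbol{\mu}_r$) it suffices to treat the case $a=0$, $r=1$; all the quantities appearing transform homogeneously, so this is harmless. Write $B = \oball 01$ for brevity. The key classical input is the Sobolev embedding $\Sob{}{1}{1}(B) \hookrightarrow \Lp{\vdim}(B)$ together with its Poincaré companion: there is $c(\vdim)$ with $\eqLpnorm{\mathscr{L}^\vdim \restrict B}{\vdim}{f - \bar f} \le c(\vdim) \eqLpnorm{\mathscr{L}^\vdim \restrict B}{\vdim}{\weakD f}$, where $\bar f = \unitmeasure{\vdim}^{-1} \tint{B}{} f \ud \mathscr{L}^\vdim$ is the mean. (For $\vdim = 2$ the target exponent is exactly $\vdim$; for $\vdim \ge 2$ one has $\vdim \le \vdim\vdim/(\vdim-1)$, so $\Lp{\vdim}$ is in the range of the embedding and Hölder's inequality on the finite-measure ball gives the stated form.)

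The heart of the matter is then to estimate the constant $\bar f$ using only the values of $f$ on $A$. Here I would split: on one hand $\eqLpnorm{\mathscr{L}^\vdim \restrict A}{\vdim}{\bar f} = |\bar f|\,\mathscr{L}^\vdim(A)^{1/\vdim} \ge |\bar f|(\unitmeasure{\vdim} - \varepsilon^\vdim)^{1/\vdim} \ge |\bar f|(\unitmeasure{\vdim}/2)^{1/\vdim}$ using the hypothesis $\varepsilon \le (\unitmeasure{\vdim}/2)^{1/\vdim}$, so that $\LM^\vdim(B\setminus A)\le(\unitmeasure\vdim/2)$; on the other hand $\eqLpnorm{\mathscr{L}^\vdim \restrict A}{\vdim}{\bar f} \le \eqLpnorm{\mathscr{L}^\vdim \restrict A}{\vdim}{f} + \eqLpnorm{\mathscr{L}^\vdim \restrict A}{\vdim}{f-\bar f} \le \eqLpnorm{\mathscr{L}^\vdim \restrict A}{\vdim}{f} + \eqLpnorm{\mathscr{L}^\vdim \restrict B}{\vdim}{f - \bar f}$. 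Combining, $|\bar f| \le C(\vdim)\big(\eqLpnorm{\mathscr{L}^\vdim \restrict A}{\vdim}{f} + \eqLpnorm{\mathscr{L}^\vdim \restrict B}{\vdim}{\weakD f}\big)$ via the Poincaré bound, and hence $\eqLpnorm{\mathscr{L}^\vdim \restrict B}{\vdim}{f} \le \eqLpnorm{\mathscr{L}^\vdim \restrict B}{\vdim}{f - \bar f} + |\bar f|\,\unitmeasure{\vdim}^{1/\vdim}$ is controlled by $C(\vdim)\big(\eqLpnorm{\mathscr{L}^\vdim \restrict A}{\vdim}{f} + \eqLpnorm{\mathscr{L}^\vdim \restrict B}{\vdim}{\weakD f}\big)$.

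So far this gives the inequality with fixed constants but \emph{without} the $\varepsilon^{1/2}$ and $\varepsilon^{-1/2}$ weights; the remaining, and I expect the only genuinely delicate, point is to recover the correct $\varepsilon$-dependence. The idea is a dilation trick: for a parameter $0 < \rho \le 1$ to be chosen, apply the fixed-constant estimate on the smaller ball $\oball 0\rho$ — where the relative measure of the complement of $A$ has grown to at most $\varepsilon^\vdim \rho^{-\vdim}$ — and sum/cover $B$ by a bounded number of such balls, or alternatively run the argument with $\bar f$ replaced by the average over a well-chosen sub-ball of radius comparable to $\varepsilon$ on which $A$ already has full-measure proportion. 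Optimising the scale $\rho \sim \varepsilon$ then produces the gain: the $\weakD f$ term picks up the factor $\varepsilon^{1/2}$ from the Hölder loss in passing between scales, while the $f$ term on $A$ picks up the compensating $\varepsilon^{-1/2}$. Concretely I would choose the sub-ball $\oball{b}{\varepsilon'}$, with $\varepsilon'$ a fixed multiple of $\varepsilon$, so that $\LM^\vdim(\oball{b}{\varepsilon'}\setminus A) \le \tfrac12 \LM^\vdim(\oball{b}{\varepsilon'})$ — such a ball exists by a covering/pigeonhole argument since $\LM^\vdim(B\setminus A)$ is small — use it to pin down $\bar f$, and then absorb the mismatch between $\eqLpnorm{\mathscr{L}^\vdim \restrict \oball{b}{\varepsilon'}}{\vdim}{\weakD f}$ and $\eqLpnorm{\mathscr{L}^\vdim \restrict B}{\vdim}{\weakD f}$ by Hölder with exponent pair giving precisely the power $\varepsilon^{1/2}$. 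Tracking the exponents carefully through this last step is where the main care is required; everything else is the standard Sobolev--Poincaré machinery.
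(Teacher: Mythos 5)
Your reduction to the unit ball and your Poincar{\'e}-based derivation of a fixed-constant version of the inequality are fine, and you correctly identify the $\varepsilon$-dependence as the genuine difficulty. The problem is that the sub-ball averaging you propose does not produce the balanced powers $\varepsilon^{1/2}$ and $\varepsilon^{-1/2}$. If you pin down the constant by averaging $f$ over a set of measure comparable to $\rho^{\vdim}$ with $\rho \sim \varepsilon$, then H{\"o}lder's inequality gives $|\bar f_A| \lesssim \rho^{-1}\eqLpnorm{\LM^\vdim \restrict A}{\vdim}{f}$, but the mismatch $|\bar f_A - \bar f_B|$, estimated by H{\"o}lder over the same small set, also incurs a factor $\rho^{-1}$ against $\eqLpnorm{\LM^\vdim \restrict \oball 01}{\vdim}{\weakD f}$; both contributions therefore carry $\varepsilon^{-1}$, which is strictly \emph{worse} than the fixed-constant estimate and cannot be rebalanced by optimising $\rho$. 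The covering variant has a similar problem: one needs roughly $\rho^{-\vdim}$ balls of radius $\rho$ rather than a bounded number, and in some of them $\oball 01 \without A$ may dominate, so no uniform local estimate is available.

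The correct source of $\varepsilon^{1/2}$ is an \emph{integrability gain}, not a change of scale: since $\weakD f$ lies in $\Lp{\vdim}$ over a $\vdim$-dimensional domain, the Sobolev inequality gives $\eqLpnorm{\LM^\vdim \restrict \oball 01}{2\vdim}{f - \bar f} \le c(\vdim)\, \eqLpnorm{\LM^\vdim \restrict \oball 01}{\vdim}{\weakD f}$. One then splits $\eqLpnorm{\LM^\vdim \restrict \oball 01}{\vdim}{f}$ into the contribution from $A$ plus that from $\oball 01 \without A$ and bounds the latter by H{\"o}lder's inequality using $\LM^\vdim(\oball 01 \without A)^{1/(2\vdim)} \le \varepsilon^{1/2}$, which puts $\varepsilon^{1/2}$ in front of $\eqLpnorm{\LM^\vdim \restrict \oball 01}{2\vdim}{f}$; after controlling $|\bar f|$ as you did and absorbing the leftover $\varepsilon^{1/2}\eqLpnorm{\LM^\vdim \restrict \oball 01}{\vdim}{f}$ term for $\varepsilon$ below a dimensional threshold (the remaining range of $\varepsilon$ being precisely the fixed-constant case), the estimate follows. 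The paper's proof is a compressed version of this: it first lowers the left-hand side to a rescaled $L^{2\vdim}$ norm by H{\"o}lder's inequality and then invokes the interpolation result \cite[6.3]{snulmenn.decay} with $\lambda = (\varepsilon r)^{\vdim}$, which packages exactly this Sobolev--H{\"o}lder mechanism.
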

\begin{proof}
	By H{\"o}lder's inequality it is sufficient to prove the statement
	that results from replacing $r^{-1} \eqLpnorm{\mathscr{L}^\vdim
	\restrict \oball ar}{\vdim}{f}$ by $r^{-1/2}
	\eqLpnorm{\mathscr{L}^\vdim \restrict \oball ar}{2\vdim}{f}$. The
	latter is a special case of \cite[6.3]{snulmenn.decay} taking $\zeta =
	\frac 23 \vdim$, $\xi = \vdim$, $s = \vdim$ and $\lambda =
	(\varepsilon r)^\vdim$.
\end{proof}
\begin{theorem}
	\label{corollary:orlicz-poincare}
	Suppose $2 \leq \vdim \in \nat$, $\Phi$ is related to $\vdim$ as in
	\ref{miniremark:Phi}, $a \in \rel^\vdim$, $0 < r < \infty$, $A$ is an
	$\LM^\vdim$ measurable subset of $\oball ar$ with $\LM^\vdim(A) \ge
	\frac 12 \unitmeasure{\vdim} r^\vdim$, and $f \in \Sob{}{1}{1}(\oball
	ar)$,

	Then there holds
	\begin{gather*}
		\eqLpnorm{\LM^\vdim \restrict \oball ar}{r^{-m}\Phi}{f} \le
		\Gamma \big ( \eqLpnorm{\mathscr{L}^\vdim \restrict \oball
		ar}{\vdim}{\weakD f} + r^{-1} \eqLpnorm{\mathscr{L}^\vdim
		\restrict A}{\vdim}{f} \big ),
	\end{gather*}
	where $\Gamma$ is a positive, finite number depending only on $\vdim$.

\end{theorem}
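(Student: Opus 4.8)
The plan is to deduce Theorem~\ref{corollary:orlicz-poincare} from the sharp Orlicz–Sobolev embedding on balls combined with a Poincaré-type argument that replaces the full $\LM^\vdim$-average by an average over the large subset~$A$. The target Orlicz norm is the one associated with $\Phi(t) = \exp(t^{\vdim/(\vdim-1)})-1$, which by \ref{miniremark:Phi} is precisely the Luxemburg norm occurring in the sharp embedding $\Sob{}{1}{\vdim}(\oball ar) \hookrightarrow \Lpnorm{}{\Phi}{\cdot}$, see \cite[8.27, 8.28]{MR2424078}. So the first step is to recall (or cite) that for $g \in \Sob{}{1}{\vdim}(\oball ar)$ with vanishing mean one has $\eqLpnorm{\LM^\vdim \restrict \oball ar}{r^{-m}\Phi}{g} \leq \Gamma \eqLpnorm{\LM^\vdim \restrict \oball ar}{\vdim}{\weakD g}$ with $\Gamma$ depending only on~$\vdim$; the scaling factor $r^{-m}$ in the seminorm is the correct homogeneity, as can be checked by the substitution $x \mapsto a + rx$ together with \ref{remark:basic_orlicz}\,\eqref{item:basic_orlicz:mult}\,\eqref{item:basic_orlicz:push}. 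Since we only assume $f \in \Sob{}{1}{1}(\oball ar)$, not $\Sob{}{1}{\vdim}$, this embedding is applied not to $f$ directly but after the truncation-and-interpolation already packaged in \ref{lemma:int-ineq}: that lemma already converts an $\Lp\vdim$ bound on $\weakD f$ plus an $\Lp\vdim$ bound of $f$ on~$A$ into an $\Lp\vdim$ bound of $f$ on the whole ball, so in effect $f$ behaves like a $\Sob{}{1}{\vdim}$ function for the purposes of the embedding.

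The main line of argument is then: first choose the constant $c$ realising $\LM^\vdim(A)^{-1}\tint_A f \ud\LM^\vdim = c$ (the average of $f$ over $A$), or simply subtract a suitable constant. Write $f = (f - c) + c$. For the constant piece, $\eqLpnorm{\LM^\vdim\restrict\oball ar}{r^{-m}\Phi}{c} = |c|/\Phi^{-1}(1/(\unitmeasure\vdim r^\vdim))$ by \ref{remark:orlicz_eq} and \ref{miniremark:Phi}, and $|c| \leq \LM^\vdim(A)^{-1}\tint_A |f|\ud\LM^\vdim \leq (\unitmeasure\vdim r^\vdim/2)^{-1/\vdim}\eqLpnorm{\LM^\vdim\restrict A}{\vdim}{f}$ by Hölder; since $\Phi^{-1}(1/(\unitmeasure\vdim r^\vdim)) \sim (\log(1+1/(\unitmeasure\vdim r^\vdim)))^{1-1/\vdim}$ this is controlled by a $\vdim$-dependent multiple of $r^{-1}\eqLpnorm{\LM^\vdim\restrict A}{\vdim}{f}$ (the logarithm only helps). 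For the oscillation piece $f - c$, apply \ref{lemma:int-ineq} with $\varepsilon = (\unitmeasure\vdim/2)^{1/\vdim}$ (the borderline admissible value, for which $\LM^\vdim(\oball ar \setminus A) \leq \frac12\unitmeasure\vdim r^\vdim = (\varepsilon r)^\vdim$) to get $r^{-1}\eqLpnorm{\LM^\vdim\restrict\oball ar}{\vdim}{f-c} \leq \Gamma(\eqLpnorm{\LM^\vdim\restrict\oball ar}{\vdim}{\weakD f} + r^{-1}\eqLpnorm{\LM^\vdim\restrict A}{\vdim}{f-c})$, and then feed $f-c$ (which is now genuinely $\Lp\vdim$ on the ball, with $\Lp\vdim$ gradient) into the sharp Orlicz embedding. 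Finally, recombine $\eqLpnorm{\LM^\vdim\restrict A}{\vdim}{f-c} \leq \eqLpnorm{\LM^\vdim\restrict A}{\vdim}{f} + |c|\unitmeasure\vdim^{1/\vdim}r \leq \Gamma\eqLpnorm{\LM^\vdim\restrict A}{\vdim}{f}$ and use subadditivity of the seminorm $\eqLpnorm{\LM^\vdim\restrict\oball ar}{r^{-m}\Phi}{\cdot}$.

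The step I expect to be the genuine obstacle is matching the scaling constant $r^{-m}$ in the Orlicz seminorm with the $r^{-1}$ and gradient normalisations so that the final constant $\Gamma$ truly depends on $\vdim$ alone and not on~$r$. This is where one must be careful: under $x \mapsto a + rx$, $\LM^\vdim$ picks up $r^\vdim$, which by \ref{remark:basic_orlicz}\,\eqref{item:basic_orlicz:mult} turns $\Phi$ into $r^\vdim\Phi$ — but the seminorm we want is with respect to $r^{-\vdim}\Phi$, so the bookkeeping is exactly the reciprocal of that, and one has to track that $\Lpnorm{\LM^\vdim\restrict\oball a r}{r^{-\vdim}\Phi}{f} = r^{-1}\Lpnorm{\LM^\vdim\restrict\oball 01}{\Phi}{\tilde f}$ where $\tilde f(x) = r^{-1}f(a+rx)$, consistently with the $r^{-1}$ on the right-hand side. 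Once the reduction to $r = 1$ is clean, everything else is the (standard but not entirely trivial) sharp borderline Sobolev embedding plus the already-proven \ref{lemma:int-ineq}, so the logical core of the proof is short; the writing effort is entirely in the scaling and in bounding the constant $|c|$ by the $A$-term, for which \ref{miniremark:Phi}'s explicit formula for $\Phi^{-1}$ and \ref{miniremark:kappa}'s concavity remarks are the right tools.
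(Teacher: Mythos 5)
Your approach shares the two key ingredients with the paper's proof --- \ref{lemma:int-ineq} to pass from $A$ to the full ball and the critical Orlicz--Sobolev embedding on the unit ball --- but the extra constant-subtraction step is unnecessary and introduces two concrete errors. The paper's proof is the straightforward composition: apply \ref{lemma:int-ineq} to $f$ itself to obtain an $\Lp\vdim$ bound for $f$ over all of $\oball ar$, scale by $x \mapsto a+rx$ to reduce to $a=0$, $r=1$ (at which point the factor $r^{-\vdim}$ in the Young function and the factor $r^{-1}$ on the right both disappear), and then cite the two-term Orlicz embedding $\eqLpnorm{\LM^\vdim\restrict\oball 01}{\Phi}{f} \leq \Gamma\big(\eqLpnorm{\LM^\vdim\restrict\oball 01}{\vdim}{\weakD f} + \eqLpnorm{\LM^\vdim\restrict\oball 01}{\vdim}{f}\big)$ from \cite[8.27]{MR2424078}. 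No re-centering of $f$ is required.

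The two issues in your version are as follows. First, your Orlicz-norm formula for the constant piece is wrong: by \ref{remark:basic_orlicz}\,\eqref{item:basic_orlicz:mult} and \ref{miniremark:Phi} one has
$\eqLpnorm{\LM^\vdim\restrict\oball ar}{r^{-\vdim}\Phi}{c} = \eqLpnorm{r^{-\vdim}\LM^\vdim\restrict\oball ar}{\Phi}{c} = |c|/\Phi^{-1}(1/\unitmeasure\vdim)$,
an $r$-independent multiple of $|c|$, not $|c|/\Phi^{-1}(1/(\unitmeasure\vdim r^\vdim))$ as you wrote. Your formula omits the $r^{-\vdim}$ weighting of the Young function; the subsequent remark ``the logarithm only helps'' is then valid only for small $r$ --- for $r$ large, $\Phi^{-1}(1/(\unitmeasure\vdim r^\vdim)) \to 0$ and your bound on the constant piece diverges, whereas the theorem is asserted for all $0 < r < \infty$. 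Second, the embedding you state requires vanishing mean over $\oball ar$, but you apply it to $f - c$ where $c$ is the average of $f$ over $A$; that function has zero mean over $A$, not over the ball, so the citation as stated does not apply. Both problems vanish if you drop the decomposition entirely and use the two-term embedding after \ref{lemma:int-ineq}, as in the paper.
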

\begin{proof}
    The problem may be reduced firstly to the case $A = \oball ar$ by
    \ref{lemma:int-ineq} and secondly to the case $a = 0$ and $r = 1$ using
    translations and homotheties. The remaining case is a special case of
    \cite[8.27]{MR2424078}.
\end{proof}


\section{Quadratic tilt-excess, decay rates}
\label{sec:quadratic_tilt_decay}
In this section we prove sharp decay rates of the quadratic tilt-excess for two
dimensional integral varifolds whose first variation is a Radon measure, see
\ref{theorem:quadratic-tilt-decay}. This result rests on two pillars.  Firstly,
on the second order rectifiability of such varifolds obtained
in~\cite[4.8]{snulmenn.c2}. Secondly, on an approximate coercive estimate by
which we mean an estimate of the tilt-excess in terms of the first variation,
the height-excess measured on a set of suitably large weight measure and small
contributions from the tilt-excess, see \ref{lemma:app-coercive}.

Accordingly, in order to derive the approximate coercive estimate
\ref{lemma:app-coercive} from the coercive estimate
\ref{lemma:coercive-estimate}, one needs to estimate the height-excess
occurring in~\ref{lemma:coercive-estimate} by approximate height-quantities
together with the variation measure of the first variation and quadratic
tilt-excess. The approximation \ref{lemma:mini-lip-approx} reduces such an
estimate to the case of a real valued Lipschitzian functions which has been
treated in \ref{lemma:int-ineq} and \ref{corollary:orlicz-poincare}.

Since currently no analogous estimates to \ref{lemma:int-ineq} and
\ref{corollary:orlicz-poincare} are available for real valued Lipschitzian
function over varifolds, the authors have chosen the path using the
approximation by $\qspace_Q ( \rel^\codim )$ valued functions leading to
\ref{lemma:mini-lip-approx}. Yet, it would be of interest to investigate whether
the embedding theory for Lipschitzian functions on varifolds can be extended so
as to yield a proof without such approximation. In a somewhat different vein
much of that theory has been extended to generalised weakly differentiable
Functions in~\cite[\S 10]{snulmenn:tv.v2}.

\begin{lemma}
    \label{lemma:app-coercive}
    Suppose $2 < \adim \in \nat$, $Q \in \nat$, $c \in \rel^\adim$, $0 < r <
    \infty$, $V \in \IVar_2 ( \oball c{8r} )$,
    \begin{gather*}
        \measureball{\| V \|}{\cball c{2r}} \geq (Q-1/2) \unitmeasure 2 (2r)^2,
        \quad
        \measureball{\| V \|}{ \oball c{8r} } \leq (Q+1/4) \unitmeasure 2 (8r)^2,
    \end{gather*}
    $T \in \grass \adim 2$, $0 < \varepsilon \leq 1$, $Z$ is $\| V \|$
    measurable, and
    \begin{gather*}
        \alpha = r^{-1} \measureball{\| \delta V\|}{ \oball c{8r} },
        \quad
        \beta = r^{-1} \big ( \tint{}{} \| \project{S} - \project{T} \|^2 \ud V(z,S) \big )^{1/2},
        \quad
        \alpha + \beta \leq \varepsilon,
        \\
        \gamma = r^{-2} \big ( \tint{Z}{} \dist (z-c,T)^2 \ud \| V \| z \big)^{1/2},
        \quad
        \| V \| ( \oball c{8r} \without Z ) \leq (\varepsilon r )^2,
    \end{gather*}
    and $\kappa : \{ t \with 0 \leq t < \infty \} \to \rel$ satisfies (see~\ref{miniremark:kappa})
    \begin{gather*}
    	\kappa (0) = 0,
        \qquad
        \kappa (t) = t \big ( 1 + (\log (1+1/t))^{1/2} \big )
        \quad \text{for $0 < t < \infty$}.
    \end{gather*}

    Then there holds
    \begin{gather*}
    	r^{-2} \tint{\oball cr \times \grass \adim 2}{}
        \| \project{S} - \project{T} \|^2 \ud V(z,S)
        \leq \Gamma \big ( \kappa ( \alpha ( \alpha + \beta + \gamma ) ) 
        + \varepsilon \beta^2 + \varepsilon^{-1} \gamma^2 \big ),
    \end{gather*}
    where $\Gamma$ is a positive, finite number depending only on $\adim$
    and~$Q$.
\end{lemma}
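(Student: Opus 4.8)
The plan is to combine the basic coercive estimate \ref{lemma:coercive-estimate} with the Lipschitz approximation \ref{lemma:mini-lip-approx} and the Orlicz embedding \ref{corollary:orlicz-poincare} (together with \ref{lemma:int-ineq}), chaining the estimates so that the Orlicz height-excess on the set $H$ of points with lower density bound gets replaced by quantities built only from $\alpha$, $\beta$, $\gamma$. First I would fix $c$ to be the origin and reduce to $r=1$ by the usual homothety invariance of all the quantities involved; I would also dispose of the degenerate regime $\varepsilon < \Gamma^{-1}$ (or more precisely where the hypotheses force $X = \varnothing$ in \ref{lemma:mini-lip-approx}), where the conclusion follows trivially because the left-hand side is bounded by a universal constant times $\varepsilon$ times itself. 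Then I would note that the density hypotheses on $\cball{c}{2r}$ and $\oball{c}{8r}$, after appropriate rescaling, match the cylinder-type hypotheses of \ref{lemma:mini-lip-approx} with $T$ and $Q$ as given; the quantity $\eta$ there is comparable to $\alpha^2 + \beta^2$ (using $\|\delta V\|(\oball{}{})^{\vdim/(\vdim-1)} = \|\delta V\|^2$ since $\vdim = 2$), so $\eta \le \varepsilon^2$ is small.

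Next I would invoke \ref{lemma:mini-lip-approx} to obtain the bad set $X$ and the real-valued $1$-Lipschitz function $f$ on $X$, extend $f$ to a $\Sob{}{1}{1}$ function on a ball by a standard extension (one may take $f$ defined on all of $\cball{0}{1}$ by Kirszbraun/McShane extension, preserving $\Lip f \le 1$, so that $|\weakD f| \le 1$ a.e.\ and $\ap \Der f$ agrees with the weak derivative $\LM^\vdim$-a.e.\ on $X$), noting $\LM^\vdim(\oball{}{} \without X) \le \Gamma \eta$ is small enough to apply \ref{lemma:int-ineq} and \ref{corollary:orlicz-poincare} with $A = X$. Applying \ref{corollary:orlicz-poincare} gives
\begin{gather*}
    \eqLpnorm{\LM^\vdim \restrict \oball{}{}}{\Phi}{f}
    \le \Gamma \big ( \eqLpnorm{\LM^\vdim \restrict \oball{}{}}{\vdim}{\weakD f}
    + \eqLpnorm{\LM^\vdim \restrict X}{\vdim}{f} \big ),
\end{gather*}
and then, using $|\weakD f| \le 1$ to bound $\|\weakD f\|_{(\vdim)}$ by a constant, and using \ref{lemma:mini-lip-approx}\,\eqref{item:mini-lip-approx:tilt-control} together with H\"older's inequality to bound it instead by a multiple of $\eta^{1/2} \sim \alpha + \beta$, and using \ref{lemma:mini-lip-approx}\,\eqref{item:mini-lip-approx:inverse-2-height-control} to bound $\|f\|_{(2);X}$ by a multiple of $\gamma$ (after inflating the $L^\vdim$ norm to the $L^2$ norm via H\"older, which is harmless in dimension $\vdim = 2$), I would arrive at a bound of the form $\eqLpnorm{\|V\|\restrict H}{\Phi}{h} \le \Gamma(\alpha + \beta + \gamma)$ via \ref{lemma:mini-lip-approx}\,\eqref{item:mini-lip-approx:Phi-height-control}, and similarly $\int_{C \cap H} |h|^2 \ud \|V\| \le \Gamma \gamma^2$ via \ref{lemma:mini-lip-approx}\,\eqref{item:mini-lip-approx:2-height-control} with $q = 2$.

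Then I would feed these into \ref{lemma:coercive-estimate}: the right-hand side there is $\Gamma(\|\delta V\|^2 + \kappa(\|\delta V\| \cdot \|h\|_{(\Phi)}) + \int |h|^2)$, which after the substitutions becomes $\Gamma(\alpha^2 + \kappa(\alpha(\alpha + \beta + \gamma)) + \gamma^2)$; since $\alpha^2 \le \kappa(\alpha^2) \le \kappa(\alpha(\alpha+\beta+\gamma))$ using that $\kappa$ is increasing and $\kappa(t) \ge t$, and since $\gamma^2 \le \varepsilon^{-1}\gamma^2$, this is absorbed into the asserted form. The $\varepsilon\beta^2$ term, if needed, would come from carrying one interpolation parameter (as in Brakke's and Schätzle's arguments) rather than using the crude bound $|\weakD f| \le 1$ throughout — I would route the tilt contribution through \ref{lemma:mini-lip-approx}\,\eqref{item:mini-lip-approx:tilt-control} which gives a factor $(2Q\eta)^{1/2} \sim \alpha + \beta$, and a Young-type splitting of the cross term then produces $\varepsilon\beta^2 + \varepsilon^{-1}(\cdots)$. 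The main obstacle I anticipate is bookkeeping: matching the precise normalisation constants and ball radii ($8r$ vs.\ $4r$ vs.\ $2r$) between the hypotheses of \ref{lemma:app-coercive} and those of \ref{lemma:mini-lip-approx} and \ref{lemma:coercive-estimate} — in particular checking that the cylinder conditions $\|V\|(\cylinder{T}{0}{r}{r}) \approx Q \unitmeasure{\vdim} r^\vdim$ and the annular smallness condition genuinely follow from the given lower bound on $\|V\|(\cball{c}{2r})$, the upper bound on $\|V\|(\oball{c}{8r})$, and the smallness of $\beta$ (via a tilt-controls-height-concentration argument) and of $\varepsilon$ — and ensuring that the set $H$ appearing in the two lemmas is literally the same, which it is by construction since both use the same density threshold $(40\isoperimetric{\vdim}\vdim)^{-\vdim}$. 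The analytic content is entirely in the already-proven lemmas; the new work is purely the correct chaining.
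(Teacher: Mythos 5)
Your overall strategy matches the paper's: chain \ref{lemma:coercive-estimate}, \ref{lemma:mini-lip-approx}, \ref{lemma:int-ineq}, and \ref{corollary:orlicz-poincare} after rescaling to $r=1$, handling the degenerate regime separately, and extending $f$ by Kirszbraun. But there is a genuine gap in how you route $\gamma$ into the chain. You take $A = X$ throughout and claim \ref{lemma:mini-lip-approx}\,\eqref{item:mini-lip-approx:inverse-2-height-control} then yields $\eqLpnorm{\LM^2 \restrict X}{2}{f} \le C\gamma$. With $A = X$ that item only gives $\eqLpnorm{\LM^2\restrict X}{2}{f} \le \eqLpnorm{\|V\|\restrict H \cap \pp^{-1}\lIm X\rIm}{2}{\perpproject T}$, i.e., the height over \emph{all} of $H \cap \pp^{-1}\lIm X\rIm$, whereas $\gamma$ only integrates the squared height over $Z$. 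The set $H \cap \pp^{-1}\lIm X\rIm$ need not lie in $Z$, and the difference contributes an error of order $\varepsilon$ (from the height being bounded on the cylinder times $\|V\|(\oball c{8r}\without Z)^{1/2}$), which cannot be absorbed into $\alpha+\beta+\gamma$ since $\gamma$ may be much smaller than $\varepsilon$. The tell-tale sign is that your plan never invokes the hypothesis $\|V\|(\oball c{8r}\without Z) \le (\varepsilon r)^2$. The fix — the paper's choice — is to trim $A$ to $X \without \pp\lIm C \cap \{z : \density^2(\|V\|,z)\in\nat\}\without Z\rIm$, so that the $\|V\|$-relevant part of $H \cap \pp^{-1}\lIm A\rIm$ (the points of positive-integer density) lies in $Z$ and hence the right side of \eqref{item:mini-lip-approx:inverse-2-height-control} is dominated by $\gamma$; one then uses the hypothesis on $\|V\|(\oball c{8r}\without Z)$, together with the projection estimate for Hausdorff measure and Allard's result, to keep $\LM^2(\oball 02 \without A)$ small enough for \ref{lemma:int-ineq} and \ref{corollary:orlicz-poincare} to apply with this trimmed $A$.

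Two minor mismatches, not gaps: the $\varepsilon\beta^2$ and $\varepsilon^{-1}\gamma^2$ terms already arise from the free smallness parameter built into \ref{lemma:int-ineq} (applied with that parameter $\sim\varepsilon$), rather than from a separate Young-type cross-term splitting; and the cylinder hypotheses of \ref{lemma:mini-lip-approx} are verified via the monotonicity identity together with a Chebyshev bound based on $\gamma$ and $\|V\|(\oball c{8r}\without Z)$, not a tilt-controls-height argument.
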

\begin{proof}
    Considering $(\boldsymbol{\mu}_{1/r})_{\#}V$ in place of $V$ one may assume
    $r = 1$ and, using isometries, one may assume $c = 0$ and $T = \im
    \pp^\ast$, see \ref{miniremark:pqT}. Moreover, one may assume $Z$ to be a
    Borel set.

    Define
    \begin{gather*}
    	\Delta_1 = 1 + \Gamma_{\ref{lemma:mini-lip-approx}} ( 2 ,\adim,Q),
        \quad
    	\Delta_2 = \inf \big \{ 1/3, 2  \Delta_1^{-1/2} \big \},
        \\
    	\Delta_3 = 2 (Q+1/2)^{1/2} (Q+3/8)^{-1/2},
        \quad
        \Delta_4 = (2Q)^{1/2}+\Delta_1^{1/2},
        \\
        \Delta_5 = \Delta_1^{5/4} \Gamma_{\ref{lemma:int-ineq}}(2) 2^{1/2},
        \quad
        \Delta_6 = \inf \big \{ 1, ( \Delta_3^2 - 4 )^{1/2}/4 \big \},
        \quad
        \Delta_7 = \Delta_5 ( 1 + \Delta_4 ),
        \\
        \Delta_8 = 4 \Delta_1 ( \Gamma_{\ref{corollary:orlicz-poincare}} ( 2) +1 ) ( \Delta_4 + 1 ),
        \\
        \Delta_9 = \sup \{ \Delta_2^{-1}, \Delta_6^{-2} \},
        \quad
	\Gamma = \sup \{ \Delta_9, \Gamma_{\ref{lemma:coercive-estimate}} ( 2
	)  ( 1 + 3 \Delta_7^2 + \Delta_8 ) \}.
    \end{gather*}
    If $\varepsilon > \Delta_2$ then $\beta^2 \le \Delta_2^{-1} \varepsilon
    \beta^2 \leq \Delta_9 \varepsilon \beta^2$ and if $\gamma > \Delta_6$,
    then $\beta^2 \le \varepsilon^2 \le 1 \le \Delta_6^{-2} \varepsilon^{-1}
    \gamma^2 \leq \Delta_9 \varepsilon^{-1} \gamma^2$.  Therefore one may
    assume $\varepsilon \leq \Delta_2$ and $\gamma \leq \Delta_6$.

    Abbreviate $C = \cylinder T022$, $K = \rel^\adim \cap \{ z \with \dist
    (z,C) \leq 4 \}$, and
    \begin{gather*}
	H = C \cap \{ z \with \measureball{\| V \|}{\cball zs} \geq (80
	\isoperimetric{ 2 } )^{-2} s^2 \text{ for $0 < s < 4$} \}.
    \end{gather*}
    Notice that
    \begin{gather*}
    	\oball 01 \subset \{ z \with \oball z1 \subset C \}, 
        \quad
        K \subset \oball 08,
        \\
        C \cap H_{\ref{lemma:coercive-estimate}} \subset H,
        \quad
        \text{where $H_{\ref{lemma:coercive-estimate}}$ denotes the set
          named ``$H$'' in \ref{lemma:coercive-estimate}}.
    \end{gather*}

    In order to apply \ref{lemma:mini-lip-approx} with $\vdim$ and $r$
    replaced by $2$ and $2$, one estimates
    \begin{gather*}
    	\measureball{\| V \|}{ \oball 08} \leq 100 Q \unitmeasure 2,
        \\
        \begin{aligned}
            \| V \| ( \cylinder T024 \without \cylinder T021 ) 
            & \leq \| V \| ( \oball 08 \cap \{ z \with \dist (z,T) \geq 1 \} )
            \\
            & \leq \| V \| ( \oball 08 \without Z ) + \gamma^2 
            \leq \varepsilon^2 + 1
            \leq 2 \unitmeasure{2},
        \end{aligned}
    \end{gather*}
    and, noting $2 < \Delta_3 < 8$ and
    \begin{gather*}
    	C \subset \oball 0{\Delta_3} \cup
        \big (
	  \oball 08 \cap \{ z \with \dist (z,T)^2 \geq \Delta_3^2 - 4 \} \big
	  ),
    \end{gather*}
    one employs the monotonicity identity (see for instance \cite[4.5,
    4.6]{snulmenn:tv.v2}) and the bounds on $\varepsilon$ and $\gamma$ to infer
    \begin{align*}
    	 \| V \| ( C ) 
	&\leq \measureball{\| V \|}{ \oball 0{\Delta_3} } + \| V \| ( \oball
	08 \cap \{ z \with \dist (z,T)^2 \geq \Delta_3^2-4 \} ) \\
        &\leq \Delta_3^2 \big (
          8^{-2} \measureball{\| V \|}{ \oball 08 } 
          + \tint 28 t^{-2} \measureball{\| \delta V \|}{ \oball 0t } \ud \LM^1t
          \big )
        \\
	& \qquad + \| V \| ( \oball 08 \without Z ) + ( \Delta_3^2-4)^{-1}
	\gamma^2 \\
        &\leq \Delta_3^2 \big (
          (Q+1/4) \unitmeasure 2 + \tfrac 38 \alpha 
          + \varepsilon^2 + ( \Delta_3^2-4)^{-1} \gamma^2
        \big )
        \\
        &\leq \Delta_3^2 ( Q + 3/8 ) \unitmeasure 2
        = 4 ( Q + 1/2 ) \unitmeasure 2.
    \end{align*}
    Using the hypotheses one also gets $\|V\|(C) \ge 4 ( Q - 1/2 )
    \unitmeasure{2}$. Therefore, applying \ref{lemma:mini-lip-approx} with
    $\vdim$ and $r$ replaced by~$2$ and $2$ in conjunction with Kirszbraun's
    theorem, see~\cite[2.10.43]{MR41:1976}, one obtains a Borel set $X$ and a
    function $f : \rel^2 \to \rel$ such that $f|X$ satisfies the conditions of
    \ref{lemma:mini-lip-approx} and $\Lip f \leq 1$, in particular $f$ is
    weakly differentiable with $\weakD f(x) = \Der f(x)$ for $\LM^2$ almost all
    $x$ by \cite[2.13, 2.14]{MR2003a:49002}. Define
    \begin{gather*}
        A = X \without \pp \lIm C \cap
        \{ z \with \density^2 ( \| V \|, z ) \in \nat \} \without Z \rIm
    \end{gather*}
    and notice that $A$ is $\LM^2$ measurable by \cite[2.55]{MR2003a:49002}
    and~\cite[2.2.13]{MR41:1976}. Since
    \begin{multline*}
    	\LM^2 ( \pp \lIm C \cap
        \{ z \with \density^2 ( \| V \|, z ) \in \nat \} \without Z \rIm )
        \\
        \leq \mathscr{H}^2 ( C \cap \{ z \with \density^2 ( \| V \|, z ) \in \nat \} \without Z )
        \leq \|  V \| ( C \without Z )
        \leq \varepsilon^2
    \end{multline*}
    by \cite[2.10.35]{MR41:1976} and Allard \cite[3.5\,(1b)]{MR0307015}, one
    infers from
    \ref{lemma:mini-lip-approx}\,\eqref{item:mini-lip-approx:bad-set} that
    \begin{gather*}
    	\LM^2 ( \oball 02 \without A )
        \le \LM^2 ( \cball 02 \without X ) + \LM^2 ( X \without A )
        \le \Delta_1 \varepsilon^2 
        \le 2 \unitmeasure 2.
    \end{gather*}
    Noting \cite[3.5\,(1c)]{MR0307015} and~\cite[2.8.17, 2.9.11,
    3.1.2]{MR41:1976} and observing that from the definition of $A$ it follows
    that $H \cap \pp^{-1} \lIm A \rIm \cap \{ z \with \density^2 ( \| V \|, z
    ) \in \nat \} \subset Z$, one applies
    \ref{lemma:mini-lip-approx}\,\eqref{item:mini-lip-approx:inverse-2-height-control}
    and~\ref{lemma:mini-lip-approx}\,\eqref{item:mini-lip-approx:bad-set}\,\eqref{item:mini-lip-approx:tilt-control}
    to obtain the following auxiliary estimates
    \begin{gather*}
    	\eqLpnorm{ \LM^2 \restrict A }{2}{f} \leq \gamma, \quad
        \eqLpnorm{ \LM^2 \restrict \oball 02}{2}{\Der f} \leq \Delta_4 (
        \alpha + \beta ).
    \end{gather*}

    Next, defining $\Phi$ as in \ref{miniremark:Phi}, it will be shown that
    \begin{align*}
        \eqLpnorm{\| V \| \restrict H} 2 {\perpproject T} & \leq \Delta_7 (
        \alpha + \varepsilon^{1/2} \beta + \varepsilon^{-1/2} \gamma ), \\
        \eqLpnorm{ \| V \| \restrict H}{\Phi}{\perpproject T} & \leq \Delta_8 (
        \alpha + \beta + \gamma ).
    \end{align*}
    To prove the first estimate, one notes $\alpha^2 \le \alpha$, $\beta^2 \le
    \varepsilon \beta \le \varepsilon^{1/2} \beta $ and applies
    \ref{lemma:mini-lip-approx}\,\eqref{item:mini-lip-approx:2-height-control}
    and \ref{lemma:int-ineq} with $r$ and $\varepsilon$ replaced by $2$ and
    $2^{-1} \Delta_1^{1/2} \varepsilon$ to deduce
    \begin{align*}
        & \eqLpnorm{\| V \| \restrict H}{2} { \perpproject T} 
        \leq \Delta_1 \big (
          \eqLpnorm{\LM^2 \restrict \oball 02} 2f + \alpha^2 + \beta^2
        \big ) \\
        & \qquad \leq \Delta_5 \big ( 
          \varepsilon^{1/2} \eqLpnorm{\LM^2 \restrict \oball 02}{2}{\Der f} 
          + \varepsilon^{-1/2} \eqLpnorm{\LM^2 \restrict A} 2f 
          + \alpha + \varepsilon^{1/2} \beta
        \big ),
    \end{align*}
    hence the estimate follows using the auxiliary estimates. To prove the
    second estimate, one employs
    \ref{remark:basic_orlicz}\,\eqref{item:basic_orlicz:estimate},
    \ref{lemma:mini-lip-approx}\,\eqref{item:mini-lip-approx:Phi-height-control},
    and \ref{corollary:orlicz-poincare} to infer
    \begin{align*}
        & \eqLpnorm{\| V \| \restrict H}{\Phi} { \perpproject{T} } 
        \leq 4 \Delta_1 \big (
          \eqLpnorm{\LM^2 \restrict \oball 02}{\Phi/4}{f} + \alpha + \beta
        \big )
        \\
        & \quad \leq 4 \Delta_1 (\Gamma_{\ref{corollary:orlicz-poincare}} ( 2 ) + 1 )
        \big (
          \eqLpnorm{\LM^2 \restrict \oball 02} 2 {\Der f} +
          \eqLpnorm{\LM^2 \restrict A}{2}{f} + \alpha + \beta
        \big ),
    \end{align*}
    hence the estimate follows from the auxiliary estimates.

    To conclude the proof, one employs \ref{lemma:coercive-estimate} to obtain
    \begin{align*}
        & \tint{ \oball 01 \times \grass \adim 2}{} \| \project{S} -
        \project{T} \|^2 \ud V(z,S) \\
        & \qquad \leq \Gamma_{\ref{lemma:coercive-estimate}} (2) \big (
        \alpha^2 + \kappa ( \Delta_8 \alpha ( \alpha + \beta + \gamma ) ) + 3
        \Delta_7^2 ( \alpha^2 + \varepsilon \beta^2 + \varepsilon^{-1}
        \gamma^2 ) \big ),
    \end{align*}
    hence, noting $\alpha^2 \leq \kappa ( \alpha^2 )$ and $\kappa ( \Delta_8
    \alpha ( \alpha + \beta + \gamma ) ) \leq \Delta_8 \kappa ( \alpha ( \alpha
    + \beta + \gamma ) )$ by \ref{miniremark:kappa}, the conclusion is now
    readily derived.
\end{proof}

\begin{theorem}
    \label{theorem:quadratic-tilt-decay}
    Suppose $2 < \adim \in \nat$, $U$ is an open subset of~$\rel^\adim$, $V
    \in \IVar_2(U)$, and $\| \delta V \|$ is a Radon measure.

    Then, for $V$ almost all $(z,T)$, there holds
    \begin{gather*}
        \lim_{r \to 0+} r^{-4} ( \log (1/r) )^{-1} 
        \tint{\cball zr \times \grass \adim 2}{}
        \| \project{S} - \project{T} \|^2 \ud V (\zeta,S) = 0.
    \end{gather*}
\end{theorem}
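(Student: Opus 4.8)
The plan is to establish first the \emph{weak form} of the assertion, in which ``$\lim$'' and ``$=0$'' are replaced by ``$\limsup$'' and ``$<\infty$'', and then to upgrade it by combining \ref{thm:O_o} with the second order rectifiability of $V$ from \cite[4.8]{snulmenn.c2}. Since $V$ is rectifiable by Allard \cite[3.5\,(1), 5.5\,(1)]{MR0307015}, it suffices to treat $\| V \|$ almost all $c$ with $T = \Tan^2 ( \| V \|, c )$; write $E(c,r) = \tint{\cball cr \times \grass \adim 2}{} \| \project S - \project T \|^2 \ud V(\zeta,S)$.

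\emph{Weak form.} By \cite[4.8]{snulmenn.c2} there is a countable family of $2$ dimensional submanifolds of $U$ of class $2$ covering $\| V \|$ almost all of $U$; fix one, $M$. For $\| V \|$ almost all $c \in M$ one may assume that $\density^2 ( \| V \|, c )$ equals a positive integer $Q$, that $\Tan(M,c) = T$, that $\| V \| ( \cball cs \without M ) = o(s^2)$ as $s \to 0+$, that $\density^{\ast 2} ( \| \delta V \|, c ) < \infty$ --- valid $\| V \|$ almost everywhere because $\{ z \with \density^{\ast 2} ( \| \delta V \|, z ) = \infty \}$ is $\HM^2$ negligible by \cite[2.10.19\,(3)]{MR41:1976} while $\| V \|$ is absolutely continuous with respect to $\HM^2$ --- and, by \cite[5.2\,(1)]{snulmenn.c2} with $q = 2$ and some fixed $0 < \alpha < 1$, that $E(c,s) = O(s^{2+2\alpha})$. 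Setting $r_k = 8^{-k} r_0$, one applies for $k$ large \ref{lemma:app-coercive} at $c$ and radius $r_k$ with $Z = \cball c{8r_k} \cap M$ and a quantity $\varepsilon = \varepsilon(r_k) \to 0$ subject to $\varepsilon^{-1} \leq \log(1/r_k)$ and $(\varepsilon r_k)^2 \geq \| V \| ( \oball c{8r_k} \without M )$; there the quantities named $\alpha,\beta,\gamma$ satisfy $\alpha = O(r_k)$, $\gamma = O(r_k)$ --- using $\dist(z-c,T) \leq C|z-c|^2$ for $z \in M$ near $c$ together with $\tint{\cball c{8r_k}}{} |z-c|^4 \ud \| V \| = O(r_k^6)$ --- and $\beta^2 \leq r_k^{-2} E(c,8r_k)$. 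One then proves by induction on $k$ that $E(c,r_k) \leq C r_k^4 \log(1/r_k)$ for all $k \geq k_0$, with $C$ large and $k_0$ large depending on $c$ (and $C$): granting the bound at $8r_k$, the induction hypothesis forces $\beta = O( C^{1/2} r_k (\log(1/r_k))^{1/2} )$, so $\varepsilon^{-1}\gamma^2$ and, by \ref{miniremark:kappa}, $\kappa(\alpha(\alpha+\beta+\gamma))$ each contribute $O( r_k^2 \log(1/r_k) )$ with constant $O(C^{1/2})$, while $\varepsilon\beta^2$ contributes at most $\tfrac12 C r_k^2 \log(1/r_k)$ once $k_0$ is large, after which a sufficiently large choice of $C$ settles the remaining contributions and the base case. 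Hence $\limsup_{r \to 0+} r^{-4} ( \log(1/r) )^{-1} E(c,r) < \infty$ for $\| V \|$ almost all $c$.

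\emph{Upgrade to $\lim = 0$.} Fix such an $M$, a compact piece $K \subset M$, and a neighbourhood $W$ of $K$ on which the tangent plane map $z \mapsto \project{\Tan(M,z)}$ extends to a Lipschitzian map $\sigma : W \to \Hom(\rel^\adim,\rel^\adim)$ (by composition with the nearest point retraction onto $M$). Put $g(z) = \| \project{\Tan^2 ( \| V \|, z )} - \sigma(z) \|^2$ for $z \in \spt \| V \| \cap W$, and $g = 0$ elsewhere; $g$ is bounded and vanishes $\| V \|$ almost everywhere on $M$. Let $\mu(B) = \tint{B}{} g \ud \| V \|$ for Borel $B \subset U$ and $Z = \{ z \with g(z) = 0 \}$, so $\mu(Z) = 0$ and $M \subset Z$ up to a $\| V \|$ null set. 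For $c \in K$ one has $\project T = \sigma(c)$, whence $\| \project S - \project T \|^2 \leq 2 g(z) + 2 (\Lip \sigma)^2 |z-c|^2$ and $g(z) \leq 2 \| \project S - \project T \|^2 + 2 (\Lip \sigma)^2 |z-c|^2$, and since $\tint{\cball cr}{} |z-c|^2 \ud \| V \| = O(r^4)$,
\begin{gather*}
    \tfrac12 E(c,r) - \Lambda r^4 \leq \measureball{\mu}{\cball cr} \leq 2 E(c,r) + \Lambda r^4
\end{gather*}
for a $c$-dependent $\Lambda$. Now invoke \ref{thm:O_o} with $\vdim = 2$, $p = 1$, $q = \infty$, $f = g$, this $\mu$, this $Z$, and $\omega$ any modulus of continuity agreeing with $r \mapsto r^2\log(1/r)$ near $0$: the standing hypotheses of \ref{miniremark:situation_general} hold, $\mu = f \| V \|$ with $f \in \Lploc{\infty}(\| V \|)$, and the required condition reads $\liminf_{r \to 0+} r^{-2}\omega(r) = \liminf_{r \to 0+} \log(1/r) = \infty > 0$; the conclusion gives that for $\HM^2$ almost all --- hence, restricting to $M$ and using that $\| V \| \restrict M$ is absolutely continuous with respect to $\HM^2$, for $\| V \|$ almost all --- $z \in Z$, the value $\limsup_{r \to 0+} r^{-4} ( \log(1/r) )^{-1} \measureball{\mu}{\cball zr}$ is $0$ or $\infty$. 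The right inequality above together with the weak form excludes $\infty$ at $\| V \|$ almost all $c \in K$, so the value is $0$, and the left inequality then yields $\lim_{r \to 0+} r^{-4} ( \log(1/r) )^{-1} E(c,r) = 0$ there. As the pieces $K$ exhaust $\| V \|$ almost all of $U$, the theorem follows.

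\emph{Main difficulty.} The hard part is the weak form: squeezing out exactly the factor $\log(1/r)$ relies on the a priori decay $E(c,s) = O(s^{2+2\alpha})$ of \cite[5.2\,(1)]{snulmenn.c2}, the bounds $\alpha = O(r) = \gamma$, the choice $\varepsilon^{-1} \approx \log(1/r)$, and a Campanato type induction which keeps each of $\kappa(\alpha(\alpha+\beta+\gamma))$, $\varepsilon\beta^2$, $\varepsilon^{-1}\gamma^2$ of size $O(r^4\log(1/r))$ once the inductive bound on $\beta$ is reinserted; a secondary point is the passage, via the extended tangent plane map $\sigma$ on a class $2$ piece, from the centre dependent integrand $\| \project S - \project T \|^2$ to the \emph{fixed} measure $\mu$, which is what makes \ref{thm:O_o} applicable.
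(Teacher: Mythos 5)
Your proof is correct and uses the same three pillars as the paper's: the second-order rectifiability of \cite[4.8]{snulmenn.c2}, the approximate coercive estimate \ref{lemma:app-coercive} driving a Campanato-type iteration that yields the $\limsup<\infty$ bound, and the $\{0,\infty\}$ dichotomy of \ref{thm:O_o} applied to a fixed measure $\mu = g\|V\|$ built from a smooth extension of the tangent-plane map of a class-$2$ piece, giving the upgrade to $\lim=0$. The only (minor) bookkeeping difference is that you let $\varepsilon=\varepsilon(r_k)\to 0$ in \ref{lemma:app-coercive} subject to $\varepsilon^{-1}\le\log(1/r_k)$ and hence invoke the a~priori decay $E(c,s)=O(s^{2+2\alpha})$ from \cite[5.2\,(1)]{snulmenn.c2} to keep $\alpha+\beta\le\varepsilon$ valid, whereas the paper fixes $\varepsilon=2^{-14}\eta^{-1}$ once and for all (which is already enough once $\Delta$ is taken large and one knows only the $\Lp1$ approximate continuity of $\tau$), making the iteration slightly leaner.
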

\begin{proof}
    Define $Z = U \cap \{ z \with \Tan^2 ( \| V \|, z ) \in \grass \adim 2 \}$
    and $\tau : Z \to \Hom ( \rel^\adim, \rel^\adim )$ by $\tau (z) =
    \project{\Tan^2 ( \| V \|, z )}$ for $z \in Z$. Recall that
    \begin{gather*}
        V(k) = \tint{Z}{} k(z,\tau(z)) \density^2 ( \| V \|, z ) \ud \mathscr{H}^2 z
        \quad \text{for $k \in \mathscr{K} (U \times \grass \adim 2 )$}
    \end{gather*}
    from Allard \cite[3.5\,(1b)]{MR0307015} and that there exists a countable
    collection $C$ of $2$ dimensional submanifolds of $\rel^\adim$ of class $2$
    such that $\| V\| (U \without \bigcup C ) = 0$ from
    \cite[4.8]{snulmenn.c2}. Notice that
    \begin{gather*}
        \Tan (M,z) = \Tan^2 ( \| V \|, z ) 
        \quad \text{for $\| V \|$ almost all $z \in U \cap M$}
    \end{gather*}
    for $M \in C$ by \cite[2.8.18, 2.9.11, 3.2.17]{MR41:1976} and Allard
    \cite[3.5\,(2)]{MR0307015}. In~particular, one may construct a sequence of
    functions $\tau_i : U \to \Hom ( \rel^\adim, \rel^\adim )$ of class $1$ such
    that the sets $Z_i = U \cap \{ z \with \tau(z) = \tau_i (z) \}$ cover $\| V
    \|$ almost all of~$U$. For $i \in \nat$, applying \ref{thm:O_o} with
    $\vdim$, $p$, $\omega(r)$, $Z$, $f$, and $q$ replaced by $2$, $1$, $r^2
    (1 + \log(1/r))$, $Z_i$, $\| \tau - \tau_i \|^2$, and $\infty$ one infers, for $\|
    V \|$ almost all $z \in Z_i$, that
    \begin{align*}
        & \limsup_{r \to 0+} r^{-4} ( \log (1/r) )^{-1} \tint{ \cball zr }{}
        \| \tau(\zeta)-\tau(z)\|^2 \ud \| V \| \zeta 
        \\
        & \qquad = \limsup_{r \to 0+} r^{-4} ( \log (1/r))^{-1} \tint{\cball zr}{} 
        \| \tau - \tau_i \|^2 \ud \| V \| \in \{ 0, \infty \}.
    \end{align*}
    
    Therefore it is sufficient to prove for $\|V\|$ almost all $c$ that
    \begin{gather*}
        \limsup_{r \to 0+} r^{-4} ( \log (1/r) )^{-1} \tint{\cball cr}{} 
        \| \tau (z)- \tau(c) \|^2 \ud \| V \| z < \infty.
    \end{gather*}
    
    For $\| V \|$ almost all $c \in U$ there exist $Q \in \nat$ and $M \in C$
    such that
    \begin{gather*}
        \density^2 ( \| V \|, c ) = Q,
        \quad
        \density^{\ast 2} ( \| \delta V \|, c ) < \infty,
        \quad
        \density^2 ( \| V \| \restrict U \without M, c) = 0,
        \\
        \lim_{r \to 0+} r^{-2} 
        \tint{\cball cr}{} \| \tau (z) - \tau (c) \| \ud \| V \| z = 0
    \end{gather*}
    
    by Allard \cite[3.5\,(1c)]{MR0307015} and \cite[2.8.18, 2.9.5, 2.9.9,
    2.9.11]{MR41:1976}. Considering such $c$, $Q$ and~$M$ and abbreviating $T =
    \project{\Tan(M,c)}$, it follows
    \begin{gather*}
        \tau (c) = \project T,
        \quad
        \limsup_{s \to 0+} s^{-6}
        \tint{ \cball cs \cap M}{} \dist (z-c,T)^2 \ud \| V \| z < \infty
    \end{gather*}
    since $M$ is a submanifold of class $2$. Defining
    \begin{gather*}
    	\eta = \sup \{ 1, \Gamma_{\ref{lemma:app-coercive}} ( \adim, Q ) \},
        \quad
        \varepsilon = 2^{-14} \eta^{-1},
    \end{gather*}
    one obtains the existence of $0 < r \leq 1/4$ and $1 \leq \xi < \infty$
    such that $\oball c {8r} \subset U$ and for $0 < s \le r$
    \begin{gather*}
        s^{-1} \measureball{\| \delta V \|}{ \oball c{8s} } + s^{-2}
        \big (
          \tint{ \oball c{8s} \cap M}{} \dist (z-c,T)^2 \ud \| V \| z
        \big )^{1/2} \leq \xi s,
        \\
        \text{and $V$ satisfies the hypotheses of \ref{lemma:app-coercive}}
        \\
        \text{ with $r$ and $Z$ replaced by $s$ and $\oball c{8s} \cap M$.}
    \end{gather*}
    Abbreviating
    \begin{gather*}
        f(s) = s^{-2} \tint{ \oball cs \times \grass \adim 2}{} 
        \| \project S - \project T \|^2 \ud V(z,S)
        \quad \text{for $0 < s \leq 8r$},
        \\
        \Delta = \sup \big \{
          2^{20} \eta^2 \varepsilon^{-1} \xi^2,
          2^6 f (8r) r^{-2} ( \log (1/(8r)))^{-1}
        \big \},
    \end{gather*}
    one inductively proves that
    \begin{gather*}
        f (s) \leq \Delta s^2 \log (1/s)
        \quad \text{whenever $0 < s \leq 8r$};
    \end{gather*}
    in fact, the inequality is evident if $r \leq s \leq 8r$ and if it holds
    with $s$ replaced by~$8s$ for some $0 < s \leq r$, then, recalling $r \le
    1/4$, one notes that
    \begin{gather*}
	s^2 \leq \xi s \big ( \xi s + 2^6 \Delta^{1/2} s ( \log (1/s ) )^{1/2}
	\big ) \leq 8^{-1} \eta^{-1} \Delta s^2(\log (1/s))^{1/2}, \\
        1 + ( \log ( 1 + 1/s^2 ) )^{1/2} \leq 4 ( \log (1/s) )^{1/2},
        \quad
        \eta \varepsilon^{-1} \xi^2 s^2 \le 4^{-1} \Delta s^2 \log(1/s),
    \end{gather*}
    to infer from \ref{lemma:app-coercive} that
    \begin{align*}
	f(s) & \leq \eta \Big ( \kappa \big ( \xi s ( \xi s + 2^6
	\Delta^{1/2} s ( \log (1/s) )^{1/2} ) \big ) + 2^{12} \varepsilon
	\Delta s^2 \log (1/s) + \varepsilon^{-1} \xi^2 s^2 \Big ) \\
        & \leq 8^{-1} \Delta s^2 ( \log ( 1/s ) )^{1/2}
        \big ( 1 + ( \log (1+1/s^2))^{1/2} \big ) + 2^{-1} \Delta s^2 \log (1/s)
        \\
        & \leq \Delta s^2 \log (1/s),
    \end{align*}
    where $\kappa$ is as in \ref{lemma:app-coercive}.
\end{proof}
\begin{remark}
    In view of \ref{example:quadratic_tilt_excess} the decay rate is sharp for
    integral varifolds. For curvature varifolds a stronger conclusion is
    attainable, see \cite[15.9]{snulmenn:tv.v2}.
\end{remark}
\begin{remark}
    It is an open problem whether the integrality hypothesis on $V$ could be
    replaced by the requirement ``$\density^\vdim ( \| V \|, z ) \geq 1$ for
    $\| V \|$ almost all $z$''.
\end{remark}


\section{Super-quadratic tilt-excess, an example}
\label{sec:super_quadratic_tilt}

In this section we provide examples of curvature varifolds satisfying the
conditions of \ref{miniremark:situation_general} with $p = \infty$, hence in
particular having bounded generalised mean curvature vector, for which there is
a~set of positive weight measure such that in arbitrarily small balls around the
points of that set there is a portion of relatively large measure where the tilt
is greater than~$1/3$. In~fact, the Hausdorff measure of the regions in the
affine tangent planes which are not covered by the varifold, i.e., the size of
``holes'', is large at these scales which is essentially a stronger statement,
see~\ref{remark:holes-tilt-large}. The power of the decay of the super-quadratic
tilt-excess exhibited by these varifolds is the smallest possible, see
\ref{remark:example-super-quadratic-tilt} and \ref{thm:positive_result}. In
\ref{example:quantitative_brakke_again}, the example is modified so as to yield
the largest possible size of points of small density permitted by the
approximate lower semicontinuity of the density,
see~\ref{remark:density-ap-lsc}.

The qualitative construction principle was described by Brakke in
\cite[6.1]{MR485012} for two dimensional integral varifolds. Our implementation
employs additionally the estimates obtained in \ref{lemma:bent_catenoid} for
certain varifolds, see Figure \ref{F:bent-catenoid}, and the sets constructed
in~\ref{example:cantor_set} and~\ref{example:yet_another_cantor_set}.

\begin{figure}[!htb]
  \centering
  \includegraphics[width=\textwidth, keepaspectratio=true]{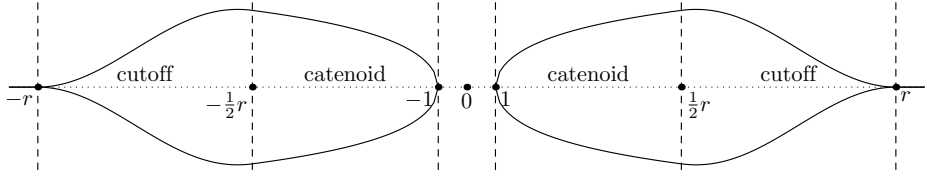}
  \caption{Rotating the solid line around the vertical axis illustrates the
    support of the varifold constructed in \ref{lemma:bent_catenoid}.}
  \label{F:bent-catenoid}
\end{figure}

\begin{miniremark}
    \label{miniremark:absorb}
    If $\phi$ is a measure, $A$ is $\phi$ measurable, $\phi (A) < \infty$, $f
    \in \Lp{\infty} ( \phi )$, $\varepsilon > 0$, and $\varepsilon \phi (A) \leq
    \tint A{} f \ud \phi$, then
    \begin{gather*}
    	( \varepsilon / 2 ) \phi (A) \leq \phi ( A \cap \{ x \with f(x) \geq
        \varepsilon / 2 \} ) \Lpnorm{\phi}{\infty}{f}.
    \end{gather*}
\end{miniremark}

\begin{lemma}
    \label{lemma:bent_catenoid}
    Suppose $\adim = 3$, $\pp$ and $\qq$ are related to $\adim$ as in
    \ref{miniremark:pqT}, and $4 \leq r < \infty$.

    Then there exists a~curvature varifold $V \in \IVar_2 ( \rel^3 )$
    satisfying
    \begin{gather*}
        \spt \| V \| \subset \im \pp^\ast \cup \pp^{-1} \lIm \oball 0r \rIm, 
        \\
        \| V \| \restrict \pp^{-1} \lIm \rel^2 \without \oball{0}{r} \rIm 
        = 2 \HM^2 \restrict \im \pp^\ast \without \oball{0}{r}, 
        \\
        \text{$\pp^{-1} \lIm \oball 0r \rIm \cap \spt \| V \|$
          is a two dimensional submanifold of $\rel^3$ of class $\infty$},
        \\
        \density^2 ( \| V \|, z ) = 1 
        \quad
        \text{for $z \in \pp^{-1} \lIm \oball 0r \rIm \cap \spt \| V \|$},
        \\
        0 \leq \| V \| \big ( \pp^{-1} \lIm \oball 0r \rIm \big )
        - 2 \measureball{\mathscr{L}^2}{ \oball 0r }
        \leq \Gamma ( \log r )^2,
        \\
        \| \delta V \| \leq \Gamma ( r^{-2} \log r ) \| V \|, 
        \quad \tint{}{} \| \mathbf{b} (V,z) \| \ud \| V \| z \leq \Gamma \log r, 
        \\
        \text{$| \pp (z) | \geq 1$ and $| \qq (z) | \leq 3 \log r$} 
        \quad
        \text{whenever $z \in \spt \| V \|$}, 
        \\
        V \big( \eqclassification{\rel^3 \times \grass{3}{2}}{(z,S)}
        {\| \project{S} - \pp^\ast \circ \pp \| \geq 1/3 } \big) \geq 1,
    \end{gather*}
    where $\Gamma$ is a universal positive, finite number.
\end{lemma}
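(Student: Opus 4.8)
The plan is to realise $V$ as a surface of revolution smoothly interpolating between a piece of the catenoid of \ref{miniremark:catenoid} near its neck and a doubly covered plane far from the axis, in the spirit of \ref{lemma:ball_catenoid} and of Brakke's example. First I would fix a cut-off $\chi$ of class $\infty$ on $\{ t \with 1 \le t < \infty \}$ with values in $\{ t \with 0 \le t \le 1 \}$ such that $\chi(t) = 1$ for $1 \le t \le r/2$, $\chi(t) = 0$ for $t \ge r$, all derivatives of $\chi$ vanish at $r/2$ and at $r$, and $|\chi'(t)| \le 4/r$, $|\chi''(t)| \le \Gamma r^{-2}$ for $r/2 \le t \le r$; such $\chi$ is obtained by rescaling a fixed bump function. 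Setting $g(t) = \ach(t) \chi(t)$ one gets, by \ref{miniremark:arcosh} and $r \ge 4$, a function of class $\infty$ on $\{ t \with 1 < t \le r \}$ with $g(1) = 0$, $0 < g(t) \le \ach(r) \le 3 \log r$ for $1 < t < r$, $g(t) = \ach(t)$ for $1 \le t \le r/2$, $g$ together with all its derivatives vanishing at $r$, and $|g'(t)| \le \Gamma r^{-1} \log r$, $|g''(t)| \le \Gamma r^{-2} \log r$ for $r/2 \le t \le r$.

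Next I would put $M = \rel^3 \cap \{ z \with 1 \le |\pp(z)| < r \text{ and } |\qq(z)| = g(|\pp(z)|) \}$ and observe that $M$ is a $2$-dimensional submanifold of $\rel^3$ of class $\infty$: near $\{|\pp| = 1\}$ it equals a piece of the catenoid $N$ of \ref{miniremark:catenoid} since $g = \ach$ there, while for $1 < |\pp| < r$ it is the disjoint union of the two graphs $\{ \qq(z) = \pm g(|\pp(z)|) \}$. One then defines $V \in \IVar_2(\rel^3)$ by
\begin{gather*}
    V(k) = \tint{M}{} k(z, \Tan(M,z)) \ud \HM^2 z + 2 \tint{(\im \pp^\ast) \without \oball 0r}{} k(z, \im \pp^\ast) \ud \HM^2 z
    \quad \text{for $k \in \mathscr{K}(\rel^3 \times \grass 32)$},
\end{gather*}
so that $\| V \| = \HM^2 \restrict M + 2 \HM^2 \restrict ((\im \pp^\ast) \without \oball 0r)$. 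Since $g$ and all its derivatives vanish at $r$, the two graphs $\{ \qq = \pm g(|\pp|) \}$ continue smoothly past $\{|\pp| = r\}$ as the doubly covered plane, so that $V$ is a finite sum of varifolds each associated with constant multiplicity to a submanifold of class $\infty$ — a piece of $N$ over $\{ 1 \le |\pp| \le r/2 \}$ and the two graphs over $\{ |\pp| \ge r/2 \}$ — which fit together along the circles over $\{ |\pp| = r/2 \}$ with matching tangent planes, hence without free boundary. Consequently $\| \delta V \|$ is absolutely continuous with respect to $\| V \|$ and, by \ref{remark:curvature_varifold}, $V$ is a curvature varifold whose second fundamental form is, $\| V \|$ almost everywhere, the classical second fundamental form of these submanifolds. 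The asserted inclusion of $\spt \| V \|$, the form of $\| V \|$ outside $\pp^{-1}\lIm \oball 0r \rIm$, the equality $\density^2(\| V \|, \cdot) = 1$ on $M$, and the bounds $|\pp(z)| \ge 1$, $|\qq(z)| \le 3 \log r$ on $\spt \| V \|$ are then immediate from the construction and the bounds on $g$.

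It remains to derive the four quantitative estimates, for which I would use \ref{miniremark:catenoid}, \ref{miniremark:radial_function}, and the graph case of \ref{miniremark:pqT}. On the catenoid piece $\mathbf{h}(V, \cdot) = 0$ by minimality and $\| \mathbf{b}(V, z) \| = |\pp(z)|^{-2}$ by \ref{miniremark:radial_function} and \ref{miniremark:arcosh}; on the two graph pieces \ref{miniremark:radial_function} with $\adim = 3$ gives $|\mathbf{h}(V, z)| \le |\pp(z)|^{-1} |g'| + |g''| \le \Gamma r^{-2} \log r$ and $\| \mathbf{b}(V, z) \| \le \sup \{ |\pp(z)|^{-1} |g'|, |g''| \} \le \Gamma r^{-2} \log r$; since $\mathbf{h}$ and $\mathbf{b}$ vanish on the plane this yields $\| \delta V \| \le \Gamma (r^{-2} \log r) \| V \|$, while the coarea computation implicit in \ref{miniremark:catenoid} gives $\tint{M}{} |\pp(z)|^{-2} \ud \HM^2 z = 4 \unitmeasure 2 \ach(r/2)$ over the catenoid piece, and the graph pieces have mass at most $\Gamma r^2$, so $\tint{}{} \| \mathbf{b}(V, z) \| \ud \| V \| z \le \Gamma \log r$. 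For the weight, $\| V \| ( \pp^{-1}\lIm \oball 0r \rIm ) = \HM^2(M)$ equals twice the area of one sheet over $\{ 1 \le |\pp| \le r \}$; by \ref{miniremark:catenoid} the catenoid part over $\{ 1 \le |\pp| \le r/2 \}$ contributes $2 \unitmeasure 2 ( \ach(r/2) + (r/2)((r/2)^2 - 1)^{1/2} )$, while the graph part over $\{ r/2 \le |\pp| \le r \}$ exceeds $\measureball{\LM^2}{\pp^{-1}\lIm \oball 0r \rIm \without \pp^{-1}\lIm \oball 0{r/2} \rIm}$ by at most $\tfrac12 \sup|g'|^2$ times the latter, hence by at most $\Gamma (\log r)^2$; expanding $(r/2)((r/2)^2 - 1)^{1/2} = (r/2)^2 - \tfrac12 + O(r^{-2})$ and using $\ach(r/2) \ge \log(r/2) \ge \log 2$ for $r \ge 4$ gives $0 \le \| V \| ( \pp^{-1}\lIm \oball 0r \rIm ) - 2 \measureball{\LM^2}{\oball 0r} \le \Gamma (\log r)^2$. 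Finally, by the graph case of \ref{miniremark:pqT} the tangent plane to the catenoid over radius $t$ is the graph of a linear map of norm $\ach'(t) = (t^2-1)^{-1/2}$, so $\| \project{\Tan(M, z)} - \pp^\ast \circ \pp \| = |\pp(z)|^{-1}$ there; hence $V ( \eqclassification{\rel^3 \times \grass 32}{(z,S)}{\| \project S - \pp^\ast \circ \pp \| \ge 1/3} ) \ge \HM^2 ( \{ z \in M \with |\pp(z)| \le \rho_0 \} )$ with $\rho_0 = \inf \{ 3, r/2 \} \ge 2$, which by \ref{miniremark:catenoid} is at least $2 \unitmeasure 2 \rho_0 (\rho_0^2 - 1)^{1/2} \ge 1$ for every $r \ge 4$.

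The main obstacle I expect is twofold: to verify cleanly that $V$ is a bona fide curvature varifold across the joining circle over $\{|\pp| = r\}$ — which is exactly why the cut-off $\chi$ must be arranged so that $g$ vanishes to infinite order at $r$, making the two bent sheets merge into the doubly covered plane without creating a free boundary or a singular point — and to control the sign of the weight excess, which forces the catenoid neck to sit at radius exactly $1$ together with the restriction $r \ge 4$, so that the surplus area $2 \unitmeasure 2 \ach(r/2)$ of the minimal catenoid dominates the area $2 \unitmeasure 2$ lost over the central unit disc.
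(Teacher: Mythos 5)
Your proposal is correct and follows essentially the same route as the paper: rotate the profile $t \mapsto \ach(t)$ multiplied by a smooth cut-off to obtain a surface that agrees with the catenoid near the neck and smoothly flattens onto the doubly covered plane, then read off the estimates from \ref{miniremark:pqT}, \ref{miniremark:radial_function}, \ref{miniremark:arcosh}, and \ref{miniremark:catenoid}. The only structural difference is cosmetic: you build the two-sheeted surface $M = \{\,1 \le |\pp| < r,\ |\qq| = g(|\pp|)\,\}$ directly and add the doubly covered outer plane, whereas the paper defines $M$ as the single graph $\qq = g(|\pp|)$ over $\{|\pp| > 1\}$ (with $g = 0$ for $|\pp| \ge r$ absorbing the plane) and then takes $V = W + L_\#W$ with the reflection $L(z) = \pp^\ast(\pp(z)) - \qq^\ast(\qq(z))$. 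These yield the same varifold.

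One small point you should make explicit: to ensure $M$ is a manifold and the weight on $\pp^{-1}\lIm\rel^2\without\oball 0r\rIm$ comes out with multiplicity exactly two, the cut-off $\chi$ must be strictly positive on $[1,r)$, so that $g(t) > 0$ for $1 < t < r$ and the two sheets stay disjoint until they simultaneously merge into the plane at $|\pp| = r$. (The paper guards against this by prescribing $\{\,t \with \gamma(t) > 0\,\} = \oball 01$ for its bump function $\gamma$.) With that condition added, your computations of $|g'|$, $|g''|$, $\|\mathbf b\| = |\pp|^{-2}$ on the catenoid part, $\|\project{\Tan(M,z)} - \pp^\ast\circ\pp\| = |\pp(z)|^{-1}$, and the area and curvature integrals all agree with the paper's, and the sign of the weight excess is secured exactly as you indicate, by comparing $\ach(r/2) + (r/2)((r/2)^2-1)^{1/2}$ with $(r/2)^2$ and using $r \ge 4$.
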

\begin{proof}
    Choose $\gamma \in \mathscr{D} ( \rel, \rel )$ with $\{ t \with \gamma (t) >
    0 \} = \oball 01$ and
    \begin{gather*}
        0 \leq \gamma (t) \leq 1 \quad \text{for $t \in \rel$},
        \qquad
        \gamma (t) = 1
        \quad
        \text{for $-1/2 \leq t \leq 1/2$}.
    \end{gather*}
    Recalling \ref{miniremark:arcosh}, define $g : \classification{\rel}{t}{ 1
    < t < \infty} \to \rel$ by
    \begin{gather*}
        g(t) = \ach(t) \gamma (t/r) \quad \text{for $1 < t < \infty$},
    \end{gather*}
    hence there exists a positive, finite number $\Delta_1$ determined by
    $\gamma$ such that
    \begin{gather*}
        |g'(t)| \leq \Delta_1 r^{-1} \log r \quad \text{and} \quad |g''(t)| \leq
        \Delta_1 r^{-2} \log r \qquad \text{for $r/2 \leq t \leq r$}.
    \end{gather*}
    Defining $h : \rel^2 \without \cball{0}{1} \to \rel$ by $h(x) = g (|x|)$
    for $x \in \rel^2 \without \cball{0}{1}$, let
    \begin{gather*}
        M = \classification{\rel^3}{z}{ \qq (z) = h ( \pp (z) ) }.
    \end{gather*}
    Notice that $\mathbf{h} ( M,z ) = 0$ for $z \in M \cap \pp^{-1} \lIm
    \oball 0{r/2} \rIm$ by \ref{miniremark:catenoid} and
    \begin{gather*}
        M \cap \pp^{-1} \lIm \rel^2 \without \oball 0r \rIm = \im \pp^\ast
        \without \oball 0r, \\
    	\pp \lIm M \rIm \subset \rel^2 \without \oball 01, \quad \qq \lIm M
        \rIm \subset \cball 0{3 \log r}, \\
        M \cap \pp^{-1} \lIm \cball 02 \rIm \subset \{ z \with \| \project{\Tan
          ( M,z)} - \pp^\ast \circ \pp \| \geq 1/3 \}.
    \end{gather*}
    by \ref{miniremark:pqT}\,\eqref{item:pqT:graphs} and
    \ref{miniremark:arcosh}. Recalling $r \geq 4$, one may deduce from
    \ref{miniremark:arcosh} and \ref{miniremark:catenoid} that
    \begin{gather*}
        0 \leq \HM^2 \big ( M \cap \pp^{-1} \lIm \oball{0}{r/2} \rIm \big ) -
        \mathscr{L}^2 ( \oball 0{r/2} ) \leq 12 \log r.
    \end{gather*}
    Noting that $(1+s)^{1/2} \leq 1 + s/2$ for $-1 \leq s < \infty$, one
    estimates
    \begin{align*}
    	0 & \leq \HM^2 \big ( M \cap \pp^{-1} \lIm \oball 0r \without \oball
        0{r/2} \rIm \big ) - \mathscr{L}^2 ( \oball 0r \without \oball 0{r/2}
        ) \\
        & \leq 8 \tint{r/2}{r} \big ( ( 1 + g'(t)^2 )^{1/2} - 1 \big ) t \ud
        \mathscr{L}^1 t \leq 4 \tint{r/2}{r} g'(t)^2 t \ud \mathscr{L}^1 t \leq
        2 \Delta_1^2 ( \log r )^2.
    \end{align*}
    In view of the estimates for $g'$ and $g''$ and
    \ref{miniremark:radial_function}, one notes
    \begin{gather*}
        \| \mathbf{b} (M,z) \| \leq 2 \Delta_1 r^{-2} \log r \quad
        \text{whenever $z \in M$ and $r/2 \leq | \pp (z) | \leq r$},
    \end{gather*}
    hence, using \ref{miniremark:radial_function} and \ref{miniremark:arcosh},
    one obtains
    \begin{gather*}
    	\tint{M}{} \| \mathbf{b}(M,z) \| \ud \mathscr{H}^2 z \leq \Delta_2 \log r,
    \end{gather*}
    where $\Delta_2$ is a positive, finite number determined by $\gamma$.

    Employing the reflection $L : \rel^3 \to \rel^3$ given by $L(z)=
    \pp^\ast(\pp(z)) - \qq^\ast(\qq(z))$ for $z \in \rel^3$, one defines a
    curvature varifold
    $V \in \IVar_2 ( \rel^3 )$ by
    \begin{gather*}
        V(k) = \tint{M}{} k(z,\Tan(M,z)) + k (L(z), L \lIm \Tan (M,z) \rIm ) \ud \HM^2 z
    \end{gather*}
    for $k \in \mathscr{K} ( \rel^3 \times \grass{3}{2} )$. Hence one may take
    $\Gamma = 4 \sup \{ (3 + \Delta_1)^2, \Delta_2 \}$.
\end{proof}
\begin{example}
    \label{example:quantitative_brakke}
    Suppose $\vdim$ is an integer with $\vdim \geq 2$ and $\omega$ is a
    modulus of continuity satisfying the Dini condition.

    Then there exist $\varepsilon$, $C$, $M$, $R$, $T$, and $V$ satisfying
    \begin{gather*}
        \varepsilon > 0,
        \quad
        R \in \grass{\vdim+1}{\vdim-2},
        \quad
        T \in \grass{\vdim+1}{\vdim},
        \quad
        \text{$C$ is a Borel subset of $T$}, 
        \\
        \text{$M$ is an $\vdim$ dimensional submanifold 
          of $\rel^{\vdim+1}$ of class $\infty$},
        \\
        \text{$V \in \IVar_\vdim ( \rel^{\vdim+1} )$ is a~curvature varifold
          with $\density^\vdim ( \| V \|, z ) = 1$ for $z \in M$},
        \\
        \text{$C$, $M$, and $T$, are invariant under translations in
          directions belonging to $R$}, \\
        \spt \| V \| \subset M\cup T,
        \quad
        \| \delta V \| \leq \| V \|, 
        \quad
        \| V \| (C) > 0, \quad \density^\vdim ( \| V \|, c ) = 2,
        \\
        \| V \| ( \cball{c}{r} \cap \{ z \with \density^\vdim ( \| V \|, z) = 1 \} )
        \geq \omega (r) r^\vdim,
        \\
        \begin{aligned}
            & \inf \big \{
              V( \eqclassification{\cball cr \times \grass{\vdim+1}{\vdim}}{(z,S)}
                {\| \project{S} - \project{T} \| \geq 1/3 } ),
            \\
            & \phantom{\inf \big \{\ }
              \mathscr{H}^\vdim ( T \cap \cball cr \without \project T \lIm \spt \| V \| \rIm  )
            \big \}
            \geq \omega (r)^2 r^{\vdim+2} ( \log (1/r))^{-2}
        \end{aligned}
    \end{gather*}
    whenever $c \in C$ and $0 < r \leq \varepsilon$ and, if $\vdim > 2$, then
    there also exists a~curvature varifold $V' \in \IVar_2 ( \ker \project R )$
    such that
    \begin{gather*}
        V(k) = \tint{\rel^{\vdim+1} \times R}{} k(x+y, \im (\project{P} +
        \project R)) \ud V' \times \mathscr{H}^{\vdim-2} ((x,P),y)
    \end{gather*}
    whenever $k \in \mathscr{K} \big ( \rel^{\vdim+1}, \grass {\vdim+1} \vdim
    \big)$.
\end{example}
\begin{proof} [Construction]
    Suppose $\pp$ and $\qq$ as related to $\adim=\vdim+1$ as in
    \ref{miniremark:pqT} define $T = \im \pp^\ast$. Notice that it is sufficient
    to prove the assertion obtained by replacing closed balls ``$\cball cr$'' by
    open cubes ``$\ocube cr$'', see \ref{miniremark:open_cube}. Hence, in view
    of \ref{thm:product_varifold}, the construction may be reduced to the case
    $\vdim = 2$ by considering suitable products with $\vdim-2$ dimensional
    planes if $\vdim > 2$. Let $\Delta = 3 \sup \{
    \Gamma_{\ref{lemma:bent_catenoid}}, 3 \}$ and choose $0 < \eta \leq 1$ such
    that $\omega ( \eta ) \leq 2^{-6} \Delta^{-2}$.

    Define a modulus of continuity $\psi$ satisfying the Dini condition such
    that $\psi (r) = \sup \{ 8 \Delta \omega (r), 4r^2 \}$ for $0 \leq r \leq
    \eta$.  Apply \ref{example:cantor_set} with $\vdim$, $\omega$, and
    $\lambda$ replaced by $2$, $\psi$, and $1/2$ to obtain a number $\delta$,
    named ``$\varepsilon$'' there, as well as $G$ and $A$. Let
    \begin{gather*}
	\varepsilon = \inf \big \{ \delta, \Delta^{-1}, \eta \big \}, \quad B
	= \{ r \with 0 < r \leq \varepsilon \}.
    \end{gather*}

    Define $W \in \IVar_2 ( \rel^3 )$ by
    \begin{gather*}
        W ( k ) = 2 \tint{T \without \pp^\ast \lIm \bigcup G \rIm}{} k (z,T) \ud \HM^2 z
        \quad
        \text{for $k \in \mathscr{K} (\rel^3 \times \grass{3}{2} )$}.
    \end{gather*}
    Whenever $Q = \ocube{a}{s} \in G$ and $s > \Delta^{-1}$ let $X_Q \in \IVar_2
    ( \rel^3 )$ be defined by
    \begin{gather*}
        X_Q ( k ) = 2 \tint{T}{} k(z,T) \ud \HM^2 z
        \quad
        \text{for $k \in \mathscr{K} ( \rel^3 \times \grass{3}{2} )$}
    \end{gather*}
    and set $M_Q = \varnothing$. Whenever $Q = \ocube{a}{s} \in G$ and $s \leq
    \Delta^{-1}$ apply \ref{lemma:bent_catenoid} with $r$ replaced by $\Delta
    s^{-1} \log (1/s)$ to construct a~curvature varifold $X_Q \in \IVar_2 (
    \rel^3 )$ such that $( \boldsymbol{\mu}_{\Delta s^{-2} \log (1/s)} \circ
    \boldsymbol{\tau}_{-a})_\# X_Q$ satisfies the conditions of
    \ref{lemma:bent_catenoid} in place of $V$ implying
    \begin{gather*}
        \| X_Q \| \restrict \pp^{-1} \lIm \rel^2 \without \oball as \rIm
        = 2 \mathscr{H}^2 \restrict T \without \oball {\pp^\ast(a)}s,
        \\
        \text{$M_Q$ is a two dimensional submanifold of $\rel^3$ of class $\infty$},
        \\
        \density^2 ( \| X_Q \|, z ) = 1 \quad \text{for $z \in M_Q$}, 
        \\
        \| \delta X_Q \| \leq \| X_Q \|, 
        \qquad
        | \qq (z) | \leq s^2 \quad \text{for $z \in \spt \| X_Q \|$}, 
        \\
        \| X_Q \| \big ( \ocube{\pp^\ast(a)} s \cap
          \{ z \with \density^2 ( \| X_Q \|, z ) = 1 \} \big )
        \geq 2^{-1} \mathscr{L}^2 ( Q ),
        \\
        \| X_Q \| \big ( \pp^{-1} \lIm Q \rIm \big )
        \leq 2 \mathscr{L}^2 ( Q) + s^4,
        \quad
        \tint{}{} \| \mathbf{b} (X_Q,z) \| \ud \| X_Q \| z \leq s^2,
        \\
        \begin{aligned}
            & \inf \big \{ X_Q ( \eqclassification{\pp^{-1} \lIm Q \rIm \times
              \grass{3}{2}}{(z,S)}{\| \project{S} - \project{T} \| \geq 1/3 } ),
            \\
            & \phantom{ \inf\big \{ } \ 
            \mathscr{H}^2 ( \pp^\ast \lIm Q \rIm \without
            \project{T} \lIm \spt \| X_Q \| \rIm ) \big \}
            \geq \Delta^{-2} s^4 ( \log (1/s))^{-2},
        \end{aligned}
    \end{gather*}
    where $M_Q = \pp^{-1} \lIm \oball as \rIm \cap \spt \| X_Q \|$. Now, let
    $M = \bigcup \{ M_Q \with Q \in G \}$ and define $V \in \IVar_2 ( \rel^3
    )$ by
    \begin{gather*}
        V = W + \tsum{Q \in G}{} X_Q \restrict
        ( \pp^{-1} \lIm Q \rIm \times \grass 32 ).
    \end{gather*}
    Note that $\| V \| (\pp^\ast \lIm A \rIm) \geq 1$ and $\density^2 ( \| V \|,
    c ) = 2$ for $\| V \|$ almost all $c \in \pp^\ast \lIm A \rIm$ by Allard
    \cite[2.8\,(4a), 3.5\,(2)]{MR0307015}. Let $C = \classification{\pp^\ast
      \lIm A \rIm}{c}{\density^2(\|V\|, c) = 2 }$.  Moreover, observe that $V$
    is a~curvature varifold with $\| \delta V \| \leq \| V \|$. Finally, if $c
    \in \pp^\ast \lIm A \rIm$ and $r \in B$, then there exists $\ocube as = Q
    \in G$ with $Q \subset \oball{\pp(c)}{r}$ and $\LM^2( Q) = 4s^2 \geq \psi(r)
    r^2$, in particular $\ocube{\pp^\ast(a)} s \subset \ocube cr$, $s \leq
    \Delta^{-1}$, and
    \begin{gather*}
        \Delta^{-2} s^4 ( \log (1/s) )^{-2}
        \geq 2^{-6} \Delta^{-2} \psi (r)^2 r^4 ( \log (1/r) )^{-2}
        \geq \omega(r)^2 r^4 ( \log(1/r))^{-2}
    \end{gather*}
    since $r \geq s \geq \psi (r)^{1/2} r/2 \geq r^2$.
\end{proof}
\begin{remark}
    \label{remark:holes-tilt-large}
    Concerning the relation of the two terms occurring in the infimum, the
    following observation is particularly appropriate. \emph{If $n$, $Q$, $L$,
    $M$, $\delta_1$, $\delta_2$, $\delta_3$, $\delta_4$, $\varepsilon$, $m$,
    $s$, $S$, $U$, $V$, $\delta$, and $B$ are as in \cite[4.1]{snulmenn.c2},
    $p = \vdim$, $\psi$ is related to $\vdim$, $\adim$, $p$, $U$ and $V$ as in
    \ref{miniremark:situation_general}, and $\psi ( U )^{1/\vdim} \leq
    \delta$, then}
    \begin{gather*}
        \| V \| (B) \leq 2 \delta^{-1} \adim^{1/2} \besicovitch \adim V ( ( U
        \times \grass \adim \vdim ) \cap \{(z,R) \with | \project R - \project S
        | \geq \delta/2 \} );
    \end{gather*}
    in fact, this follows from \ref{miniremark:absorb}, Allard
    \cite[8.9\,(3)]{MR0307015}, and the Besicovitch-Federer covering theorem.
\end{remark}
\begin{remark}
    \label{remark:example-super-quadratic-tilt}
    Taking $\omega$ in \ref{example:quantitative_brakke} such that $\omega (t) =
    ( \log (1/t))^{-1} (\log(\log(1/t)))^{-2}$ for $0 < t \leq e^{-e}$, where
    $e$ denotes the Euler's number, one obtains
    \begin{gather*}
        \lim_{r \to 0+} r^{-\vdim-2-\delta}
        \tint{\cball{c}{r} \times \grass{\adim}{\vdim}}{}
        \| \project{S} - \project{T} \|^\iota \ud V (z,S) = \infty
    \end{gather*}
    whenever $c \in C$, $\delta > 0$, and $1 \leq \iota < \infty$. Taking $\iota
    > 2$ and $\delta = \iota -2$, one infers
    \begin{gather*}
        \tint{M \cap \oball cr}{} \| \mathbf{b} (M,z) \|^q \ud \mathscr{H}^\vdim z
        = \infty
    \end{gather*}
    whenever $c \in \spt ( \| V \| \restrict C )$, $0 < r < \infty$, and $1 < q
    < \infty$; in fact, the Cartesian product structure of $M$ and $V$ reduces
    the problem to the case $\vdim = 2$ in which, in view of
    \ref{lemma:second_fundamental_form}\,\eqref{item:second_fundamental_form:norm_equiv}
    and \ref{remark:curvature_varifold}, one may apply
    \cite[11.4\,(3)]{snulmenn:tv.v2} with $f(z)$ replaced by $\project{\Tan^2 (
    \| V \|,z)}$.
\end{remark}

\begin{remark}
    \label{remark:brakke_example}
    For comparison note the following well known proposition: \emph{If
      $\vdim$ and $\adim$ are positive integers, $\vdim \leq \adim$, $0 \leq
      K < \infty$, $V \in \IVar_\vdim ( \rel^\adim )$ with $\| \delta V \|
      \leq K \| V \|$ then there exists a relatively open, dense subset $A$
      of $\spt \| V \|$ such that for any $1 \leq q < \infty$ there holds
      \begin{gather*}
          \limsup_{r \to 0+} r^{-\vdim-q} \tint{\cball{a}{r}
            \times \grass{\adim}{\vdim}}{} | \project{S} - \project{T} |^q
          \ud V (z,S) < \infty
      \end{gather*}
      for $V$ almost all $(a,T) \in A \times \grass{\adim}{\vdim}$}; in fact,
    one may combine Allard \cite[8.1\,(1)]{MR0307015} with elliptic regularity
    theory as provided, e.g.,~in \cite[3.6, 3.21]{snulmenn.c2} and properties of
    Sobolev functions, see Calder\'on and Zygmund \cite[Theorem~12,
    p.~204]{MR0136849} or \cite[Theorem 3.4.2]{MR1014685}. In particular, the
    tangent plane behaviour exhibited in the preceding example may not occur at
    $V$ almost all points.
\end{remark}
\begin{remark}
    Example \ref{example:quantitative_brakke} is a refinement of the example
    described by Brakke in \cite[6.1]{MR485012}.
\end{remark}
\begin{example}
    \label{example:quantitative_brakke_again}
    Suppose $\vdim$ is an integer with $\vdim \geq 2$ and $\omega$ is a
    modulus of continuity.

    Then there exist $\varepsilon$, $B$, $C$, $M$, $T$, and $V$ satisfying
    \begin{gather*}
        \varepsilon > 0, \quad B \subset \rel \cap \{ t \with t > 0 \}, \quad
        R \in \grass{\vdim+1}{\vdim-2}, \quad T \in \grass{\vdim+1}{\vdim}, \\
        \inf B = 0, \quad \text{$C$ is a Borel subset of $T$}, \\
        \text{$M$ is an $\vdim$ dimensional submanifold of $\rel^{\vdim+1}$ of
          class $\infty$}, \\
        \text{$V \in \IVar_\vdim ( \rel^{\vdim+1} )$ is a~curvature varifold
          with $\density^\vdim ( \| V \|, z ) = 1$ for $z \in M$}, \\
        \text{$C$, $M$, and $T$ are invariant under translations in directions
          belonging to $R$}, \\
        \spt \| V \| \subset M \cup T, \quad \| \delta V \| \leq \| V \|,
        \quad \| V \| (C) > 0, \quad \density^\vdim ( \| V \|, c ) = 2, \\
        \| V \| ( \cball{c}{r} \cap \{ z \with \density^\vdim ( \| V \|, z) = 1
        \} ) \geq \omega (r) r^\vdim \quad \text{for $0 < r \leq
          \varepsilon$}, \\
        \begin{aligned}
	    & \inf \big \{ V( \eqclassification{\cball cr \times
	    \grass{\vdim+1}{\vdim}}{(z,S)} {\| \project{S} - \project{T} \|
	    \geq 1/3 } ), \\
	    & \phantom{\inf \big \{\ } \mathscr{H}^\vdim ( T \cap \cball cr
	    \without \project T \lIm \spt \| V \| \rIm ) \big \} \geq \omega
	    (r) r^{\vdim+2} ( \log (1/r))^{-2} \quad \text{for $r \in B$}
        \end{aligned}
    \end{gather*}
    whenever $c \in C$ and, if $\vdim > 2$, then there also exists a~curvature
    varifold $V' \in \IVar_2 ( \ker \project R)$ such that
    \begin{gather*}
        V(k) = \tint{\rel^{\vdim+1} \times R}{} 
        k(x+y, \im (\project{P} + \project R))
        \ud V' \times \mathscr{H}^{\vdim-2} ((x,P),y)
    \end{gather*}
    whenever $k \in \mathscr{K} \big ( \rel^{\vdim+1}, \grass {\vdim+1} \vdim
    \big)$.
\end{example}
\begin{proof} [Construction]
    Modify the construction of \ref{example:quantitative_brakke} by replacing
    its second paragraph by ``Define a modulus of continuity $\psi$ such that
    $\psi (r) = \sup \{ 8 \Delta \omega (r)^{1/2}, 4r^2 \}$ for $0 \leq r \leq
    \eta$. Apply \ref{example:yet_another_cantor_set} with $\vdim$, $\omega$,
    and $\lambda$ replaced by $2$, $\psi$, and $1/2$ to obtain a number
    $\delta$, named `$\varepsilon$' there, as well as $B$, $G$ and $A$. Let
    $\varepsilon = \inf \big \{ \delta, \Delta^{-1}, \eta \big \}$.'' and
    ``$\omega(r)^2$'' in the last displayed inequality by ``$\omega(r)$'', and
    adding ``If $c \in \pp^\ast \lIm A \rIm$ and $0 < r \leq \varepsilon$ there
    exists $H$ such that $H \subset G \cap \{ Q \with Q \subset \oball{\pp (c)}r
    \}$ and $\mathscr{L}^2 ( \bigcup H ) \geq \psi (r)r^2$, in particular
    $\ocube as \in H$ implies $\ocube{\pp^\ast(a)}s \subset \ocube cr$ and $s
    \leq \Delta^{-1}$.'' at the end, to obtain a construction for the present
    assertion.
\end{proof}
\begin{remark}
    The main modification of the construction of
    \ref{example:quantitative_brakke_again} in comparison to
    \ref{example:quantitative_brakke} is the usage of
    \ref{example:yet_another_cantor_set} in place of \ref{example:cantor_set}
    and that $B$ is a (countable) set constructed in
    \ref{example:yet_another_cantor_set} rather than an interval.
\end{remark}

\begin{remark}
    \label{remark:second_fundamental_form_large}
    As in \ref{remark:example-super-quadratic-tilt}, one obtains
    \begin{gather*}
	\tint{M \cap \oball cr}{} \| \mathbf{b} (M,z) \|^q \ud
	\mathscr{H}^\vdim z = \infty
    \end{gather*}
    whenever $c \in \spt ( \| V \| \restrict C )$, $0 < r < \infty$, and $1 <
    q < \infty$.
\end{remark}
\begin{remark} \label{remark:density-ap-lsc}
    Since $\density^\vdim ( \| V \|, c ) = 2$ for $c \in C$, the lower bound
    on $$\| V \| ( \cball cr \cap \{ z \with \density^\vdim ( \| V \|, z ) = 1
    \} )$$ is the largest one permitted by the approximate continuity of
    $\density^\vdim ( \| V \|, \cdot )$ with respect to $\| V\|$ and the
    standard Vitali relation, see \cite[2.8.18, 2.9.13]{MR41:1976}.
\end{remark}


\section{Super-quadratic tilt-excess, decay rates}
\label{sec:sqte-decay}

The present section concerns integral varifolds of at least two dimensions,
deferring the one dimensional case to Section~\ref{sec:one_dimensional_decay}.
Its purpose is to complement the examples concerning the decay rates of the
super-quadratic tilt-excess constructed in \ref{example:quantitative_brakke},
\ref{example:quantitative_brakke_again} and \cite[\S 1]{snulmenn.isoperimetric}
by positive results, see \ref{thm:positive_result}.  This yields a~sharp
dividing line in most cases, see
\ref{remark:sharpness_superquadratric}--\ref{remark:m=2,p=1}. Additionally, we
prove that the examples constructed in \ref{example:quantitative_brakke} and
\ref{example:quantitative_brakke_again} are essentially sharp also with respect
to the size of holes the varifolds contain, see \ref{corollary:holes}.

The positive results follow readily from the existing theory. For the
super-quadratic tilt-excess, these are the second order rectifiability and its
consequences for the decay of the quadratic tilt-excess in conjunction with the
differentiation theory both obtained in~\cite[4.8, 5.2]{snulmenn.c2} and
\cite[\S 3]{snulmenn.isoperimetric} respectively. Concerning the estimate for
the size of the holes, we additionally employ an approximation by $\qspace_Q (
\rel^\codim )$ valued functions, see~\cite[3.15]{snulmenn.poincare}, and more
basic results on the size of the set where the first variation is large
from~\cite[\S 2]{snulmenn.isoperimetric}.

\begin{theorem}
    \label{thm:positive_result}
    Suppose $\vdim$, $\adim$, $p$, $U$, and $V$ satisfy the hypotheses of
    \ref{miniremark:situation_general}, $V \in \IVar_{\vdim}(U)$, $2 < q <
    \infty$, and either
    \begin{enumerate}
    	\item $\vdim = 2$ and $p>1$, or
    	\item $\vdim>2$ and $p \geq 2\vdim/(\vdim+2)$.
    \end{enumerate}

    Then for $V$ almost all $(z,T)$ there holds
    \begin{gather*}
        \lim_{r \to 0+} r^{-\vdim-2} \tint{\cball{z}{r} \times
          \grass{\adim}{\vdim}}{} | \project{S} - \project{T} |^q \ud V
        (\zeta,S) = 0.
    \end{gather*}
\end{theorem}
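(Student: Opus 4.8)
The plan is to reduce the super-quadratic ($q>2$) statement to the already-established quadratic case together with the pointwise boundedness of $\tau$, via an interpolation in the radius variable. Specifically, for $V$ almost all $(z,T)$ write $\tau(c)=\project T$ and consider the integrand $\|\project{S}-\project{T}\|$, which by \ref{miniremark:pqT}\,\eqref{item:pqT:norm_equiv} is comparable to $|\project S-\project T|$ and is bounded by $2^{1/2}$ everywhere. Hence for $q>2$ one has the crude bound
\begin{gather*}
    \tint{\cball zr\times\grass\adim\vdim}{}|\project S-\project T|^q\ud V(\zeta,S)
    \leq 2^{(q-2)/2}\tint{\cball zr\times\grass\adim\vdim}{}|\project S-\project T|^2\ud V(\zeta,S).
\end{gather*}
This immediately reduces the problem to showing that the quadratic tilt-excess integral is $o(r^{\vdim+2})$ for $V$ almost all $(z,T)$, i.e.\ to the case $q=2$ with decay exponent $\alpha=1$ in the language of Question~3.

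The second step is to invoke the known positive results for the quadratic case. Under hypothesis (1) ($\vdim=2$, $p>1$) or hypothesis (2) ($\vdim>2$, $p\ge 2\vdim/(\vdim+2)$), the results cited in the introduction from \cite[5.2\,(2)]{snulmenn.c2} (building on the second order rectifiability \cite[4.8]{snulmenn.c2} and the differentiation theory \cite[\S 3]{snulmenn.isoperimetric}) give precisely that
\begin{gather*}
    \lim_{r\to 0+}r^{-\vdim-2}\tint{\cball zr\times\grass\adim\vdim}{}\|\project S-\project T\|^2\ud V(\zeta,S)=0
\end{gather*}
for $V$ almost all $(z,T)$; note that the relevant normalisation is $r^{-\vdim}\cdot r^{-2}$, matching the decay exponent $\alpha=1$ applied to $r^{-\vdim}E(z,r)$. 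Combining this with the first step and the norm equivalence of \ref{miniremark:pqT}\,\eqref{item:pqT:norm_equiv} yields the desired conclusion for $q>2$.

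I would carry this out as follows: first fix the full-measure set of $(z,T)$ on which the quadratic decay holds and on which $\Tan^\vdim(\|V\|,z)\in\grass\adim\vdim$ with $\tau(z)=\project T$; then, for such $(z,T)$, apply the elementary pointwise inequality $\|\project S-\project T\|^{q-2}\le 2^{(q-2)/2}$ inside the integral and pull the constant out; finally pass to the limit using the quadratic statement and the equality $|\project S-\project T|=2^{1/2}\|\project S-\project T\|$. The main (and essentially only) obstacle is bookkeeping: one must check that the hypotheses (1) and (2) here are exactly those under which \cite[5.2\,(2)]{snulmenn.c2} delivers $\alpha=1$, $q=2$ decay \emph{with limit zero} rather than merely $\limsup<\infty$; this is where the differentiation theory (now also recorded in \ref{thm:O_o} and \ref{thm:differentiation}) enters, upgrading a finite $\limsup$ to an honest limit. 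No genuinely new analysis is needed — the theorem is a corollary of the quadratic theory plus the trivial interpolation against the uniform bound on $\tau$.
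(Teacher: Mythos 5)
Your reduction to the quadratic case via a pointwise bound with a uniform constant fails, because the quadratic tilt-excess normalized by $r^{\vdim+2}$ does \emph{not} converge to zero $V$ almost everywhere; only $\limsup_{r\to 0+} r^{-\vdim-2}\tint{\cball zr\times\grass\adim\vdim}{}|\project S-\project T|^2\ud V < \infty$ is available (this is precisely what \cite[5.2\,(2)]{snulmenn.c2} delivers, answering Question~3 affirmatively for $\alpha = 1$, $q = 2$). To see that the limit cannot in general be zero, take the simplest case of a unit-density varifold associated with a $C^2$ submanifold $M$: then $\tau$ is $C^1$ on $M$, and at any point $z$ where $\mathbf{b}(M,z) \neq 0$, i.e.\ $\Der\tau(z)\ne 0$, the quantity $r^{-\vdim-2}\tint{\cball zr\cap M}{}|\tau(\zeta)-\tau(z)|^2\ud\mathscr{H}^\vdim\zeta$ converges to a \emph{positive} constant proportional to $|\Der\tau(z)|^2$. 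So the quadratic expression you would need to be $o(r^{\vdim+2})$ is merely $O(r^{\vdim+2})$, and applying $|\project S-\project T|^q\le C|\project S-\project T|^2$ with $C$ independent of $r$ can only propagate that $O$, not improve it to $o$.

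The extra decay for $q>2$ comes from the averaged smallness of the factor $|\tau(\zeta)-\tau(z)|^{q-2}$ near $z$, not from the uniform bound. The paper exploits this by first establishing $\limsup_{r\to 0+}r^{-\vdim-2}\tint{\cball zr\times\grass\adim\vdim}{}|\project S-\project T|^q\ud V<\infty$ (the quadratic $\limsup$ plus boundedness suffices), then using the second-order rectifiability \cite[4.8]{snulmenn.c2} to produce $C^1$ functions $\tau_i$ coinciding with $\tau$ on sets $Z_i$ that cover $\|V\|$ almost all of $U$, and finally applying the differentiation theorem \cite[3.7\,(i)]{snulmenn.isoperimetric} with $\alpha=2/q<1$ to obtain $\tint{\cball zr}{}|\tau-\tau_i|^q\ud\|V\|=o(r^{\vdim+2})$ for $\|V\|$ almost all $z\in Z_i$. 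The conclusion then follows because $\tau_i$ is of class~$1$, hence Lipschitz, so $\tint{\cball zr}{}|\tau_i-\tau_i(z)|^q\ud\|V\|=O(r^{\vdim+q})$ — and $r^{\vdim+q}=o(r^{\vdim+2})$ \emph{only because $q>2$}. That single inequality is where the hypothesis $q>2$ enters essentially, and it is exactly what your reduction to the case $q=2$ discards. (A minor further point: the constant $2^{(q-2)/2}$ in your bound is valid only in codimension one, where \ref{miniremark:pqT}\,\eqref{item:pqT:norm_equiv} gives $|\project S-\project T|=2^{1/2}\|\project S-\project T\|$; in general a dimensional constant is needed, but this is harmless compared with the structural issue above.)
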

\begin{proof}
    Assume $\vdim < \adim$. First, note that since the function mapping $S \in
    \grass{\adim}{\vdim}$ to $|\project S - \project T|$ is bounded for any $T
    \in \grass{\adim}{\vdim}$, we have
    \begin{gather*}
	\limsup_{r \to 0+} r^{-\vdim-2} \tint{\cball{z}{r} \times
	\grass{\adim}{\vdim}}{} | \project{S} - \project{T} |^q \ud V
	(\zeta,S) < \infty
    \end{gather*}
    for $V$ almost all $(z,T)$ by \cite[5.2\,(2)]{snulmenn.c2} and H\"older's
    inequality. Second, note that \cite[4.8]{snulmenn.c2} implies the
    existence of a sequence of functions $\tau_i : U \to \Hom ( \rel^\adim,
    \rel^\adim )$ of class $\class{1}$ such that
    \begin{gather*}
        \| V \| \big ( U \without {\textstyle\bigcup_{i=1}^{\infty}}
        Z_i \big ) = 0,
    \end{gather*}
    where $Z_i = \classification{U}{z}{ \tau_i(z) = \project{\Tan^\vdim ( \| V
    \|, z )} }$, hence
    \begin{gather*}
        \lim_{r \to 0+} r^{-\vdim-2} \tint{\cball{z}{r}}{} | \tau_i (\zeta) -
        \project{\Tan^\vdim ( \| V \|, \zeta )} |^q \ud \| V \|\zeta = 0
    \end{gather*}
    for $\| V \|$ almost all $z \in Z_i$
    by \cite[3.7\,(i)]{snulmenn.isoperimetric} with $Z$,
    $f$, $\alpha$, $r$, and $g$ replaced by $\Hom ( \rel^\adim,
    \rel^\adim)$, $\project{\Tan^\vdim ( \| V \|, \cdot )}$, $2/q$, $\infty$,
    and $\tau_i$. The conclusion then follows, since the functions $\tau_i$
    are of class~$1$.
\end{proof}
\begin{remark}
    The concept of proof is the same as in \cite[5.2\,(1)]{snulmenn.c2}.
\end{remark}
\begin{remark}
    \label{remark:sharpness_superquadratric}
    Note that the number $2$ in $r^{-\vdim-2}$ cannot be replaced by any larger
    number by \ref{example:quantitative_brakke} even if $\adim = \vdim+1$ and
    ``$\lim$'' is replaced by ``$\liminf$''.
\end{remark}
\begin{remark}
    Note the following proposition: \emph{If $\vdim$, $\adim$, $p$, $U$, and
    $V$ are as in \ref{miniremark:situation_general}, $V \in
    \IVar_{\vdim}(U)$, $\vdim > 2$, $p < 2\vdim/(\vdim+2)$, and $2 \leq q <
    \infty$, then
    \begin{gather*}
	 \lim_{r \to 0+} r^{-\vdim-\vdim p/(\vdim-p)} \tint{\cball{z}{r}
	 \times \grass{\adim}{\vdim}}{} | \project{S} - \project{T} |^q \ud V
	 (z,S) = 0
    \end{gather*}
    for $V$ almost all $(z,T)$;} in fact, it suffices to combine
  \cite[5.2\,(1)]{snulmenn.c2} with H\"older's inequality. Taking $\alpha_1 =
  \alpha_2$ slightly larger than $q^{-1} \vdim p(\vdim-p)^{-1}$ in
  \cite[1.2]{snulmenn.isoperimetric}, one infers that $\vdim p/(\vdim-p)$ cannot
  be replaced by any larger number in the preceding statement even if $\adim =
  \vdim+1$ and ``$\lim$'' is replaced by ``$\liminf$''.
\end{remark}
\begin{remark}
    \label{remark:m=2,p=1}
    Note the following proposition: \emph{If $\vdim$, $\adim$, $p$, $U$, and
    $V$ are as in \ref{miniremark:situation_general}, $V \in
    \IVar_{\vdim}(U)$, $\vdim=2$, $p=1$, $0 < s < 2$, and $2 \leq q < \infty$,
    then
    \begin{gather*}
	 \limsup_{r \to 0+} r^{-2-s} \tint{\cball{z}{r} \times
	 \grass{\adim}{\vdim}}{} | \project{S} - \project{T} |^q \ud V
	 (\zeta,S) < \infty
    \end{gather*}
    for $V$ almost all $(z,T)$;} in fact, again, it suffices to combine
  \cite[5.2\,(1)]{snulmenn.c2} with H\"older's inequality. Taking $\alpha_1 =
  \alpha_2$ slightly larger than $2q^{-1}$ in \cite[1.2,
  1.3]{snulmenn.isoperimetric}, one infers that $s$ cannot be replaced by any
  number larger than $2$ in the preceding statement and $s$ cannot be replaced
  by $2$ in case $q=2$ by \ref{example:quadratic_tilt_excess} both even if
  $\adim = \vdim+1$ and ``$\limsup$'' is replaced by ``$\liminf$''.  This leaves
  open the case $s=2$ and $q>2$. An affirmative answer to the latter case would
  be implied by interpolation if one would know
    \begin{gather*}
        \limsup_{r \to 0+} r^{-1} \phi (z,r,T) < \infty
        \quad
        \text{for $V$ almost all $(z,T)$},
    \end{gather*}
    where $\phi(z,r,T)$ abbreviates
    \begin{gather*}
        r^{-1} \sup \big\{ t V ( \eqclassification{\cball{z}{r} \times
          \grass{\adim}{\vdim} }{(\zeta,S)}{| \project{S} - \project{T} | >
          t})^{1/2} \with 0 < t < \infty \big \}.
    \end{gather*}
\end{remark}
\begin{remark}
    \label{remark:no-gehring-improvement}
    \emph{If $2 \leq \vdim \in \nat$, $1 \leq p < \infty$, and $2 < q <
    \infty$, then there exist $V \in \IVar_\vdim (U)$ related to $\vdim$,
    $\adim = \vdim+1$, $p$, and $U = \rel^\adim$ as in
    \ref{miniremark:situation_general} and $A$ with $V(A)>0$ satisfying
    \begin{gather*}
        \lim_{r \to 0+} \frac{\big ( r^{-\vdim} \tint{\cball zr \times \grass
            \adim \vdim }{} | \project S - \project T |^q \ud V(\zeta,S) \big
          )^{1/q}}{\big ( r^{-\vdim} \tint{\cball zr \times \grass \adim
            \vdim}{} | \project S - \project T|^2 \ud V(\zeta,S) \big )^{1/2} +
          r^{1-\vdim/p} \psi ( \cball zr)^{1/p}} = \infty
    \end{gather*}
    whenever $(z,T) \in A$, where $\psi$ is as in
    \ref{miniremark:situation_general};} in fact, choosing $\alpha$ such that
    $q^{-1} \vdim p(\vdim-p)^{-1} < \alpha < 2^{-1} \vdim  p ( \vdim-p )^{-1}$
    if $\vdim > 2$ and $p < 2 \vdim/(\vdim+2)$ and $2/q < \alpha < 1$
    otherwise, \ref{remark:sharpness_superquadratric}--\ref{remark:m=2,p=1}
    yield $V \in \IVar_\vdim (U)$ related $\vdim$, $\adim = \vdim+1$, $p$, and
    $U = \rel^\adim$ as in \ref{miniremark:situation_general} and $A$ with
    $V(A)>0$ such that
    \begin{gather*}
	\liminf_{r \to 0+} r^{-\alpha} \big ( r^{-\vdim} \tint{\cball zr
	\times \grass \adim \vdim}{} | \project S - \project T|^q \ud V (
	\zeta, S ) \big )^{1/q} > 0
    \end{gather*}
    whenever $(z,T) \in A$, hence \cite[5.2]{snulmenn.c2} and \cite[2.8.18,
    2.9.5, 2.9.8]{MR41:1976} imply the assertion. \emph{The same statement
    holds for $p = \infty$ if $\psi ( \cball zr )^{1/p}$ is replaced by
    $\eqLpnorm{\| V \| \restrict \cball zr}{\infty}{ \mathbf{h}(V,\cdot)}$.}
\end{remark}
\begin{theorem}
    \label{corollary:holes} Suppose $\vdim$, $\adim$, $p$, $U$, and $V$
    satisfy the hypotheses of \ref{miniremark:situation_general}, $V \in
    \IVar_{\vdim}(U)$, and either
    \begin{enumerate}
	\item $\vdim = 2$ and $p>1$, or
	\item $\vdim>2$ and $p \geq 2\vdim/(\vdim+2)$.
    \end{enumerate}

    Then for $V$ almost all $(c,T)$ there holds
    \begin{gather*}
        \lim_{r \to 0+} r^{-\vdim-2} \HM^\vdim ( H (T,c,r) ) = 0,
    \end{gather*}
    where $H (T,c,r) = T \cap \cball{\project{T}(c)}{r} \without \project{T}
    \lIm \cylinder Tcrr \cap \{ z \with \density^{\ast \vdim} ( \| V \|, z ) >
    0 \} \rIm$.
\end{theorem}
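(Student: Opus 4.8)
The strategy is to reduce the estimate on the Hausdorff measure of the "hole" $H(T,c,r)$ in the tangent plane to the decay of the quadratic tilt-excess (available from \cite[5.2\,(2)]{snulmenn.c2} under hypotheses (1) or (2)) together with an approximation by $\qspace_Q(\rel^\codim)$ valued functions from \cite[3.15]{snulmenn.poincare}. First I would fix a point $c$ at which the varifold behaves well: by \cite[4.8]{snulmenn.c2} and \cite[5.2\,(2)]{snulmenn.c2}, for $V$ almost all $(c,T)$ there is $Q\in\nat$ with $\density^\vdim(\|V\|,c)=Q$, the density ratio converges to $Q\unitmeasure\vdim$, $\tau(c)=\project T$, the tilt-excess $r^{-\vdim}\int_{\cball cr}\|\tau(z)-\project T\|^2\ud\|V\|z$ is $o(r^2)$, and the first variation density $\density^{\ast\vdim}(\|\delta V\|,c)<\infty$ (when $p>1$ one uses the bound on $\mathbf h(V,\cdot)$ together with Hölder's inequality to control $\|\delta V\|(\cball cr)$ by a suitable power of $r$). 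I would also arrange, using \cite[\S 2]{snulmenn.isoperimetric} as in the proof of \ref{thm:O_o}, that $c$ lies outside the "bad" sets $G_i$ where the isoperimetric ratio fails, so that the hypotheses of \cite[3.15]{snulmenn.poincare} are met in all sufficiently small cylinders $\cylinder Tcrr$ after rescaling.

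Next, for such $c$ and small $r$, I would apply \cite[3.15]{snulmenn.poincare} in the cylinder $\cylinder Tcrr$ with the parameters chosen so that the tilt-excess and first-variation terms, which I collectively call $\eta$ as in \ref{lemma:mini-lip-approx}, are small. The key output is a Lipschitzian $\qspace_Q(\rel^\codim)$ valued function whose graph agrees with $\spt\|V\|$ outside an exceptional set $B$ of small $\|V\|$ measure, together with the estimate $\|V\|(B)\le\Gamma\eta r^\vdim$ and $\mathscr L^\vdim(\cball{\project T(c)}{r}\setminus X)\le\Gamma\eta r^\vdim$ for the domain $X$ of the approximation (compare \ref{lemma:mini-lip-approx}\,\eqref{item:mini-lip-approx:bad-set}). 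The point is that any $x\in T\cap\cball{\project T(c)}{r}$ lying in $H(T,c,r)$ — i.e., not covered by $\project T[\cylinder Tcrr\cap\{z\with\density^{\ast\vdim}(\|V\|,z)>0\}]$ — must fail to be in the "good" part of the domain of the $\qspace_Q$ valued approximation, because points in the good part do have a preimage in $\spt\|V\|$ of positive density; hence $H(T,c,r)$ is contained, up to the negligible set where the Lipschitz graph itself leaves the cylinder in the $\perpproject T$ direction (controlled by the height-excess, again $o(r)$), in $\project T[B]\cup(\cball{\project T(c)}{r}\setminus X)$. Therefore $\HM^\vdim(H(T,c,r))\le\Gamma'\eta r^\vdim$.

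Finally I would estimate $\eta$: by the choice of $c$ the quadratic tilt-excess contributes $o(r^2)$ and, since $\eta$ scales like $r^{-\vdim}$ times (a $\vdim/(\vdim-1)$ power of the first-variation mass plus the tilt-excess integral), after rescaling to unit radius one finds $\eta=o(r^2)$ under hypotheses (1) or (2); combining with the previous paragraph yields $\HM^\vdim(H(T,c,r))=o(r^{\vdim+2})$, which is the claim. The main obstacle is the careful bookkeeping in the second step: one must verify that a point of the tangent plane outside the "hole" really is captured by the $\qspace_Q$ valued approximation (this uses that $\density^{\ast\vdim}(\|V\|,z)>0$ at covered points together with \cite[3.15\,(2)\,(4)]{snulmenn.poincare}), and one must show the excluded portion of the cylinder coming from the height-excess of the Lipschitz graph is of lower order — this is where the convergence to zero of $r^{-1}$ times the height-excess, itself a consequence of the tilt decay via the coercive estimate of Section \ref{sec:quadratic_tilt_decay} or directly of second order rectifiability, is used. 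One should also double-check that the slightly weaker density hypothesis $\density^{\ast\vdim}(\|V\|,z)>0$ (rather than $z\in\spt\|V\|$) in the definition of $H(T,c,r)$ only enlarges the hole by an $\HM^\vdim$-null set, which follows from Allard \cite[3.5\,(1b)]{MR0307015}.
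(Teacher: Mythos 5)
There is a genuine gap at the point where you estimate $\eta$ after the application of \ref{lemma:mini-lip-approx}. The quantity $\eta$ in \ref{lemma:mini-lip-approx} (at unit scale, after rescaling by $r$) is
\begin{gather*}
    \eta_r = \big(r^{1-\vdim}\|\delta V\|(\oball c{4r})\big)^{\vdim/(\vdim-1)}
    + r^{-\vdim}\tint{\oball c{4r}\times\grass\adim\vdim}{}\|\project S - \project T\|^2\ud V(z,S),
\end{gather*}
and while the tilt-excess term can indeed be brought to $o(r^2)$ by combining [5.2(2)] with [3.7] of the references, the first-variation term is only $O(r^{\vdim/(\vdim-1)})$ for $\|V\|$ almost all $c$: the finiteness of $\density^{\ast\vdim}(\|\delta V\|,c)$ (respectively, $\density^\vdim(\psi,c)$ for $p>1$) only yields $\|\delta V\|(\oball c{4r})=O(r^\vdim)$, hence $(r^{1-\vdim}\|\delta V\|)^{\vdim/(\vdim-1)}=O(r^{\vdim/(\vdim-1)})$. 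This is $O(r^2)$ for $\vdim=2$ (so not $o(r^2)$) and strictly worse than $r^2$ for $\vdim>2$, so the bound $\HM^\vdim(H(T,c,r))\le\Gamma'\eta_r r^\vdim$ does not give $o(r^{\vdim+2})$. The issue is that \ref{lemma:mini-lip-approx} bounds $\mathscr L^\vdim(\cball0r\without X)$ by the \emph{total} first variation mass, and that quantity is too coarse here.

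The paper avoids this by not passing through $\eta$ at all. After arranging the hypotheses of \cite[3.15]{snulmenn.poincare}, it bounds the exceptional set $B$ produced there by the union $B_i\cup D_i(c)$ of two \emph{exceedance sets} — the set $B_i$ where the isoperimetric ratio fails with a fixed constant, and the set $D_i(c)$ where the local tilt ratio relative to $\tau(c)$ exceeds that constant — and then shows, for $\|V\|$ almost all $c$ and suitable $i$, that both $\|V\|(B_i\cap\cball cr)$ and $\|V\|(D_i(c)\cap\cball cr)$ are $o(r^{\vdim+2})$. For $B_i$ this uses \cite[2.9, 2.10]{snulmenn.isoperimetric}, whose density exponent $\vdim^2/(\vdim-p)$ is $\ge\vdim+2$ precisely under your hypothesis $p\ge 2\vdim/(\vdim+2)$; for $D_i(c)$ it uses \cite[3.7\,(ii)]{snulmenn.isoperimetric} together with \cite[5.2\,(2)]{snulmenn.c2}. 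These measure-theoretic bounds on the exceedance sets are genuinely finer than the $L^1$-type control encapsulated in $\eta$: they exploit that the first-variation ratio exceeds the threshold only on a very small set, not that the first-variation mass itself is small. Your proposal should be repaired by replacing the $\eta$-estimate with this decomposition of the exceptional set into $B_i\cup D_i(c)$ and citing the density estimates from \cite[\S2--3]{snulmenn.isoperimetric}.
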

\begin{proof}
    Assume $1 < p < \vdim$. If $\vdim = \adim$, then $\delta V = 0$ by
    \cite[4.8]{snulmenn.c2}, hence the conclusion follows from Allard
    \cite[4.6\,(3)]{MR0307015}. Therefore assume $\vdim < \adim$.
    
    Suppose $Q$ is a positive integer. Recalling
    \cite[2.4]{snulmenn.isoperimetric}, define
    \begin{gather*}
        \lambda = \varepsilon_{\text{\cite[3.15]{snulmenn.poincare}}}
        ( \codim, \vdim, Q, 1, 5^\vdim Q, 1/4, 1/4, 1/4, 1/4, ( 2
        \isoperimetric{\vdim} \vdim)^{-\vdim} / \unitmeasure{\vdim} ),
        \\
        Z = U \cap \{ z \with \Tan^\vdim ( \| V \|, z ) \in
        \grass{\adim}{\vdim} \}
    \end{gather*}
    and $\tau \with Z \to \Hom ( \rel^\adim, \rel^\adim)$ by
    \begin{gather*}
        \tau (z) = \project{\Tan^\vdim ( \| V \|, z )} \quad
        \text{whenever $z \in Z$}.
    \end{gather*}
    Let $B_i$ consist of all $z \in \spt \| V \|$ such that either
    $\cball{z}{1/i} \not \subset U$ or
    \begin{gather*}
        \measureball{\| \delta V \|}{ \cball{z}{s} } > \lambda \,
        \| V \| ( \cball{z}{s} )^{1-1/\vdim} \quad \text{for some $0 <
          s < 1/i$}
    \end{gather*}
    whenever $i$ is a positive integer. Note that $B_{i+1} \subset B_i$.
    Moreover, let $D_i(c)$ denote the set of all $z \in U$ such that
    either $\cball{z}{1/i} \not \subset U$ or
    \begin{gather*}
        \tint{\cball{z}{s}}{} | \tau(\zeta) - \tau(c)| \ud \| V \| \zeta >
        \lambda \, \measureball{\| V \|}{\cball{z}{s}} \quad
        \text{for some $0 < s < 1/i$}
    \end{gather*}
    whenever $c \in Z$ and $i$ is a positive integer. Note that $D_{i+1}
    (c) \subset D_i(c)$.

    Next, the following assertion will be proven. \emph{For $\| V \|$ almost
    all $c$ there exists $i$ such that
    \begin{gather*}
	  \lim_{r \to 0+} r^{-\vdim-2} \| V \| ( B_i \cap \cball{c}{r} ) = 0,
	  \quad \lim_{r \to 0+} r^{-\vdim-2} \| V \| ( D_i(c) \cap
	  \cball{c}{r} ) = 0.
    \end{gather*}}
    Noting $\vdim p / (\vdim-p) \geq 2$ and applying \cite[2.9,
    2.10]{snulmenn.isoperimetric} with $m$, $n$, $\mu$, $s$, $\varepsilon$,
    and $\Gamma$ replaced by $\codim$, $\vdim$, $\| V \|$, $\vdim$, $\inf \big
    \{ ( 2 \isoperimetric{\vdim} )^{-p/(\vdim-p)}, \lambda^{p/(\vdim-p)}
    \big \}$, and $8 \isoperimetric{\vdim} \vdim$ yields the first equality.
    In view of \cite[5.2\,(2)]{snulmenn.c2}, applying
    \cite[3.7\,(ii)]{snulmenn.isoperimetric} with $n$, $m$, $\mu$, $Z$, $f$,
    $\alpha$, $q$, and $r$ replaced by $\vdim$, $\codim$, $\| V \|$, $\Hom (
    \rel^\adim, \rel^\adim )$, $\tau$, $1$, $2$, and $\infty$ one obtains the
    second equality.

    Note that for $V$ almost all $(c,T)$ with density $\density^\vdim ( \| V
    \|, c ) = Q$ the hypotheses of~\cite[3.15]{snulmenn.poincare} (Lipschitz
    approximation theorem) with $m$, $n$, $L$, $M$, $\delta_1$, $\delta_2$,
    $\delta_3$, $\delta_4$, $\delta_5$, $a$, $h$, and $\mu$ replaced by
    $\codim$, $\adim$, $1$, $5^\vdim Q$, $1/4$, $1/4$, $1/4$, $1/4$, $(2
    \isoperimetric{\vdim} \vdim )^{-\vdim} / \unitmeasure{\vdim}$, $c$, $r$,
    and $\| V \|$ are satisfied for all sufficiently small $r>0$. Therefore
    the conclusions (1)--(3) of \cite[3.15]{snulmenn.poincare} with
    $\varepsilon_1$ replaced by $\lambda$ in conjunction with the assertion of
    the preceding paragraph yield the conclusion.
\end{proof}
\begin{remark}
    Possibly up to logarithmic factors, the estimate obtained is sharp even in
    case $\codim = 1$ and $p = \infty$ by \ref{example:quantitative_brakke} and
    \ref{example:quantitative_brakke_again}.
\end{remark}


\section{The one dimensional case}
\label{sec:one_dimensional_decay}

For completeness, we consider in this section one dimensional integral varifolds
of locally bounded first variation. In that case we prove that there is a set,
almost equal to the support of the weight measure of the varifold, such that the
tangent map of the varifold is differentiable relative to this set almost
everywhere, see \ref{thm:decay-1d}. This implies that near almost all points the
varifold may be expressed by a finite sum of graphs of Lipschitzian functions,
see \ref{corollary:1-dim-Q-graph}.

The differentiability result for the tangent map mainly relies on an adaptation
of a coercive estimate of Allard and Almgren in \cite[\S 5]{MR0425741} in
conjunction with differentiability results of approximate and integral nature
obtained for that map in~\cite[\S 5]{snulmenn.c2}. The corollary then follows
from a suitable approximation by Lipschitzian $\qspace_Q ( \rel^{\adim-1} )$
valued functions, see~\cite[3.15]{snulmenn.poincare}, in combination with
a~structural result for such function, see Almgren \cite[1.10]{MR1777737}.

\begin{theorem}
    \label{lemma:sup-L1-est}
    Suppose $U$ is an open subset of $\rel^\adim$, $V \in \RVar_1 ( U )$, $M =
    \{ z \with 0 < \density^{\vdim} ( \| V \|, z ) < \infty \}$, $\mu$ is a
    Radon measure over $U$, $f : U \to \rel$ is a Lipschitzian function,
    \begin{gather*}
	\tint{}{} \ap \Der  f \bullet \ap \Der  \theta \ud \| V \| \leq \mu ( \theta )
	\quad \text{whenever $\theta \in \mathscr{D} ( U, \rel )$ and $\theta
	\geq 0$},
    \end{gather*}
    where ``$\ap$'' denotes approximate differentiation with respect to $(\| V
    \|, 1)$, and $\phi \in \mathscr{D} ( U, \rel )$ with $\phi \geq 0$.

    Then there holds
    \begin{gather*}
        \eqLpnorm{\HM^1 \restrict M}{\infty}{ \phi | \ap \Der f | \density^1 ( \| V \|, \cdot )}
        \leq \Lip ( \phi ) \tint{\spt \Der  \phi}{} | \ap \Der f| \ud \| V \| + \mu ( \phi ).
    \end{gather*}
\end{theorem}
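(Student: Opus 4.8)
The plan is to derive a differential inequality for a one dimensional quantity obtained by integrating $\ap \Der f$ along the rectifiable varifold and then estimate its supremum. Concretely, I would first use the coarea formula for rectifiable varifolds, see \ref{miniremark:rect_varifold}\,\eqref{item:rect:varifold:coarea} with $\mu = 1$, to represent $\| V \|$ integrals as iterated integrals over level sets of a suitable Lipschitzian coordinate-like function; the key point, adapted from Allard and Almgren \cite[\S 5]{MR0425741}, is that in one dimension the level sets $M \cap f^{-1}[\{y\}]$ carry a weight $\density^1 ( \| V \|, \cdot )$ and the quantity $\phi | \ap \Der f | \density^1 ( \| V \|, \cdot )$ behaves like (a multiple of) the derivative along the varifold of the integral of $\phi | \ap \Der f |$. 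The hypothesis on $f$, namely $\tint{}{} \ap \Der f \bullet \ap \Der \theta \ud \| V \| \leq \mu ( \theta )$ for nonnegative $\theta \in \mathscr{D}(U,\rel)$, says precisely that $f$ is a weak subsolution of Poisson's equation for the Laplace--Beltrami operator on $V$ with right hand side controlled by $\mu$, see \ref{remark:laplace-beltrami}.

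Second, I would test the subsolution inequality against cleverly chosen nonnegative test functions of the form $\theta = \phi \, \chi$, where $\chi$ approximates the characteristic function of a superlevel set $\{ \ap \Der f > t \}$ or an interval cut out along the one dimensional structure, and let the approximation converge. Using the product rule for approximate differentiation together with $|\ap\Der\phi| \le \Lip(\phi)$ and the fact that $\ap\Der\chi$ is supported where the level variable crosses $t$, the left hand side splits into a boundary-type term producing $\eqLpnorm{\HM^1 \restrict M}{\infty}{\phi|\ap\Der f|\density^1(\|V\|,\cdot)}$ (via the coarea representation and monotonicity of the superlevel sets), a term bounded by $\Lip(\phi) \tint{\spt \Der \phi}{} |\ap\Der f|\ud\|V\|$ coming from differentiating $\phi$, and the right hand side $\mu(\phi\chi) \le \mu(\phi)$. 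Passing to the supremum over $t$ (equivalently over the essential range of $\phi|\ap\Der f|\density^1$) yields the asserted $\Lp\infty$ bound.

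The main obstacle I anticipate is making the "differentiate along the one dimensional varifold" step rigorous: one must justify, for $\HM^1$ almost every level $y$, that the relevant one sided limits exist and that the jump of the integrated quantity across a level is exactly $\phi|\ap\Der f|\density^1(\|V\|,\cdot)$ evaluated there, handling branch points of the one dimensional varifold (points of multiplicity or where several arcs meet) correctly via the density factor. This is where the restriction to $M = \{ z \with 0 < \density^1(\|V\|,z) < \infty\}$ and the structure theory of rectifiable varifolds, Allard \cite[3.5]{MR0307015}, together with \ref{miniremark:rect_varifold}, enter; the remaining estimates are routine once the coarea bookkeeping is set up. I would also need to check measurability of the superlevel sets on $M$ and that $\eqLpnorm{\HM^1 \restrict M}{\infty}{\cdot}$ is computed correctly, but these follow from standard arguments as in \cite[2.10.35]{MR41:1976} and Allard \cite[3.5\,(1b)]{MR0307015}.
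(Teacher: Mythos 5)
Your overall plan — disintegrate with the one-dimensional coarea formula \ref{miniremark:rect_varifold}\,\eqref{item:rect:varifold:coarea}, test the subsolution inequality against $\theta=\phi\chi$ for an approximate cutoff $\chi$, and split into a boundary-type term, a $\Lip(\phi)$ term, and $\mu(\phi)$ — is the same one the paper uses, but your choice of cutoff is muddled. The option $\chi\approx\mathbf{1}_{\{\ap\Der f>t\}}$ is wrong: $\ap\Der f$ is merely a bounded measurable vector field, its superlevel sets are not the fibres of any Lipschitzian map, and the coarea formula has nothing to say about them; nor does $\ap\Der\chi$ make sense as a measurable gradient against which to integrate. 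The correct cutoff is $\chi=\psi\circ f$ with $\psi$ a smooth nondecreasing function, $0\leq\psi\leq 1$, approximating the characteristic function of $\{u\with u>t\}$ — i.e. a superlevel set of $f$ itself, which is what your ``interval cut out along the one-dimensional structure'' vaguely gestures at. Then $\ap\Der\theta=(\psi\circ f)\ap\Der\phi+\phi\,\psi'(f)\ap\Der f$, the hypothesis (first extended to nonnegative compactly supported Lipschitzian $\theta$ by convolution, a step you omit) yields
\begin{gather*}
\tint{}{}\phi\,\psi'(f)\,|\ap\Der f|^2\ud\|V\|\leq\mu(\phi)+\Lip(\phi)\tint{\spt\Der\phi}{}|\ap\Der f|\ud\|V\|=\gamma,
\end{gather*}
and coarea identifies the left side with $\tint{}{}\psi'(t)\,h(t)\ud\LM^1 t$, where $h(t)=\tint{M\cap f^{-1}\lIm\{t\}\rIm}{}\phi|\ap\Der f|\,\density^1(\|V\|,\cdot)\ud\HM^0$.

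You also overestimate the remaining difficulty. No one-sided limits, jump analysis, or special treatment of branch points is needed — the density factor is already built into the coarea formula and accounts for multiplicity automatically. The paper simply lets $\psi$ tend to a step to get $h(t)\leq\gamma$ at Lebesgue points $t$ of $h$, hence for $\LM^1$-a.e.~$t$; then for $z\in M$ the nonnegativity of the summands gives $\phi(z)|\ap\Der f(z)|\density^1(\|V\|,z)\leq h(f(z))$, while a second application of coarea shows $\ap\Der f=0$ for $\HM^1$-a.e.~$z\in M$ with $f(z)$ in any prescribed $\LM^1$-null set. Together these yield the $\Lp\infty$ bound. Note in particular that the $\Lp\infty$ norm is not obtained as a ``supremum over $t$'' as you suggest: the levelwise bound $h(t)\leq\gamma$ holds only a.e., and the exceptional fibres are handled by the vanishing of $\ap\Der f$ there, not by taking a supremum over the essential range of the integrand.
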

\begin{proof}
    Abbreviate $\gamma = \Lip ( \phi) \tint{\spt \Der \phi}{} | \ap \Der f | \ud \| V \|
    + \mu ( \phi )$ and define
    \begin{gather*}
        h(t) = \tint{M \cap \{ z \with f(z) = t \}}{}
        \phi (z) | \ap \Der f(z) | \density^1 ( \| V \|, z ) \ud \mathscr{H}^0 z
        \quad \text{whenever $t \in \rel$}
    \end{gather*}
    and $T = \{ t \with h(t) \leq \gamma \}$. Since
    \ref{miniremark:rect_varifold}\,\eqref{item:rect:varifold:coarea} and
    Allard \cite[3.5\,(1b)]{MR0307015} imply
    \begin{gather*}
    	\ap \Der f(z) = 0
        \quad
        \text{for $\mathscr{H}^1$ almost all $z \in M \cap f^{-1} \lIm N \rIm$}
    \end{gather*}
    whenever $\mathscr{L}^1 (N) = 0$, it is sufficient to prove $\mathscr{L}^1
    ( \rel \without T ) = 0$.

    Approximating $\theta$ by convolution and using
    \cite[4.5\,(3)]{snulmenn.decay}, one obtains
    \begin{gather*}
	\tint{}{} \ap \Der f \bullet \ap \Der  \theta \ud \| V \| \leq \mu (\theta)
    \end{gather*}
    whenever $\theta : U \to \rel$ is a nonnegative Lipschitzian function with
    compact support. Employing
    \ref{miniremark:rect_varifold}\,\eqref{item:rect:varifold:coarea} with
    $g(z)$ replaced by $\phi (z) \psi'(f(z)) | \ap \Der  f (z) |$ and taking
    $\theta = \phi \cdot ( \psi \circ f )$ yields
    \begin{align*}
        \tint{}{} \psi' h \ud \mathscr{L}^1
        & = \tint{}{} \phi ( \psi' \circ f ) | \ap \Der f|^2 \ud \| V \|
        \\
        & = \tint{}{} \ap \Der f \bullet \ap \Der  \theta \ud \| V \| 
        - \tint{}{} ( \psi \circ f ) \ap \Der f \bullet \ap \Der  \phi \ud \| V \|
        \leq \gamma
    \end{align*}
    whenever $\psi \in \mathscr{E} ( \rel, \rel )$ and $0 \leq \psi \leq 1$.
    Letting $\psi$ approach the characteristic function of $\{ u \with t < u
    \}$ shows that $t \in T$ whenever $t$ is a Lebesgue point of $h$ and the
    conclusion follows from \cite[2.8.18, 2.9.8]{MR41:1976}.
\end{proof}

\begin{remark}
    \label{remark:sup-L1-est}
    If $\theta \in \mathscr{D} (U,\rel)$ and $f \in \Hom ( \rel^\adim, \rel)$,
    then $( \delta V ) ( \theta \grad f ) = \tint{}{} \ap \Der f \bullet \ap \Der 
    \theta \ud \| V \|$. Consequently, if $\| \delta V \|$ is a Radon measure,
    then
    \begin{gather*}
        \Lpnorm{\| V \|}{\infty}{ \phi | \ap \Der L | }
        \leq \adim \big ( \| L \| \| \delta V \| ( \phi ) 
        + \Lip ( \phi ) \tint{\spt \phi}{} | \ap \Der L | \ud \| V \| \big )
    \end{gather*}
    whenever $L \in \Hom ( \rel^\adim, \rel^\adim )$.
\end{remark}
\begin{remark} \label{remark:laplace-beltrami}
    The method of proof originates from Allard and Almgren
    \cite[5\,(6)]{MR0425741}. Adapting the terminology of \cite[p.~41, p.~188,
    p.~391]{MR1814364} to varifolds, our presentation views $f$ as a ``weak
    subsolution to Poisson's equation for the Laplace-Beltrami operator on
    $V$'', see also Allard \cite[7.5]{MR0307015}.
\end{remark}
\begin{theorem}
    \label{thm:decay-1d}
	Suppose $1 < \adim \in \nat$, $U$ is an open subset of $\rel^\adim$,
	$V \in \IVar_1 ( U )$, $\| \delta V \|$ is a Radon measure,
	\begin{gather*}
		C = \{ (z,\cball{z}{r}) \with \text{$z \in U$, $0 < r <
		\infty$} \}, \quad Z = U \cap \{ z \with \Tan^1 ( \| V \|, z )
		\in \grass{\adim}{1} \},
	\end{gather*}
	$\tau : Z \to \Hom ( \rel^\adim, \rel^\adim)$ satisfies $\tau(z) =
	\project{\Tan^1 ( \| V \|, z )}$ for $z \in Z$, and $A$ is the set of
	points in $\spt \| V \|$ at which $\tau$ is $( \| V \|, C )$
	approximately continuous.

	Then $\| V \| ( U \without A ) = 0$ and, for $\| V \|$ almost all $z
	\in A$, $\tau|A$ is differentiable relative to $A$ at~$z$ with
	\begin{gather*}
	    \Der (\tau|A) (z) = ( \| V \|, 1 ) \ap \Der  \tau (z) .
	\end{gather*}
\end{theorem}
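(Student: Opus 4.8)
The plan is to reduce the statement about differentiability of $\tau|A$ to an application of a differentiation theorem based on decay of the quadratic tilt-excess and approximate differentiability, using the coercive $\Lp\infty$ estimate in \ref{lemma:sup-L1-est} together with the positive result of \cite[5.2\,(2)]{snulmenn.c2} that for one-dimensional integral varifolds with $\| \delta V \|$ a Radon measure one has, for $\| V \|$ almost all $z$,
\begin{gather*}
    \limsup_{r \to 0+} r^{-1} \big ( r^{-1} \tint{\cball zr}{} \| \tau(\zeta) - \tau(z) \|^2 \ud \| V \| \zeta \big )^{1/2} < \infty.
\end{gather*}
First I would record that $\tau$ is $( \| V \|, C )$ approximately continuous at $\| V \|$ almost every point and approximately $(\| V \|,1)$ differentiable at $\| V \|$ almost every point --- the former from rectifiability and the standard Vitali relation for the weight measure of a one-dimensional rectifiable varifold, the latter from the second-order rectifiability in \cite[4.8]{snulmenn.c2} (or directly from \cite[5.2]{snulmenn.c2}). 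This gives $\| V \| ( U \without A ) = 0$ and identifies the candidate derivative $( \| V \|, 1 ) \ap \Der \tau (z)$ as the unique affine object that approximates $\tau$ near $z$ relative to $\| V \|$.

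The core of the argument is to upgrade approximate differentiability to genuine differentiability relative to $A$. Here I would invoke a differentiation result of the type \ref{thm:differentiation} (equivalently \cite[3.7]{snulmenn.isoperimetric}), applied with $g(\zeta) = \| \tau(\zeta) - \tau(z) - \langle \zeta - z, ( \| V \|, 1 ) \ap \Der \tau(z) \rangle \|$ restricted to the set where $\tau$ is defined: the quadratic decay above, together with the pointwise control of $g$ on the ``good'' set (of points where the tilt-excess, the variation measure and the deviation from the affine approximation are all small), forces the limit of $|\zeta - z|^{-1} g(\zeta)$ to be zero at $\| V \|$ almost every $z$. The role of \ref{lemma:sup-L1-est}, applied (via \ref{remark:sup-L1-est}) to the affine functions $z \mapsto \langle z, L \rangle$ with $L$ ranging over a basis of $\Hom(\rel^\adim,\rel^\adim)$, is to pass from an $\Lp1$ bound on $\ap \Der \tau$ near $z$ to an $\Lp\infty$ bound on $| \ap \Der \tau |$ weighted by a cutoff; this is what lets one bound the increment of $\tau$ along $\spt\|V\|$ by a multiple of the distance, which is exactly the estimate needed to make ``$\limsup$'' into a genuine limit and to control points of $A$ that are not seen by the approximate differentiability relation. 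Finally, a Besicovitch-Federer covering argument (as in the proof of \ref{thm:O_o}) is used to discard the exceptional set of points near which $\| V \| ( \cball zr \without \text{good set} )$ is not $o(r)$, which completes the identification of $\Der(\tau|A)(z)$ with $( \| V \|, 1 ) \ap \Der \tau(z)$.

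The main obstacle I expect is the interplay between the \emph{approximate} derivative $( \| V \|, 1 ) \ap \Der \tau$ and the \emph{relative} (classical) derivative of $\tau|A$: approximate differentiability only controls $\tau$ off a set of vanishing density, whereas differentiability relative to $A$ demands control at \emph{every} point of $A$ near $z$. Bridging this gap is precisely where the a priori $\Lp\infty$ estimate \ref{lemma:sup-L1-est} is indispensable --- it provides a Lipschitz-type bound for $\tau$ along the support that is valid pointwise, not merely almost everywhere, so that the excluded density-zero set in the approximate statement can be absorbed. A secondary technical point is verifying that the hypotheses of \ref{lemma:sup-L1-est} are met, i.e.\ that each coordinate of $\tau$ (or rather each affine test function composed with the inclusion) is a weak subsolution to Poisson's equation for the Laplace-Beltrami operator on $V$ with the appropriate right-hand side measure; this is exactly the content of \ref{remark:sup-L1-est} and reduces to the first variation identity, so it should be routine.
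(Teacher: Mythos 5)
Your overall strategy matches the paper's: combine the $\Lp\infty$ estimate of \ref{lemma:sup-L1-est} (via \ref{remark:sup-L1-est}) with the decay \cite[5.2\,(2)]{snulmenn.c2} and the differentiation theorem \ref{thm:differentiation}. But there are two genuine gaps that prevent your plan from going through as written.

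First, you propose to apply \ref{thm:differentiation} to $g(\zeta) = \| \tau(\zeta) - \tau(z) - \langle \zeta - z, ( \| V \|, 1 ) \ap \Der \tau(z) \rangle \|$, but this $g$ depends on the base point $z$, whereas \ref{thm:differentiation} takes a \emph{single} function $g$ and produces an almost-everywhere dichotomy for it. To make the argument rigorous you need a countable family of \emph{fixed} auxiliary functions. The paper gets this from second-order rectifiability (\cite[4.8]{snulmenn.c2} together with \cite[11.1]{snulmenn:tv.v2}): there are class-$1$ maps $\tau_i : U \to \Hom ( \rel^\adim, \rel^\adim )$ such that the coincidence sets $Z_i$ cover $\| V \|$ almost all of $U$ and $( \| V \|, 1 ) \ap \Der \tau = \Der \tau_i | \Tan^1 ( \| V \|, \cdot )$ on $Z_i$. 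One then applies \ref{thm:differentiation} to $|f_i|$, where $f_i = (\tau - \tau_i)|A$ is $z$-independent; since $f_i$ vanishes on $Z_i$, the ``$g(z)=0$'' case of \ref{thm:differentiation} is automatically active there, and one can conclude $\Der ( \tau|A ) (z) = \Der \tau_i(z) | \Tan (A,z)$ on $Z_i$. Combined with \cite[3.1.22]{MR41:1976} and $\Tan^1 ( \| V \|, z ) \subset \Tan (A,z) \subset \Tan ( \spt \| V \|, z )$, this yields the stated identification. Without this reduction to fixed test functions the application of the differentiation theorem is not justified.

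Second, you misidentify what \ref{remark:sup-L1-est} controls. Applied to a linear map $L \in \Hom ( \rel^\adim, \rel^\adim )$, it bounds the $\Lp\infty$ norm of $\phi | \ap \Der L | = \phi | L \circ \tau |$ by a multiple of $\| L \| \| \delta V \| ( \phi ) + \Lip ( \phi ) \int_{\spt \phi} | L \circ \tau | \ud \| V \|$; that is, it is a first-order bound on $\tau$, not on the second-order quantity $\ap \Der \tau$ as your write-up states. The paper uses it with the specific choice $L = \id_{\rel^\adim} - \tau(z)$ and a cutoff $\phi$ supported near $z$, exploiting the algebraic identity $| \tau(\zeta) - \tau(z) |^2 = 2 | L \circ \tau(\zeta) |^2$ from Allard \cite[8.9\,(1)\,(2)]{MR0307015}; together with $\density^{\ast 1} ( \| \delta V \|, z ) < \infty$ and the decay of $\int_{\cball zr} | \tau(\zeta) - \tau(z) | \ud \| V \| \zeta$ from \cite[5.2\,(2)]{snulmenn.c2}, this produces the pointwise Lipschitz-type bound $\limsup_{A \ni \zeta \to z} | \tau(\zeta) - \tau(z) | / | \zeta - z | < \infty$. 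It is this pointwise (not merely a.e.) finiteness of the difference quotient, for $\| V \|$ almost all $z$, that feeds into \ref{thm:differentiation} applied to the $f_i$.
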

\begin{proof}
	First, notice that $\| V \| ( U \without A ) = 0$ by \cite[2.8.18,
	2.9.13]{MR41:1976} and that, for $\| V \|$ almost all $z$, $\tau$ is
	$( \| V \|, 1 )$ approximately differentiable at $z$ and
	\begin{gather*}
		\limsup_{r \to 0+} r^{-2} \tint{\cball{z}{r}}{} | \tau (
		\zeta ) - \tau ( z )| \ud \| V \| z < \infty
	\end{gather*}
	by \cite[5.2\,(2), 5.5]{snulmenn.c2}. If $z$ additionally satisfies
	$\density^{\ast 1} ( \| \delta V \|, z ) < \infty$, as $\| V \|$
	almost all $z$ do by \cite[2.10.19\,(3)]{MR41:1976}, then
	\begin{gather*}
		\limsup_{A \owns \zeta \to z} | \tau (\zeta) - \tau (z)
		|/|\zeta-z| < \infty;
	\end{gather*}
	in fact, it is sufficient to take $L = \id{\rel^\adim} - \tau (z)$ and a
    suitable $\phi$ in \ref{remark:sup-L1-est} since $| \tau (\zeta) - \tau
    (z)|^2 = 2 | L \circ \tau (\zeta) |^2$ for $\zeta \in Z$ by Allard
    \cite[8.9\,(1)\,(2)]{MR0307015},

	Next, one obtains a sequence of functions $\tau_i : U \to \Hom (
	\rel^\adim, \rel^\adim )$ of class $1$ such that the sets $Z_i = Z
	\cap \{ z \with \tau (z) = \tau_i (z) \}$ cover $\| V \|$ almost all
	of $U$ and
	\begin{gather*}
		( \| V \|, 1 ) \ap \Der  \tau (z) = \Der  \tau_i (z) | \Tan^1 ( \| V
		\|, z) \quad \text{for $\| V \|$ almost all $z \in Z_i$}
	\end{gather*}
	by \cite[11.1\,(2)\,(4)]{snulmenn:tv.v2} and \cite[3.2.16]{MR41:1976}.
	Defining $f_i = ( \tau-\tau_i ) |A$, the preceding paragraph yields
	\begin{gather*}
		\limsup_{\zeta \to z} |f_i (\zeta)| / |\zeta-z| <
		\infty \quad \text{for $\| V \|$ almost all $z \in Z_i$},
	\end{gather*}
	hence, in view of \ref{thm:differentiation} and
	\cite[3.1.22]{MR41:1976}, it follows
	\begin{gather*}
		\lim_{\zeta \to z} | f_i (\zeta)|/|\zeta-z| = 0, \quad \Der  (
		\tau |A ) (z) = \Der  \tau_i (z) | \Tan (A,z)
	\end{gather*}
	for $\| V \|$ almost all $z \in Z_i$. Noting
	\begin{gather*}
		\Tan^1 ( \| V \|, z ) \subset \Tan (A,z) \subset \Tan (\spt \|
		V \|, z) \quad \text{for $z \in U$}
	\end{gather*}
	by \cite[3.2.16]{MR41:1976}, the conclusion now follows from
	\cite[11.3]{snulmenn:tv.v2}.
\end{proof}
\begin{corollary}
    \label{corollary:1-dim-Q-graph}
    Suppose $1 < \adim \in \nat$, $U$ is an open subset of $\rel^\adim$, $V
    \in \IVar_1(U)$, $\| \delta V \|$ is a Radon measure, and $\varepsilon >
    0$.

    Then, for $V$ almost all $(z,T)$, there exist $Q \in \nat$, $0 <r<
    \infty$, and $f_i : T \cap \cball{\project{T} (z)}{r} \to T^\perp \cap
    \cball{ \perpproject{T} (z)}{r}$ with $\Lip f_i \leq \varepsilon$ for $i =
    1, \ldots, Q$ such that
    \begin{gather*}
        \density^1 ( \| V\|, \zeta ) = \card \{ i \with f_i ( \project{T}
        ( \zeta ) ) = \perpproject{T} (\zeta) \}
    \end{gather*}
    for $\mathscr{H}^1$ almost all $\zeta \in \cylinder Tzrr$.
\end{corollary}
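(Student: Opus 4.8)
The plan is to combine the differentiability result \ref{thm:decay-1d} with a Lipschitz approximation by $\qspace_Q(\rel^{\adim-1})$ valued functions and the selection theorem of Almgren. First I would fix a point $z$ at which the conclusions of \ref{thm:decay-1d} hold: that is, $z \in A$ (so $\| V \|(U \without A) = 0$ is automatic), $\tau$ is $(\| V \|,1)$ approximately differentiable at~$z$, $\tau | A$ is differentiable relative to~$A$ at~$z$, and in addition $\density^1(\| V \|,z) = Q$ for some positive integer~$Q$ and $\density^{\ast 1}(\| \delta V \|, z) < \infty$; this last set of conditions holds for $\| V \|$ almost all~$z$ by Allard \cite[3.5\,(1c)]{MR0307015} and \cite[2.10.19\,(3)]{MR41:1976}. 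Set $T = \Tan^1(\| V \|, z) = \im \tau(z)$. The differentiability of $\tau | A$ at~$z$ relative to~$A$ forces, for every $\delta > 0$, a radius $r > 0$ for which $\spt \| V \| \cap \cball zr$ lies in a cone $\{\zeta \with \dist(\zeta - z, z + T) \leq \delta |\zeta - z|\}$ around the affine tangent line, so in particular $\perpproject T$ restricted to $\spt \| V \| \cap \cball zr$ is Lipschitzian over its image with small constant — this is the geometric input that will make the graph constant small.

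Next I would invoke the Lipschitz approximation \cite[3.15]{snulmenn.poincare}: since $\| \delta V \|$ is a Radon measure with $\density^{\ast 1}(\| \delta V\|, z) < \infty$ and $\density^1(\| V \|, z) = Q$, for all sufficiently small $r$ the hypotheses of \cite[3.15]{snulmenn.poincare} (with $\vdim = 1$, $\adim$, $L = 1$, $M = 5 Q$, the $\delta_i$ chosen small, $a = z$, $h = r$, $\mu = \| V \|$) are met, yielding a $\qspace_Q(\rel^{\adim-1})$ valued Lipschitzian function $g$ defined on $T \cap \cball{\project T(z)}{r}$ whose graph agrees with $\spt \| V \|$ up to a set of small measure, and whose Lipschitz constant is controlled by the tilt-excess, hence small by the cone estimate above. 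On the exceptional set the structure is absent, but one notes that the tilt-excess decay from \ref{thm:decay-1d} (equivalently \cite[5.2\,(2)]{snulmenn.c2} for $\vdim = 1$) makes the exceptional set have density zero at~$z$; combined with the fact that $\spt \| V \| \cap \cball zr$ is \emph{entirely} inside the narrow cone, one upgrades "agrees up to small measure" to "agrees completely" after shrinking~$r$ — here the one-dimensionality is essential, since in the cone the projection $\project T$ is injective on $\spt \| V \|$, so there is no room for pieces of the varifold to escape the graph. Then I would apply Almgren's selection theorem \cite[1.10]{MR1777737} to decompose the $\qspace_Q(\rel^{\adim-1})$ valued function $g$ into $Q$ single-valued Lipschitzian functions $f_i : T \cap \cball{\project T(z)}{r} \to T^\perp \cap \cball{\perpproject T(z)}{r}$ (after possibly shrinking~$r$ to keep the images in the stated ball and keep $\Lip f_i \leq \varepsilon$), and the multiplicity formula $\density^1(\| V \|, \zeta) = \card\{i \with f_i(\project T(\zeta)) = \perpproject T(\zeta)\}$ for $\HM^1$ almost all $\zeta$ in the cylinder follows from the corresponding property in \cite[3.15]{snulmenn.poincare} together with the rectifiability-multiplicity identity of Allard \cite[3.5\,(1b)]{MR0307015}.

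The main obstacle I expect is the upgrade step: \cite[3.15]{snulmenn.poincare} only gives agreement of the varifold with the $\qspace_Q$-graph off a set of small relative measure, not everywhere, whereas the corollary demands an honest equality of $\density^1(\| V \|, \cdot)$ with the counting function $\HM^1$ almost everywhere on a full cylinder. Handling this requires carefully exploiting that in dimension one, once $\spt \| V \| \cap \cball zr$ is trapped in a thin cone about the tangent line (which \ref{thm:decay-1d} provides), the orthogonal projection onto~$T$ is a bi-Lipschitz map from $\spt \| V \|$ to its image, so the "bad set" of the approximation cannot correspond to points of $\spt \| V \|$ that fail to be on the graph — it can only be an artifact that disappears once $r$ is taken small enough that the tilt-excess bound of \ref{thm:decay-1d} overwhelms the error terms in \cite[3.15]{snulmenn.poincare}. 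Making this quantitative, and checking that the resulting $f_i$ indeed have $\Lip f_i \le \varepsilon$ after the final shrinking of~$r$, is the technical heart of the argument; everything else is assembly of cited results.
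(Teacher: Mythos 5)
Your overall route (invoke \ref{thm:decay-1d}, apply the Lipschitz approximation of \cite[3.15]{snulmenn.poincare}, then decompose via Almgren's selection theorem \cite[1.10]{MR1777737}) is the right skeleton, and it matches the paper's. However, the mechanism you propose for passing from ``agreement up to small measure'' to actual $\mathscr{H}^1$-almost-everywhere equality is not the one used in the paper, and the one you sketch does not work. You argue that $\spt\|V\|\cap\cball zr$ lies in a thin cone about $z+T$ and that, consequently, $\project T$ is injective on $\spt\|V\|$; but when $\density^1(\|V\|,z)=Q\geq 2$ the projection is $Q$-to-one, not injective, so that step fails. You also rely on the decay rate from \cite[5.2\,(2)]{snulmenn.c2} to conclude that the exceptional set has density zero at~$z$; density zero as $r\to 0+$ does not show that the exceptional set is empty at any fixed small~$r$, and ``shrinking~$r$'' does not obviously remove it since the bad set of \cite[3.15]{snulmenn.poincare} is an intrinsic feature of the varifold, not an artifact of the radius.

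The paper's argument sidesteps this entirely. Theorem~\ref{thm:decay-1d} yields that, for $\|V\|$ almost all $z\in A$, the map $\tau|A$ is \emph{differentiable} at $z$ relative to $A$, hence \emph{continuous} at~$z$. Therefore, for $\|V\|$ almost all~$z$, given $\lambda>0$ one can choose $r$ so that $|\tau(\zeta)-\tau(z)|\leq\lambda$ for \emph{every} $\zeta\in A\cap\oball z{4r}$, a pointwise (uniform) tilt bound rather than an averaged one. Combined with $\|\delta V\|(\oball z{4r})\leq\lambda$ (available because $\|\delta V\|(\{z\})=0$ and $\|\delta V\|$ is a Radon measure) and choosing $\lambda$ equal to the threshold $\varepsilon_{\text{\cite[3.15]{snulmenn.poincare}}}$, the maximal-function-type criteria that define the bad set $B$ of \cite[3.15]{snulmenn.poincare} are never triggered, so $B=\varnothing$ identically. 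Consequently the exceptional set in \cite[3.15\,(3)]{snulmenn.poincare} is $\HM^1$- and $\|V\|$-null from the outset, with no ``upgrade'' step required; one then takes $g=\Clos f$ to get a full domain and applies Almgren \cite[1.10\,(1)\,(iii), 1.10\,(2)]{MR1777737} for the decomposition and the multiplicity count off the graph. You should replace your cone-and-injectivity reasoning with this observation that the pointwise tilt bound from continuity of $\tau|A$ makes the bad set of the Lipschitz approximation vanish identically.
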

\begin{proof}
    Let $Z$, $\tau$ and $A$ be defined as in~\ref{thm:decay-1d}, in particular
    $\| V \| ( U \without A ) = 0$. In~view of~\ref{thm:decay-1d} and Allard
    \cite[3.5\,(1)]{MR0307015} it is sufficient to prove the conclusion at
    a~point $(z,T)$ such that $z \in A$, $T = \im \tau (z)$, $\| \delta V \| (
    \{ z \} ) = 0$, $\tau|A$ is continuous at~$z$, and, for some $Q \in \nat$,
    also
    \begin{gather*}
    	r^{-1} \tint{}{} k (r^{-1} (\zeta-z), S ) \ud V(\zeta,S) \to Q \tint
        T{} k ( \zeta, T ) \ud \mathscr{H}^1 \zeta \quad \text{as $r \to 0+$}
    \end{gather*}
    for $k \in \mathscr{K} ( \rel^\adim \times \grass \adim 1 )$. Define
    $\delta_1 = \delta_2 = \delta_3 = 1/2$, $\delta_4 = 1/4$, and $\delta_5 = (2
    \unitmeasure 1 \isoperimetric 1 )^{-1}$ and recall $\delta_5 \leq 1$, see
    e.g.~\cite[2.4]{snulmenn.isoperimetric}. From \cite[3.15]{snulmenn.poincare}
    one obtains
    \begin{gather*}
        \lambda = \varepsilon_{\text{\cite[3.15]{snulmenn.poincare}}} (
        \adim-1, 1, Q, \varepsilon, 5Q, \delta_1, \delta_2, \delta_3,
        \delta_4, \delta_5 ).
    \end{gather*}
    Choose $0 < r < \infty$ such that
    \begin{gather*}
        (Q - 1/2) \unitmeasure{1} r \le \|V\|(\cylinder Tzrr) \le (Q + 1/2)
        \unitmeasure{1} r , \\
        \|V\|(\cylinder Tzr{5r/4} \without \cylinder Tzr{r/2}) \le (1/2)
        \unitmeasure{1} r , \\
        \oball z{4r} \subset U, \quad \measureball{\|V\|}{\oball z{4r}} \le 5Q
        \unitmeasure{1} r , \quad \measureball{\|\delta V\|}{\oball z{4r}} \le
        \lambda, \\
        | \tau(\zeta) - \tau(z) | \le \lambda \quad \text{whenever $\zeta \in
          A \cap \oball z{4r}$}.
    \end{gather*}
    Applying \cite[3.15\,(1)--(3)]{snulmenn.poincare} with $m$, $n$, $L$, $M$,
    $a$, $h$, and $\varepsilon_1$ by $\adim-1$, $1$, $\varepsilon$, $5Q$, $z$,
    $r$, and $\lambda$ and noting that the set $B$ occurring there is empty,
    one infers the existence of a function $f$ with values in $\qspace_Q (
    T^\perp )$ with $\dmn f \subset T \cap \cball{ \project{T} (z) }{r}$,
    $\Lip f \leq \varepsilon$, and
    \begin{gather*}
        \spt f (x) \subset \cball { \perpproject{T} (z)} r \quad \text{for $x
          \in \dmn f$}, \\
        \density^1 ( \| V \|, \zeta ) = \density^0 ( \| f ( \project{T}
        (\zeta) ) \|, \perpproject{T} (\zeta) ) \quad \text{for $\zeta \in
          \cylinder Tzrr \cap \project{T}^{-1} \lIm \dmn f \rIm$}, \\
        \mathscr{H}^1 ( T \cap
        \cball{\project{T} (z)}{r} \without \dmn f) + \| V \| \big ( \cylinder
        Tzrr \without \project{T}^{-1} \lIm \dmn f \rIm \big ) = 0.
    \end{gather*}
    Consequently, \cite[2.10.19\,(4)]{MR41:1976} implies
    \begin{gather*}
    	\density^1 ( \| V \|, \zeta ) = 0 \quad \text{for $\mathscr{H}^1$
          almost all $\zeta \in \cylinder Tzrr \without \project{T}^{-1} \lIm
          \dmn f \rIm$}.
    \end{gather*}
    Defining $g = \Clos f$, one infers that $g$ is a function, $\dmn g = T \cap
    \cball {\project{T}(z)}r$, $\Lip g \leq \varepsilon$, and $\spt g(x) \subset
    \cball{\perpproject{T}(z)}{r}$ for $x \in \dmn g$. Now one readily verifies
    \begin{gather*}
	\density^0 ( \| g ( \project{T} (\zeta) ) \|, \perpproject{T} (\zeta))
	= 0 \quad \text{for $\mathscr{H}^1$ almost all $\zeta \in \cylinder
	Tzrr \without \project{T}^{-1} \lIm \dmn f \rIm$}
    \end{gather*}
    and the conclusion, both by means of Almgren \cite[1.10\,(2),
    1.10\,(1)\,(iii)]{MR1777737}.
\end{proof}
\begin{remark}
    The fact $\mathscr{H}^\vdim ( \{ (x,y) \with \text{$x \in X$ and $y \in \spt
      g(x)$} \} ) = 0$ whenever $\mathscr{L}^\vdim ( X ) = 0$ and $g : X \to
    \qspace_Q ( \rel^\codim )$ is Lipschitzian, deduced from Almgren
    \cite[1.10\,(2), 1.10\,(1)\,(iii)]{MR1777737} for $\vdim = 1$ in the
    preceding proof, clearly holds for arbitrary $\adim > \vdim \in \nat$ by
    Almgren \cite[1.5\,(11)\,(iii)\,(c)]{MR1777737} or
    \cite[2.5\,(1)]{snulmenn.poincare}.
\end{remark}


\addcontentsline{toc}{section}{\numberline{}References}
\bibliography{ohne_duplikate}
\bibliographystyle{alphaurl}

\medskip
{\small \noindent S{\l}awomir Kolasi{\'n}ski \\
Max Planck Institute for Gravitational Physics (Albert Einstein Institute), \\
Am M{\"u}hlen\-berg 1, D-14476 Golm, Germany \\
\texttt{Slawomir.Kolasinski@aei.mpg.de}}

\vspace{1em}

{\small \noindent Ulrich Menne \newline Max
Planck Institute for Gravitational Physics (Albert Einstein Institute),
\newline Am M{\"u}hlen\-berg 1, D-14476 Golm, Germany \newline
\texttt{Ulrich.Menne@aei.mpg.de} \newline
University of Potsdam, Mathematics Institute, \newline OT Golm,
Karl-Liebknecht-Stra\ss e 24--25, D-14476 Potsdam, Germany \newline
\texttt{Ulrich.Menne@uni-potsdam.de}}

\end{document}